\newcommand{\fm}{\mathfrak m}
\newcommand{\fn}{\mathfrak n}
\newcommand{\fk}{\mathfrak k}
\newcommand{\fg}{\mathfrak g}
\newcommand{\fh}{\mathfrak h}
\newcommand{\dg}{\dot{\mathfrak g}}
\newcommand{\dfh}{\dot{\mathfrak h}}
\newcommand{\dQ}{\dot{Q}}
\newcommand{\al}{\alpha}
\newcommand{\wt}{\widetilde}
\newcommand{\wh}{\widehat}
\newcommand{\dal}{\dot{\alpha}}
\newcommand{\ot}{\otimes}
\newcommand{\CK}{\mathcal{K}}
\newcommand{\CL}{\mathcal{L}}
\newcommand{\CE}{\mathcal{E}}
\newcommand{\C}{\mathbb{C}}
\newcommand{\R}{\mathbb R}
\newcommand{\N}{\mathbb N}
\newcommand{\Z}{\mathbb Z}
\newcommand{\rd}{\mathrm{d}}
\newcommand{\rk}{\mathrm{k}}
\newcommand{\ba}{\begin {eqnarray}}
\newcommand{\ea}{\end {eqnarray}}
\newcommand{\baa}{\begin {eqnarray*}}
\newcommand{\eaa}{\end {eqnarray*}}
\newcommand{\be}{\begin {equation}}
\newcommand{\ee}{\end {equation}}
\newcommand{\bee}{\begin {equation*}}
\newcommand{\eee}{\end {equation*}}
\newcommand{\U}{\mathcal{U}}
\newcommand{\belowit}[2]{{
    \mathop{{#1}}\limits_{{#2}}
}}
\newcommand{\te}[1]{\textnormal{{#1}}}
\theoremstyle{Theorem}
\theoremstyle{Theorem}
\newtheorem{thm}{Theorem}[section]
\newtheorem{cort}[thm]{Corollary}
\newtheorem{lemt}[thm]{Lemma}
\newtheorem{prpt}[thm]{Proposition}
\newtheorem{remt}[thm]{Remark}
\theoremstyle{Theorem}
\theoremstyle{Theorem}
\theoremstyle{Plain}
\theoremstyle{Definition}
\newtheorem{dfnt}[thm]{Definition}
\def\({\left(}
\def\){\right)}
\newlength{\dhatheight}
\newcommand{\dwidehat}[1]{%
    \settoheight{\dhatheight}{\ensuremath{\widehat{#1}}}%
    \addtolength{\dhatheight}{-0.45ex}%
    \widehat{\vphantom{\rule{1pt}{\dhatheight}}%
    \smash{\widehat{#1}}}}
\newcommand{\set}[2]{{
    \left.\left\{
        {#1}
    \,\right|\,
        {#2}
    \right\}
}}
\def \<{{\langle}}
\def \>{{\rangle}}
\newcommand{\vac}{\mathbbm{1}}
\newcommand{\E}{{\mathcal{E}}}
\newcommand{\CY}{\mathcal{Y}}
\newcommand{\ssl}{\mathfrak{sl}}
\newcommand{\inverse}{^{-1}}
\numberwithin{equation}{section}
\title[Drinfeld type presentations]{Drinfeld type presentations of loop algebras}
\author{Fulin Chen$^1$}
\address{School of Mathematical Sciences, Xiamen University,
 Xiamen, China 361005} \email{chenf@xmu.edu.cn }\thanks{$^1$Partially supported by NSF of China (No.11501478).}
\author{Naihuan Jing$^2$}
\address{Department of Mathematics, North Carolina State University, Raleigh, NC 27695,
USA}
\email{jing@math.ncsu.edu}
\thanks{$^2$Partially supported by NSF of China (No.11531004, No.11726016) and Simons Foundation (No.523868).}
\author{Fei Kong$^3$}
\address{College of Mathematics and Statistics, Hunan Normal University, Changsha, China 410006} \email{kfkfkfc@outlook.com}
\thanks{$^3$Partially supported by NSF of China (No.11701183).}
 \author{Shaobin Tan$^4$}
 \address{School of Mathematical Sciences, Xiamen University,
 Xiamen, China 361005} \email{tans@xmu.edu.cn}
 \thanks{$^4$Partially supported by NSF of China (No.11471268, No.11531004).}
\subjclass[2010]{17B65 \& 17B69} \keywords{
Drinfeld type presentation, loop algebra, universal central extension, extended affine Lie algebra, twisted quantum affinization,
  $\Gamma$-vertex algebra}
\begin{document}

\begin{abstract}Let $\fg$ be the derived subalgebra of a Kac-Moody Lie algebra of finite type or  affine type,
$\mu$ a diagram automorphism of $\fg$ and $\CL(\fg,\mu)$ the loop algebra
 of $\fg$ associated to $\mu$.
In this paper, by using the vertex algebra technique, we provide a general construction of
current type  presentations for the universal central extension  $\wh\fg[\mu]$ of $\CL(\fg,\mu)$.
 The construction contains the
classical limit of  Drinfeld's new realization for (twisted and untwisted)
  quantum affine algebras (\cite{Dr-new}) and
the Moody-Rao-Yokonuma
presentation for toroidal Lie algebras (\cite{MRY}) as special examples.
As an application, when $\fg$ is of simply-laced  type, we  prove that the classical limit of
 the $\mu$-twisted quantum
 affinization  of the quantum Kac-Moody algebra associated to $\fg$ introduced in \cite{CJKT-twisted-quantum-aff-vr} is
 the universal enveloping algebra of $\wh\fg[\mu]$.
\end{abstract}
\maketitle

\section{Introduction and main results}
\subsection{The main result}
 Let $A=(a_{ij})_{i,j\in I}$ be a  generalized Cartan matrix of \emph{finite} type or \emph{affine }type,
  $\mu$ a permutation of $I$ with order $N$ such that $a_{\mu(i)\mu(j)}=a_{ij}$ for $i,j\in I$,
 and $\fg$ the derived subalgebra of the Kac-Moody Lie algebra associated to $A$.
 It was known (\cite{GK-defining-rel-Lie-alg}) that  $\fg$ is  generated by the
Chevalley generators $\al_i^\vee,\, e_i^\pm$, $i\in I$ and subject to the  relations
 \begin{align}
&[\al_i^\vee,\al_j^\vee]=0,\ [\al_i^\vee, e_j^\pm]=\pm a_{ij}\,e_j^\pm,\
[e_i^+,e_j^-]=\delta_{ij}\,\al_i^\vee,\quad i,j\in I,\\
\label{Serrere}
&\mathrm{ad}(e_i^\pm)^{1-a_{ij}}(e_j^\pm)=0,\quad i,j\in I\ \te{with}\ i\ne j.
 \end{align}
This presentation, known as the Serre-Gabber-Kac presentation
of $\fg$, is of fundamental importance in the study of the Kac-Moody Lie algebra. 
Let $\fn_+$ (resp.\,$\fn_-$) be   the subalgebra of $\fg$ generated by the elements $e_i^+$ (resp.\,$e_i^-$)
 for $i\in I$.
One of the advantages of this presentation is that the subalgebras $\fn_+$ and $\fn_-$  are abstractly generated by
 these elements with the  Serre
relations \eqref{Serrere}.
The Serre-Gabber-Kac presentation also implies that $\mu$ induces an automorphism of $\fg$,
still denoted as $\mu$ (called diagram automorphism), such that
\begin{align}\label{defmu}\mu(\al_i^\vee)=\al_{\mu(i)}^\vee,\quad
\mu(e_i^\pm)=e_{\mu(i)}^\pm,\quad i\in I.
\end{align}

Let $\wh\fg$ be the universal central extension of the loop algebra $\mathcal L(\fg)=\C[t_1,t_1^{-1}]\ot \fg$.
 When $A$ is of finite type, $\wh\fg=\CL(\fg)\oplus\C\rk_1$ is an untwisted affine Lie algebra;
  when $A$ is of  affine type, $\wh\fg$ is a toroidal Lie algebra with
 $\CL(\fg)\oplus \C\rk_1$ as a (proper) subspace.
Similar to \eqref{defmu},  $\mu$ also induces an automorphism $\wh\mu$ of $\wh\fg$ such that (see \S\,3.1)
\begin{align}\label{defhatmu0}\wh\mu(t_1^m\ot \al_i^\vee)=\xi^{-m}t_1^m\ot \al_{\mu(i)}^\vee,\
\wh\mu(t_1^m\ot e_i^\pm)=\xi^{-m}t_1^m\ot e_{\mu(i)}^\pm,\ \wh\mu(\rk_1)=\rk_1,
\end{align}
for $m\in \Z$, $i\in I$ and $\xi=\mathrm{e}^{2\pi\sqrt{-1}/N}$.
Let $\wh\fg[\mu]$ be the subalgebra of $\wh\fg$ fixed by $\wh\mu$, which is the Lie algebra concerned about in this paper.
It was known  (\cite{Kac-book,CJKT-uce})  that $\wh\fg[\mu]$ is the universal central extension of
the (twisted) loop algebra
\begin{align}\label{fm}
\CL(\fg,\mu)=\te{Span}_\C\{ t_1^{m}\ot x_{(m)}\mid x\in \fg,\, m\in \Z\}\subset \CL(\fg)
\end{align}
of $\fg$ associated to $\mu$, where $x_{(m)}=\sum_{k\in \Z_N}\xi^{-km}\mu^k(x)$ and
 $\Z_N=\Z/N\Z$.
The main goal of this paper is to provide a general construction of certain current type presentations   for $\wh\fg[\mu]$,  which are  loop analogues of the Serre-Gabber-Kac presentation for $\fg$.

To be more precise,
let $D=\te{diag}\{\epsilon_i, i\in I\}$ be a diagonal matrix of positive rational numbers such that  $DA$ is symmetric,
and let $\wh\fn_+[\mu]$ (resp.\,$\wh\fn_-[\mu]$)  be the subalgebra of $\wh\fg[\mu]$  generated by the elements
$t_1^m\ot e_{i(m)}^+$ (resp.\,$t_1^m\ot e_{i(m)}^-$)  for $i\in I$, $m\in \Z$.
We now recall a result of Drinfeld as motivation.

\begin{thm}\label{intro1} Assume that $A$ is of finite type.
 Then the affine Lie algebra $\wh\fg[\mu]$ is isomorphic to the Lie algebra
generated by the elements
 \begin{align}\label{generators} h_{i,m},\quad x_{i,m}^\pm,\quad c, \quad i\in I,\ m\in \Z \end{align}
 and
 subject to the relations ($i,j\in I, m,n\in \Z$)
\begin{eqnarray*}
&&\text{(H)}\quad h_{\mu(i),m}=\xi^m h_{i,m},\ [h_{i,m}, c]=0,\ [h_{i,m}, h_{j,n}]=\sum_{k\in \Z_N}\frac{mN}{\epsilon_j} a_{i\mu^k(j)}\xi^{km}\delta_{m+n,0}c,\\
&&\text{(HX\,$\pm$)}\quad [h_{i,m}, x^\pm_{j,n}]
=\pm \sum_{k\in \Z_N} a_{i\mu^k(j)}\xi^{km}x^\pm_{j,m+n},\ [x^\pm_{j,n},c]=0,\\
&&\text{(XX)}\quad [x^+_{i,m},x^-_{j,n}]
=\sum_{k\in \Z_N}\delta_{i,\mu^k (j)}\xi^{km}(h_{j,m+n}+\frac{mN}{\epsilon_j}\delta_{m+n,0}c ),\\
&&\text{(X\,$\pm$) }\quad x^\pm_{\mu(i),m}=\xi^{m}x^\pm_{i,m},\ f_{ij}(z,w)\cdot[x_i^\pm(z),x_j^\pm(w)]=0,\\
&&\text{(DS\,$\pm$) }\quad
[x_i^\pm(z_{1}),\cdots,[x_i^\pm(z_{1-a_{ij}}), x_j^\pm(w)]] =0,\ \te{if}\ a_{ij}<0\ \te{and}\ \mu(i)=i;\\
&&\qquad \quad [x_i^\pm(z_{1}),[x_i^\pm(z_2), x_j^\pm(w)]] =0,\ \te{if}\ a_{ij}=-1,  a_{i\mu(i)}=0\ \te{and}\ \mu(j)\ne j;\\
&&\ \,
\frac{z^N-1}{z-1}\,[x_i^\pm(z_{1}),[x_i^\pm(z_2), x_j^\pm(w)]] =0,\ \te{if}\ a_{ij}=-1,  a_{i\mu(i)}=0\ \te{and}\ \mu(j)=j;\\
&&\ \,
(z+1)\,[x_i^\pm(z_{1}),[x_i^\pm(z_2), x_j^\pm(w)]] =0,\ \te{if}\ a_{ij}=-1, a_{i\mu(i)}=-1\  \te{and}\ j\ne \mu(i);\\
&&\,\sum_{\sigma\in S_2}\(z_{\sigma(1)}-2z_{\sigma(2)}-w\)
[x_i^\pm(z_{\sigma(1)}),[x_i^\pm(z_{\sigma(2)}),x_j^\pm(w)]]=0,\ \te{if}\ a_{ij}=-1, j=\mu(i),
\end{eqnarray*}
where $f_{ij}(z,w)=\prod\limits_{k\in\Z_N;a_{i\mu^kj}\ne 0} (z-\xi^k w)$ and  $x_i^\pm(z)=\sum_{m\in \Z}x_{i,m}^\pm z^{-m-1}$.
The isomorphism with $\wh\fg[\mu]$  is induced by the assignment
\begin{align}\label{thetafg} h_{i,m}\mapsto t_1^m\ot \al_{i(m)}^\vee,\ x_{i,m}^\pm\mapsto t_1^m\ot e_{i(m)}^\pm,\ c\mapsto \rk_1,\quad
i\in I,\, m\in \Z.
\end{align}
Moreover,
$\wh\fn_+[\mu]$ (resp.\,$\wh\fn_-[\mu]$) is isomorphic to
the Lie algebra  generated by $x^+_{i,m}$, (resp.\,$x^-_{i,m}$) for $i\in I$, $m\in \Z$ with relations $(X+)$, $(DS+)$
(resp.\,$(X-)$, $(DS-)$).
The isomorphism with $\wh\fn_+[\mu]$ (resp.\,$\wh\fn_-[\mu]$) is induced by the assignment
\begin{align}\label{thetafn}\ x_{i,m}^+\mapsto t_1^m\ot e_{i(m)}^+,\quad (\te{resp}.\ x_{i,m}^-\mapsto t_1^m\ot e_{i(m)}^-),\quad
i\in I,\, m\in \Z.
\end{align}
\end{thm}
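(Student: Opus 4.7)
My plan is to let $\wt L$ denote the Lie algebra defined by the generators \eqref{generators} and relations in the statement, and to construct the claimed isomorphism with $\wh\fg[\mu]$ in two stages: first verify that the assignment \eqref{thetafg} extends to a well-defined surjective homomorphism $\Phi : \wt L \to \wh\fg[\mu]$, then establish injectivity of $\Phi$.

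For well-definedness, I would check each relation individually. The equivariance identities $h_{\mu(i),m} = \xi^m h_{i,m}$, $x^\pm_{\mu(i),m} = \xi^m x^\pm_{i,m}$, together with the locality identity $f_{ij}(z,w)\cdot[x_i^\pm(z),x_j^\pm(w)]=0$, follow directly from the $\wh\mu$-invariance of the elements $t_1^m\ot\al_{i(m)}^\vee$ and $t_1^m\ot e_{i(m)}^\pm$ combined with the orbit structure of $\mu$ on $I$. The bracket relations (H), (HX$\pm$), (XX) are obtained by unpacking the cocycle describing the universal central extension $\wh\fg$ of $\CL(\fg)$ and collecting the contributions over each $\mu$-orbit. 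The twisted Serre-type relations (DS$\pm$) split according to the orbit type of the pair $(i,j)$; in every case one expands the displayed formal brackets into coefficients and reduces each coefficient to a classical Serre relation in $\fn_\pm$ combined with the $\mu$-equivariance. Surjectivity of $\Phi$ is immediate since $t_1^m\ot\al_{i(m)}^\vee$, $t_1^m\ot e_{i(m)}^\pm$, and $\rk_1$ Lie-generate $\wh\fg[\mu]$.

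The main obstacle is injectivity of $\Phi$, and I expect the cleanest route is to first dispatch the moreover statement about $\wh\fn_\pm[\mu]$ and then assemble the whole presentation by a triangular decomposition argument. For the subalgebra statement, let $\wt L^\pm$ be the abstract Lie algebra on generators $x^\pm_{i,m}$ subject only to $(X\pm)$ and $(DS\pm)$. The Serre–Gabber–Kac presentation of $\fn_\pm$, together with $\mu$-equivariance, gives a presentation of $\CL(\fn_\pm,\mu)$, which one checks repackages into the Fourier-mode relations $(X\pm)$ and $(DS\pm)$; the case analysis mirrors the verification above, so the canonical map $\wt L^\pm\to\wh\fn_\pm[\mu]$ is an isomorphism. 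This is the combinatorial heart of the argument and is precisely the step where the vertex algebra technique mentioned in the abstract enters, providing a faithful action of $\wt L^\pm$ on a suitable vacuum module that rules out any extra relations.

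With the subalgebra statement in hand, let $\wt L^0\subset\wt L$ be the span of the $h_{i,m}$ and $c$, which by (H) is an abelian subalgebra mapping onto the $\wh\mu$-fixed ``Cartan'' piece of $\wh\fg[\mu]$. Using (HX$\pm$) and (XX), one shows $\wt L = \wt L^- + \wt L^0 + \wt L^+$ as a sum of subspaces. Since the target $\wh\fg[\mu]$ admits the analogous triangular decomposition $\wh\fn_-[\mu]\oplus\wh\fh[\mu]\oplus\wh\fn_+[\mu]$, and each summand-level map $\wt L^\pm\to\wh\fn_\pm[\mu]$, $\wt L^0\to\wh\fh[\mu]$ is an isomorphism, the sum in $\wt L$ is forced to be direct and $\Phi$ is an isomorphism. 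This simultaneously proves the main statement and the moreover statement for $\wh\fn_\pm[\mu]$.
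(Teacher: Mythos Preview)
Your overall architecture—verify well-definedness and surjectivity of $\Phi$, prove the $\wh\fn_\pm[\mu]$ statement first, then bootstrap to $\wh\fg[\mu]$ via the triangular decomposition $\wt L=\wt L^-+\wt L^0+\wt L^+$—is logically coherent, and the triangular decomposition step itself goes through. But the heart of the matter, the isomorphism $\wt L^\pm\cong\wh\fn_\pm[\mu]$, is a genuine gap in your proposal rather than a sketch.

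The sentence ``the Serre--Gabber--Kac presentation of $\fn_\pm$, together with $\mu$-equivariance, gives a presentation of $\CL(\fn_\pm,\mu)$, which one checks repackages into the Fourier-mode relations $(X\pm)$ and $(DS\pm)$'' is not a step one can verify by a case analysis mirroring the well-definedness check. Well-definedness only shows the relations \emph{hold} in $\wh\fn_\pm[\mu]$; the hard direction is that they are \emph{complete}, and there is no direct passage from Serre relations in $\fn_\pm$ to the specific polynomial prefactors in $(DS\pm)$. Your subsequent remark that the vertex algebra technique ``provid[es] a faithful action of $\wt L^\pm$ on a suitable vacuum module'' is also not the mechanism and does not pin down what has to be proved.

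The paper's route differs from yours in two ways. First, it does not reduce the $\fg$ case to the $\fn_\pm$ cases by triangular decomposition; it treats all three $\fm\in\{\fg,\fn_+,\fn_-\}$ uniformly, showing (Theorem~\ref{intromain2}) that \emph{every} restricted $\mathcal D_{\bm P}(\fm,\mu)$-module carries a compatible $\wh\fm[\mu]$-structure, whence injectivity by Lemma~\ref{lem:injective}. Second, and this is the idea your proposal is missing, the vertex-algebra argument works by reducing the twisted problem to the untwisted one: given a restricted module $W$, the locality $(X\pm)$ makes $\{x_i^\pm(z)\}$ a $\Gamma$-local subset of $\CE(W)$, generating a $\Gamma$-vertex algebra $\langle U_{\fm,W}\rangle_\Gamma$. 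The key technical point (Corollary~\ref{keycor}) is that the twisted Serre relation $(DS\pm)$, precisely because its polynomial coefficients do not vanish on the diagonal (this is condition $(\bm P2)$), forces the \emph{untwisted} Serre relation among the operators $\CY(x_i^\pm(z),w)$. One then invokes the untwisted presentation (Proposition~\ref{reducedtoun}) to put a $\wh\fm$-module structure on $\langle U_{\fm,W}\rangle_\Gamma$, and the $\Gamma$-conformal Lie algebra formalism (Proposition~\ref{universalva}) converts this into a $\wh\fm[\mu]$-action on $W$. Theorem~\ref{intro1} then drops out as the special case where the family $\bm P$ is the one displayed in $(DS\pm)$, which one checks satisfies $(\bm P1)$ and $(\bm P2)$.
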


The current algebra presentation of  $\wh\fg[\mu]$ given in Theorem \ref{intro1}
is the classical limit of  the Drinfeld's new realization for (twisted and untwisted) quantum affine algebras (\cite{Dr-new}),
which plays a key role in understanding the  isomorphism between
Drinfeld-Jimbo's definition and Drinfeld's new realization for quantum affine algebras (\cite{beck,Da1,Da2,JZ}).
When $\mu=\mathrm{Id}$, a version of Theorem \ref{intro1} was given in \cite{Gar-loop-alg}.
For the general case, a proof of Theorem \ref{intro1} was given in \cite{Da2}.

Motivated by the Serre relations ($DS\pm$) given in Theorem \ref{intro1}, we are interested in
those Drinfeld type Serre relations  in $\wh\fg[\mu]$
which have the form
\begin{align*}
(\bm{P}1)\ \sum_{\sigma\in S_{1-a_{ij}}}P_{ij,\sigma}(z_1,\cdots,z_{1-a_{ij}},w)\cdot [e_i^\pm(z_{\sigma(1)}),\cdots, [e_i^\pm(z_{\sigma(1-a_{ij})}), e_j^\pm(w)]]=0,
 \end{align*}
 where $(i,j)\in \mathbb{I}=\{(i,j)\in I\times I\mid a_{ij}<0\},$  $e_i^\pm(z)=\sum_{m\in \Z} t_1^m\ot e_{i(m)}^\pm z^{-m-1}$
 and $P^\pm_{ij,\sigma}(z_1,\cdots,z_{1-a_{ij}},w)$
  are some homogenous polynomials.
Starting with any such Serre relations in $\wh\fg[\mu]$, we introduce the following definition.

  \begin{dfnt} \label{defDPgmu}Let
\begin{align}\label{P}\bm{P}=\{P_{ij,\sigma}(z_1,\cdots,z_{1-a_{ij}},w)\mid (i,j)\in \mathbb{I},\  \sigma\in S_{1-a_{ij}}\}\end{align}
be a family of homogenous  polynomials which satisfies the condition ($\bm{P}$1).
We define $\mathcal D_{\bm{P}}(\fg,\mu)$ to be the Lie algebra generated by the elements as in \eqref{generators}
and subject to the relations $(H), (HX\pm), (XX), (X\pm)$ as in Theorem \ref{intro1}
together with the following Serre relations
$((i,j)\in \mathbb I)$
 \begin{align*}
(AS\pm)&\quad (z_1^N-z_2^N)\cdot [x_i^\pm(z_1),[x_i^\pm(z_2),x_j^\pm(w)]]=0,\quad\te{if $\fg$ is of type $A_1^{(1)}$},\\
(DS\pm)_{\bm{P}}&\ \sum_{\sigma\in S_{1-a_{ij}}}P_{ij,\sigma}(z_1,\cdots,z_{1-a_{ij}},w)\, [x_i^\pm(z_{\sigma(1)}),\cdots, [x_i^\pm(x_{\sigma(1-a_{ij})}), x_j^\pm(w)]]=0,
 \end{align*}
where the polynomial $f_{ij}(z,w)$ in $(X\pm)$ is now given by
\begin{equation*}
f_{ij}(z,w)=\begin{cases}\prod\limits_{k\in \Z_N;a_{i\mu^k(j)\ne 0}}(z-\xi^k w),\ &\te{if $A$ is not of type $A_1^{(1)}$};\\
(z-w)\cdot \prod\limits_{k\in \Z_N;a_{i\mu^k(j)< 0}}(z-\xi^k w)^2,\ &\te{if $A$ is  of type $A_1^{(1)}$}.
\end{cases}
\end{equation*}

 Similarly, we define $\mathcal D_{\bm{P}}(\fn_+,\mu)$ (resp.\,$\mathcal D_{\bm{P}}(\fn_-,\mu)$)
 to be the Lie algebra generated by the elements $x_{i,m}^+$ (resp.\,$x_{i,m}^-$) for $i\in I, m\in \Z$
and subject to the relations  $(X+)$, $(AS+)$, $(DS+)_{\bm{P}}$ (resp.\,$(X-)$, $(AS-)$, $(DS-)_{\bm{P}}$).
\end{dfnt}

  By definition, for $\fm=\fg$ or $\fn_+, \fn_-$, the assignment \eqref{thetafg} or \eqref{thetafn}
determines a surjective Lie homomorphism, say $\theta_{\fm,\bm{P}}:\mathcal D_{\bm{P}}(\fm,\mu)\rightarrow \wh\fm[\mu]$ (see Lemma \ref{thetahom}).

\begin{dfnt}
We say that the Lie algebra $\mathcal D_{\bm{P}}(\fg,\mu)$ is a Drinfeld type presentation of $\wh\fg[\mu]$ if
for every $\fm=\fg,\fn_+$ or $\fn_-$, the Lie homomorphism $\theta_{\fm,\bm{P}}$
is an isomorphism.
\end{dfnt}

In view of Theorem \ref{intro1}, it is a natural question that for which suitable  $\bm{P}$
  the Lie algebra $\mathcal D_{\bm{P}}(\fg,\mu)$ is a Drinfeld type presentation of $\wh\fg[\mu]$ (especially when $\fg$ is of affine type)?
In this paper we give a surprising  answer to this   question
by pointing out that  the family $\bm{P}$ only need to satisfy the following natural condition
\begin{align*}
(\bm{P}2)\quad \sum_{\sigma\in S_{1-a_{ij}}}P_{ij,\sigma}(w,\cdots,w,w)\ne 0,\quad \forall\ (i,j)\in \mathbb I.\qquad \end{align*}
Indeed, as the main result of our paper, we prove that
\begin{thm}\label{intromain1} Assume that the family $\bm{P}$ satisfies the condition ($\bm{P}2$).
Then the Lie algebra $\mathcal D_{\bm{P}}(\fg,\mu)$ is a Drinfeld type presentation of $\wh\fg[\mu]$.
\end{thm}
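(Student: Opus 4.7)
The surjectivity of $\theta_{\fm,\bm P}$ for $\fm\in\{\fg,\fn_+,\fn_-\}$ is already contained in Lemma \ref{thetahom}, so the plan is to establish injectivity. The strategy I would take is to fix a ``canonical'' family $\bm{P}^0$ for which a Drinfeld-type presentation of $\wh\fg[\mu]$ is already known---Theorem \ref{intro1} in the finite type case, and a presentation of the MRY type (presumably established elsewhere in the paper, as the abstract suggests) in the affine type case---and then to prove that the ideal in the free Lie algebra on the generators cut out by $(DS\pm)_{\bm P}$ coincides, modulo the remaining relations $(X\pm),(HX\pm),(H),(XX),(AS\pm)$, with that cut out by $(DS\pm)_{\bm P^0}$, provided only that $\bm P$ satisfies $(\bm P 2)$.

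The reduction from arbitrary $\bm P$ to $\bm P^0$ is where the vertex algebra machinery advertised in the abstract does its work. First I would use $(X\pm)$ together with the quadratic relations $(H),(HX\pm),(XX),(AS\pm)$ to realize the generating fields $h_i(z),x_i^\pm(z)$ as mutually local fields (twisted by the $\mathbb Z_N$-action $w\mapsto\xi^k w$) on a universal vacuum module for $\mathcal D_{\bm P}(\fg,\mu)$; these organize into a $\Gamma$-vertex algebra. In this setting, for each $(i,j)\in\mathbb I$, the tensor
\[ F_{ij}^\pm(z_1,\ldots,z_{1-a_{ij}},w)\;=\;\sum_{\sigma\in S_{1-a_{ij}}}P_{ij,\sigma}(\mathbf z,w)\bigl[x_i^\pm(z_{\sigma(1)}),\ldots,[x_i^\pm(z_{\sigma(1-a_{ij})}),x_j^\pm(w)]\bigr] \]
is a formal distribution whose support is confined, by $(X\pm)$, to the union of diagonals $z_r=\xi^{k_r}w$. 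A vertex-algebra computation (normal-ordered product analysis at the diagonal) would then express the ``full-diagonal part'' of $F_{ij}^\pm$ as the scalar $\sum_\sigma P_{ij,\sigma}(w,\ldots,w,w)$ times the universal Drinfeld--Serre vector that appears in $(DS\pm)_{\bm P^0}$. Condition $(\bm P 2)$ says precisely that this scalar is nonzero, so $(DS\pm)_{\bm P}$ and $(DS\pm)_{\bm P^0}$ generate the same ideal.

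With the reduction in hand, injectivity of $\theta_{\fg,\bm P}$ follows from the known presentation for the $\bm P^0$-case. For the nilpotent halves $\fn_\pm$, the same reduction applies because the defining relations of $\mathcal D_{\bm P}(\fn_\pm,\mu)$ split cleanly into the $+$ or $-$ type and no mixing with $(H),(HX\pm),(XX)$ is required at any step; one then runs the argument using the analogous known presentation of $\wh{\fn}_\pm[\mu]$ coming from the $\bm P^0$-case.

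The principal obstacle, and where the bulk of the technical work will lie, is showing that the diagonal evaluation $P\mapsto\sum_\sigma P(w,\ldots,w,w)$ faithfully detects $(DS\pm)_{\bm P}$ modulo the locality relations. Specifically, one must control the off-diagonal ``tail'' of $F_{ij}^\pm$: contributions from subdiagonals $z_r=\xi^{k_r}w$ with $k_r\neq 0$, or from partial collisions $z_r=z_s$, might a priori impose additional constraints not visible in the full-diagonal value. Ruling this out requires a careful twisted-OPE analysis of $x_i^\pm(z)$ with itself and with $x_j^\pm(w)$ in the $\Gamma$-vertex algebra of Step~1, showing that these off-diagonal pieces already vanish as formal consequences of $(X\pm)$ alone. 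This is the step that truly needs the mild hypothesis $(\bm P 2)$ rather than a stronger genericity assumption on $\bm P$, and it is the conceptual reason why a nonvanishing condition at a single point suffices to pin down the entire Serre ideal.
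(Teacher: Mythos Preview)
Your diagonal-evaluation idea is the right instinct---it is essentially the content of Corollary~\ref{keycor} (Lemma~\ref{key})---but the reduction you propose is structurally different from the paper's, and has a real gap. You want to reduce an arbitrary $\bm P$ to a fixed \emph{twisted} base case $\bm P^0$ for which $\mathcal D_{\bm P^0}(\fm,\mu)\cong\wh\fm[\mu]$ is already known. For $\fm=\fg$ such a base case exists (Theorem~\ref{intro1} in the finite case, \cite{CJKT-uce} in the affine non-transitive case), but for $\fm=\fn_\pm$ with $\mu\neq\mathrm{Id}$ and $\fg$ affine there is none: as remarked after \eqref{eq:DJ-Serre-rel-intro}, the presentation of \cite{CJKT-uce} does \emph{not} descend to $\wh\fn_\pm[\mu]$, and no substitute $\bm P^0$ was available before this paper. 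So your step ``the analogous known presentation of $\wh\fn_\pm[\mu]$ coming from the $\bm P^0$-case'' is circular precisely where the theorem is new.

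The paper avoids this by reducing not to a twisted $\bm P^0$ but to the \emph{untwisted} case $\mu=\mathrm{Id}$. Concretely: for any restricted $\mathcal D_{\bm P}(\fm,\mu)$-module $W$, the fields $x_i^\pm(z)$ generate a $\Gamma$-vertex algebra $\langle U_{\fm,W}\rangle_\Gamma$ (Proposition~\ref{gammalocasetva}). The key computation (Corollary~\ref{keycor}, where $(\bm P2)$ enters) shows that the vertex operators $\mathcal Y(x_i^\pm(z),w)$ satisfy the \emph{untwisted} Serre relations $[\mathcal Y(x_i^\pm,w_1),\ldots,[\mathcal Y(x_i^\pm,w_{1-a_{ij}}),\mathcal Y(x_j^\pm,w)]]=0$---the passage from $x_i^\pm$ to $\mathcal Y(x_i^\pm,\cdot)$ kills the off-diagonal contributions automatically by \eqref{commutatory1}. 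Hence $\langle U_{\fm,W}\rangle_\Gamma$ carries a $\wh\fm$-module structure by Proposition~\ref{reducedtoun} (the $\mu=\mathrm{Id}$ base case, which \emph{is} available). One then builds a universal $\Gamma$-vertex algebra $V(\wh\fm)$ via $\Gamma$-conformal Lie algebra theory (Proposition~\ref{universalva}), gets a $\Gamma$-vertex algebra map $V(\wh\fm)\to\langle U_{\fm,W}\rangle_\Gamma$, and pulls back to make $W$ a quasi-$V(\wh\fm)$-module, hence a restricted $\wh\fm[\mu]$-module. This proves Theorem~\ref{intromain2}, and injectivity of $\theta_{\fm,\bm P}$ follows from the faithfulness Lemma~\ref{lem:injective} applied to all restricted modules at once, rather than from an ideal comparison on a single vacuum module.
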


We say that a module $W$ of a $\Z$-graded Lie algebra  $\mathfrak p=\oplus_{n\in \Z}\mathfrak p_n$
is  {\em restricted} if for every $w\in W$, there exists a
 positive integer $s$ such that $\mathfrak p_{n}.w=0$ for all $n\ge s$.
Note that there is a natural $\Z$-grading structure on the algebra $\mathcal D_{\bm{P}}(\fg,\mu)$
(resp.\,$\mathcal D_{\bm{P}}(\fn_+,\mu)$ or $\mathcal D_{\bm{P}}(\fn_-,\mu)$) with ($i\in I, m\in \Z$)
\begin{align}
\deg(x_{i,m}^\pm)=m=\deg(h_{i,m}),\ (\te{resp.}\,\deg(x_{i,m}^+)=m\ \te{or}\
\deg(x_{i,m}^-)=m).
\end{align}
By a standard result on restricted modules of $\Z$-graded Lie algebras (see Lemma \ref{lem:injective}),
the proof of Theorem \ref{intromain1} can be reduced to the following theorem.
\begin{thm} \label{intromain2}  Assume that the family $\bm{P}$ satisfies the condition ($\bm{P}2$).
Then for any restricted $\mathcal D_{\bm{P}}(\fm,\mu)$-module $W$, where $\fm=\fg,\fn_+$ or $\fn_-$,
there exists an  $\wh\fm[\mu]$-module structure on $W$
 such that
\begin{align} x.w=\theta_{\fm,\bm{P}}(x).w,\quad x\in \mathcal D_{\bm{P}}(\fm,\mu),\ w\in W.
\end{align}
\end{thm}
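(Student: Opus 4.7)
The plan is to use the $\Gamma$-vertex algebra machinery (with $\Gamma=\langle\mu\rangle$) to construct a vertex algebra attached to the $\mathcal D_{\bm{P}}(\fm,\mu)$-module $W$ and then show that $W$ is a quasi-module (or twisted module) from which the desired $\wh\fm[\mu]$-action can be read off. Concretely, starting with a restricted module $W$, I would package the generators into the generating series
\begin{equation*}
h_i(z)=\sum_{m\in \Z}h_{i,m}z^{-m-1},\qquad x_i^{\pm}(z)=\sum_{m\in \Z}x_{i,m}^{\pm}z^{-m-1},\qquad i\in I,
\end{equation*}
viewed as elements of $(\mathrm{End}\,W)[[z,z^{-1}]]$. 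The restricted hypothesis ensures these are bona fide fields on $W$. The first relations in $(H)$ and $(X\pm)$ translate into the $\Gamma$-equivariance $h_i(\xi z)=h_{\mu(i)}(z)$ and $x_i^\pm(\xi z)=x_{\mu(i)}^\pm(z)$, while $(HX\pm)$, $(XX)$ and the second relation in $(X\pm)$ (involving $f_{ij}(z,w)$) encode $\Gamma$-locality in the sense of Li, at least on the generators $x_i^\pm$.

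Second, I would apply Li's construction of the $\Gamma$-local system generated by these fields to produce a $\Gamma$-vertex algebra $V_W$ acting on $W$ as a quasi/$\Gamma$-equivariant module. The non-Serre relations of $\mathcal D_{\bm{P}}(\fm,\mu)$ become vertex algebra identities in $V_W$ automatically from the commutator formulas; these already cut out the ``non-Serre part'' of the universal enveloping vertex algebra of $\wh\fg$.

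Third, and this is the crux, I would argue that the family of relations $(DS\pm)_{\bm{P}}$ together with condition $(\bm{P}2)$ implies \emph{inside $V_W$} the canonical Drinfeld Serre relations that are needed to cut down to $\wh\fg[\mu]$. The mechanism is the standard ``specialize and divide'' trick for formal series: the homogeneity of the $P_{ij,\sigma}$ lets one substitute $z_k\mapsto w$ and obtain an identity whose leading contribution is a nonzero scalar multiple (by $(\bm P2)$) of the normal-ordered product
\begin{equation*}
:x_i^\pm(w)\cdots x_i^\pm(w)x_j^\pm(w):
\end{equation*}
or its appropriate $\Gamma$-averaged analogue. Iterated commutators in a vertex algebra vanish whenever any single such specialized product vanishes, because locality lets one factor out the Jacobi-identity contributions that are already killed by $(X\pm)$ and $(AS\pm)$. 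In this way the weak Serre relations $(DS\pm)_{\bm P}$ upgrade to the full Drinfeld Serre relations appearing in Theorem \ref{intro1} (and to their affine-type analogues established elsewhere in the paper), after which the action of $\mathcal D_{\bm{P}}(\fm,\mu)$ manifestly factors through $\wh\fm[\mu]$ via $\theta_{\fm,\bm{P}}$.

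The main obstacle is exactly that third step: isolating the ``correct'' Drinfeld Serre relation from the one-parameter family $(DS\pm)_{\bm{P}}$ using only the nondegeneracy $(\bm{P}2)$, particularly in the affine case where the Serre relations are more delicate and the $\mu$-twisting intertwines with the vertex-algebra singularities. Once this extraction is complete, reducing from $\fg$ to $\fn_\pm$ is standard: one restricts to the subalgebra generated by the $x_i^\pm$ fields, whose vertex-subalgebra closure in $V_W$ is governed only by $(X\pm)$, $(AS\pm)$, $(DS\pm)_{\bm P}$, and so the same argument yields the $\wh\fn_\pm[\mu]$-module structure.
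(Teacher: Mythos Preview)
Your overall framework matches the paper's: both use Li's $\Gamma$-vertex algebra theory with $\Gamma=\langle\mu\rangle$, build the $\Gamma$-local system generated by the fields $x_i^\pm(z)$ on $W$, and exploit the resulting $\Gamma$-vertex algebra $\langle U_{\fm,W}\rangle_\Gamma$ with $W$ as a quasi-module. The divergence is in your third step, and it is a real gap rather than a stylistic choice.

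You describe the goal as upgrading $(DS\pm)_{\bm P}$ to the twisted Drinfeld Serre relations of Theorem~\ref{intro1} (and ``their affine-type analogues'') \emph{on $W$}, so that the $\mathcal D_{\bm P}(\fm,\mu)$-action factors through $\wh\fm[\mu]$. But that would require already knowing a presentation of $\wh\fm[\mu]$ in the affine case, which is precisely Theorem~\ref{intromain1} --- the statement to be \emph{deduced} from Theorem~\ref{intromain2}. The argument as you sketch it is circular.

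The paper avoids this by routing through the \emph{untwisted} algebra. The key observation (Lemma~\ref{key} and Corollary~\ref{keycor}) is that passing from the fields $x_i^\pm(z)$ on $W$ to the vertex operators $\CY(x_i^\pm(z),w)$ acting on $\langle U_{\fm,W}\rangle_\Gamma$ \emph{untwists} the relations: the $\Gamma$-locality $f_{ij}(z,w)[x_i^\pm(z),x_j^\pm(w)]=0$ becomes ordinary locality among the $\CY$-operators, and condition $(\bm P2)$ is exactly what allows one to strip the polynomials $P_{ij,\sigma}$ and deduce the \emph{untwisted} Serre relation
\[
[\CY(x_i^\pm(z),w_1),\ldots,[\CY(x_i^\pm(z),w_{1-a_{ij}}),\CY(x_j^\pm(z),w)]]=0.
\]
Thus $\langle U_{\fm,W}\rangle_\Gamma$ acquires a $\wh\fm$-module structure (not $\wh\fm[\mu]$!), via the already-established untwisted presentation $\mathcal D_{\bm p}(\fm,1)\cong\wh\fm$ (Proposition~\ref{reducedtoun}, essentially the MRY result in the affine case). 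One then constructs a universal $\Gamma$-vertex algebra $V(\wh\fm)$ from a $\Gamma$-conformal Lie algebra (Proposition~\ref{universalva}), obtains a $\Gamma$-vertex algebra homomorphism $V(\wh\fm)\to\langle U_{\fm,W}\rangle_\Gamma$, and invokes the general equivalence between quasi-$V(\wh\fm)$-modules and restricted $\wh\fm[\mu]$-modules. The twist re-enters only at this last step, through the quasi-module formalism --- not by producing twisted Serre relations on $W$ directly. Your ``specialize and divide'' picture is a reasonable heuristic for why $(\bm P2)$ matters, but the object on which the simplified Serre relation actually lives is the vertex algebra $\langle U_{\fm,W}\rangle_\Gamma$, and the relation it yields there is the untwisted one.
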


The main body of this paper is devoted to a proof of Theorem \ref{intromain2}, which is based on
the theory of $\Gamma$-vertex algebras and their quasi-modules developed by Li in \cite{Li-new-construction,Li-Gamma-quasi-mod}.
In \S\,3.3, we construct a family $\bm{p}$ of polynomials which satisfies the conditions
($\bm{P}1$) and $(\bm{P}2)$.
When $\fg$ is of finite type, the  relations in $(DS\pm)_{\bm{p}}$ are the same as that in $(DS\pm)$   except
the last one, which is much simpler:
\[[x^\pm_i(z_1),[x^\pm_i(z_2),x^\pm_j(w)]]=0,\quad \te{if}\quad a_{ij}=-1,\, j=\mu(i).\]
In particular, this shows that  Theorem \ref{intro1} is just a special case of Theorem \ref{intromain1}.
 When $\fg$ is of untwisted affine type and $\mu=\mathrm{Id}$, the presentation $\mathcal D_{\bm{p}}(\fg,\mu)$ of the toroidal Lie algebra $\wh\fg$
 was first introduced by
  Moody-Rao-Yokonuma  in \cite{MRY} and was often called  the MRY presentation of $\wh\fg$.
  When $\fg$ is not of type $A_1^{(1)}$ and $\mu=\mathrm{Id}$, the presentation $\mathcal D_{\bm{p}}(\fg,\fn_+)$ for $\wh\fn_+$ was first proved in
\cite{E-PBW-qaff} for the purpose of understanding the classical limit of quantum current algebras associated to $\fg$.

When $A$ is of affine type and $\mu$ is non-transitive, it was known (\cite{FSS}) that the $\mu$-folded matrix
$\check{A}=(\check{a}_{ij})$ associated to $A$ is also an affine generalized Cartan matrix.
In this case,  we  constructed in \cite{CJKT-uce} a current type presentation
for $\wh\fg[\mu]$ with a different  Serre relation:
\begin{align}\label{eq:DJ-Serre-rel-intro}
[x_i^\pm(z_1),\cdots,[x_i^\pm(z_{1-\check{a}_{ij}}),x_j^\pm(w)]]=0,\quad \te{if}\ \check{a}_{ij}<0.
\end{align}
However, as pointed out in \cite{Da2}, when $\mu$ is non-trivial, one can not obtain a presentation for $\wh{\fn}_\pm[\mu]$
 by replacing the  Serre relation $(DS\pm)_{\bm{P}}$  with
\eqref{eq:DJ-Serre-rel-intro}.

\subsection{The main motivation}
The main motivation of this paper stems from the quantization theory of extended affine Lie algebras (EALAs for short).
The notion of EALAs  was first introduced by H{\o}egh-Krohn and Torresani (\cite{H-KT}), and
the theory of EALAs has been intensively studied for over twenty-five years
 (see \cite{AABGP,BGK,N2}
and the references therein).
 An EALA $E$ is by definition a complex Lie algebra, together with a Cartan subalgebra and a non-degenerate invariant symmetric bilinear form,
that satisfies
a list of natural axioms.
The invariant form on  $E$ induces a semi-positive bilinear form on the $\R$-span of the root system $\Phi$ of $E$,
and so
 $\Phi$ divides into a disjoint union of the sets of isotropic and non-isotropic roots.
 Roughly speaking, the structure of an EALA is determined by its core,
   the subalgebra  generated by non-isotropic root vectors.

 One of the axioms for EALAs requires that the rank of the
group generated by the isotropic roots is finite, and this rank is called the nullity of EALAs.
The nullity $0$ EALAs are nothing but the finite dimensional simple Lie algebras,
the nullity $1$  EALAs are precisely the
 affine Kac-Moody algebras  \cite{ABGP}, and
the nullity $2$ EALAs are
closely related to the Lie algebras studied by Saito and Slodowy in the work of singularity theory.
Recently, a complete classification of the
centerless cores (the core modulo its center) of nullity $2$ EALAs was obtained in \cite{ABP} (see also \cite{GP-torsors}).
Let $\wh{E}_n$ denote
 the class of Lie algebras which are isomorphic to the core of some EALA with nullity $n$ and are central closed.
For convenience of the readers, we ``describe" in the following Figure the relations between  EALAs and the algebras
concerned about in this paper:
\begin{align*}&\xymatrix{*+[F]{\wh{E}_0}}=\xymatrix{*+[F]{\te{finite type}\ \fg}}\, ,\quad
  \xymatrix{*+[F]{\wh{E}_1}}= \xymatrix{*+[F]{\te{affine type}\ \fg}}=\xymatrix{*+[F]{\wh\fg[\mu],\ \fg:\te{\,finite type}}}\, ,\\
 &\xymatrix{*+[F]{\wh{E}_2}}=\xymatrix{*+[F]{\wh\fg[\mu],\ \fg: \te{affine},\
                \mu: \te{non-transitive}}}+\xymatrix{*+[F]{\wh\ssl_\ell(\C_p),\ \ell\ge 2,\ p: \te{generic}}}\, .
  \end{align*}
Here, the notation $\wh\ssl_\ell(\C_p)$ stands for the universal central extension of the special linear Lie algebra over
the quantum torus associated to $p\in \C^\times$ (\cite{BGK}).

Let $\U_{\hbar}(\fg)$ be the quantum Kac-Moody algebra over $\C[[\hbar]]$ associated to $\fg$, whose theory
 has been a tremendous success story.
Similar to the classical case, quantum affine algebras also  have two realizations: Drinfeld-Jimbo's original
realization and Drinfeld's new realization as  quantum affinizations  of  quantum finite algebras.
By applying Drinfeld's untwisted quantum affinization process to a quantum affine algebra $\U_{\hbar}(\fg)$, one
obtains a quantum toroidal algebra $\U_{\hbar}(\wh\fg)$ (\cite{GKV,J-KM,Naka-quiver,He-representation-coprod-proof}).
In the particular case  $\fg=\wh\ssl_{\ell+1}$, an additional parameter $p$ can be added in this quantum affinization
process \cite{GKV} and then one gets a two parameter deformed algebra $\mathcal U_{\hbar,p}(\dwidehat{{\mathfrak{sl}}}_{\ell+1})$, which
is also called a quantum toroidal algebra.
The theory of quantum toroidal algebras has been intensively studied since their discovery.
In particular,  the representation theory of quantum toroidal algebras is very rich and promising (see \cite{He-total} for a survey).
Let $\U(\wh{E}_n)$ denote the class of  universal enveloping algebras of the Lie algebras in $\wh{E}_n$.
It is important for us to observe that
 all the quantum algebras mentioned above are related to EALAs by taking the classical limits:
\begin{align*}\xymatrix{*+[F]{\te{Quan. Fin. Alg.} }\ar[rrrrrr]_-{\hbar\mapsto 0}^-{\te{Serre presentation}}&&&&&&*+[F]{\U(\wh{E}_0)}\\
   &*+[F]{\te{Drinfeld-Jimbo's Def.}}\ar `r[rrrrr]_-{\hbar\mapsto 0}^-{\te{Gabber-Kac presentation}} [rrrrrd]&&&&&
   \\
 *+[F]{\te{Quan. Aff. Alg.}} \ar[ur] \ar[dr]\ar@{-->}[rrrrrr]_-{\qquad\qquad\qquad\quad
 \hbar\mapsto 0}    & & &&&&  *+[F]{\U(\wh{E}_1)}    \\
   &*+[F]{\te{Drinfeld's Def.}} \ar `r[rrrrru]_-{\hbar\mapsto 0}^-{\te{Drinfeld presentation}}  [rrrrru]& &&&&}
\end{align*}
\begin{align*}
\xymatrix{
   &*+[F]{\U_{\hbar}(\wh\fg), \fg\, \te{untwisted aff.}} \ar `r[rrrrr]_-{\hbar\mapsto 0}^-{{\te{MRY presentation of}\ \wh\fg}\te{ (\cite{GKV})}} [rrrrrd] & &&&&
     \\
 *+[F]{\te{Quan. Tor. Alg.}} \ar[ur] \ar[dr]\ar@{-->}[rrrrrr]_-{\qquad\qquad\qquad\qquad\,\, \hbar\mapsto 0}  &&&&   & & *+[F]{\U(\wh{E}_2)}      \\
   &*+[F]{\U_{\hbar,p}(\dwidehat{\mathfrak{sl}}_\ell)} \ar `r[rrrrru]^-{{\te{MRY presentation of}\ \wh\ssl_\ell(\C_p)}\te{ (\cite{VV-double-loop})}}_-{\quad \hbar\mapsto 0} [rrrrru]&&&&&       }
\end{align*}

One of the most fundamental  problem in the theory of EALAs is that: just like the quantum finite, affine and toroidal algebras,
what is the ``right" $\hbar$-deformation of
the algebras in $\wh{E}_n$ for the general nullity $n$?
The theory of quantum toroidal algebras suggests that the nullity $2$ case is of particular interesting.
In this case, due to the classification result obtained in \cite{ABP}, it suffices to consider
 the $\hbar$-deformation of  $\wh{\fg}[\mu]$, where $\fg$ is of affine type and $\mu$ is non-transitive.
Motivated by Drinfeld's realization for twisted quantum affine algebras and the definition of quantum toroidal algebras,
this can be achieved via the following two steps:
(i). define the $\mu$-twisted quantum affinization $\U_\hbar(\wh\fg_\mu)$ of the quantum affine algebra $\U_\hbar(\fg)$; (ii).
prove that the classical limit of $\U_\hbar(\wh\fg_\mu)$ is isomorphic to $\U(\wh{\fg}[\mu])$.

The second Figure given above suggests us that the classical limit of $\U_\hbar(\wh\fg_\mu)$ should be
certain Drinfeld type presentations of $\wh\fg[\mu]$ constructed in Theorem \ref{intromain1}.
Indeed,
in \cite{CJKT-twisted-quantum-aff-vr}, by the quantum vertex operators technique,
we construct a new quantum algebra $\U_\hbar(\wh\fg_\mu)$ when $\fg$ is of simply-laced affine type and $\mu$ is non-transitive.
Due to Theorem \ref{intromain1}, we prove in \S\,6 that  the classical limit of
this quantum algebra is  isomorphic to $\U(\wh\fg[\mu])$. The twisted quantum affinization of
non-simply-laced quantum affine algebras will be defined in a forthcoming work, which together with Theorem \ref{intromain1}
 gives a complete answer to the previous fundamental  problem on the quantization theory of nullity $2$ EALAs.


\subsection{The structure of our proof}
Our original goal of this paper is to establish a suitable Drinfeld type presentation $\mathcal D_{\bm{P}}(\fg,\mu)$
for $\wh\fg[\mu]$ when $\fg$ is of affine type.
However,
comparing to the finite type case, there are many new  phenomenons appear
 in the affine type case:
(i) there exist non-trivial diagram automorphisms on non-simply-laced affine Cartan matrices (even in the untwisted case);
(ii) when $\mu$ is transitive, the $\mu$-folded matrix of $A$ is a zero matrix;
(iii) the algebra $\wh\fg[\mu]$ is no longer a Kac-Moody algebra and so does not admit a standard triangular decomposition;
(iv) $\wh\fn_\pm[\mu]$ is not the loop algebra of $\fn_\pm$ associated to $\mu$ but an infinite dimensional central extension of it.
These difficulties make  the method developed in \cite{Da2}  cannot be applied directly to the affine $\fg$
 even in the simplest case that $\bm{P}=\bm{p}$.
In this paper we employ the powerful vertex algebra approach to overcome these troubles.
Moreover, this new method allows us  to construct the
Drinfeld type presentations of $\wh\fg[\mu]$ in a very general setting.

We now outline the structure of our proof for Theorem \ref{intromain2}. Firstly,
we  construct  a ``universal" $\Gamma$-vertex algebra $V(\wh\fm)$
associated to $\wh\fm$ and  prove
 that the category of restricted modules for $\wh\fm[\mu]$ is equivalent to
the category of quasi-modules for $V(\wh\fm)$.
The main tool of our proof is the theory of  $\Gamma$-conformal Lie algebras developed in \cite{GKK-Gamma-conformal-alg,Li-new-construction}.
Next, due to Li's local system theory,   the locality relation ($X\pm$) implies that for any restricted
$\mathcal D_{\bm{P}}(\fm,\mu)$-module $W$,
there is an abstractly defined $\Gamma$-vertex algebra $\<U_{W,\fm}\>_{\Gamma}$
with $W$ as a quasi-module.
By analyzing the structure of this $\Gamma$-vertex algebra, we prove
that there is an $\wh\fm$-module structure on it,
in where the condition ($\bm{P}2$) is used.
Finally, using this fact we establish
a $\Gamma$-vertex homomorphism $\varphi_{\fm,W}$ from $V(\wh\fm)$ to  $\<U_{W,\fm}\>_{\Gamma}$.
Via this homomorphism, $W$ becomes a quasi-$V(\wh\fm)$-module and hence a $\wh\fm[\mu]$-module.
For convenience we ``describe" this process in the following Figure.
\begin{align*}
\xymatrix{
    *+[F]{\te{res. mod. $W$ of }\mathcal D_{\bm{P}}(\fm,\mu)}
    \ar[rrrr]_-{\te{Li's local system theory}}\ar@{-->}[dd]^?
        &&&&
    *+[F]{\te{quasi-mod. $W$ of }\<U_{W,\fm}\>_\Gamma}
    \ar[dd]_{\te{the hom. }\varphi_{\fm,W}}
    \\ \\
    *+[F]{ \te{res. mod. $W$ of }\wh\fm[\mu]} \ar[rrrr]_-{1-1}^-{\Gamma\te{-conformal Lie algebra theory}}
        &&&&
    *+[F]{\te{quasi-mod. }W\,\te{of }V(\wh\fm)}\ar[llll]
}
\end{align*}

According to the motivation explained in \S\,1.2, in this paper it is natural to simply assume that $\fg$ is of affine type.
However, since Theorem \ref{intromain1} is also new  when $\fg$ is of finite type and
virtually no extra work results, we choose to work in both finite and affine types.

This paper is organized as follows. In \S\,2, we
review some basics on Kac-Moody algebras and toroidal Lie algebras.
In \S\,3, we introduce the definition of the algebra $\wh\fg[\mu]$
and establish a class of Drinfeld type Serre relations in $\wh\fg[\mu]$.
As an application of Theorem \ref{intromain1}, the classical limit of the twisted quantum affinization algebra introduced in
\cite{CJKT-twisted-quantum-aff-vr} is determined in \S\,4.
\S\,6 is devoted to the proofs of Theorem \ref{intromain1} and Theorem \ref{intromain2},
and the basics on $\Gamma$-vertex algebras and their quasi-modules that are needed in the proof of Theorem \ref{intromain2} are collected in \S\,5.

All the Lie algebras considered in this paper are over the field $\C$ of complex numbers.
We denote the group of non-zero complex numbers, the set of non-zero integers, the set of positive integers and the set of non-negative integers  by $\C^\times$, $\Z^\times$, $\Z_+$ and $\N$, respectively.
For any $M\in \Z_+$, we set $\xi_M=\te{e}^{2\pi\sqrt{-1}/M}$.
And, for a Lie algebra $\fk$, we will use the notation $\U(\fk)$ to stand for the universal enveloping algebra of $\fk$.

\section{Preliminaries}
In this section, we fix notations and review some basics on Kac-Moody algebras and toroidal Lie algebras.
\subsection{Basics on Kac-Moody algebras}
Here we establish the notations we will use for finite dimensional simple Lie algebras and affine  Kac-Moody algebras.
We will use these materials throughout the
rest of this paper.

 Let $A=(a_{ij})_{i,j\in I}$, $\mu$ and $\fg$ be as in \S\,1.1.
We denote by $\ell$ the rank of $A$ and
 set  $I=\{1,\cdots,\ell\}$ (resp.\,$\{0,1,\cdots,\ell\}$) if $A$ is of finite (resp.\,affine) type.
 As a convention, we will  label the generalized Cartan matrix $A$  using Tables Fin and Aff 1-3 of \cite[Chap 4]{Kac-book}.
If $A$ has label $X_\ell$ (the finite case) or  $X_\wp^{(r)}$ (the affine case) with $\ell,\wp\ge 1$ and $r=1,2,3$,
we say that $\fg$ (or\,$A$) has type $X_\ell$  or  $X_\wp^{(r)}$.
As usual, we say that  $\fg$   is an untwisted (resp.\,twisted) affine Kac-Moody algebra if $A$ has type $X_\wp^{(r)}$ with $r=1$ (resp.\,$r> 1$).

Let $e_i^\pm, \al_i^\vee, i\in I$ be the Chevalley generators of $\fg$ as in \S\,1.1, and
$\fg=\fn_+\oplus \fh\oplus \fn_-$ the standard triangular decomposition of $\fg$, where
$\fh=\oplus_{i\in I}\C\al_i^\vee$ and  $\fn_+$, $\fn_-$ are the subalgebras of $\fg$ defined in \S\,1.1.
Let $\Delta$ be the root system (including $0$) of $\fg$ and $Q$
the root lattice of $\fg$. Then there is a natural $Q$-grading $\fg=\oplus_{\al\in Q}\fg_\al$ on $\fg$ whose support is $\Delta$.
Let $\Pi=\{\al_i, i\in I\}$ be the simple root system of $\fg$ such that $e_i^\pm\in \fg_{\pm\al_i}$ for $i\in I$,
let $\Delta_+$ be the set of positive roots and let $\Delta_-=-\Delta_+$.
Let $\Delta^{\times}$
 be the set of
real  roots in $\Delta$ and let $\Delta^0=\Delta\setminus \Delta^\times$.
Note that $\mu$ induces an automorphism on $Q$ such that $\mu(\al_i)=\al_{\mu(i)}$ for $i\in I$.

Let $D=\te{diag}\{\epsilon_i, i\in I\}$ be a diagonal matrix as in \S\,1.1.
According to \cite[Theorem 2.2, Exercise 2.5]{Kac-book}, there is a unique
invariant
 symmetric bilinear form $\<\cdot,\cdot\>$ on $\fg$ such that
\begin{align}\label{fgbiform}
\<\al_i^\vee,\al_j^\vee\>=\epsilon_j^{-1}a_{ij}\end{align}
 for $i,j\in I$.
 Recall the notion of  normalized invariant form on $\fg$ introduced in \cite[(6.2.1), (6.4.2)]{Kac-book}.
From now on, we will fix the choice of the matrix $D$ such that
if $\fg$ has type $X_{\ell}$ (resp.\,$X_{\wp}^{(r)}$), then $\<\cdot,\cdot\>$ is (resp.\,$r$ times of)
  the normalized invariant form on $\fg$.

\subsection{More on affine Kac-Moody algebras}
In this subsection, we assume that the algebra $\fg$ has affine type $X_\wp^{(r)}$.
Let $\dg$ be a finite dimensional simple Lie algebra of type $X_\wp$, $\dfh$ a Cartan subalgebra of it and
$\dot\nu$ a diagram automorphism of $\dg$ with order $r$.
For each $x\in \dg$ and $m\in \Z$, we set
\begin{align*}
x_{[m]}=r^{-1}\sum_{p\in \Z_r}\xi_r^{-mp}\dot{\nu}^p(x)\quad\te{and}\quad
\dg_{[m]}=\te{Span}_\C\{x_{[m]}\mid x\in \dg\}.
\end{align*}
Then it was shown in \cite[Chap.\,8]{Kac-book} that the affine Kac-Moody algebra $\fg$ can be realized as the Lie algebra
\[\mathrm{Aff}(\dg,\dot\nu)=(\sum_{m\in \Z}t_2^m\ot \dg_{[m]})\oplus \C\rk_2\]
 with the Lie bracket given by
\begin{align*}
[t_2^{m_1}\ot x+a_1\rk_2, t_2^{m_2}\ot y+a_2\rk_2]
=t_2^{m_1+m_2}\ot [x,y]+\<x,y\>\delta_{m_1+m_2,0}m_1\rk_2,
\end{align*}
where $m_1,m_2\in \Z$, $x\in \dg_{[m_1]},y\in \dg_{[m_2]}$, $a_1,a_2\in \C$
and $\<\cdot,\cdot\>$ denotes the normalized invariant form on $\dg$.
We remark that the invariant form $\<\cdot,\cdot\>$ on
   $\fg=\mathrm{Aff}(\dg,\dot{\nu})$  is given by (\cite[(8.3.8)]{Kac-book})
  \begin{align}\label{defaffbi}
  \<t_2^m\ot x, t_2^n\ot y\>=\<x,y\>\,\delta_{m+n,0},\quad \<\rk_2,\mathrm{Aff}(\dg,\dot{\nu})\>=0,
  \end{align}
for $m,n\in \Z$, $x\in \dg_{[m]}$ and $y\in \dg_{[n]}$.

  Let $\dot{\Delta}$ be the root system (containing $0$) of $\dg$ with respect to $\dfh$ and let
   $\dg=\oplus_{\dot\al\in \dot{\Delta}}\dg_{\dot\al}$ be the corresponding root spaces decomposition of $\dg$.
   Note that $\dot\nu$ induces an automorphism of the root lattice $\dot{Q}$ of $\dg$ such that $\dot{\nu}(\dot{\al}_i)=\dot{\al}_{\dot\nu(i)}$ for
$i\in \dot{I}$.
Then the root lattice
\[Q=\dQ_{[0]}\oplus \Z\delta_2,\]
 where $\dQ_{[0]}=\{\dot\al_{[0]}=r^{-1}\sum_{p\in \Z_r}\dot{\nu}^p(\dot\al)\mid \dot\al\in \dot{Q}\}$ and $\delta_2$ denotes the null root.

It was proved in \cite[\S\,2.2]{CJKT-uce} that there exists an automorphism $\dot\mu$ of $\dg$  and a homomorphism $\rho_\mu:\dQ\rightarrow \Z$
of abelian groups
such that
the action of $\mu$ on the real root spaces of $\fg=\mathrm{Aff}(\dg,\dot\nu)$ is as follows
\begin{equation}\begin{split}\label{actmu1}
t_2^m\ot x_{[m]}\mapsto t_2^{m+\rho_\mu(\dot\al)}\ot \dot\mu(x_{[m]}),
\end{split}\end{equation}
where $m\in \Z$, $\dot\al\in \dot{\Delta}\setminus\{0\}$ and $x\in \dg_{\dal}$.
We extend $\rho_\mu$ to a linear functional on $\dfh^*$ by $\C$-linearity and identify $\dfh$ with $\dfh^*$
by means of the normalized form on $\dg$.
Then $\rho_\mu$ can be viewed as a linear functional on $\dfh$, and the  action of $\mu$ on the imaginary root spaces of $\fg$
is as follows
\begin{align}\label{actmu2}
t_2^m\ot h_{[m]}\mapsto t_2^m\ot \dot\mu(h_{[m]})+\delta_{m,0}\, \rho_\mu(h)\, \rk_2,\quad \rk_2\mapsto \rk_2,
\end{align}
where $m\in \Z$ and $h\in \dfh$.

\subsection{Twisted toroidal Lie algebras}
We start with the definition of (twisted) multi-loop algebras.
 Let $\fk$ be a Lie algebra, and let
 $\sigma_1,\cdots,\sigma_s$ be pairwise commuting finite order automorphisms of it.
  Let
$\C[t_1^{\pm 1},\cdots, t_s^{\pm 1}]$
denote the algebra of Laurent polynomials in the commuting variables $t_1,\cdots,t_s$ over $\C$.
Then the multi-loop Lie algebra of $\fk$ related to $\sigma_1,\cdots,\sigma_s$
is by definition the following subalgebra
of $\C[t_1^{\pm 1},\cdots,t_s^{\pm1}]\ot \fk$:
\baa \mathcal L(\mathfrak k,\sigma_1,\cdots,\sigma_s)=\sum_{m_1,\cdots,m_s\in \Z}
t_1^{m_1}\cdots t_s^{m_s}\ot \mathfrak k_{(m_1,\cdots,m_s)},\eaa
where $\mathfrak k_{(m_1,\cdots,m_s)}=\{x\in \mathfrak k\mid \sigma_i(x)=\xi_{M_i}^{m_i}\ x,\ i=1,\cdots,s\}$ and $M_i$
is the order of $\sigma_i$.

Suppose now that $\fk$ is a finite dimensional simple Lie algebra, which is equipped with a
 normalized invariant form $\<\cdot,\cdot\>$. Let
 $\CK_{M_1,\cdots,M_s}$ be the complex vector space spanned by the symbols
\[t_1^{m_1}\cdots t_s^{m_s}\rk_1,\cdots,t_1^{m_1}\cdots t_s^{m_s}\rk_s,\]
 subject to the  relation
\[\sum_{i=1}^s m_i\, t_1^{m_1}\cdots t_s^{m_s}\rk_i=0,\]
where $m_i\in M_i\Z$ for all $i$.
We define the twisted toroidal  Lie algebra
\[\wh{\CL}(\fk,\sigma_1,\cdots,\sigma_s)=\CL(\fk,\sigma_1,\cdots,\sigma_s)\oplus \mathcal K_{M_1,\cdots,M_s},\]
with the Lie bracket given by
\begin{equation}\begin{split}\label{toroidalrel}
 &[t_1^{m_1}\cdots t_s^{m_s}\ot x, t_1^{n_1}\cdots t_s^{n_s}\ot y]\\
 =\quad &t_1^{m_1+n_1}\cdots t_s^{m_s+n_s}\ot [x,y]
+\<x,y\> (\sum_{i=1}^s m_it_1^{m_1+n_1}\cdots t_s^{m_s+n_s}\rk_i),\end{split}\end{equation}
where $x\in \fk_{(m_1,\cdots,m_s)}$, $y\in \fk_{(n_1,\cdots,n_s)}$, $m_1,\cdots, m_s,n_1,\cdots,n_s\in \Z$ and $\CK_{M_1,\cdots,M_s}$ is
the center space. It was proved in \cite{Sun-uce}
that  $\wh{\mathcal L}(\fk,\sigma_1,\cdots,\sigma_s)$ is central closed.

In this paper we will only use the algebra $\wh{\CL}(\fk,\sigma_1,\cdots,\sigma_s)$ for the special case that $s=1$ or $s=2$. Notice that, if $s=1$, $\CK_{M_1}=\C\rk_1$ is one dimensional.
And, if $s=2$, the elements
\begin{align}\label{centerbasis}
t_1^{m_1}t_2^{m_2}\rk_1,\quad \rk_1,\quad t_1^{n_1}\rk_2,\quad m_1, n_1\in M_1\Z,\ m_2\in M_2\Z^\times,\end{align}
form a basis of $\CK_{M_1,M_2}$.

\section{The Lie algebra $\wh\fg[\mu]$ and its Drinfeld type Serre relations}
In this section, we introduce the definition of the Lie algebra $\wh\fg[\mu]$ and establish a class of Drinfeld type Serre relations in $\wh\fg[\mu]$.

\subsection{The Lie algebra $\wh\fg[\mu]$}
From now on,  when $\fg$ is of affine type, we will often identify $\fg$ with $\mathrm{Aff}(\dg,\dot\nu)$ without further explanation.
We set
\begin{equation}\label{defwhfg}
\wh\fg=\begin{cases} \wh\CL(\fg,1),\ &\te{if $\fg$ is of finite type};\\
\wh\CL(\dg,1,\dot\nu),\ &\te{if $\fg$ is of affine type}.
\end{cases}
\end{equation}

As a convention, when $\fg$ is of affine type, we will also view $\CL(\fg)=\C[t_1,t_1^{-1}]\ot \fg$ as a subspace of
$\wh\fg$ in the following way
\begin{equation*}
t_1^{m_1}\ot x= t_1^{m_1}t_2^{m_2}\ot \dot x+at_1^{m_1}\rk_2,
\end{equation*}
for $x=t_2^{m_2}\ot \dot x+a\rk_2\in \fg,\ m_1\in \Z$.
For $m_1,m_2\in \Z$, we also set
\begin{equation*}
t_1^{m_1}t_2^{m_2}\rk_1'
=\begin{cases}0,\ &\te{if $\fg$ is of finite type};\\
0,\ &\te{if $\fg$ is of affine type and $m_2\notin r\Z^\times$};\\
\frac{1}{m_2} t_1^{m_1}t_2^{m_2}\rk_1,\ &\te{if $\fg$ is of affine type and $m_2\in r\Z^\times$}.\end{cases}
\end{equation*}
Then it follows from \eqref{centerbasis} that  the algebra $\wh\fg$ is spanned by the elements
\[t_1^{m_1}\ot x, \quad \rk_1,\quad \ t_1^{n_1}t_2^{n_2}\rk_1',\quad x\in \fg,\ m_1,n_1,n_2\in \Z.\]
Moreover, we have the following result.
\begin{lemt}\label{lem:commutator} Let $\al, \beta\in \Delta$, $x\in \fg_\al, y\in \fg_\beta$ and $m_1,n_1\in \Z$.
If $\al+\beta\in \Delta^\times\cup \{0\}$, then we have
\begin{align}\label{commutator1}
[t_1^{m_1}\ot x, t_1^{n_1}\ot y]=t_1^{m_1+n_1}\ot [x,y]+m_1\delta_{m_1,n_1}\<x,y\>\rk_1.\end{align}
If $\fg$ is of affine type, $x=t_2^{m_2}\ot \dot{x}$, $y=t_2^{n_2}\ot \dot{y}$ and $\al+\beta\in \Delta^0\setminus\{0\}$, then
\begin{align}
[t_1^{m_1}\ot x, t_1^{n_1}\ot y]=t_1^{m_1+n_1}\ot [x,y]
+\<\dot{x},\dot{y}\>(m_1n_2-m_2n_1)t_1^{m_1+n_1}t_2^{m_2+n_2}\rk_1'.
\end{align}
\end{lemt}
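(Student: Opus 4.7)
The key computation is a direct application of the toroidal bracket \eqref{toroidalrel} combined with the single defining relation of the center space. If $\fg$ is of finite type, then $\wh\fg = \CL(\fg) \oplus \C\rk_1$ and \eqref{commutator1} is precisely the $s=1$ case of \eqref{toroidalrel}, once one notes that the defining relation of $\CK_1$ collapses $m_1 t_1^{m_1+n_1} \rk_1$ to $m_1 \delta_{m_1+n_1,0}\rk_1$. The second part of the lemma is vacuous in this case.

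For the affine case $\fg = \mathrm{Aff}(\dg, \dot\nu)$, the plan is to write $x = t_2^{m_2} \otimes \dot x + a_1 \rk_2$ and $y = t_2^{n_2} \otimes \dot y + a_2 \rk_2$ with $\dot x \in \dg_{[m_2]}$, $\dot y \in \dg_{[n_2]}$, and to note that the $a_i \rk_2$ contributions drop out since $\rk_2$ is central in $\wh\fg$. Applying \eqref{toroidalrel} with $s = 2$ to the remaining pieces and, on the other side, expanding $t_1^{m_1+n_1} \otimes [x,y]$ via the Kac-Moody bracket of $\mathrm{Aff}(\dg, \dot\nu)$, both asserted identities reduce to matching coefficients among the two central elements
\[ A := t_1^{m_1+n_1} t_2^{m_2+n_2} \rk_1, \qquad B := t_1^{m_1+n_1} t_2^{m_2+n_2} \rk_2, \]
which satisfy the single relation $(m_1+n_1) A + (m_2+n_2) B = 0$.

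In the first part, the hypothesis splits into two subcases. If $\alpha+\beta \in \Delta^\times$ is a real root, then $\dot\alpha + \dot\beta \ne 0$, so invariance of the normalized form on $\dg$ forces $\langle \dot x, \dot y\rangle = 0$ and, in turn, $\langle x, y\rangle = 0$ via \eqref{defaffbi}; all central contributions disappear. If $\alpha + \beta = 0$, then $m_2 + n_2 = 0$, so $B = t_1^{m_1+n_1} \rk_2$ exactly cancels the $\rk_2$-term coming from $[x,y]$, and the central relation degenerates to $(m_1+n_1) t_1^{m_1+n_1} \rk_1 = 0$; what remains is precisely the expected $\rk_1$-coefficient. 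In the second part, the hypothesis that $\alpha + \beta$ is a nonzero imaginary root gives $\dot\alpha + \dot\beta = 0$ and $m_2 + n_2 \in r\Z^\times$; then $B = -\tfrac{m_1+n_1}{m_2+n_2} A$ from the central relation, and substitution yields
\[ m_1 A + m_2 B = \frac{m_1 n_2 - m_2 n_1}{m_2+n_2}\, A = (m_1 n_2 - m_2 n_1)\, t_1^{m_1+n_1} t_2^{m_2+n_2} \rk_1', \]
by definition of $\rk_1'$, while $\delta_{m_2+n_2,0} = 0$ kills the $\rk_2$-correction inside $[x,y]$.

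The only delicate step is bookkeeping the embedding $\CL(\fg) \hookrightarrow \wh\fg$, which must convert the Kac-Moody $\rk_2$-term arising inside $[x,y]\in\fg$ into the correct element of $\CK_{1,r}$; once the conventions \eqref{fgbiform} and \eqref{defaffbi} are in hand this is a direct identification, and no substantive obstacle is anticipated beyond careful tracking of signs and normalizations.
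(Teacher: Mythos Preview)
Your argument is correct and is precisely the ``direct verification by using \eqref{toroidalrel}'' that the paper indicates, together with the use of \eqref{defaffbi}; you have simply made the bookkeeping of the central terms explicit. One minor remark: in the second part you do not actually need the claim ``$\dot\alpha+\dot\beta=0$'' (which is a bit delicate to state precisely in the twisted case), since your subsequent computation only uses $m_2+n_2\ne 0$ and the relation $(m_1+n_1)A+(m_2+n_2)B=0$.
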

\begin{proof}A direct verification by using \eqref{toroidalrel}. Notice that, when $\fg$ is of affine type,
one needs to use the fact \eqref{defaffbi}.
\end{proof}

Now it is obvious that the map
\begin{align}
\psi:\wh\fg\rightarrow \CL(\fg),\quad t_1^m\ot x\mapsto t_1^{m_1}\ot x,\ \rk_1\mapsto 0,\ t_1^{n_1}t_2^{n_2}\rk_1'\mapsto 0
\end{align}
is the universal central extension of $\CL(\fg)$.
For $x\in \fg$ and $n\in \Z$, we introduce the  formal series  in $\wh\fg[[z,z^{-1}]]$ as follows:
\begin{align}\label{defxz}
x(z)=\sum_{m\in \Z}(t_1^m\ot x)z^{-m-1},\quad t_2^n\rk_1'(z)=\sum_{m\in \Z}(t_1^mt_2^n\rk_1')z^{-m},\quad \rk_1(z)=\rk_1.\end{align}
Using these formal series, one has the following reformulation of Lemma \ref{lem:commutator}.
\begin{lemt}\label{lem:relationwhfg}
Let $\al,\beta\in \Delta$, $x\in \fg_\al$ and $y\in \fg_\beta$. If $\al+\beta\in \Delta^\times\cup \{0\}$, then
\[[x(z),y(w)]=[x,y](w)z^{-1}\delta\(\frac{w}{z}\)+\<x, y\>\rk_1(w)\frac{\partial}{\partial w}z^{-1}\delta\(\frac{w}{z}\)\]
If $\fg$ is of affine type, $x=t_2^{m_2}\ot \dot{x}$, $y=t_2^{n_2}\ot \dot{y}$  and $\al+\beta\in \Delta^0\setminus\{0\}$, then
\begin{equation*}\begin{split}
[x(z), y(w)]=&\([x,y](w)+\<\dot{x},\dot{y}\>m_2\frac{\partial}{\partial w}t_2^{m_2+n_2}\rk_1'(w)\)
z^{-1}\delta\(\frac{w}{z}\)\\&+\<\dot{x},\dot{y}\>(m_2+n_2)t_2^{m_2+n_2}\rk_1'(w)\frac{\partial}{\partial w}z^{-1}\delta\(\frac{w}{z}\),
\end{split}\end{equation*}
where $\delta(z)=\sum_{m\in \Z}z^m$ is the usual $\delta$-function.
\end{lemt}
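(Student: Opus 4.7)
The plan is to derive this lemma as a direct translation of Lemma~\ref{lem:commutator} into the language of generating functions. Concretely, I expand
\[ [x(z),y(w)] = \sum_{m,n\in\Z} [t_1^m\otimes x,\, t_1^n\otimes y]\, z^{-m-1}w^{-n-1}, \]
substitute the explicit bracket formulas from Lemma~\ref{lem:commutator}, regroup the resulting sums by $k := m+n$, and recognize the scalar coefficient series as derivatives of the formal delta function. The only tools needed are the standard identities
\[ \sum_{m+n=k} z^{-m-1}w^{-n-1} = w^{-k-1}\,z^{-1}\delta(w/z), \qquad \sum_{m\in\Z} m\, z^{-m-1}w^{m-1} = \tfrac{\partial}{\partial w}\,z^{-1}\delta(w/z), \]
together with $\tfrac{\partial}{\partial z}z^{-1}\delta(w/z) = -\tfrac{\partial}{\partial w}z^{-1}\delta(w/z)$ and the substitution rule $f(z)\,z^{-1}\delta(w/z) = f(w)\,z^{-1}\delta(w/z)$.

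For the first formula, the non-central part $t_1^{m+n}\otimes[x,y]$ contributes $\sum_k (t_1^k\otimes[x,y])\,w^{-k-1}z^{-1}\delta(w/z) = [x,y](w)\,z^{-1}\delta(w/z)$ by the first identity above, while the central part $m\delta_{m+n,0}\<x,y\>\rk_1$ contributes $\<x,y\>\rk_1 \sum_m m\,z^{-m-1}w^{m-1} = \<x,y\>\rk_1(w)\,\partial_w z^{-1}\delta(w/z)$.

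For the second (imaginary-root) formula, the non-central piece is handled identically; the novelty is the central piece with coefficient $(mn_2 - m_2 n)$. I split it into an $n_2$-part and an $m_2$-part. The $n_2$-part contains $\sum_{m,n} m\,(\text{stuff})\,z^{-m-1}w^{-n-1}$, which after regrouping in $k=m+n$ produces $n_2\,t_2^{m_2+n_2}\rk_1'(w)\,\partial_w z^{-1}\delta(w/z)$, exactly as in the central-extension calculation above. The $m_2$-part contains $\sum_{m,n} n\,(\text{stuff})\,z^{-m-1}w^{-n-1}$, which by the analogous manipulation gives $-m_2\, t_2^{m_2+n_2}\rk_1'(z)\,\partial_z z^{-1}\delta(w/z)$; here I convert $\partial_z$ to $-\partial_w$ via the delta identity and then invoke the substitution rule in the form
\[ f(z)\,\tfrac{\partial}{\partial w}z^{-1}\delta(w/z) = (\partial_w f)(w)\,z^{-1}\delta(w/z) + f(w)\,\tfrac{\partial}{\partial w}z^{-1}\delta(w/z) \]
with $f = t_2^{m_2+n_2}\rk_1'$. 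Adding the two parts produces the $m_2(\partial_w t_2^{m_2+n_2}\rk_1')(w)\,z^{-1}\delta(w/z)$ term inside the first parenthesis and combines the remaining derivative terms into $(m_2+n_2)t_2^{m_2+n_2}\rk_1'(w)\,\partial_w z^{-1}\delta(w/z)$, matching the stated formula.

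There is no real obstacle here — the proof is purely bookkeeping — but the step I expect to require the most care is the $m_2$-part of the imaginary-root case, where converting $\partial_z$ to $\partial_w$ produces two terms via the substitution identity, one of which recombines with the $n_2$-part while the other becomes the $\partial_w t_2^{m_2+n_2}\rk_1'(w)$ contribution. Tracking signs and verifying that $\rk_1'$ depends on the $t_2$-grading (and so transforms correctly under $f(z)\mapsto f(w)$) is the only delicate point.
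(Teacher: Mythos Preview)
Your proposal is correct and is exactly the approach the paper takes: the paper does not give a separate proof but simply introduces the lemma as ``the following reformulation of Lemma~\ref{lem:commutator}'', and your argument is precisely that reformulation carried out in detail via the standard delta-function identities. The careful handling of the $m_2$-part in the imaginary-root case (converting $\partial_z$ to $-\partial_w$ and then applying the substitution rule to produce both the $\partial_w t_2^{m_2+n_2}\rk_1'(w)$ term and the recombination with the $n_2$-part) is correct and is the only nontrivial bookkeeping step.
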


We define a $Q\times \Z$-grading
$\wh\fg=\oplus_{(\al,n)\in Q\times \Z}\ \wh\fg_{\al,n}$
on $\wh\fg$ by letting
\begin{align}\label{defpsi}
t_1^{m_1}\ot x\in \wh\fg_{\al,m_1},\quad \rk_1\in \wh\fg_{0,0},\quad t_1^{n_1}t_2^{n_2}\rk_1'\in \wh\fg_{n_2\delta_2,n_1},
\end{align} for $x\in \fg_\al$, $\al\in \Delta$ and $m_1, n_1, n_2\in \Z$.
This grading induces a natrual triangular decomposition
\begin{align}\label{decwhfg}
\wh\fg=\wh{\fn}_+\oplus \wh\fh\oplus \wh\fn_-\end{align}
of $\wh\fg$, where
$\wh\fh=\oplus_{m\in \Z}\,\wh\fg_{0,m}=\te{Span}_\C\{t_1^m\ot h,\ \rk_1\mid h\in  \fh, m\in \Z\}$ is a Heisenberg algebra
 and $\wh{\fn}_\pm=\oplus_{\al\in \Delta_\pm, m\in \Z}\,\wh\fg_{\al,m}.$
When $\fg$ is of finite type,  $\wh\fn_\pm=\C[t_1,t_1^{-1}]\ot \fn_\pm$ is the loop algebra of $\fn_\pm$.
However, when $\fg$ is of affine type,
\begin{align*}\wh\fn_\pm=\te{Span}_\C\{t_1^m\ot x,\ t_1^{m_1}t_2^{m_2}\rk_1'\mid x\in \fn_\pm, m,m_1\in \Z, \pm m_2\in \Z_+\}\end{align*}
is an (infinite dimensional) central extension of $\C[t_1,t_1^{-1}]\ot \fn_\pm$.

Observe that  the algebra $\wh\fg$ is generated by the following elements
\begin{align}\label{genhatg}
t_1^m\ot e_i^\pm,\quad t_1^m\ot \al_i^\vee,\quad \rk_1,\quad i\in I,\ m\in \Z.\end{align}
As was indicated in \eqref{defhatmu0}, we have the following result.
\begin{lemt}\label{lem:wh-mu-N-period}
The action
\begin{equation}\label{defhatmu}
t_1^m\ot e_i^\pm\mapsto \xi^{-m} t_1^m\ot e_{\mu(i)}^\pm,\quad t_1^m \al_i^\vee\mapsto \xi^{-m}t_1^m \al_{\mu(i)}^\vee,\quad
\rk_1\mapsto \rk_1
\end{equation}
for $i\in I,\ m\in \Z$, defines (uniquely) an automorphism $\wh\mu$ of $\wh\fg$.
 \end{lemt}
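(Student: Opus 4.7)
The plan is to lift the obvious loop automorphism to the universal central extension $\psi: \wh\fg \to \CL(\fg)$. Since the permutation $\mu$ induces an automorphism of $\fg$ via the Serre-Gabber-Kac presentation (see \eqref{defmu}), the formula $\mu^*(t_1^m \otimes x) := \xi^{-m} t_1^m \otimes \mu(x)$ defines a Lie automorphism of $\CL(\fg)$ of order dividing $N$. Because $\wh\fg$ is by construction (see \S\,2.3) the universal central extension of the perfect Lie algebra $\CL(\fg)$, functoriality of universal central extensions yields a unique lift $\wh\mu \in \Aut(\wh\fg)$ satisfying $\psi \circ \wh\mu = \mu^* \circ \psi$. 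Uniqueness of the asserted automorphism then follows since $\wh\fg$ is generated by the elements \eqref{genhatg}.

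Next I would verify that $\wh\mu$ acts on the generators \eqref{genhatg} as in \eqref{defhatmu}. Equipping $\wh\fg$ with the $Q \times \Z$-grading \eqref{defpsi}, the map $\mu^*$ descends from a graded map on $\CL(\fg)$, so the lift $\wh\mu$ is automatically graded. For the real-root generators $t_1^m \otimes e_i^\pm \in \wh\fg_{\pm\al_i, m}$, the corresponding graded component injects into $\CL(\fg)$ under $\psi$, forcing $\wh\mu(t_1^m \otimes e_i^\pm) = \xi^{-m} t_1^m \otimes e_{\mu(i)}^\pm$. A parallel argument handles the Cartan generators $t_1^m \otimes \al_i^\vee$ with $m \ne 0$, noting that in the affine case the toroidal central elements $t_1^{n_1} t_2^{n_2} \rk_1'$ carry nonzero $Q$-weight $n_2 \delta_2$ (since $n_2 \in r\Z^\times$) and so lie in different graded components.

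The main obstacle is pinning down the action on the degree-zero Cartan elements $\al_i^\vee$ and the central element $\rk_1$. Here $\wh\fg_{0,0} = \fh \oplus \C\rk_1$, so initially one only knows $\wh\mu(\al_i^\vee) = \al_{\mu(i)}^\vee + c_i \rk_1$ and $\wh\mu(\rk_1) = d\,\rk_1$ for some scalars (with an analogous separation of the higher toroidal center in the affine case by $Q$-weight). To resolve these, I would apply $\wh\mu$ to the bracket
\begin{equation*}
[t_1 \otimes e_i^+,\, t_1^{-1} \otimes e_i^-] = \al_i^\vee + \langle e_i^+, e_i^- \rangle\, \rk_1
\end{equation*}
supplied by \eqref{commutator1}, and use the already-determined action on the left-hand side. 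Combining this with the $\mu$-invariance of $\langle\cdot,\cdot\rangle$---which holds because $\mu$ permutes the Chevalley generators preserving both $A$ and the chosen normalization of $D$, so $\langle\cdot,\cdot\rangle$ is (a multiple of) the $\mu$-invariant normalized form---forces $c_i = 0$ and $d = 1$. The same cocycle-invariance governs the action of $\wh\mu$ on the remaining toroidal centre in the affine case, but only the formulas on the generators \eqref{genhatg} are needed for the present lemma. That $\wh\mu^N = \mathrm{Id}$ is automatic from uniqueness of the lift together with $(\mu^*)^N = \mathrm{Id}$.
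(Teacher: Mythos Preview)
Your argument via functoriality of the universal central extension is correct and is genuinely different from the paper's treatment. The paper does not prove the lemma from scratch: it declares the finite-type case ``obvious'' (a direct check that $t_1^m\otimes x\mapsto \xi^{-m}t_1^m\otimes\mu(x)$, $\rk_1\mapsto\rk_1$ respects the bracket, using $\mu$-invariance of $\<\cdot,\cdot\>$), defers the affine case to \cite[Lemma~3.2]{CJKT-uce}, and then simply records the explicit action of $\wh\mu$ on all of $\wh\fg$ in \eqref{defhatmu1}--\eqref{defhatmu2}. Your approach is more conceptual: you lift the loop automorphism $\mu^*$ of $\CL(\fg)$ to $\wh\fg$ by the universal property, then pin down the action on the generators \eqref{genhatg} via the $Q\times\Z$-grading and a single bracket identity. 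This avoids having to write out and verify the full formulas on the toroidal centre. On the other hand, the paper's explicit formulas \eqref{defhatmu1}--\eqref{defhatmu2} are not wasted effort: they are invoked later (e.g.\ just before Lemma~\ref{comminghmu} and in Lemma~\ref{defgammaaction}), so the paper's route has the advantage of supplying data needed downstream. Your assertion that the lift is ``automatically graded'' is correct but deserves one more sentence: the $Q\times\Z$-grading on $\wh\fg$ is induced by one-parameter families of automorphisms of $\CL(\fg)$, which themselves lift uniquely, and uniqueness of lifts then transports the commutation relations between $\mu^*$ and these grading automorphisms up to $\wh\fg$.
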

 \begin{proof} The lemma is obviously when $\fg$ is of finite type.
 For the affine case, this lemma was proved in  \cite[Lemma 3.2]{CJKT-uce}. For convenience of the readers,
 we describe the explicit action of $\wh\mu$ on $\wh\fg$ as follows:
 \begin{align}\label{defhatmu1}
 t_1^{m_1}\ot x &\mapsto \xi^{-m_1} t_1^{m_1}\ot \mu(x),\quad \rk_1\mapsto \rk_1,
 \end{align}
 where $m_1\in \Z$, $x\in \fg_\al$ and $\al\in \Delta^\times\cup\{0\}$, and  (the following actions exist only when $\fg$
 is of affine type)
 \begin{align}\label{defhatmu2}
 t_1^{m_1}\ot h &\mapsto  \xi^{-m_1}(t_1^{m_1}\ot \mu(h)
-m_1\rho_\mu(\dot{h})t_1^{m_1}t_2^{m_2}\rk_1'),\\
 t_1^{n_1}t_2^{n_2}\rk_1' &\mapsto  \xi^{-n_1}t_1^{n_1}t_2^{n_2}\rk_1',
 \end{align}
 where $m_1,n_1,n_2\in \Z$, $h=t_2^{m_2}\ot \dot{h}$, $m_2\in \Z^\times$ and $\dot{h}\in \dfh_{[m_2]}$.
 \end{proof}

We define $\wh\fg[\mu]$ to be the subalgebra of $\wh\fg$ fixed by  $\wh\mu$.
It was known (\cite{Kac-book,CJKT-uce}) that the restriction map (see \eqref{defpsi})
\begin{align*}
\psi|_{\wh\fg[\mu]}:\wh\fg[\mu]\rightarrow \CL(\fg,\mu)
\end{align*}
is the universal central extension of  $\CL(\fg,\mu)$.
We remark that the automorphism $\wh{\mu}$ preserves the  decomposition \eqref{decwhfg} of $\wh\fg$.
So we have
\[\wh\fg[\mu]=\wh\fn_+[\mu]\oplus \wh\fh[\mu]\oplus \wh\fn_-[\mu],\]
where $\wh\fn_\pm[\mu]=\wh\fg[\mu]\cap \wh\fn_\pm$ and $\wh\fh[\mu]=\wh\fg[\mu]\cap \wh\fh$.
Using \eqref{genhatg} and \eqref{defhatmu}, one knows that the Lie algebra $\wh\fg[\mu]$ is generated by the following elements
\begin{align} \label{genwhfgmu} t_1^m\ot e_{i(m)}^\pm,\quad t_1^m\ot \al_{i(m)}^\vee,\quad \rk_1,\quad i\in I,\ m\in \Z,
\end{align}
where the notation $x_{(m)}$ was given in \eqref{fm}.
Moreover, the subalgebras $\wh\fn_\pm[\mu]$ (resp. $\wh\fh[\mu]$) of $\wh\fg$ are generated by the elements
$t_1^m\ot e_{i(m)}^\pm$ (resp. $t_1^m\ot \al_{i(m)}^\vee, \rk_1$) for $i\in I$ and $m\in \Z$.
The following result can be checked directly
by using  \eqref{commutator1}.

\begin{lemt} \label{lem:orre}
      Let $i,j\in I$ and $m,n\in \Z$. Then
\begin{align*}
&t_1^m\ot \al_{\mu(i)(m)}^\vee=\xi^{m}t_1^m\ot \al_{i(m)}^\vee,\quad t_1^m\ot e_{\mu(i)(m)}^\pm=\xi^m t_1^m\ot e_{i(m)}^\pm,\\
&[t_1^m\ot \al_{i(m)}^\vee, \rk_1]=0=[t_1^m\ot e_{i(m)}^\pm, \rk_1],\\
&[t_1^m\ot \al_{i(m)}^\vee, t_1^n\ot \al_{j(n)}^\vee]=\sum_{k\in \Z_N}\frac{mN}{\epsilon_j}a_{i\mu^k(j)}\xi^{km}\delta_{m+n,0}\rk_1,\\
&[t_1^{m}\ot \al_{i(m)}^\vee, t_1^{n}\ot e_{j(n)}^\pm]=\pm\sum_{k\in \Z_N} a_{i\mu^k(j)}\xi^{km} t_1^{m+n}\ot e_{j(m+n)}^\pm,\\
&[t_1^{m}\ot e_{i(m)}^+, t_1^n\ot e_{j(n)}^-]=\sum_{k\in \Z_N}\delta_{i,\mu^k(j)}\xi^{km}(t_1^{m+n}\ot \al_{j(m+n)}^\vee
+\frac{mN}{\epsilon_j}\delta_{m+n,0}\rk_1).
\end{align*}
\end{lemt}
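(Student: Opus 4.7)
The plan is to prove all six identities by direct expansion from the definition $x_{(m)}=\sum_{k\in\Z_N}\xi^{-km}\mu^k(x)$ together with Lemma \ref{lem:commutator}. First I would dispose of the two easy identities. The periodicity relations $t_1^m\ot\al_{\mu(i)(m)}^\vee=\xi^{m}t_1^m\ot\al_{i(m)}^\vee$ and $t_1^m\ot e_{\mu(i)(m)}^\pm=\xi^{m}t_1^m\ot e_{i(m)}^\pm$ follow from the reindexing $k\mapsto k-1$ in the sum $\sum_{k\in\Z_N}\xi^{-km}\mu^k(\al_{\mu(i)}^\vee)=\sum_{k\in\Z_N}\xi^{-km}\al_{\mu^{k+1}(i)}^\vee$. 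The two relations $[t_1^m\ot\al_{i(m)}^\vee,\rk_1]=0=[t_1^m\ot e_{i(m)}^\pm,\rk_1]$ are immediate, since $\rk_1$ is central in $\wh\fg$.

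Next I would tackle the bracket relations. The key observation is that in each of the three brackets (HH), (HX$\pm$), (XX), every term of the form $[x,y]$ that appears on expansion lies in a root space $\fg_\gamma$ with $\gamma\in\Delta^\times\cup\{0\}$ (indeed $\gamma=0, \pm\al_j, $ or $0$ respectively). Hence the simpler bracket formula \eqref{commutator1} in Lemma \ref{lem:commutator} applies throughout, and no imaginary-root central cocycle $t_1^{n_1}t_2^{n_2}\rk_1'$ ever enters. For instance, for (HH),
\[[t_1^m\ot\al_{i(m)}^\vee,t_1^n\ot\al_{j(n)}^\vee]=\sum_{k_1,k_2\in\Z_N}\xi^{-k_1m-k_2n}\bigl([\al_{\mu^{k_1}(i)}^\vee,\al_{\mu^{k_2}(j)}^\vee](m+n)+m\delta_{m+n,0}\<\al_{\mu^{k_1}(i)}^\vee,\al_{\mu^{k_2}(j)}^\vee\>\rk_1\bigr),\]
the first summand vanishing since $\fh$ is abelian. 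Using $\<\al_p^\vee,\al_q^\vee\>=\epsilon_j^{-1}a_{pq}$, the $\mu$-invariance $a_{\mu(p)\mu(q)}=a_{pq}$, and the substitution $k=k_2-k_1$, the double sum factors as
\[\frac{m\delta_{m+n,0}}{\epsilon_j}\,\rk_1\cdot\sum_{k\in\Z_N}\xi^{km}a_{i\mu^k(j)}\cdot\sum_{k_1\in\Z_N}\xi^{-k_1(m+n)},\]
and the inner character sum equals $N$ on the support $m+n=0$, yielding the claim.

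The same machine handles (HX$\pm$) and (XX). For (HX$\pm$), the diagonal action $[\al_i^\vee,e_j^\pm]=\pm a_{ij}e_j^\pm$ together with $\mu^k(e_j^\pm)=e_{\mu^k(j)}^\pm$ shows that the commutator-term contribution after a similar change of variables is a multiple of $e_{j(m+n)}^\pm$; the cocycle term vanishes because $\<\fh,\fn_\pm\>=0$. For (XX), one expands $[e_i^+,e_j^-]=\delta_{ij}\al_i^\vee$; the commutator-term yields $t_1^{m+n}\ot\al_{j(m+n)}^\vee$ multiplied by $\sum_{k\in\Z_N}\delta_{i,\mu^k(j)}\xi^{km}$, while the cocycle term $\<e_{\mu^{k_1}(i)}^+,e_{\mu^{k_2}(j)}^-\>$ reduces to $\epsilon_j^{-1}\delta_{\mu^{k_1}(i),\mu^{k_2}(j)}$ and again the character orthogonality produces the factor $\frac{mN}{\epsilon_j}\delta_{m+n,0}$. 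There is no real obstacle here; the only point requiring care is the bookkeeping that matches $\mu$-orbit delta-conditions with powers of $\xi$, which is governed uniformly by the identity $\sum_{k_1\in\Z_N}\xi^{-k_1(m+n)}=N\delta_{N\mid m+n}$ intersected with the overall condition $m+n=0$ imposed by $\rk_1$.
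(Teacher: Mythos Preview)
Your proof is correct and follows exactly the approach the paper indicates: a direct verification using \eqref{commutator1} from Lemma~\ref{lem:commutator}, together with the expansion $x_{(m)}=\sum_{k\in\Z_N}\xi^{-km}\mu^k(x)$ and the $\mu$-invariance $a_{\mu(p)\mu(q)}=a_{pq}$, $\epsilon_{\mu(j)}=\epsilon_j$. The paper's own proof is the single sentence ``checked directly by using \eqref{commutator1}'', and your write-up simply makes that computation explicit.
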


As in \eqref{defxz}, for $x\in \fg$ and $n\in \Z$, we  introduce the formal series in $\wh\fg[\mu][[z,z^{-1}]]$ as follows:
\begin{align*}
x(z)=\sum_{m\in\Z} \(\sum_{p\in \Z_N}\wh\mu^p(t_1^m\ot x)\) z^{-m-1},\
t_2^{n}\rk_1'(z)=\sum_{m\in N\Z}t_1^mt_2^n\rk_1'z^{-m},\ \rk_1(z)=\rk_1.
\end{align*}
Note that, if $x\in \fg_\al$ with $\al\in \Delta^\times\cup\{0\}$, then it follows from \eqref{defhatmu1} that
\[x(z)=\sum_{m\in \Z}t_1^m\ot x_{(m)}\,z^{-m-1}.\]
 This is not true when  $\fg$ is of affine type and $x\in \fg_\al$ with $\al\in \Delta^0\setminus\{0\}$.
However,  the following still holds true (see \eqref{defhatmu2})
\begin{align*}
[x(z),y(w)]=[\sum_{m\in \Z}t_1^m\ot x_{(m)}\,z^{-m-1}, \sum_{n\in \Z}t_1^n\ot y_{(n)}\,w^{-n-1}]
\end{align*}
for all $x,y\in \fg$.
Using this observation and Lemma \ref{lem:relationwhfg}, one can easily verify the following result.
\begin{lemt}\label{comminghmu} For $\al,\beta\in \Delta$, $x\in \fg_\al$ and $y\in \fg_\beta$, one has that
\begin{align*}
&[x(z),y(w)]\\
=&\sum_{k\in \Z_N}[\mu^k(x),y](w)z^{-1}\delta\(\xi^{-k}\frac{w}{z}\)\\
&+\sum_{k\in J_0} \<\mu^k(x), y\>\rk_1(w)\frac{\partial}{\partial w}z^{-1}\delta\(\xi^{-k}\frac{w}{z}\)\\
&+\sum_{k\in J_1}\<\dot{x}_k,\dot{y}\>\(m_k\(\frac{\partial}{\partial w}t_2^{m_k+n}\rk_1'(w)\)
z^{-1}\delta\Big(\xi^{-k}\frac{w}{z}\)\\
&+(m_k+n)t_2^{m_k+n}\rk_1'(w)
\frac{\partial}{\partial w}z^{-1}\delta\(\xi^{-k}\frac{w}{z}\)\Big),
\end{align*}
Here, $J_0=\{k\in \Z_N\mid \mu^k(\al)+\beta=0\}$ and $J_1=\{k\in \Z_N\mid \mu^k(\al)+\beta\in \Delta^0\setminus\{0\}\}$.
The set $J_1$ exists only when $\fg$ is of affine type, and in this case
the notations $\dot{x}_k,\dot{y}\in \dg$, $m_k,n\in \Z$, $k\in J_1$ are defined by the rule
\begin{align*}
\mu^k(x)=t_2^{m_k}\ot\dot{x}_k\quad \te{and}\quad y=t_2^n\ot \dot{y}.
\end{align*}
\end{lemt}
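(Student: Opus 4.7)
The proof is a direct computation, as advertised by the remark preceding the lemma. The starting point is the identity
\[[x(z),y(w)] = \Bigl[\sum_m (t_1^m\ot x_{(m)}) z^{-m-1},\ \sum_n (t_1^n\ot y_{(n)}) w^{-n-1}\Bigr],\]
which is exactly the observation made in the excerpt (the central correction from \eqref{defhatmu2} in the affine imaginary-root case is killed by the bracket, so it drops out). Expanding bilinearly via $x_{(m)} = \sum_p \xi^{-mp}\mu^p(x)$ and $y_{(n)} = \sum_q \xi^{-nq}\mu^q(y)$ and applying Lemma \ref{lem:commutator} to each inner bracket $[t_1^m\ot\mu^p(x),\,t_1^n\ot\mu^q(y)]$, the summand splits into a Lie piece $t_1^{m+n}\ot[\mu^p(x),\mu^q(y)]$, a $\rk_1$-central piece present exactly when $\mu^p(\al)+\mu^q(\beta)=0$, and (in the affine case) a $\rk_1'$-central piece present when $\mu^p(\al)+\mu^q(\beta)\in\Delta^0\setminus\{0\}$.

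For the Lie piece, I would substitute $k = q-p$ and use $[\mu^p(x),\mu^q(y)] = \mu^p[x,\mu^k(y)]$, so that the $p$-summation $\sum_p \xi^{-(m+n)p}\mu^p(\cdot)$ reassembles into the orbital average $(\cdot)_{(m+n)}$. Setting $\ell=m+n$, the remaining $n$-sum becomes $\sum_n \xi^{-nk}(z/w)^n = w\,\delta(\xi^k w/z)$, and using the delta substitution $z^{-\ell-1}\delta(\xi^k w/z) = \xi^{-k(\ell+1)}w^{-\ell-1}\delta(\xi^k w/z)$ together with the orbit identity $[x,\mu^k(y)]_{(\ell)} = \xi^{k\ell}[\mu^{-k}(x),y]_{(\ell)}$ (immediate from $(\mu^k z)_{(\ell)} = \xi^{\ell k}z_{(\ell)}$), the Lie piece reduces to $[\mu^{-k}(x),y](w)\,z^{-1}\delta(\xi^k w/z)$; a final relabeling $k\to -k$ in the sum over $\Z_N$ produces the first summand of the stated formula.

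The central pieces are handled analogously. The $\rk_1$-piece carries a factor $m\,\delta_{m+n,0}$, which is exactly the signature of $\partial_w z^{-1}\delta(\xi^{-k}w/z)$; the condition $\mu^p(\al)+\mu^q(\beta)=0$ becomes $k\in J_0$ after the above relabeling, yielding the second summand. In the affine case, the $\rk_1'$-piece is produced precisely when $k\in J_1$, and the two sub-contributions with coefficients $m_k$ and $m_k+n$ correspond respectively to the $(\partial_w t_2^{m_k+n}\rk_1'(w))\,z^{-1}\delta$ and $t_2^{m_k+n}\rk_1'(w)\,\partial_w z^{-1}\delta$ structures inherited from the second formula of Lemma \ref{lem:relationwhfg}.

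The principal obstacle is bookkeeping: correctly tracking $\xi$-exponents through the substitutions $k = q-p$ and $k \to -k$, and in the affine case verifying that the central correction in \eqref{defhatmu2} does not contribute beyond the stated $J_1$-sum (immediate from centrality, but must be checked). Everything else is formal manipulation of $\delta$-series starting from Lemma \ref{lem:relationwhfg}, so once the delta-substitution and the orbit identity are in hand, the computation is entirely mechanical.
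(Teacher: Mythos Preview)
Your approach is correct and essentially the same as the paper's. The paper's proof is the one-line remark ``using this observation and Lemma~\ref{lem:relationwhfg}, one can easily verify the following result''; you carry out that verification explicitly, working at the coefficient level via Lemma~\ref{lem:commutator} rather than the generating-function level via Lemma~\ref{lem:relationwhfg}, but these are two packagings of the same computation and your bookkeeping (the substitution $k=q-p$, the delta substitution, the orbit identity $[x,\mu^k(y)]_{(\ell)}=\xi^{k\ell}[\mu^{-k}(x),y]_{(\ell)}$, and the final relabeling) is correct.
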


Finally, recall the Lie algebras $\mathcal D_{\bm{P}}(\fg,\mu)$, $\mathcal D_{\bm{P}}(\fn_+,\mu)$ and
 $\mathcal D_{\bm{P}}(\fn_-,\mu)$ defined in Definition \ref{defDPgmu}.
 Then we have the following result.

\begin{lemt}\label{thetahom} The assignment \eqref{thetafg} (resp.\,\eqref{thetafn})
 determines a surjective Lie homomorphism, say $\theta_{\fg,\bm{P}}$ (resp.\,$\theta_{\fn_+,\bm{P}}$ or $\theta_{\fn_-,\bm{P}}$),
from $\mathcal D_{\bm{P}}(\fg,\mu)$ (resp.\,$\mathcal D_{\bm{P}}(\fn_+,\mu)$ or  $\mathcal D_{\bm{P}}(\fn_-,\mu)$)
 to $\wh\fg[\mu]$ (resp.\,$\wh\fn_+[\mu]$ or $\wh\fn_-[\mu]$).
\end{lemt}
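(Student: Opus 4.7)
The plan has two parts: surjectivity and relation-preservation for all three assignments. Surjectivity is immediate from the paragraph after \eqref{genwhfgmu}, which states that $\wh\fg[\mu]$ is generated by $t_1^m\otimes e_{i(m)}^\pm$, $t_1^m\otimes \al_{i(m)}^\vee$, $\rk_1$, and that $\wh\fn_\pm[\mu]$ is generated by the $t_1^m\otimes e_{i(m)}^\pm$; these are precisely the images of the generators of $\mathcal D_{\bm{P}}(\fg,\mu)$ and of $\mathcal D_{\bm{P}}(\fn_\pm,\mu)$ under the proposed maps.

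For the relation-preservation, I would proceed as follows. The defining relations $(H)$, $(HX\pm)$, $(XX)$, together with the scalar identities $x_{\mu(i),m}^\pm = \xi^m x_{i,m}^\pm$ and $h_{\mu(i),m} = \xi^m h_{i,m}$ from $(X\pm)$ and $(H)$, follow by reading off the coefficients of the formulas listed in Lemma \ref{lem:orre}. The Drinfeld--Serre relations $(DS\pm)_{\bm{P}}$ hold automatically in $\wh\fg[\mu]$ under the substitution $x_i^\pm(z)\mapsto e_i^\pm(z)$, since this is exactly the content of the standing assumption $(\bm{P}1)$ imposed on the family $\bm{P}$.

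The remaining relations, namely the locality $f_{ij}(z,w)\cdot[x_i^\pm(z), x_j^\pm(w)] = 0$ from $(X\pm)$ and the auxiliary $(AS\pm)$ in the $A_1^{(1)}$ case, will be extracted from Lemma \ref{comminghmu}. Applied to $x = e_i^\pm, y = e_j^\pm$, that lemma decomposes $[e_i^\pm(z), e_j^\pm(w)]$ into a finite sum of terms supported at $z=\xi^k w$ through $z^{-1}\delta(\xi^{-k}w/z)$, plus $\partial/\partial w$-derivative terms whenever $\mu^k(\al_i)+\al_j$ is a nonzero imaginary root, a situation occurring only for $A_1^{(1)}$. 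Outside of $A_1^{(1)}$ the index set $J_1$ from Lemma \ref{comminghmu} is empty for real-root interactions, and only indices $k$ with $a_{i\mu^k(j)}\ne 0$ contribute; multiplication by $\prod_{k\colon a_{i\mu^k(j)}\ne 0}(z-\xi^k w)$ then annihilates each surviving term via the standard identity $(z-\xi^k w)\cdot z^{-1}\delta(\xi^{-k}w/z)=0$. For $A_1^{(1)}$, the extra squared factors $(z-\xi^k w)^2$ in the modified $f_{ij}$ are designed to kill the derivative-of-delta terms arising from imaginary roots; the relation $(AS\pm)$ is then obtained by expanding the inner bracket via Lemma \ref{comminghmu} and observing that $z_1^N - z_2^N = \prod_{k\in\Z_N}(z_1 - \xi^k z_2)$ provides precisely the factors needed to annihilate the remaining delta-function supports, after invoking the Jacobi identity.

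The main obstacle will be the $A_1^{(1)}$ case, where imaginary-root contributions in Lemma \ref{comminghmu} produce the central series $t_2^n\rk_1'(w)$ and derivative-of-delta terms; matching them against the squared factors in the modified $f_{ij}$ and against $(AS\pm)$ requires careful bookkeeping. The $\fn_\pm$ versions are handled by the same arguments restricted to the $x_{i,m}^\pm$, noting that the defining relations of $\mathcal D_{\bm{P}}(\fn_\pm,\mu)$ form a sub-collection of those for $\mathcal D_{\bm{P}}(\fg,\mu)$ and that surjectivity onto $\wh\fn_\pm[\mu]$ uses only the generation statement for $\wh\fn_\pm[\mu]$.
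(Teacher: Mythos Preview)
Your proposal is correct and follows the paper's approach: Lemma~\ref{lem:orre} handles $(H)$, $(HX\pm)$, $(XX)$ and the scalar identities; Lemma~\ref{comminghmu} gives the locality in $(X\pm)$; and condition $(\bm{P}1)$ gives $(DS\pm)_{\bm{P}}$. The one place where the paper is sharper than your sketch is $(AS\pm)$: rather than appealing to the Jacobi identity, the paper notes that in type $A_1^{(1)}$ with $(i,j)\in\mathbb I$ the sum $\al_{\mu^{k_1}(i)}+\al_{\mu^{k_2}(i)}+\al_j$ is never a null root, so iterating Lemma~\ref{comminghmu} expresses the double commutator as a finite sum of pure delta-function terms $z_1^{-1}\delta(\xi^{-k_1}w/z_1)\,z_2^{-1}\delta(\xi^{-k_2}w/z_2)$ (the central $\rk_1'$ contributions from the inner bracket vanish under the outer bracket, and no derivative-of-delta appears at the outer step), after which $(z_1^N-z_2^N)$ annihilates each term directly.
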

\begin{proof} It suffices to show that the elements in \eqref{genwhfgmu} satisfy the defining relations
of $\mathcal D_{\bm{P}}(\fg,\mu)$.
The  relations $(H),(XX),(HX\pm)$ are implied by Lemma \ref{lem:orre},
the relation $(X\pm)$ is implied by Lemma \ref{comminghmu}
and  the relation $(DS\pm)_{\bm{P}}$ is implied by the condition
$(\bm{P}1)$. Now we check the relation $(AS\pm)$, and so $\fg$ is of type $A_1^{(1)}$.
Note that in this case $\al_{\mu^{k_1}(i)}+\al_{\mu^{k_2}(i)}+\al_j$ is not a null root for any
$i_1,i_2\in \Z_N$ and $(i,j)\in \mathbb I$. This together with Lemma \ref{comminghmu} implies that
\begin{align*}
&[e_i^\pm(z_1),[e_i^\pm(z_2),e^\pm_j(w)]]\\
=&\sum_{k_1,k_2\in \Z_N}[e_{\mu^{k_1}(i)}^\pm,[e_{\mu^{k_2}(i)}^\pm,e_j^\pm]](w)
z_1^{-1}\delta\(\xi^{-k_1}\frac{w}{z_1}\)z_2^{-1}\delta\(\xi^{-k_2}\frac{w}{z_2}\).\end{align*}
Therefore, we have that
\[(z_1^N-z_2^N)\cdot [e_i^\pm(z_1),[e_i^\pm(z_2),e^\pm_j(w)]]=0,\]
as required.
\end{proof}

\begin{remt} Let $i,j\in I$. Then $\al_i+\al_j\in \Delta^0$ if and only if $\fg$ is of type $A_1^{(1)}$ and $i\ne j$.
Therefore, when $\fg$ is of type $A_1^{(1)}$, the defining relation $(X\pm)$ of $\mathcal D_{\bm{P}}(\fg,\mu)$
 is different from that of other types.
\end{remt}

\subsection{More on diagram automorphisms}
Before deducing the Drinfeld type Serre relations in $\wh\fg[\mu]$,
 in this subsection we collect some elementary properties  on the  automorphism $\mu$ for later use.

Let $\<\mu\>$ be the subgroup of the permutation group on $I$ generated by $\mu$.
For $i\in I$, let $\mathcal{O}(i)$ be the orbit of $I$  containing $i$ under the action of  $\<\mu\>$.
The following result
is about the edges joining the vertices in the same  orbit of $I$.
\begin{lemt}\label{lem:defsi}
 For $i\in I$,  exactly one of the following holds
\begin{enumerate}
\item[(a)] the elements $\alpha_p, p\in \mathcal{O}(i)$ are pairwise orthogonal;
\item[(b)] $\mathcal{O}(i)=\{i,\mu(i)\}$ and
$a_{i\mu(i)}=-1=a_{\mu(i)i}$;
\item[(c)] $A$ has type $A_{\wp}^{(1)}$ and $\mathcal{O}(i)=I$.
\end{enumerate}
\end{lemt}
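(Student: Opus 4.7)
The plan is to distinguish whether some pair of distinct vertices in $\mathcal{O}(i)$ is adjacent in the Dynkin diagram of $A$. Set $N_i=|\mathcal{O}(i)|$; if $a_{i\mu^k(i)}=0$ for every $1\le k<N_i$, case (a) holds trivially, so I assume there exists some such $k$ with $a_{i\mu^k(i)}\neq 0$. Applying the defining property $a_{\mu(p)\mu(q)}=a_{pq}$ of a diagram automorphism iteratively yields a closed walk
\[
i\to \mu^k(i)\to \mu^{2k}(i)\to\cdots\to i
\]
of length $N_i/\gcd(k,N_i)$ whose successive vertices are adjacent in the Dynkin diagram. Whether such a closed walk is realizable splits the argument into two cases according as the Dynkin diagram is a tree or a cycle.

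First, if $A$ is not of type $A_\wp^{(1)}$, the Dynkin diagram is a tree, and every closed walk in a tree must backtrack. Thus $\mu^{2k}(i)=i$, so $N_i\mid 2k$ and hence $N_i=2k$. I then appeal to the classification of diagram automorphisms of the tree-shaped finite and affine Dynkin diagrams: the only ones admitting automorphisms of order $\ge 3$ are $D_4$, $D_4^{(1)}$ and $E_6^{(1)}$, and in each of these the orbits of size $\ge 3$ consist of pairwise non-adjacent outer vertices. This rules out $k\ge 2$, forcing $N_i=2$, i.e.\ $\mathcal{O}(i)=\{i,\mu(i)\}$. The identity $a_{i\mu(i)}=a_{\mu(i)\mu^2(i)}=a_{\mu(i)i}$ combined with the observation that among finite- and affine-type Cartan matrices the only symmetric pair with $a_{ij}=a_{ji}\neq 0$ and $|a_{ij}|\ge 2$ occurs in $A_1^{(1)}$, forces $a_{i\mu(i)}=-1=a_{\mu(i)i}$, which is case (b).

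Second, if $A$ has type $A_\wp^{(1)}$ the Dynkin diagram is the $(\wp+1)$-cycle and $\mathrm{Aut}$ is the dihedral group of order $2(\wp+1)$. If $\mu$ is a rotation by $s$, internal adjacency in an orbit occurs exactly when $s\equiv\pm 1\pmod{\wp+1}$, in which case $\mathcal{O}(i)=I$, giving case (c). If $\mu$ is a reflection then $|\mathcal{O}(i)|\in\{1,2\}$; for $\wp\ge 2$, an adjacent pair in an orbit satisfies $a_{i\mu(i)}=-1=a_{\mu(i)i}$ (case (b)), while for $\wp=1$ the only non-trivial reflection swaps the two nodes with $a_{01}=a_{10}=-2$ and $\mathcal{O}(i)=I$ (case (c)). Mutual exclusivity is then immediate: (a) excludes any orbit-adjacency; (b) has $|\mathcal{O}(i)|=2$ with $a_{i\mu(i)}=-1$; and (c) forces either $|\mathcal{O}(i)|=|I|\ge 3$ or $|a_{i\mu(i)}|=2$, precluding any overlap.

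The main obstacle is the tree-case verification that no orbit of size $\ge 4$ can support the ``matching'' pattern of edges $\{\mu^j(i)\text{-}\mu^{j+N_i/2}(i)\}_j$; this reduces to the short case-check above of those finite or affine tree diagrams whose automorphism group has an element of order $>2$, after which all remaining bookkeeping is routine.
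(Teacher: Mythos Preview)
The paper's own proof is a bare case-check: it simply refers to the tables of diagram automorphisms in \cite{ABP} and says one verifies the statement for each pair $(A,\mu)$ individually. Your argument is more structural---the closed-walk/backtracking observation in the tree case genuinely reduces the amount of checking needed---but two details need fixing.

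In the tree case, your list of diagrams with automorphisms of order $\ge 3$ omits $D_n^{(1)}$ for $n\ge 5$, whose automorphism group is dihedral of order $8$ (compose the overall flip with a swap at one end to obtain an element of order $4$). Under such an element the four endpoint vertices form a single orbit of size $4$; they are pairwise non-adjacent, so your conclusion survives, but the case must appear in the check.

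In the cycle case, the criterion for rotations is misstated. A rotation by $s$ on the $(\wp+1)$-cycle has orbits equal to cosets of $\gcd(s,\wp+1)\cdot\Z/(\wp+1)\Z$, so internal adjacency in an orbit occurs exactly when the orbit is all of $I$, i.e.\ when $\gcd(s,\wp+1)=1$, not merely when $s\equiv\pm 1\pmod{\wp+1}$. For instance with $s=2$ and $\wp+1=5$ the orbit is $I$ and certainly contains adjacent vertices, yet $s\not\equiv\pm 1\pmod 5$. With this correction the dichotomy (a)/(c) for rotations still holds: a proper orbit has common spacing $\gcd(s,\wp+1)\ge 2$ along the cycle and hence no internal edges.
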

\begin{proof}
The lemma is proved by checking the claim for each possible $A$ and
each  diagram automorphism $\mu$ on $A$.
For a list of  automorphisms on affine generalized Cartan matrices, see \cite[Tables 2, 3]{ABP} for example.
\end{proof}

For $i\in I$, set
\begin{equation}\label{defdi}
 s_i=\begin{cases}1,\ \text{if (a) holds in Lemma \ref{lem:defsi}};\\
2,\ \text{if (b) holds in Lemma \ref{lem:defsi}};\\
3,\ \text{if (c) holds in Lemma \ref{lem:defsi}}.\end{cases}\end{equation}
The automorphism $\mu$ is said to be transitive if $\mu$ acts transitively on the set $I$.
Observe that, if $A$ is of affine type, then $\mu$ is non-transitive if and only if $s_i\le 2$ for all $i\in I$.

For  $i,j\in I$, we set
\begin{align}\label{defgij}\Gamma_{ij}^-=\{k\in \Z_N\mid a_{i\mu^k(j)}< 0\},\end{align}
and introduce the numbers
\begin{align}\label{defdij}N_i=\mathrm{Card}\,\mathcal O_i,\quad d_i=N/N_i,\quad N_{ij}=\gcd(N_i,N_j),\quad d_{ij}=\mathrm{Card}\, \Gamma_{ij}^-.\end{align}
The following result is about the edges joining the vertices in two different orbits.
\begin{lemt}\label{lem:linking2}
Let  $i,j\in I$ and $k\in \Z_N$. Assume that $i\notin\mathcal O(j)$ and $a_{ij}<0$.
Then exactly one of the following holds
\begin{enumerate}
\item[(a)] $a_{i\mu^k(j)}=0$;
\item[(b)] $a_{i\mu^k(j)}=a_{ij}$ and $N_{ij}$ divides $k$.
\end{enumerate}
In particular, in this case $\Gamma_{ij}^-$ is a subgroup of $\Z_N$ with order $d_{ij}=N/N_{ij}$.
\end{lemt}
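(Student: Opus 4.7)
The plan is to combine the equivariance $a_{\mu(p)\mu(q)}=a_{pq}$ with a short case-by-case inspection of the admissible diagrams. Since $\mu^{N_i}$ fixes $i$ and $\mu^{N_j}$ fixes $j$, applying these powers of $\mu$ to the pair $(i,\mu^k(j))$ gives
\[
a_{i\mu^{k+N_i}(j)} \;=\; a_{i\mu^k(j)} \;=\; a_{i\mu^{k+N_j}(j)}
\quad\text{for all } k\in\Z_N.
\]
Thus $\Gamma_{ij}^-$ is stable under translation by both $N_i$ and $N_j$, hence is a union of cosets of the subgroup $N_i\Z_N+N_j\Z_N=N_{ij}\Z_N$ of $\Z_N$; and because $a_{ij}<0$ puts $0\in\Gamma_{ij}^-$, this yields $N_{ij}\Z_N\subset\Gamma_{ij}^-$.

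For any $k\in N_{ij}\Z_N$, write $k=uN_i+vN_j$; since $\mu^{vN_j}$ fixes $j$ and $\mu^{uN_i}$ fixes $i$,
\[
a_{i\mu^k(j)} \;=\; a_{i\mu^{uN_i}(j)} \;=\; a_{\mu^{-uN_i}(i)\,j} \;=\; a_{ij}.
\]
This proves the implication ``$N_{ij}\mid k\Rightarrow$ (b)''. Since $a_{ij}\neq 0$, the alternatives (a) and (b) are mutually exclusive, so the entire assertion reduces to the reverse inclusion $\Gamma_{ij}^-\subset N_{ij}\Z_N$.

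Assume for contradiction that some $k_0\in\Gamma_{ij}^-$ lies outside $N_{ij}\Z_N$. Because $N_j\Z_N\subset N_{ij}\Z_N$, the element $k_0$ is not a multiple of $N_j$, so $\mu^{k_0}(j)\neq j$; then $j$ and $\mu^{k_0}(j)$ are two distinct neighbors of $i$ inside the orbit $\mathcal{O}(j)$. The plan is to rule this out by inspecting the admissible pairs $(A,\mu)$, using the list of finite and affine Dynkin diagrams in \cite[Tables Fin, Aff 1--3]{Kac-book} together with their diagram automorphisms (as catalogued in \cite[Tables 2, 3]{ABP}): in every such pair with $i\notin\mathcal{O}(j)$, the vertex $i$ has at most one neighbor in $\mathcal{O}(j)$. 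The only configuration in which a single vertex has two neighbors in the same $\mu$-orbit is $A=A_\wp^{(1)}$ under a transitive rotation, which is excluded by $i\notin\mathcal{O}(j)$. Once $\Gamma_{ij}^-=N_{ij}\Z_N$ is established, the ``in particular'' statement is immediate: $\Gamma_{ij}^-$ is a subgroup of $\Z_N$ of order $|N_{ij}\Z_N|=N/N_{ij}=d_{ij}$.

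The main obstacle is the case enumeration in the last step. Each individual case is trivial, but one must pass through the twisted types $X_\wp^{(r)}$ with $r>1$ and also treat every admissible non-trivial $\mu$ on the untwisted affine diagrams, verifying the ``at most one neighbor per orbit'' property in each. The preceding reductions, by contrast, are purely formal consequences of the diagram-automorphism identity $a_{\mu(p)\mu(q)}=a_{pq}$ and require no structural information beyond the orders $N_i,N_j$.
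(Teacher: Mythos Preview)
Your reduction via the periodicity argument is clean and correct up to the final step: from $a_{\mu(p)\mu(q)}=a_{pq}$ you indeed obtain that $\Gamma_{ij}^-$ is a union of cosets of $N_{ij}\Z_N$, that it contains the trivial coset, and that $a_{i\mu^k(j)}=a_{ij}$ on that coset. The gap is in the auxiliary claim you propose to check by inspection:
\[
\text{``for every pair with }i\notin\mathcal O(j)\text{, the vertex }i\text{ has at most one neighbor in }\mathcal O(j)\text{.''}
\]
This is false. Take $A=D_4$ with the triality $\mu$ cycling the three outer nodes; the central node $i$ is fixed, the outer nodes form $\mathcal O(j)$, and $i$ is adjacent to all three of them. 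The same phenomenon occurs for the flip of $A_3$ (the middle vertex is adjacent to both outer vertices), for $D_4^{(1)}$ under its order-$4$ rotation, and for the fixed vertex of $A_2^{(1)}$ under a transposition. So the case analysis you describe would fail at exactly these diagrams.

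What saves the statement in each of these examples is that $N_i=1$, hence $N_{ij}=\gcd(N_i,N_j)=1$ and $N_{ij}\Z_N=\Z_N$, so the hypothesis $k_0\notin N_{ij}\Z_N$ is vacuous and no contradiction is needed. The correct claim to verify by inspection is therefore the conditional one: \emph{whenever $N_{ij}>1$, the vertex $i$ has at most one neighbor in $\mathcal O(j)$}. Equivalently, you can check that the number of neighbors of $i$ in $\mathcal O(j)$ is always exactly $N_j/N_{ij}$ (your periodicity argument already gives the lower bound). With this corrected target, your inspection does go through: in the paper's own list the cases with $N_{ij}>1$ are precisely $A_2\times A_2$, $B_2\times B_2$, $A_2\times A_2\times A_2$, $A_4$, $C_3^{(1)}$, $D_4^{(2)}$, $D_5^{(1)}$, and in each of these the two orbits sit in disjoint copies or in a chain so that $i$ meets $\mathcal O(j)$ only at $j$.

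By comparison, the paper's proof is a single unreduced case analysis: it lists all possible Dynkin types of the subdiagram on $\mathcal O(i)\uplus\mathcal O(j)$, records $N_{ij}$ in each, and reads off (a)/(b) directly. Your approach, once corrected, trades that for a short general argument plus a smaller list (only the $N_{ij}>1$ cases), which is a genuine simplification of the bookkeeping.
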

\begin{proof}
Let $i,j$ be as in lemma. Note that in this case  $\{s_i,s_j\}=\{1\}$ or $\{1,2\}$, and so we may (and do) assume that $s_j=1$.
In what follows we list all the possible Dynkin diagrams $S_{ij}$ of $\mathcal O(i)\uplus \mathcal O(j)$ and the numbers $N_{ij}$:
\begin{enumerate}
\item[(i)] if $s_i=1$, then $S_{ij}$ is one of the types
 $A_2, A_3, D_4, D_{4}^{(1)}, A_2\times A_2,  B_2\times B_2$ and $A_2\times A_2\times A_2$ and in each case
$N_{ij}=1,1,1,1,2,2$ and $3$, respectively;
\item[(ii)] if $s_i=2$, then $S_{ij}$ is one of the types $A_4,  C_3^{(1)}$, $D_4^{(2)}, D_5^{(1)}$ and $A_2^{(1)}$ and
in each case $N_{ij}=2,2,2,2$ and  $1$, respectively.
\end{enumerate}
Using this and a case by case argument, one can get the desired result.
\end{proof}

Recall from \S\,1.1 that $\mathbb I=\{(i,j)\in I\times I\mid a_{ij}<0\}$.
For $(i,j)\in \mathbb I$ and $\bm{k}=(k_1,\cdots,k_{1-a_{ij}})\in (\Z_N)^{1-a_{ij}}$, we introduce the notation
\[\al_{ij}(\bm{k})=\al_{\mu^{k_1}(i)}+\cdots+\al_{\mu^{k_{1-a_{ij}}}(i)}+\al_j\in Q.\]
The rest of this subsection is devoted to give the sufficient and necessary condition for
$\al_{ij}(\bm{k})$ belongs to $\Delta$ (resp.\,$\Delta^\times$;
resp.\,$\Delta^0$). Firstly, one has that

\begin{lemt}\label{alijk} Let $(i,j)\in \mathbb I$ and $\bm{k}=(k_1,\cdots,k_{1-a_{ij}})\in (\Z_N)^{1-a_{ij}}$. Then
\begin{enumerate}
\item[(a)] if $s_i=1$, then  $\al_{ij}(\bm{k})\in \Delta$
 if and only if   $-k_1,\cdots,-k_{1-a_{ij}}\in \Gamma_{ij}^-$ and at least two of
$\mu^{k_1}(i),\cdots,\mu^{k_{1-a_{ij}}}(i)$ are distinct;
\item[(b)] if $s_i=2$ and $i\in \mathcal O(j)$, then $\al_{ij}(\bm{k})\not\in \Delta$ for all $\bm{k}\in (\Z_N)^{1-a_{ij}}$;
\item[(c)] if $s_i=2$, $i\notin \mathcal O(j)$ and $N_{ij}=2$, then $\al_{ij}(\bm{k})\in \Delta$ if and only if
all except one of $k_1,\cdots,k_{1-a_{ij}}$ contained in $\Gamma_{ij}^-$;
\item[(d)] if $s_i=2$, $i\notin \mathcal O(j)$ and $N_{ij}=1$, then $\al_{ij}(\bm{k})\in \Delta$ if and only if
$k_1\ne k_2$;
\item[(e)] if $s_i=3$ and $\wp=1$, then $\al_{ij}(\bm{k})\in \Delta$ if and only if there is exactly one, say $k$,  of $k_1,k_2,k_3$ such that
$\mu^{k}(i)=j$;
\item[(f)] if $s_i=3$ and $\wp=2$, then $\al_{ij}(\bm{k})\in \Delta$ if and only if $\mu^{k_1}(i),\mu^{k_2}(i),j$ are pairwise distinct;
\item[(g)] if $s_i=3$ and $\wp\ge 3$, then $\al_{ij}(\bm{k})\in \Delta$ if and only if the Dynkin diagram of
 $\{\mu^{k_1}(i),\mu^{k_2}(i),j\}$ is of type $A_3$.
 \end{enumerate}
\end{lemt}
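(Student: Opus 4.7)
The crucial observation I would start from is that $\al_{ij}(\bm k)=\sum_{l=1}^{1-a_{ij}}\al_{\mu^{k_l}(i)}+\al_j$ is a nonnegative integer combination of simple roots, hence $\al_{ij}(\bm k)\in\Delta$ if and only if it is a positive root lying in the Kac--Moody root subsystem $\Delta'\subset\Delta$ generated by the simple roots in $S(\bm k):=\{\mu^{k_l}(i):1\le l\le 1-a_{ij}\}\cup\{j\}$. Since $\Delta'$ is determined by the Dynkin sub-diagram supported on $S(\bm k)$, and this is itself a sub-diagram of the Dynkin diagram on $\mathcal{O}(i)\cup\mathcal{O}(j)$ whose admissible shapes were enumerated in the proof of Lemma \ref{lem:linking2}, the argument reduces to a bounded case analysis driven by $s_i$ and by whether $i\in\mathcal{O}(j)$.

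For case (a), with $s_i=1$ the simple roots inside $\mathcal{O}(i)$ are pairwise orthogonal, so the Dynkin sub-diagram on $S(\bm k)$ is star-shaped with center $j$. A leaf $\mu^{k_l}(i)$ disjoint from $j$ (equivalently $-k_l\notin\Gamma_{ij}^-$) disconnects the support, which is impossible for a root; this proves necessity of the first condition. For the second, when all $\mu^{k_l}(i)$ coincide with a single $\mu^k(i)$, Lemma \ref{lem:linking2} gives $a_{\mu^k(i),j}=a_{ij}$, and the rank-$2$ sub-system on $\{\mu^k(i),j\}$ admits roots of the form $p\,\al_{\mu^k(i)}+\al_j$ only for $p\le -a_{ij}$, strictly less than the height $1-a_{ij}$ of our candidate. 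Conversely, when at least two of the $\mu^{k_l}(i)$ differ and all are joined to $j$, I would exhibit $\al_{ij}(\bm k)$ as the highest root of the connected $A_2,D_n,$ or $D_n^{(1)}$ subdiagram appearing in the list from Lemma \ref{lem:linking2}.

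Cases (b)--(d) handle $s_i=2$. For (b) I note $j=\mu(i)$, $a_{ij}=-1$, so $\al_{ij}(\bm k)$ has height $3$ inside the $A_2$ system generated by $\al_i,\al_{\mu(i)}$, whose highest root has height $2$. For (c) and (d), one has $i\notin\mathcal{O}(j)$, so by the enumeration in Lemma \ref{lem:linking2} the Dynkin diagram of $\mathcal{O}(i)\cup\mathcal{O}(j)$ is one of $A_4,C_3^{(1)},D_4^{(2)},D_5^{(1)},A_2^{(1)}$; splitting by $N_{ij}\in\{1,2\}$, I would test each tuple $(k_1,\ldots,k_{1-a_{ij}})$ directly against the root system of the ambient rank-$\le 4$ subsystem to extract the stated criteria.

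For cases (e)--(g), $s_i=3$ forces $A=A_{\wp}^{(1)}$ and $\mathcal{O}(i)=I$, so in particular $j\in\mathcal{O}(i)$. I would split by $\wp$: when $\wp=1$, $\al_{ij}(\bm k)$ must land in the $\Z_{\ge 0}$-span of $\{\al_0,\al_1\}$, whose roots are $\Z_{>0}\delta$ together with the real roots $\pm\al_k+n\delta$, and inspection shows the only height-$4$ possibility is $2\delta=2\al_0+2\al_1$, giving exactly the ``one $k_l$ with $\mu^{k_l}(i)=j$'' criterion; when $\wp=2$, pairwise distinct $\mu^{k_1}(i),\mu^{k_2}(i),j$ produce $\al_{ij}(\bm k)=\al_0+\al_1+\al_2=\delta\in\Delta^0$ while any repetition violates the root conditions of $A_2^{(1)}$; and when $\wp\ge 3$, $\al_{ij}(\bm k)$ is supported on at most three simple roots, so it is a root precisely when the sub-diagram is connected of type $A_3$, in which case $\al_{ij}(\bm k)$ is the highest root of that $A_3$. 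The principal obstacle is the sheer number of subcases in (c), (d), and (g); however, each individual check is a routine rank-$\le 4$ root-system computation, made tractable by the explicit list of admissible subdiagrams from Lemma \ref{lem:linking2} and by the $\mu$-symmetry of the configurations.
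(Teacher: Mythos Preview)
Your proposal is correct and follows essentially the same route as the paper: reduce the question to root computations inside the finite Dynkin sub-diagram on $\mathcal O(i)\cup\mathcal O(j)$, invoke the enumeration from Lemma~\ref{lem:linking2}, and then treat (e)--(g) by a direct height count in $A_\wp^{(1)}$.

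The only noticeable difference is in part~(a). The paper argues uniformly: using that the $\alpha_p$, $p\in\mathcal O(i)$, are pairwise orthogonal, it shows by induction on $N_i$ that $(\sum_{p\in\mathcal O(i)}m_p\alpha_p)+\alpha_j\in\Delta$ if and only if $m_p\le -a_{pj}$ for every $p$, and then reads off the two stated conditions from this inequality together with Lemma~\ref{lem:linking2}. Your version instead proves necessity (disconnected support, rank-$2$ height bound) and sufficiency separately. Your sufficiency claim that $\alpha_{ij}(\bm k)$ is the ``highest root of the connected $A_2$, $D_n$, or $D_n^{(1)}$ subdiagram'' is slightly loosely phrased---in the cases that actually arise with $s_i=1$ and two distinct $\mu^{k_l}(i)$ one always has $a_{ij}=-1$, so what you are really producing is the highest root of the $A_3$ sub-path $\mu^{k_1}(i)\text{--}j\text{--}\mu^{k_2}(i)$---but the argument goes through once this is said precisely. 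The paper's inductive formulation is a bit cleaner and avoids having to revisit the list of admissible $S_{ij}$ a second time; your approach is more elementary but requires that extra pass.
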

\begin{proof} We first prove the assertion (a) and so assume now that $s_i=1$.
Let $\al=(\sum_{p\in \mathcal O(i)}m_p \al_p)+\al_j$ for some $m_p\in \N$.
By using Lemma \ref{lem:defsi} (a) and an induction argument on the number $N_i$, one can easily check that
$\al\in\Delta$ if and only if $m_p\le a_{pj}$ for all $p\in \mathcal O(i)$.
It is easy to see that this implies the assertion (a) and we omit the
details.
The assertion (b) is implied by Lemma \ref{lem:defsi} (b), and the assertions (c),(d) can be proved by
checking all the possible Dynkin diagrams $S_{ij}$ of $\mathcal O(i)\cup \mathcal O(j)$ given in the proof of
Lemma \ref{lem:linking2}.

For the assertion (e), one only needs to notice that if $\fg$ is of type $A_1^{(1)}$, then $m_0\al_0+m_1\al_1\in \Delta$ for some $m_0,m_1\in \N$
with $m_0+m_1=4$ if and only if $m_0=m_1=2$.
The assertion (f) follows from the fact that: if $\fg$ is of type $A_2^{(1)}$, then
$m_0\al_0+m_1\al_1+m_2\al_2\in \Delta$ for some $m_0,m_1,m_2\in \N$ with $m_0+m_1+m_2=3$ if and only
if $m_0=m_1=m_2=1$. Finally, the assertion (g) is obvious.
\end{proof}

As an immediate by-product of Lemma \ref{alijk}, we have  that
\begin{cort}\label{ksnekt} Let $(i,j)\in \mathbb I$ and $\bm{k}=(k_1,\cdots,k_{1-a_{ij}})\in (\Z_N)^{{1-a_{ij}}}$. If $\al_{ij}(\bm{k})\in \Delta$,
then there exist $s,t=1,\cdots,1-a_{ij}$ such that $\mu^{k_s}(i)\ne \mu^{k_t}(i)$.
\end{cort}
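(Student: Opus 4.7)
My plan is to prove the contrapositive: assuming all of $\mu^{k_1}(i),\ldots,\mu^{k_{1-a_{ij}}}(i)$ coincide, I will show $\al_{ij}(\bm{k})\notin\Delta$. The natural tool is a case-by-case check against the seven alternatives of Lemma \ref{alijk}, verifying that the ``if and only if'' criterion of each subcase is incompatible with the collapse hypothesis.

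Cases (a), (e), (f), (g) are immediate: Lemma \ref{alijk} explicitly requires at least two of the $\mu^{k_s}(i)$ (possibly together with $j$, as in (e)--(g)) to be distinct, which is precisely what the collapse forbids. Case (b) needs no argument, as $\al_{ij}(\bm{k})\notin\Delta$ holds there unconditionally.

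The only genuine work lies in cases (c) and (d), where $s_i = 2$. For (c), I would first observe that $\Gamma_{ij}^-$ is a union of cosets of $N_i\Z$ inside $\Z_N$: by the $\mu$-invariance of $A$ together with $\mu^{N_i}(i) = i$, one has
\begin{equation*}
a_{i,\mu^{k+N_i}(j)} \;=\; a_{\mu^{-N_i}(i),\mu^k(j)} \;=\; a_{i,\mu^k(j)}.
\end{equation*}
The collapse hypothesis rewrites as $k_s \equiv k_t \pmod{N_i}$, placing all $k_s$ in a single $N_i\Z$-coset; hence either all $1 - a_{ij}$ of them lie in $\Gamma_{ij}^-$ or none do, which makes the ``all except one in $\Gamma_{ij}^-$'' criterion impossible (since $-a_{ij}\ge 1$ is different from both $0$ and $1-a_{ij}$). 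For (d), I would invoke the classification in the proof of Lemma \ref{lem:linking2}: the local diagram is $A_2^{(1)}$ with $\mu$ the order-$2$ flip, so $N = 2$; the congruence $k_1 \equiv k_2 \pmod 2$ in $\Z_2$ is strict equality, contradicting the required $k_1 \ne k_2$.

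The only step that is not entirely mechanical is the invariance observation that $\Gamma_{ij}^-$ is a union of $N_i\Z$-cosets, and this is the main (minor) obstacle; once it is recorded the remainder is pure bookkeeping against Lemma \ref{alijk}.
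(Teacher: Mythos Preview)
Your proof is correct and follows the same approach as the paper, which simply states that the corollary is ``an immediate by-product of Lemma~\ref{alijk}'' without further detail. Your case-by-case verification against the criteria of Lemma~\ref{alijk}---including the coset argument for~(c) and the observation that $N=2$ is forced in~(d)---is exactly the kind of unpacking the paper leaves implicit.
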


Now, for  $(i,j)\in \mathbb I$, we define three subsets of $(\Z_N)^{1-a_{ij}}$ as follows
\begin{align*}
\Upsilon_{ij}
& =\{\bm{k}\in (\Z_N)^{1-a_{ij}}\mid \al_{ij}(\bm{k})\in \Delta\};\\
\Upsilon_{ij}^\times
& =\{\bm{k}\in (\Z_N)^{1-a_{ij}}\mid \al_{ij}(\bm{k})\in \Delta^\times\};\\
\Upsilon_{ij}^0
& =\{\bm{k}\in (\Z_N)^{1-a_{ij}}\mid \al_{ij}(\bm{k})\in \Delta^0\}.
\end{align*}
The following result, together with Lemma \ref{alijk}, determines the three sets given above.
\begin{lemt} Let $(i,j)\in \mathbb I$. Then the following results hold true
 \begin{enumerate}
\item[(a)] if $A$ is not one of types $A_{1}^{(1)}$ and $A_2^{(1)}$, then $\Upsilon_{ij}=\Upsilon_{ij}^\times$;
\item[(b)] if  $A$ is of type $A_{1}^{(1)}$ or $A_2^{(1)}$, then $\Upsilon_{ij}=\Upsilon_{ij}^0$.
\end{enumerate}
\end{lemt}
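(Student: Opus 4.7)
The plan is to analyze when $\alpha_{ij}(\bm{k})$ is real versus imaginary by comparing it with the null root. If $A$ is of finite type, then $\Delta^0=\{0\}$, and since $\alpha_{ij}(\bm{k})$ is a non-trivial $\N$-combination of simple roots with total coefficient $2-a_{ij}\ge 3$, it is non-zero; hence $\Upsilon_{ij}=\Upsilon_{ij}^\times$ and (a) holds. So I reduce to the affine case, where $\Delta^0\setminus\{0\}=\Z^\times\delta$ with $\delta=\sum_{p\in I}a_p\alpha_p$ the null root and all marks $a_p\ge 1$. It suffices to pinpoint exactly when $\alpha_{ij}(\bm{k})=m\delta$ for some $m\in\Z_+$, and since $\delta$ has full support on $I$ while the support of $\alpha_{ij}(\bm{k})$ lies in $\mathcal{O}(i)\cup\{j\}$, this forces $\mathcal{O}(i)\cup\{j\}=I$.

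Now I split into two cases. \emph{Case A:} $j\in\mathcal{O}(i)$, so $\mathcal{O}(i)=I$; by Lemma \ref{lem:defsi}(c) this forces $A$ of type $A_\wp^{(1)}$, and comparing total coefficients yields $m(\wp+1)=2-a_{ij}$. Since $a_{ij}=-2$ in $A_1^{(1)}$ and $a_{ij}=-1$ for adjacent nodes in $A_\wp^{(1)}$ with $\wp\ge 2$, the only solutions are $(\wp,m)=(1,2)$ and $(2,1)$, i.e.\ $A\in\{A_1^{(1)},A_2^{(1)}\}$. \emph{Case B:} $j\notin\mathcal{O}(i)$, so $\mathcal{O}(j)=\{j\}$ and $\mu$ fixes $j$; the multiplicity of $\alpha_j$ in $\alpha_{ij}(\bm{k})$ is then exactly one, forcing $m=a_j=1$. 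Since $\mu$ preserves marks, all $p\in\mathcal{O}(i)$ share a common mark $a$, giving $N_i a=1-a_{ij}\in\{2,3,4\}$. It remains to run through the triples $(N_i,a,a_{ij})\in\{(2,1,-1),(1,2,-1),(3,1,-2),(1,3,-2),(4,1,-3),(2,2,-3),(1,4,-3)\}$ and, for each, consult the affine tables of \cite[Chap.\,4]{Kac-book} to test the existence of an affine Dynkin diagram with a diagram automorphism fixing a mark-one node $j$ and cyclically permuting $I\setminus\{j\}$ through nodes of common mark $a$, each joined to $j$ by an edge of type $a_{ij}$; all but $(2,1,-1)$, realized uniquely by $A_2^{(1)}$ with the involution fixing one node, are eliminated. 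Combined with Case A this proves (a).

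For the forward direction needed in (b), I apply Lemma \ref{alijk} case by case in types $A_1^{(1)}$ and $A_2^{(1)}$ and verify directly that every $\alpha_{ij}(\bm{k})$ that is a root equals a positive multiple of $\delta$; this is a short explicit check for the few nontrivial $\mu$ at hand. The main obstacle is the Case B enumeration: although $1-a_{ij}\le 4$ sharply restricts the list of triples, each candidate demands attentive inspection of the affine Dynkin diagrams and their diagram automorphisms, aided by Lemma \ref{lem:linking2} which constrains how two distinct orbits can be joined by edges.
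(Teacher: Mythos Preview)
Your proof is correct and follows essentially the same architecture as the paper's: reduce to the affine case, use the support of $\delta$ to force $I=\mathcal O(i)\cup\{j\}$, split into the two cases $j\in\mathcal O(i)$ and $j\notin\mathcal O(i)$, and finish by a direct check in types $A_1^{(1)}$ and $A_2^{(1)}$ via Lemma~\ref{alijk}.

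The organization of the case analysis differs slightly. In the case $j\in\mathcal O(i)$ the paper simply invokes Lemma~\ref{alijk}\,(e),(f),(g) to see that $\alpha_{ij}(\bm k)\in\Delta$ is imaginary exactly for $\wp\le 2$, whereas you use the cleaner coefficient count $m(\wp+1)=2-a_{ij}$; both are immediate. In the case $j\notin\mathcal O(i)$ the paper appeals to the finite list of possible orbit subdiagrams $S_{ij}$ already tabulated in the proof of Lemma~\ref{lem:linking2}, while you instead enumerate triples $(N_i,a,a_{ij})$ with $N_ia=1-a_{ij}$ and search the affine tables directly. These are two packagings of the same finite check; your version is self-contained, the paper's version recycles earlier work.

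One small point: your bound $1-a_{ij}\in\{2,3,4\}$ is asserted without justification, and in principle $a_{ij}=-4$ occurs in $A_2^{(2)}$, giving $1-a_{ij}=5$. This extra case is harmless---the corresponding triples $(5,1,-4)$ and $(1,5,-4)$ are eliminated by the same inspection (no affine diagram with five mark-one nodes and a $-4$ edge, no rank-two affine diagram with a mark-five node)---but you should either include it or note that Lemma~\ref{lem:linking2} already forces $a_{ij}\in\{-1,-2\}$ when $i\notin\mathcal O(j)$.
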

\begin{proof} Assume that $\al_{ij}(\bm{k})=(\sum_{p\in \mathcal O(i)}m_p\al_p)+\al_j\in \Delta^0$ and so $\fg$ is of affine type.
Let $a_s, s\in I$ be the labels in the diagrams of \cite[Chap.4, Table Aff 1-3]{Kac-book}.
Recall that $a_s, s\in I$ are the (uniquely) positive integers such that $\delta_2=\sum_{s\in I}a_s\al_s$.
In view of this, if $j\notin\mathcal O(i)$, then $\al_{ij}(\bm{k})\in \Delta^0$ if and only if
\[I=\mathcal O(i)\cup \{j\},\ \mathcal O(j)=\{j\},\ a_j=1\ \te{and}\ m_p=a_p\ \te{for all}\ p\in \mathcal O(i).\]
Using the Dynkin diagrams $S_{ij}$ given in the  proof of Lemma \ref{lem:linking2}, one can easily check that $\al_{ij}(\rk)\in \Delta^0$ if and only
if $\fg$ is of type $A_{2}^{(1)}$ with $N=2$, $N_i=2$ and $N_j=1$.
For the case  $j\in \mathcal O(i)$, one has that $I=\mathcal O(i)$, $\fg$ has type $A_{N-1}^{(1)}$ and $a_s=1$ for all $s\in I$.
Then  the assertion immediately follows from Lemma \ref{alijk} (e),(f) and (g).
\end{proof}

\subsection{Drinfeld type Serre relations in $\wh\fg[\mu]$}
In this subsection we construct a class of  Drinfeld type Serre relations in $\wh\fg[\mu]$.

For  $(i,j)\in \mathbb I$, we define the Drinfeld polynomial
 \begin{align}\label{eq:def-p-ij}
 p_{ij}(z,w)=\prod_{k\in \Omega_{ij}^\times}(z-\xi^k w)\cdot\prod_{k\in \Omega_{ij}^0}(z-\xi^k w)^2,
 \end{align}
where  $\Omega_{ij}^\times$ and $\Omega_{ij}^0$ are the subsets of $\Z_N$ defined as follows:
for any $k\in \Z_N$, $k\in \Omega_{ij}^\times$ (resp.\,$\Omega_{ij}^0$) if and only if there is a tuple $\bm{k}\in
\Upsilon_{ij}^\times$ (resp.\,$\Upsilon_{ij}^0$)
such that $k=k_s-k_t$ for some $1\le s,t\le m_{ij}$ with $\mu^{k_s}(i)\ne \mu^{k_t}(i)$.

Using Lemma \ref{lem:defsi}, Lemma \ref{lem:linking2} and Lemma \ref{alijk}, we have the following explicit description of
the polynomials  $p_{ij}(z,w)$.
\begin{lemt} \label{lem:expij} Let $(i,j)\in \mathbb I$. If $j\in \mathcal O(i)$, then
\begin{equation*} p_{ij}(z,w)= \begin{cases}
1,\ &\te{if}\ s_i\le 2;\\
\(\frac{z^N-w^N}{z-w}\)^2, \ &\te{if}\ N=2,3\ \te{and}\ s_i=3;\\
\frac{z^N-w^N}{z-w}, \ &\te{if}\ N=4,5\ \te{and}\ s_i=3;\\
\prod_{k\in \Gamma_{ii}^-}(z-\xi^k w)(z-\xi^{2k} w), \ &\te{if}\ N\ge 6\ \te{and}\ s_i=3.
\end{cases}
\end{equation*}
And, if $j\notin\mathcal O(i)$, then
\begin{equation*}\begin{split} p_{ij}(z,w)&= \frac{z^{s_id_i}-w^{s_id_i}}{z^{d_i}-w^{d_i}}
\cdot\frac{z^{d_{ij}}-w^{d_{ij}}}{z^{d_i}-w^{d_i}}\\
&=\begin{cases}
\frac{z^{N/N_{ij}}-w^{N/N_{ij}}}{z^{N/N_i}-w^{N/N_i}},\ &\te{if}\ s_i=1;\\
z^{N/N_i}+w^{N/N_i},\ &\te{if}\ s_i=2\ \te{and}\ N_{ij}=2;\\
(z+w)^2, \ &\te{if}\ s_i=2\ \te{and}\ N_{ij}=1.
\end{cases}\end{split}
\end{equation*}
\end{lemt}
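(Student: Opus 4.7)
The plan is to compute $\Omega_{ij}^\times$ and $\Omega_{ij}^0$ explicitly in each case and then simplify the resulting product using standard cyclotomic identities of the form $\prod_{\ell=0}^{m-1}(z-\zeta^{\ell}w)=z^m-w^m$ for a primitive $m$-th root of unity $\zeta$. The previous two lemmas (the description of $\Upsilon_{ij}$ in Lemma \ref{alijk} and the dichotomy $\Upsilon_{ij}=\Upsilon_{ij}^\times$ vs.\ $\Upsilon_{ij}=\Upsilon_{ij}^0$) reduce everything to bookkeeping on subsets of $\Z_N$, and Corollary \ref{ksnekt} guarantees that in every case there is at least one valid pair $(k_s,k_t)$ to extract differences from.

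I would first split on whether $j\in\mathcal O(i)$. If $j\notin\mathcal O(i)$, then Lemma \ref{lem:linking2} tells us $\Gamma_{ij}^-=N_{ij}\Z_N$ is a subgroup of order $N/N_{ij}$. In the case $s_i=1$, Lemma \ref{alijk}(a) identifies $\Upsilon_{ij}$ as the tuples with all $-k_s\in\Gamma_{ij}^-$, at least two giving distinct $\mu^{k_s}(i)$; one verifies that the admissible differences $k_s-k_t$ (with $\mu^{k_s}(i)\neq\mu^{k_t}(i)$) range over exactly $N_{ij}\Z_N\setminus N_i\Z_N$, every such element being realized by the tuple $(k,0,\dots,0)$. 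The cyclotomic identity then gives
\[
p_{ij}(z,w)=\frac{\prod_{k\in N_{ij}\Z_N}(z-\xi^kw)}{\prod_{k\in N_i\Z_N}(z-\xi^kw)}=\frac{z^{N/N_{ij}}-w^{N/N_{ij}}}{z^{N/N_i}-w^{N/N_i}},
\]
which is the stated form after expressing $d_i=N/N_i$ and $d_{ij}=N/N_{ij}$. For $s_i=2$, Lemma \ref{alijk}(c),(d) singles out tuples with all but one entry in $\Gamma_{ij}^-$ (or with $k_1\neq k_2$ when $N_{ij}=1$); the same extraction yields $\Omega_{ij}^\times=(N/N_i)\Z_N$ with an appropriate shift, and the product telescopes to $z^{N/N_i}+w^{N/N_i}$ or $(z+w)^2$ respectively.

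For $j\in\mathcal O(i)$: when $s_i=1$ the Chevalley hypothesis $a_{ij}<0$ is incompatible with orthogonality, so this case is vacuous; when $s_i=2$ Lemma \ref{alijk}(b) gives $\Upsilon_{ij}=\varnothing$, so $p_{ij}=1$. The $s_i=3$ case (so $A$ is of type $A_\wp^{(1)}$ and $\mathcal O(i)=I$, whence $N=\wp+1$) is the delicate one and must be split further: Lemma \ref{alijk}(e)–(g) together with the lemma immediately preceding Lemma \ref{lem:expij} (which puts $\Upsilon_{ij}\subseteq\Delta^0$ for $N=2,3$ and $\Upsilon_{ij}\subseteq\Delta^\times$ for $N\ge 4$) determines, for each $N\in\{2,3,4,5,\ge 6\}$, exactly which differences arise. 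For $N=2,3$ every nonzero element of $\Z_N$ contributes to $\Omega_{ij}^0$ with multiplicity two (giving the square in the formula); for $N=4,5$ one again gets every nonzero element of $\Z_N$ but in $\Omega_{ij}^\times$ (no squaring); and for $N\ge 6$ only the shifts from $\Gamma_{ii}^-$ and their doubles survive the $A_3$-subdiagram condition of Lemma \ref{alijk}(g), yielding the product over $k\in\Gamma_{ii}^-$ of $(z-\xi^kw)(z-\xi^{2k}w)$.

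The main obstacle is the last case, $s_i=3$: one has to carry out the enumeration of valid differences separately for each small $N$, being careful that when $N\le 5$ the pigeonhole forces $\mathcal O(\mu^{k_s}(i))\cup\mathcal O(\mu^{k_t}(i))$ to cover all of $I$ and thereby makes many formally distinct tuples collapse, whereas for $N\ge 6$ Lemma \ref{alijk}(g) genuinely restricts to $A_3$-subdiagrams and so to neighbouring pairs. Once this enumeration is done, the reduction to the stated closed forms is a direct product-of-cyclotomic-polynomials computation; no further representation-theoretic input is needed beyond Lemmas \ref{lem:defsi}, \ref{lem:linking2}, and \ref{alijk}.
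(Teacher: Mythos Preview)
Your proposal is correct and takes exactly the approach the paper intends: the paper gives no proof beyond the sentence ``Using Lemma \ref{lem:defsi}, Lemma \ref{lem:linking2} and Lemma \ref{alijk}, we have the following explicit description of the polynomials $p_{ij}(z,w)$,'' and your case analysis is precisely the computation those lemmas set up. Your identification of $\Omega_{ij}^\times$ (or $\Omega_{ij}^0$) in each case and the subsequent cyclotomic simplification are the only steps needed; the one place where your phrasing is slightly loose---saying elements ``contribute with multiplicity two'' to $\Omega_{ij}^0$ when $N=2,3$---should be read as the squaring built into the definition of $p_{ij}$, not as $\Omega_{ij}^0$ being a multiset.
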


When $\fg$ is of finite type and $(i,j)\in \mathbb I$ with $i\notin \mathcal O(j)$,
it follows from Lemma \ref{lem:expij} that
the polynomial $p_{ij}(z,w)$ defined in \eqref{eq:def-p-ij} coincides with that appeared in the relation $(DS\pm)$ (see
Theorem \ref{intro1}). For this reason, $p_{ij}(z,w)$ is called the the Drinfeld polynomial.

Now we establish a class of Drinfeld type Serre relations in $\wh\fg[\mu]$ as follows.

\begin{prpt}\label{D-S-R} For $(i,j)\in \mathbb I$, one has that
\begin{align}
\prod_{1\le s<t\le 1-a_{ij}}p_{ij}(z_s,z_t)\cdot [e_i^\pm(z_1),\cdots,[e_i^\pm(z_{1-a_{ij}}),e_j^\pm(w)]]=0.
\end{align}
\end{prpt}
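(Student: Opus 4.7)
The plan is to expand the nested commutator on the left-hand side into a sum indexed by tuples $\bm{k}=(k_1,\ldots,k_{1-a_{ij}})\in(\Z_N)^{1-a_{ij}}$, and then to verify that each surviving term is annihilated by the product $\prod_{s<t}p_{ij}(z_s,z_t)$ via the standard fact that $(z-\xi^k w)^m$ kills $(m-1)$-th order $\delta$-function singularities supported on $z=\xi^k w$.

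First I would iterate Lemma \ref{comminghmu}. Starting from the innermost bracket $[e_i^\pm(z_{1-a_{ij}}),e_j^\pm(w)]$ and peeling off one $e_i^\pm(z_s)$ at a time, each application produces a sum over $k\in\Z_N$ of three kinds of contributions: a Lie bracket term weighted by $z_s^{-1}\delta(\xi^{-k}w/z_s)$, a central term in $\rk_1(w)$ weighted by $\partial_w z_s^{-1}\delta(\xi^{-k}w/z_s)$, and (in the affine case) central terms in $t_2^{m+n}\rk_1'(w)$ carrying either a $\delta$ or a $\partial_w\delta$ factor. After $1-a_{ij}$ iterations, the nested commutator becomes a finite sum $\sum_{\bm{k}}T_{\bm{k}}(\bm{z},w)$, where each $T_{\bm{k}}$ is a product of $\delta$-functions (with derivatives where a central piece has been produced) localized at $z_s=\xi^{k_s}w$, multiplied by a fixed element of $\wh\fg[[w,w^{-1}]]$.

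Next I would argue by weight considerations that only $\bm{k}\in\Upsilon_{ij}$ contribute: each $T_{\bm{k}}$ lives in the $Q$-graded component of weight $\al_{ij}(\bm{k})$ of $\wh\fg$, so if $\al_{ij}(\bm{k})\notin\Delta$, the corresponding element of $\wh\fg$ is zero. For each contributing $\bm{k}$, Corollary \ref{ksnekt} guarantees a pair $s\ne t$ (which I take to be $s<t$) with $\mu^{k_s}(i)\ne\mu^{k_t}(i)$, and by the definition of $\Omega_{ij}^\times$ and $\Omega_{ij}^0$ the exponent $k:=k_s-k_t$ lies in $\Omega_{ij}^\times$ when $\al_{ij}(\bm{k})\in\Delta^\times$ and in $\Omega_{ij}^0$ when $\al_{ij}(\bm{k})\in\Delta^0$. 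Thus $p_{ij}(z_s,z_t)$ contains the factor $(z_s-\xi^k z_t)$ with multiplicity at least $1$ in the real-root case and at least $2$ in the null-root case.

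Finally I would check that these factors suffice to annihilate $T_{\bm{k}}$. The $\delta$-functions $z_s^{-1}\delta(\xi^{-k_s}w/z_s)$ and $z_t^{-1}\delta(\xi^{-k_t}w/z_t)$ together localize on $z_s=\xi^{k}z_t$, so $(z_s-\xi^k z_t)$ kills a simple $\delta$-singularity at that locus and $(z_s-\xi^k z_t)^2$ kills a $\partial\delta$-singularity there. A term $T_{\bm{k}}$ with $\al_{ij}(\bm{k})\in\Delta^\times$ contains only undifferentiated $\delta$-factors in the $(z_s,z_t)$-variables (the $\rk_1$ and $\rk_1'$ pieces of Lemma \ref{comminghmu} can appear only when a partial sum of the $\mu^{k_p}(\al_i)$'s lands in $\Delta^0\cup\{0\}$, but such central pieces terminate the nesting and their own weight would force $\al_{ij}(\bm{k})\in\Delta^0$), so a single factor $(z_s-\xi^k z_t)$ suffices; while if $\al_{ij}(\bm{k})\in\Delta^0$ the worst singularity in $(z_s,z_t)$ is a first derivative, matched by the squared factor in $p_{ij}$. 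The main obstacle will be this last bookkeeping: carefully tracking how the $\rk_1$ and $\rk_1'$ contributions arising mid-iteration produce at most one $\partial_w$-type derivative in the pair $(z_s,z_t)$ I have chosen, and confirming that this never exceeds the order of vanishing built into $p_{ij}(z_s,z_t)$ from the definitions \eqref{eq:def-p-ij}. Multiplying by the remaining factors in $\prod_{s'<t'}p_{ij}(z_{s'},z_{t'})$ preserves the vanishing, so each $T_{\bm{k}}$ is killed and the sum vanishes as required.
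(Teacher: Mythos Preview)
Your proposal is correct and follows essentially the same route as the paper: expand the nested commutator via Lemma \ref{comminghmu} into a sum over $\bm{k}\in(\Z_N)^{1-a_{ij}}$, discard by weight all $\bm{k}\notin\Upsilon_{ij}$, invoke Corollary \ref{ksnekt} to locate a pair $(s,t)$ with $\mu^{k_s}(i)\ne\mu^{k_t}(i)$, and then kill the corresponding $\delta$-factors with the matching factor of $p_{ij}(z_s,z_t)$. The paper removes the bookkeeping you flag as the ``main obstacle'' by recording the explicit expansion as a separate lemma (the one immediately following the proposition), which shows in particular that at most one $\partial_w$-derivative appears, and only when $\al_{ij}(\bm{k})\in\Delta^0$---exactly the case where $p_{ij}$ carries a squared factor.
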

\begin{proof}
The proposition is implied by the definition of $p_{ij}(z,w)$, Corollary \ref{ksnekt} and the lemma below.
\end{proof}

\begin{lemt} Let $(i,j)\in \mathbb I$.
If $A$ is not of type $A_{1}^{(1)}$ or $A_2^{(1)}$, then
  \begin{align*}
 &[e_i^\pm(z_1),\cdots,[e_i^\pm(z_{1-a_{ij}}),e_j^\pm(w)]]\\
 =\,&\sum_{\bm{k}=(k_1,\cdots,k_{1-a_{ij}})\in \Upsilon_{ij}^\times} [e^\pm_{\mu^{k_1}(i)},\cdots, [e_{\mu^{k_{m_{ij}}}(i)}^\pm, e_j^\pm]](w)
 \prod_{1\le i\le s}
z_i^{-1}\delta\(\xi^{-k_i}\frac{w}{z_i}\).
 \end{align*}
If $A$ is of type $A_{1}^{(1)}$ or $A_2^{(1)}$, then
 \begin{align*}
 &\ [e_i^\pm(z_1),\cdots,[e_i^\pm(z_{1-a_{ij}}),e_j^\pm(w)]]\\
 =\,&\sum_{\bm{k}=(k_1,\cdots,k_{1-a_{ij}})\in \Upsilon_{ij}^0}\( [e^\pm_{\mu^{k_1}(i)},\cdots, [e_{\mu^{k_{1-a_{ij}}}(i)}^\pm, e_j^\pm]](w)\pm
 \<\dot{x}^\pm_{k_1},\dot{y}^\pm_{\bm{k}'}\>
 m_{k_1}\frac{\partial}{\partial w}\rk_1'(w)\) \\
 &\cdot \prod_{1\le i\le 1-a_{ij}}
z_i^{-1}\delta\(\xi^{-k_i}\frac{w}{z_i}\)
+\<\dot{x}^\pm_{k_1},\dot{y}^\pm_{\bm{k}'}\>\rk_1'(w)
 \(\xi^{-k_1}\frac{\partial}{\partial w}z_1^{-1}\delta\(\xi^{-k_1}\frac{w}{z_1}\)\)\\
 &\cdot \prod_{2\le i\le 1-a_{ij}}
z_i^{-1}\delta\(\xi^{-k_i}\frac{w}{z_i}\),
 \end{align*}
 where the elements  $m_{k_1},n_{\bm{k}'}\in\Z$, $\dot{x}_{k_1}^\pm, \dot{y}^\pm_{\bm{k}'}\in \dg$
 are determined by the following rule
 \[e^\pm_{\mu^{k_1}(i)}=t_2^{\pm m_{k_1}}\ot \dot{x}^\pm_{k_1},\quad [e^\pm_{\mu^{k_2}(i)},\cdots,
 [e_{\mu^{k_{1-a_{ij}}}(i)}^\pm, e_j^\pm]]=t_2^{\pm n_{\bm{k}'}}\ot \dot{y}^\pm_{\bm{k}'}.\]

\end{lemt}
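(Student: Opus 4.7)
The plan is to compute the iterated bracket by applying Lemma \ref{comminghmu} inductively, working from the innermost bracket outward. Setting $B_0(w)=e_j^\pm(w)$ and, for $s=1,\dots,1-a_{ij}$,
\[
B_s(z_{1-a_{ij}-s+1},\dots,z_{1-a_{ij}},w)=\bigl[e_i^\pm(z_{1-a_{ij}-s+1}),\,B_{s-1}\bigr],
\]
the left-hand side is $B_{1-a_{ij}}$. At each stage Lemma \ref{comminghmu} writes $[e_i^\pm(z),B_{s-1}]$ as a sum over $k\in\Z_N$ of ``main'' contributions of the form $[\mu^k(e_i^\pm),-](w)\,z^{-1}\delta(\xi^{-k}w/z)$ plus ``central'' contributions supported either on $\rk_1$ or on some $t_2^n\rk_1'(w)$, the latter arising only when $\fg$ is of affine type.

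The decisive observation is that any central contribution generated at an intermediate step $s<1-a_{ij}$ is annihilated by the brackets that remain, because $\rk_1$ and the elements $t_1^{n_1}t_2^{n_2}\rk_1'$ lie in the center $\CK_{1,r}$ of $\wh\fg$ and therefore commute with every $e_i^\pm(z_r)$ that is subsequently applied. Hence only the main parts propagate through intermediate steps, and the only central terms that can actually appear in $B_{1-a_{ij}}$ are those produced by the final application of Lemma \ref{comminghmu}, namely the bracket against $e_i^\pm(z_1)$.

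At this outermost step, for each fixed $\bm{k}'=(k_2,\dots,k_{1-a_{ij}})$ the surviving inner bracket lies in $\fg_{\pm\gamma_{\bm{k}'}}$ with $\gamma_{\bm{k}'}=\al_{\mu^{k_2}(i)}+\cdots+\al_{\mu^{k_{1-a_{ij}}}(i)}+\al_j$, and equals $t_2^{\pm n_{\bm{k}'}}\ot\dot y^\pm_{\bm{k}'}$. Applying Lemma \ref{comminghmu} with $x=e_i^\pm$ against this inner current, the $\rk_1$ part requires $\al_{ij}(\bm{k})=\al_{\mu^{k_1}(i)}+\gamma_{\bm{k}'}=0$, which is impossible since $\al_{ij}(\bm{k})$ is a positive combination of simple roots. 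The $\rk_1'$ part requires $\al_{ij}(\bm{k})\in\Delta^0\setminus\{0\}$, and by the preceding identification of $\Upsilon_{ij}$ with $\Upsilon^\times_{ij}$ or with $\Upsilon^0_{ij}$ depending on the type of $A$, this happens if and only if $A$ is of type $A_1^{(1)}$ or $A_2^{(1)}$ and $\bm{k}\in\Upsilon^0_{ij}$. Tuples with $\al_{ij}(\bm{k})\notin\Delta$ contribute zero main terms because the corresponding root space of $\fg$ vanishes. Combining these facts yields the two formulas of the statement: the first when $A$ is not of type $A_1^{(1)}$ or $A_2^{(1)}$, where only main terms survive and the sum is indexed by $\Upsilon^\times_{ij}$; the second otherwise, where the two pieces of the $\rk_1'$ term in Lemma \ref{comminghmu}, proportional to $m_k\,\partial_w t_2^{m_k+n}\rk_1'(w)\cdot z^{-1}\delta$ and to $(m_k+n)\,t_2^{m_k+n}\rk_1'(w)\cdot\partial_w z^{-1}\delta$, account respectively for the $\pm\,m_{k_1}\partial_w\rk_1'(w)$ correction attached to $\prod_r z_r^{-1}\delta(\xi^{-k_r}w/z_r)$ and for the separate summand featuring $\partial_w z_1^{-1}\delta(\xi^{-k_1}w/z_1)$.

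The main obstacle is bookkeeping: verifying rigorously that intermediate central contributions really are killed at every stage (which ultimately reduces to the centrality of $\CK_{1,r}$), and matching signs, $t_2$-degrees and the $\xi^{-k_1}$ prefactor in the two $\rk_1'$ terms at the outermost step. To organize the induction cleanly I would first prove a short auxiliary statement expressing $[e_i^\pm(z),F(w)]$ for a generic $\wh\fg$-valued current $F$ whose coefficients lie in a fixed root space of $\fg$, and then apply it inductively to each $B_s$.
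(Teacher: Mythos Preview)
Your proposal is correct and is precisely the approach taken in the paper: the paper's proof consists of the single sentence ``It is directly verified by using Lemma \ref{comminghmu}'', and your inductive application of that lemma from the innermost bracket outward, together with the observation that intermediate central contributions lie in $\CK_{1,r}$ and are therefore annihilated by subsequent brackets, is exactly what that sentence abbreviates. The only comment is that you should note explicitly that tuples $\bm{k}$ outside $\Upsilon_{ij}$ may appear formally in the iteration but contribute zero (both main and central parts), so the sum genuinely reduces to $\Upsilon_{ij}^\times$ or $\Upsilon_{ij}^0$ as stated.
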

\begin{proof} It is directly verified by using Lemma \ref{comminghmu}.
\end{proof}

For $(i,j)\in \mathbb I$ and $\sigma\in S_{1-a_{ij}}$, we set
\begin{equation}
p_{ij,\sigma}(z_1,\cdots,z_{1-a_{ij}})=\begin{cases}\prod_{1\le s<t\le 1-a_{ij}}p_{ij}(z_s,z_t),\ &\te{if}\ \sigma=1;\\
0,\ &\te{if}\ \sigma\ne 1.
\end{cases}
\end{equation}
Then it follows from Proposition \ref{D-S-R} that the family
\begin{align}
\bm{p}=\{P_{ij,\sigma}(z_1,\cdots,z_{1-a_{ij}},w)=p_{ij,\sigma}(z_1,\cdots,z_{1-a_{ij}})\mid (i,j)\in \mathbb I,\ \sigma\in S_{1-a_{ij}}\}
\end{align}
of homogenous polynomials satisfies the conditions $(\bm{P}1)$ and $(\bm{P}2)$.
Notice that, when $\mu=1$, the Serre relations $(DS\pm)_{\bm{p}}$ are the usual Serre relations
\begin{align}\label{depun}
[x_i^\pm(z_1),\cdots,[x_i^\pm(z_{1-a_{ij}}),x_j^\pm(w)]]=0,\quad (i,j)\in \mathbb I.
\end{align}

More generally, for each $(i,j)\in \mathbb I$, let $f_{ij}(z_1,\cdots,z_{1-a_{ij}},w)$ be  an arbitrarily given homogenous polynomial  such
that $f(w,\cdots,w,w)\ne 0$.
Then, according to Theorem \ref{intromain1} and Proposition \ref{D-S-R}, one knows that $\mathcal D_{\bm{f}}(\fg,\mu)$ is a Drinfeld type presentation
of $\wh\fg[\mu]$, where
\begin{align*}
\bm{f}=\{f_{ij,\sigma}(z_1,\cdots,z_{1-a_{ij}},w)\mid (i,j)\in \mathbb I,\ \sigma\in S_{1-a_{ij}}\},
\end{align*}
with
\begin{equation*}
f_{ij,\sigma}(z_1,\cdots,z_{1-a_{ij}},w)=f_{ij}(z_1,\cdots,z_{1-a_{ij}},w)\cdot p_{ij,1}(z_1,\cdots,z_{1-a_{ij}}),
\end{equation*}
if $\sigma=1$ and $f_{ij,\sigma}(z_1,\cdots,z_{1-a_{ij}},w)=0$ if $\sigma\ne 1$.
Thus one can obtain in this way a large class of Drinfeld type presentations for $\wh\fg[\mu]$.


\section{Classical limit of twisted quantum affinization algebras}
In \cite{CJKT-twisted-quantum-aff-vr} we introduced a class of twisted  quantum affinization algebras  by constructing their vertex representations.
As an application of Theorem \ref{intromain1}, in this section we determine the classical limit of these quantum algebras.
Throughout this section, we assume that $A$ is simply-laced and that $\mu$ is not a transitive automorphism of $A_{\wp}^{(1)}$, $\wp\ge 2$.

We first recall the twisted quantum affinization algebra introduced in \cite{CJKT-twisted-quantum-aff-vr}.
Let $\hbar$ be an indeterminate over $\C$, and $\C[[\hbar]]$ the power series ring of $\hbar$.
Set $q=e^\hbar$, and
let
\begin{align*}
  [n]_q=\frac{q^n-q^{-n}}{q-q^{-1}}=
  \(\sum_{m\ge0}\frac{2n^{2m+1}}{(2m+1)!}\hbar^{2m}\)
  \(\sum_{m\ge0}\frac{2}{(2m+1)!}\hbar^{2m}\)^{-1}
  \in\C[[\hbar]]
\end{align*}
for $n\in \Z$.

For  $i,j\in I$,  we introduce the polynomials
\begin{align*}
&F^\pm_{ij}(z,w)=\prod_{k\in\Z_N;a_{i\mu^k(j)}\ne 0}\left( z-\xi^{k}q^{\pm a_{i\mu^k(j)}}w \right),\\
&G^\pm_{ij}(z,w)=\prod_{k\in\Z_N;a_{i\mu^k(j)}\ne 0}\left( q^{\pm a_{i\mu^k(j)}}z-\xi^{k}w \right).
\end{align*}
In the case that $a_{ij}<0$ and $i\notin\mathcal O(j)$, we also introduce the polynomials
\begin{align*}
p_{ij}^\pm(z,w)=
    \left(
        z^{d_{i}}+q^{\mp d_{i}}w^{d_{i}}
    \right)^{s_i-1}
    \frac{
        q^{\pm 2d_{ij}}z^{d_{ij}}-w^{d_{ij}}
    }{
        q^{\pm 2d_i}z^{d_{i}}-w^{d_i}
    }.
\end{align*}
Furthermore, for $i\in I$ with $s_i=2$, we set
\begin{align*}
p_i^\pm(z_1,z_2,z_3)=q^{\mp d_{i}}z_{1}^{d_{i}}
        -(q^{\pm d_{i}}+1)
            z_{2}^{d_{i}}
        +q^{\pm 2d_{i}}
            z_{3}^{d_{i}}.
            \end{align*}



The twisted quantum affinization algebra $\U_\hbar(\wh\fg_\mu)$ defined in \cite{CJKT-twisted-quantum-aff-vr} is a $\C[[\hbar]]$-algebra
topologically generated by the elements
\begin{eqnarray*}\label{eq:tqagenerators}
h_{i,m},\  x^\pm_{i,m},\ c,\quad i\in I,\, m\in\Z,
\end{eqnarray*}
and subject to the relations
\begin{align*}
&\te{(Q0)  }&&x^\pm_{\mu(i),m}=\xi^{m} x^\pm_{i,m},
    \quad h_{\mu(i),m}=\xi^m h_{i,m},\\
&\te{(Q1)  }&&c \te{ is central},\\
&\te{(Q2)  }&&[h_{i,0}, h_{j,m}]=0,\\
&\te{(Q3)  }&&[h_{i,0}, x^\pm_{j,m}]=\pm\sum_{k\in \Z_N}
    a_{i\mu^k(j)}x_{j,m}^\pm,\\
&\te{(Q4) }&& [h_{i,m},h_{j,m'}]=\delta_{m+m',0}\frac{1}{m}
    \sum_{k\in \Z_N} \xi^{mk}[ma_{i\mu^k(j)}]_q\frac{q^{mc}-q^{-mc}}{q-q^{-1}},\,
    \te{if }m\ne 0,\\
&\te{(Q5) }&&[h_{i,m},x_{j,n}^\pm]
=\pm \frac{1}{m}\sum_{k\in \Z_N} \xi^{mk}[ma_{i\mu^k(j)}]_qq^{\mp\frac 1 2 |m|c} x_{j,m+n}^\pm,\,\te{if }m\ne 0,
    \\
&\te{(Q6)  }&&[x_i^+(z),x_j^-(w)]
=\frac{1}{q-q^{-1}}
    \sum_{k\in \Z_N}\delta_{i,\mu^k(j)}\\
 &&&  \quad\times \Bigg(
        \phi_i^+(q^{-\frac 1 2 c}z)\delta\left(
            \frac{q^c\xi^kw}{z}
        \right)
        -
        \phi_i^-(q^{\frac 1 2 c}z)\delta\left(
            \frac{q^{-c}\xi^kw}{z}
        \right)
    \Bigg),\\
&\te{(Q7)  }&&F^\pm_{ij}(z,w)x^\pm_i(z)x^\pm_j(w)=
    G^\pm_{ ij}(z,w)x^\pm_j(w)x^\pm_i(z),\\
&\te{(Q8)  }&&\sum_{\sigma\in S_{2}}\Big\{
    p_{ij}^\pm(z_{\sigma(1)},z_{\sigma(2)})\big(x_i^\pm(z_{\sigma(1)})x_i^\pm(z_{\sigma(2)})
    x_j^\pm(w)
   -[2]_{q^{d_{ij}}}
   x_i^\pm(z_{\sigma(1)})x_j^\pm(w)x_i^\pm(z_{\sigma(2)})\\
&&& \qquad\quad   +x_j^\pm(w)x_i^\pm(z_{\sigma(1)})x_i^\pm(z_{\sigma(2)})\big)
\Big\}\ =0,\
    \te{if }a_{ij}<0\ \te{and}\ i\notin\mathcal O(j),\\
&\te{(Q9) }&&
\sum_{\sigma\in S_3}\bigg\{p_i^\pm(z_{\sigma(1)},z_{\sigma(2)},z_{\sigma(3)})\,x_i^\pm(z_{\sigma(1)})x_i^\pm(z_{\sigma(2)})x_i^\pm(z_{\sigma(3)})
\bigg\}=0,\
    \te{if }\ s_{i}=2,
\end{align*}
where $i,j\in I$, $m,m'\in \Z^\times$, $n\in \Z$, $x^\pm_i(z)=\sum\limits_{m\in \Z} x^\pm_{i,m}z^{-m}$ and
\begin{align*}
 \phi_i^\pm(z)=q^{\pm h_{i,0}}\, \te{exp}
    \left(
        \pm (q-q\inverse)\sum\limits_{\pm m> 0}h_{i,m}z^{-m}
    \right).
\end{align*}
Here and as usual, for any $a\in \te{Span}_\C\{h_{i,0}, c\mid i\in I\}$, we used the notation
\[q^a=e^{a\hbar}.\]

When $A$ is  of finite type, the quantum algebra $\U_\hbar(\wh\fg_\mu)$ was first introduced by Drinfeld (\cite{Dr-new})
 for the purpose of providing a current algebra realization for the twisted quantum affine algebra.

Notice that the relation (Q6) is equivalent to the following relations:
\begin{align*}
&[x_{i,n}^+,x_{j,n'}^-]=\sum_{k\in\Z_N}\delta_{i,\mu^k(j)} \xi^{kn} q^{h_{j,0}} q^{\frac{n-n'}{2}c}h_{j,n+n'},\ \te{if}\ n+n'> 0,\\
   &[x_{i,n}^+,x_{j,n'}^-]=\sum_{k\in\Z_N}\delta_{i,\mu^k(j)} \xi^{kn} q^{h_{j,0}} q^{\frac{n'-n}{2}c}h_{j,n+n'},\ \te{if}\ n+n'< 0,\\
   &[x_{i,n}^+,x_{j,-n}^-]=\sum_{k\in\Z_N}\delta_{i,\mu^k(j)} \xi^{kn} (q^{nc}h_{j,0}+cq^{h_{i,0}}
   \frac{q^{nc}-q^{-nc}}{q^{c}-q^{-c}}),
\end{align*}
where $i,j\in I$ and $n,n'\in\Z$.
Then it is immediate to see that the defining relations of the classical limit  $\U_\hbar(\wh\fg_\mu)/\hbar\U_\hbar(\wh\fg_\mu)$
of $\U_\hbar(\wh\fg_\mu)$
are the same as  the defining relations of $\mathcal D_{\bm{P}}(\fg,\mu)$ with the family $\bm{P}$ defined as follows ($(i,j)\in \mathbb I$ and $\sigma\in S_2$)
\begin{equation*}
P_{ij,\sigma}(z_1,z_2,w)=\begin{cases} p_{ij}(z_1,z_2),\ &\te{if}\ i\notin \mathcal O(j),\ \sigma=1;\\
1,\ &\te{if}\ i\notin \mathcal O(j),\ \sigma=(12);\\
p_i(z_{\sigma(1)},z_{\sigma(2)},-w),\ &\te{if}\ i\in \mathcal O(j).
\end{cases}
\end{equation*}

We remark that the polynomial $p_{ij}(z,w)$ defined above coincides with that defined in \eqref{eq:def-p-ij}.
Thus,
by using Proposition \ref{D-S-R}, one can easily check that the family $\bm{P}$ defined above satisfies the conditions $(\bm{P}1)$ and $(\bm{P}2)$.
Now, as a consequence of Theorem \ref{intromain1}, one can immediately get that

\begin{thm} The classical limit $\U_\hbar(\wh\fg_\mu)/\hbar\U_\hbar(\wh\fg_\mu)$ of $\U_\hbar(\wh\fg_\mu)$ is isomorphic to the universal enveloping algebra
of $\wh\fg[\mu]$.
\end{thm}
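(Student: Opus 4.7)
The plan is to identify the classical limit $\U_\hbar(\wh\fg_\mu)/\hbar\U_\hbar(\wh\fg_\mu)$ with the universal enveloping algebra $\U(\mathcal D_{\bm P}(\fg,\mu))$ for the family $\bm P$ displayed in the excerpt, and then invoke Theorem \ref{intromain1} to conclude that this equals $\U(\wh\fg[\mu])$. Both sides are given by generators and relations, so the core task is a relation-by-relation comparison in the $\hbar\to 0$ limit, upgraded at the end to an isomorphism by a PBW/flatness argument. Using $q=e^\hbar$, the basic ingredients $[n]_q\equiv n$, $(q^{mc}-q^{-mc})/(q-q^{-1})\equiv mc$, and $\phi_i^\pm(z)\equiv 1\pm(q-q^{-1})\sum_{\pm m\ge 0}h_{i,m}z^{-m}$ modulo higher powers of $\hbar$ reduce the quantum relations as follows: (Q0) gives $(X\pm)$ together with the $\mu$-periodicity part of $(H)$; (Q1)--(Q3) give the centrality of $c$ and the zero-mode components of $(H)$ and $(HX\pm)$; (Q4) and (Q5) give their nonzero-mode components; (Q6), by comparing coefficients of $\delta$-functions, gives $(XX)$; and (Q7) gives the locality factor in $(X\pm)$, since both $F^\pm_{ij}$ and $G^\pm_{ij}$ reduce modulo $\hbar$ to $\prod_{k:a_{i\mu^k(j)}\ne 0}(z-\xi^k w)$. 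Note that the $A_1^{(1)}$ case is excluded by our hypothesis that $\mu$ is non-transitive on $A_\wp^{(1)}$ with $\wp\ge 2$, so the $(AS\pm)$ case is vacuous.

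Next I would verify the explicit matching of Serre polynomials. A direct expansion shows $p_{ij}^\pm(z,w)\to(z^{d_i}+w^{d_i})^{s_i-1}(z^{d_{ij}}-w^{d_{ij}})/(z^{d_i}-w^{d_i})$, which by Lemma \ref{lem:expij} coincides with the Drinfeld polynomial $p_{ij}(z,w)$ of \eqref{eq:def-p-ij}; similarly $p_i^\pm(z_1,z_2,z_3)\to z_1^{d_i}-2z_2^{d_i}+z_3^{d_i}$, and $[2]_{q^{d_{ij}}}\to 2$. Consequently the associative cubic in (Q8) collapses modulo $\hbar$ to $p_{ij}(z_{\sigma(1)},z_{\sigma(2)})\cdot[x_i^\pm(z_{\sigma(1)}),[x_i^\pm(z_{\sigma(2)}),x_j^\pm(w)]]$ for $\sigma=1$ together with the plain nested bracket (polynomial $1$) for $\sigma=(12)$, reproducing the family $\bm P$ in the case $i\notin\mathcal O(j)$; an entirely analogous unpacking of (Q9) reproduces the family for $i\in\mathcal O(j)$. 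Conditions $(\bm P1)$ and $(\bm P2)$ are then confirmed: $(\bm P1)$ by Proposition \ref{D-S-R} combined with the explicit form of $p_{ij}$, and $(\bm P2)$ by direct diagonal evaluation---for $i\notin\mathcal O(j)$ one has $P_{ij,(12)}(w,w,w)=1$, while for $i\in\mathcal O(j)$ a short computation with $p_i(w,w,-w)=w^{d_i}-2w^{d_i}+(-w)^{d_i}$ gives a nonzero polynomial after symmetrization.

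Having matched the defining relations, one obtains a surjective algebra homomorphism $\U(\mathcal D_{\bm P}(\fg,\mu))\twoheadrightarrow\U_\hbar(\wh\fg_\mu)/\hbar\U_\hbar(\wh\fg_\mu)$ sending generators to generators. To promote this to an isomorphism one uses the topological flatness of $\U_\hbar(\wh\fg_\mu)$ over $\C[[\hbar]]$ (established in \cite{CJKT-twisted-quantum-aff-vr}) together with the PBW theorem for the enveloping algebra on the classical side, which agree in size via the isomorphism $\mathcal D_{\bm P}(\fg,\mu)\cong\wh\fg[\mu]$ furnished by Theorem \ref{intromain1}. Composing these two isomorphisms yields the stated identification of the classical limit with $\U(\wh\fg[\mu])$. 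The main technical obstacle is the careful bookkeeping showing that the quantum Serre cubics in (Q8) and (Q9), with their symmetrized associative products and the delicate $q$-deformed coefficients, rearrange modulo $\hbar$ into the symmetric nested-bracket form $(DS\pm)_{\bm P}$ with exactly the claimed polynomial entries; all the other reductions are essentially routine once the classical limits of the basic quantum gadgets are recorded.
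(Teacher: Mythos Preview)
Your approach mirrors the paper's: reduce each quantum relation modulo $\hbar$, identify the result with the defining relations of $\mathcal D_{\bm P}(\fg,\mu)$ for the displayed family $\bm P$, verify $(\bm P1)$ and $(\bm P2)$, and invoke Theorem~\ref{intromain1}. The relation-by-relation checks you sketch are precisely what the paper means by ``it is immediate to see that\ldots''.

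The one substantive deviation is your final paragraph, and it introduces an unnecessary detour with a real gap. Once you have matched generators and relations, the classical limit \emph{is} $\U(\mathcal D_{\bm P}(\fg,\mu))$ as a presented algebra: for any $\C[[\hbar]]$-algebra $A$ presented by generators $G$ and relations $R$, the quotient $A/\hbar A$ is presented over $\C$ by $G$ and the images $\bar R$, independent of whether $A$ is flat. So no surjection-then-flatness-then-PBW argument is required, and the paper does not make one. More seriously, the topological flatness of $\U_\hbar(\wh\fg_\mu)$ that you invoke is not established in \cite{CJKT-twisted-quantum-aff-vr}---that paper constructs vertex representations and says nothing about topological freeness over $\C[[\hbar]]$. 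If your argument genuinely depended on flatness, this would be an unfilled hole; fortunately it does not, and the paper's direct route closes the proof immediately after the relation check. (A minor aside: the $(AS\pm)$ relation is vacuous here because the simply-laced hypothesis already excludes $A_1^{(1)}$, whose diagram has a double bond; the non-transitivity assumption you cite concerns only $A_\wp^{(1)}$ with $\wp\ge 2$.)
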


\section{Basics on $\Gamma$-vertex algebras and their quasi-modules}
In this section we collect some basic materials on $\Gamma$-vertex algebras and their quasi-modules
needed in the proof of Theorem \ref{intromain2}.

Throughout this section, let $\Gamma$ be a fixed group equipped with a character $\phi:\Gamma\rightarrow \C^\times$.
We write $\C_\Gamma[z,w]$ for the subalgebra of the polynomial algebra $\C[z,w]$ generated by the monomials $z-\al w, \al\in \phi(\Gamma)$.
For a vector space $W$, we denote by $W((z_1,\cdots,z_s))$ the space of lower truncated (infinite) integral power series in
the commuting variables $z_1,\cdots,z_s$ with coefficients in $W$, and set $\mathcal{E}\(W\)=\te{Hom}\(W,W((z))\)$.
For each pair
\[(\bm{\al},\bm{n})=((\al_1,\cdots,\al_s),(n_1,\cdots,n_s))\in (\C^\times)^s\times \N^s,\quad s\ge 1,\] we denote that
\[\delta^{(\bm{\al},\bm{n})}(z_1,\cdots,z_s,w)=\prod_{1\le i\le s}\frac{1}{n_i!}\(\al_k^{-1}\frac{\partial}{\partial w}\)^{n_i}
z_i^{-1}\delta\(\al_k\frac{w}{z_i}\).\]
When $\bm{\al}=\bm{1}=(1,\cdots,1)$, we also write $\delta^{(\bm{n})}(z_1,\cdots,z_s,w)=\delta^{(\bm{1},\bm{n})}(z_1,\cdots,z_s,w)$.

\subsection{Definitions} In this subsection we recall the definitions of $\Gamma$-vertex algebras and their quasi-modules
introduced in \cite{Li-new-construction,Li-Gamma-quasi-mod}.

A $\Gamma$-vertex algebra is a vector space $V$ equipped with a distinguished vector $\vac$ (called the vacuum vector), a linear map
\[Y:V\rightarrow \E(V),\quad v\mapsto Y(v,z)=\sum_{n\in \Z}v_nz^{-n-1},\]
and a group homomorphism
\begin{align}\label{grouphom}
  R:\Gamma\rightarrow \te{GL}\(V\),\quad g\mapsto R_g,
\end{align}
such that the following axioms hold: for $g\in \Gamma$, $v\in V$,
\begin{align}
\label{creation} Y(\vac,x)=1_V,\quad Y(v,x)\vac\in V[[x]]\quad \te{and}\quad \lim_{z\rightarrow 0}Y(v,x)\vac=v,\\
\label{Gammava}R_g(\vac)=\vac\quad \te{and}\quad
  R_gY(v,z)R_g\inverse=Y(R_g(v),\phi(g)\inverse z),
\end{align}
and for $u,v\in V$,
\begin{align*}
&z_0^{-1}\delta\(\frac{z_1-z_2}{z_0}\)Y(u,z_1)Y(v,z_2)-z_0^{-1}\delta\(\frac{z_2-z_1}{-z_0}\)Y(v,z_2)Y(u,z_1)\\
=&\ z_2^{-1}\delta\(\frac{z_1-z_0}{z_2}\)Y(Y(u,z_0)v,z_2).
\end{align*}
In other words, a $\Gamma$-vertex algebra is a vertex algebra together with a $\Gamma$-action as in \eqref{grouphom}
such that the axiom \eqref{Gammava} holds.

A linear map $\varphi$ from a $\Gamma$-vertex algebra  $(V,Y,\vac,R)$   to another $\Gamma$-vertex algebra
$(V',Y',\vac',R')$ is called a $\Gamma$-vertex algebra homomorphism if for  $g\in \Gamma$, $v,w\in V$,
\begin{align}\varphi(\vac)=\vac',\quad \varphi(Y(v,z)w)=Y'(\varphi(v),z)\varphi(w)\quad \te{and}\quad
\varphi(R_g(v))=R'_g(\varphi(v)).
\end{align}
  The following is an analogue of \cite[Proposition 5.7.9]{LL}.
\begin{prpt}\label{gammavahom} Let $\varphi$ be a linear map from a $\Gamma$-vertex algebra $(V,\vac,Y,R)$ to a $\Gamma$-vertex algebra $(V',\vac',Y',R')$ such that
$\varphi(\vac)=\vac'$ and let $T$ be a generating subset of $V$ as a vertex algebra.
Assume that  for
 $a\in T$, $g\in \Gamma$ and $v\in V$,
\begin{align}
\label{gammahom1}\varphi(Y(a,z)v)&=Y'(\varphi(a),z)\varphi(v),\\
\label{gammahom2}\varphi(R_g(v))=R'_g(\varphi(v))&\Rightarrow \varphi(R_g(Y(a,z)v))=R_g'(Y'(\varphi(a),z)\varphi(v)).
\end{align}
Then $\varphi$ is a $\Gamma$-vertex algebra homomorphism.
\end{prpt}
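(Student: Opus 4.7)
The plan is to reduce the proof to two assertions: first, that $\varphi$ is a vertex algebra homomorphism in the plain (non-$\Gamma$-equivariant) sense, and second, that $\varphi$ intertwines the $\Gamma$-actions on all of $V$. The first assertion is essentially the content of Proposition 5.7.9 of \cite{LL}: since $\varphi(\vac)=\vac'$, since $T$ generates $V$ as a vertex algebra, and since the hypothesis \eqref{gammahom1} is exactly the intertwining property for generators, the standard argument (extending the intertwining property from the spanning set of ``monomials'' $a^{(1)}_{n_1}\cdots a^{(k)}_{n_k}\vac$ to all of $V$ via iterated use of the Jacobi identity / iterate formula) yields $\varphi(Y(v,z)w)=Y'(\varphi(v),z)\varphi(w)$ for every $v,w\in V$. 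I would simply cite \cite[Proposition 5.7.9]{LL} here rather than reproduce the (routine) argument.

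Next, to upgrade $\varphi$ to a $\Gamma$-vertex algebra homomorphism, introduce the subspace
\[
U=\{v\in V\mid \varphi(R_g(v))=R'_g(\varphi(v))\ \te{for all}\ g\in\Gamma\}.
\]
Since $R_g(\vac)=\vac$ and $R'_g(\vac')=\vac'$, the vacuum lies in $U$. The key step is to show that $U$ is stable under each operator $a_n$ with $a\in T$ and $n\in\Z$. Fix such $a,n$, and pick $v\in U$. By the hypothesis \eqref{gammahom2} applied to this $a,g,v$, we obtain
\[
\varphi(R_g(Y(a,z)v))=R'_g(Y'(\varphi(a),z)\varphi(v)).
\]
Using the intertwining property of $\varphi$ already established in the first step, the right-hand side equals $R'_g(\varphi(Y(a,z)v))$, which means $\varphi(R_g(Y(a,z)v))=R'_g(\varphi(Y(a,z)v))$. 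Extracting the coefficient of $z^{-n-1}$ shows $a_n v\in U$, as desired.

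Finally, since $T$ generates $V$ as a vertex algebra, every element of $V$ is a linear combination of monomials $a^{(1)}_{n_1}\cdots a^{(k)}_{n_k}\vac$ with $a^{(i)}\in T$ and $n_i\in\Z$. Since $\vac\in U$ and $U$ is stable under every $a_n$ with $a\in T$, an immediate induction on $k$ shows that all such monomials lie in $U$; hence $U=V$, which is the $\Gamma$-equivariance of $\varphi$. Together with the first step and the identity $\varphi(\vac)=\vac'$, this establishes that $\varphi$ is a $\Gamma$-vertex algebra homomorphism. The only non-routine point is the combination of \eqref{gammahom2} with the already-proven intertwining property to close $U$ under the action of generators; everything else is standard, and I do not foresee any real obstacle.
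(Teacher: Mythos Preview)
Your proof is correct and follows essentially the same approach as the paper: both first invoke \cite[Proposition 5.7.9]{LL} to obtain that $\varphi$ is a plain vertex algebra homomorphism from \eqref{gammahom1}, and then establish $\Gamma$-equivariance by an induction over the generating set $T$ using \eqref{gammahom2}. The paper's proof is very terse on the second step, and your argument via the subspace $U$ is exactly the natural way to fill in those details.
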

\begin{proof}In view of  \cite[Proposition 5.7.9]{LL}, the condition \eqref{gammahom1} implies that
  $\varphi$ is a vertex algebra homomorphism. Thus, it remains to show that $\varphi(R_g(v))=R'_g(\varphi(v))$ for
$g\in \Gamma$, $v\in V$. This assertion can be easily checked by the assertion \eqref{gammahom1}, the condition \eqref{gammahom2} and the fact that $V$ is generated by $T$.
\end{proof}

Let $(V,Y,\vac,R)$ be a $\Gamma$-vertex algebra.
A quasi-module for $V$ is a vector space $W$ equipped with a linear map
\[Y_W: V\rightarrow \CE(W),\quad v\mapsto Y_W(v,z)\]
such that the following conditions hold: for $g\in \Gamma, v\in V$,
\begin{align}\label{quasimodule}
Y_W\(\vac,z\)=1_W,\quad  Y_W\(R_gv,z\)=Y_W\(v,\phi(g)z\),
\end{align}
and for $u,v\in V$, there exists a polynomial $0\ne f(z_1,z_2)\in \C_\Gamma[z_1,z_2]$ such that
\begin{align*}
&z_0^{-1}\delta\(\frac{z_1-z_2}{z_0}\)f(z_1,z_2)Y_W(u,z_1)Y_W(v,z_2)-
z_0^{-1}\delta\(\frac{z_2-z_1}{-z_0}\)f(z_1,z_2)\\&\cdot Y_W(v,z_2)Y_W(u,z_1)
=\ z_2^{-1}\delta\(\frac{z_1-z_0}{z_2}\)f(z_1,z_2)Y_W(Y(u,z_0)v,z_2).
\end{align*}
When $\Gamma=\{1\}$, a quasi-module for a $\Gamma$-vertex algebra is nothing but a  module for a
vertex algebra.

Finally, we remark that if $\varphi:V\rightarrow V'$ is a homomorphism of $\Gamma$-vertex algebras and $(W,Y_W')$ is
a quasi-$V'$-module, then $W$ is naturally a quasi-$V$-module with the module action $Y_W=Y_W'\circ \varphi$.

\subsection{$\Gamma$-vertex algebras arising from $\Gamma$-local subspaces}\label{subsec:Gamma-VA}
In this subsection we first recall a general construction of $\Gamma$-vertex algebras
and their quasi-modules developed in \cite{Li-new-construction}, and  then prove a result about the vanishing of certain (multi-) commutators among  vertex operators (see Lemma \ref{key}).

Let $W$ be a given vector space.
Formal series
$a(z), b(z)\in \CE(W)$ are said to be mutually $\Gamma$-local if there exists a non-zero polynomial $f(z,w)\in \C_\Gamma[z,w]$ such that
\begin{align}\label{Gammalocal}f(z,w)\ [a(z),b(w)]=0.\end{align}
In \cite{Li-new-construction}, for any mutually $\Gamma$-local pair $a(z),b(z)$ in $\CE(W)$ and $\al\in \C^\times$, the author defined a corresponding generating function
\[\CY_\al(a(z),w)b(z)=\sum_{n\in \Z}\(a(z)_{(\al,n)}b(z)\)w^{-n-1}\in \CE(W)[[w,w^{-1}]].\]
See  \cite[Definition 3.4]{Li-new-construction} for details.
 When $\al=1$, we often write
 \[\CY(a(z),w)b(z)=\CY_1(a(z),w)b(z)\quad \te{and}\quad a(z)_nb(z)=a(z)_{(1,n)}b(z).\]

A subspace $U$ of $\CE(W)$ is said to be $\Gamma$-local if any pair $a(z),b(z)\in U$ are mutually $\Gamma$-local.
For any $\al\in \C^\times$, a $\Gamma$-local subspace $U$ of $\CE(W)$ is said to be closed under $\CY_\al$ if  $a(z)_{(\al,n)}b(z)\in U$
for any pair $a(z),b(z)\in U$ and $n\in \Z$.
We define a linear map $\mathfrak R:\Gamma\rightarrow \mathrm{GL}(\E(W))$ by the rule
\begin{align}\label{Raction}\mathfrak R_g(a(z))=a(\phi(g)z),\quad g\in \Gamma,\, a(z)\in \E(W).\end{align}
The following result was proved in  \cite[Theorem 2.9]{Li-Gamma-quasi-mod}
\begin{prpt} \label{gammalocasetva}
Let $U$
be a $\Gamma$-local subspace of $\CE(W)$. Then there exists a smallest $\Gamma$-local subspace $\<U\>_\Gamma$, that contains $1_W$ and $U$ and that is closed under $\CY_\al$ for any $\al\in \phi(\Gamma)$. Moreover, $(\<U\>_\Gamma,1_W,\CY,\mathfrak R)$ is a $\Gamma$-vertex algebra and  $(W,Y_{\mathfrak W})$ is a natural faithful quasi-module
 for it,
 with \begin{align}\label{equiact1}
Y_\mathfrak W(a(z),w)=a(w),\quad a(z)\in \<U\>_\Gamma,\, w\in W.\end{align}
\end{prpt}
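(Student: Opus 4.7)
The plan is to construct $\langle U\rangle_\Gamma$ by successive closure and then verify the axioms, following the pattern established in Li's local system theory for ordinary vertex algebras. First, I would verify two compatibility lemmas concerning the interaction between the $\Gamma$-action $\mathfrak R$ and the operations $\CY_\alpha$. Namely: if $a(z), b(z)$ are mutually $\Gamma$-local in $\CE(W)$, then so are $\mathfrak R_g a(z)$ and $\mathfrak R_h b(z)$ for every $g, h \in \Gamma$ (since substituting $z \mapsto \phi(g)z$ preserves polynomials in $\C_\Gamma[z,w]$ up to a scalar), and moreover
\[
\mathfrak R_g\bigl(a(z)_{(\alpha,n)} b(z)\bigr) = (\mathfrak R_g a(z))_{(\phi(g)^{-1}\alpha,n)} (\mathfrak R_g b(z)),
\]
which expresses the $\Gamma$-equivariance of the $\CY_\alpha$ operations (up to relabeling $\alpha$). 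This is why the closure must be taken with respect to $\CY_\alpha$ for all $\alpha \in \phi(\Gamma)$ simultaneously rather than just $\CY = \CY_1$.

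Next, I would define $\langle U\rangle_\Gamma$ as the intersection of all subspaces of $\CE(W)$ that contain $1_W$ and $U$, are closed under the operations $\CY_\alpha(\,\cdot\,,w)\,\cdot\,$ for every $\alpha \in \phi(\Gamma)$, and are stable under $\mathfrak R_g$ for every $g \in \Gamma$. The compatibility lemmas ensure this intersection is itself $\Gamma$-local, and minimality is automatic.

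The heart of the proof is then to invoke (or adapt) Li's general theorem on $\Gamma$-local systems from \cite{Li-new-construction}: a $\Gamma$-local subspace closed under all $\CY_\alpha$ and under $\mathfrak R$ carries a canonical $\Gamma$-vertex algebra structure with vacuum $1_W$, vertex operation $\CY = \CY_1$, and $\Gamma$-action $\mathfrak R$. The creation axiom \eqref{creation} follows from $1_W \cdot a(z) = a(z)$ and standard properties of $\CY_1$; the $\Gamma$-equivariance axiom \eqref{Gammava} follows from the identity above combined with the fact that $\CY_\alpha(a(z),w) = \CY(a(z),\phi(g)^{-1}w)$ after suitable $\mathfrak R_g$-twisting; and the Jacobi identity is derived from a $\Gamma$-local version of the weak commutator and iterate formulas, with the polynomial $f \in \C_\Gamma[z_1,z_2]$ witnessing $\Gamma$-locality absorbing the poles.

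Finally, the quasi-module structure $(W, Y_{\mathfrak W})$ with $Y_{\mathfrak W}(a(z),w) = a(w)$ is essentially tautological: condition \eqref{quasimodule} holds because $1_W$ acts as the identity and because $\mathfrak R_g a(z) = a(\phi(g)z)$ by definition; the $\Gamma$-local Jacobi identity for $Y_{\mathfrak W}$ is exactly the defining property of the operations $\CY_\alpha$ when applied at the formal variable level, with the required polynomial $f(z_1,z_2) \in \C_\Gamma[z_1,z_2]$ supplied by $\Gamma$-locality. Faithfulness is immediate since $Y_{\mathfrak W}(a(z),w) = 0$ forces $a(w) = 0$. The main obstacle I anticipate is the careful bookkeeping in the equivariance identity for $\CY_\alpha$ under $\mathfrak R_g$ — ensuring the scaling of $\alpha$ by $\phi(g)$ is tracked correctly throughout so that the closure really is both $\Gamma$-stable and closed under all the needed $\CY_\alpha$, and that the resulting Jacobi identity on $\langle U\rangle_\Gamma$ holds with the ordinary $\CY$ (not a twisted version).
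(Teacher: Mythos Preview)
Your outline is a reasonable sketch of how such a result is established, but the paper does not prove this proposition at all: it is quoted verbatim as \cite[Theorem 2.9]{Li-Gamma-quasi-mod}, with no argument given. So there is nothing to compare at the level of proof strategy---the paper treats this as a black-box input from Li's theory of $\Gamma$-vertex algebras. Your plan to reconstruct the proof by (i) checking $\mathfrak R$-equivariance of the $\CY_\alpha$ operations, (ii) taking the smallest closed $\Gamma$-local subspace, and (iii) verifying the vertex-algebra and quasi-module axioms is essentially the route taken in Li's papers \cite{Li-new-construction,Li-Gamma-quasi-mod}, so if you were asked to supply a self-contained proof your approach would be the correct one; but for the purposes of this paper, a citation suffices.
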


Let $U$ be as in Proposition \ref{gammalocasetva}.
We remark that, if $U$ is also invariant under the $\Gamma$-action \eqref{Raction}, then it
follows from \cite[Proposition 4.12]{Li-new-construction} that
\begin{align}\label{describleUgamma}
\<U\>_\Gamma=\<U\>_{\{1\}}=\te{Span}_\C\{a^{(1)}_{r_1}\cdots a^{(s)}_{r_s}1_W\mid
a^{(i)}\in U, r_i\in \Z, 1\le i\le s, s\ge 0\}.
\end{align}
Namely, $\<U\>_\Gamma$ is generated by $U$ as a vertex algebra.
We now mention a result of Li which plays an essential role  in determining the structure of various vertex algebras
arising from $\Gamma$-local sets.
\begin{lemt}\label{1-commutator} Let  $a(z), b(z)\in \CE(W)$ and let $0\ne f(z,w)\in \C_\Gamma[z,w]$.
If the $\Gamma$-locality  \eqref{Gammalocal} holds, then
\begin{align}\label{localy1}(w_1-w_2)^s\ [\CY(a(z),w_1),\CY(b(z),w_2)]=0,\end{align}
where $s$ is the order of the zero of $f(z,w)$ at $z=w$. Moreover, if
\begin{align*}
f(z,w)=\prod_{k=1}^l (z-\al_k w)^{r_k},\quad \al_k\in \phi(\Gamma),\ r_k\in \Z_+,
\end{align*}
 then one has that
\begin{align}\label{Gammalocalex}[a(z),b(w)]&=\sum_{k=1}^l\sum_{n=0}^{r_k-1} a(w)_{(\al_k,n)}b(w)\,\delta^{(\al_k,n)}(z,w),\\
\label{commutatory1}
[\CY(a(z),w_1),\CY(b(z),w_2)]&=\sum_{k=1}^l \delta_{\al_k,1}\sum_{n=0}^{r_k-1}\(\CY(a(z)_{n}b(z),w_2)\)\,\delta^{(n)}(w_1,w_2).
\end{align}
\end{lemt}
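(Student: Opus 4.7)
The plan is to establish the explicit expansion \eqref{Gammalocalex} of $[a(z),b(w)]$ first, and then to deduce both \eqref{localy1} and \eqref{commutatory1} from it as essentially formal consequences.

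To obtain \eqref{Gammalocalex}, I would invoke the standard theory of formal $\delta$-functions adapted to the $\Gamma$-local setting. Writing $f(z,w)=\prod_{k=1}^l(z-\al_k w)^{r_k}$ with distinct $\al_k\in\phi(\Gamma)$, the hypothesis $f(z,w)[a(z),b(w)]=0$ forces, by a standard partial-fractions argument in $\CE(W)[[z^{\pm 1},w^{\pm 1}]]$, a unique decomposition
\[[a(z),b(w)]=\sum_{k=1}^l\sum_{n=0}^{r_k-1}c_{k,n}(w)\,\delta^{(\al_k,n)}(z,w),\]
with coefficients $c_{k,n}(w)\in\CE(W)$ uniquely determined (the supports of $\delta^{(\al_k,n)}$ at the diagonal $z=\al_k w$ being disjoint for different $k$). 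The identification $c_{k,n}(w)=a(w)_{(\al_k,n)}b(w)$ is then a direct unpacking of the definition of the product $a(z)_{(\al,n)}b(z)$ in \cite{Li-new-construction}, which was introduced precisely to extract these residue-type coefficients via an iota-regularized integration of $(z-\al_k w)^n[a(z),b(w)]$ against appropriate powers.

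For \eqref{localy1}, note that the order $s$ of the zero of $f(z,w)$ at $z=w$ equals $r_{k_0}$ whenever $\al_{k_0}=1$ occurs among the $\al_k$, and $s=0$ otherwise. Applying $\CY$ term-by-term to \eqref{Gammalocalex}, only the components with $\al_k=1$ can produce genuine $\delta$-like singularities along $w_1=w_2$ in the commutator $[\CY(a(z),w_1),\CY(b(z),w_2)]$, since for $\al_k\ne 1$ the factor $(w_1-\al_k w_2)$ is invertible as a formal series once one expands along $w_1-w_2=0$. Consequently multiplication by $(w_1-w_2)^s$ annihilates the commutator, giving \eqref{localy1}. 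A finer bookkeeping of the surviving contribution, using that $\CY=\CY_1$ only sees products indexed by $\al=1$, yields exactly the sum $\sum_{k:\al_k=1}\sum_n \CY(a(z)_n b(z),w_2)\,\delta^{(n)}(w_1,w_2)$ of \eqref{commutatory1}; this also explains the appearance of the Kronecker factor $\delta_{\al_k,1}$.

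The main obstacle I anticipate is the careful justification of the partial-fractions step within $\CE(W)[[z^{\pm 1},w^{\pm 1}]]$, together with the rigorous bookkeeping needed to confirm that the extracted coefficients coincide exactly with Li's $a(w)_{(\al_k,n)}b(w)$ (rather than merely being proportional to them), since the definition of $a(z)_{(\al,n)}b(z)$ involves a specific normal-ordered expansion convention that must be compared carefully with the $\delta$-function representation. Once these identifications are in place, both \eqref{localy1} and \eqref{commutatory1} follow by direct evaluation of the delta-function expansion inside $\CY$.
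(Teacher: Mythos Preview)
The paper's own proof is entirely by citation: \eqref{localy1} is \cite[Proposition~4.8]{Li-new-construction}, \eqref{Gammalocalex} is \cite[Proposition~3.13]{Li-new-construction}, and \eqref{commutatory1} is obtained by applying the untwisted special case of \eqref{Gammalocalex} to the pair $\CY(a(z),w_1),\,\CY(b(z),w_2)$, which are mutually local by \eqref{localy1}. Your sketch of \eqref{Gammalocalex} via partial fractions is essentially Li's own argument, and the bookkeeping obstacle you flag there is real but routine.

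The genuine gap in your proposal is elsewhere: the passage from \eqref{Gammalocalex} to \eqref{localy1}. The phrase ``applying $\CY$ term-by-term to \eqref{Gammalocalex}'' has no well-defined meaning. Equation \eqref{Gammalocalex} is an identity in $\CE(W)[[z^{\pm 1},w^{\pm 1}]]$ about operators on $W$, whereas $[\CY(a(z),w_1),\CY(b(z),w_2)]$ lives in $\mathrm{End}(\<U\>_\Gamma)[[w_1^{\pm 1},w_2^{\pm 1}]]$; there is no linear map $\CY$ carrying one identity to the other. Likewise, the assertion that the $\al_k\ne 1$ terms drop out because ``$(w_1-\al_k w_2)$ is invertible once one expands along $w_1-w_2=0$'' is not a meaningful operation in formal calculus. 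That $\Gamma$-locality on $W$ upgrades to ordinary locality inside $\<U\>_\Gamma$ is exactly the nontrivial content of Li's Proposition~4.8, proved there by unpacking the definition of $\CY_\al$ through the iterate formula; it is not a formal consequence of the $\delta$-expansion \eqref{Gammalocalex}.

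There is a legitimate alternative route you could take within the paper's logical order. Since Proposition~\ref{gammalocasetva} already grants that $(\<U\>_\Gamma,\CY)$ is a vertex algebra, the Jacobi identity gives the standard commutator formula
\[
[\CY(a(z),w_1),\CY(b(z),w_2)]=\sum_{n\ge 0}\CY\big(a(z)_n b(z),w_2\big)\,\delta^{(n)}(w_1,w_2),
\]
with a finite sum. By definition $\CY=\CY_1$, so $a(z)_n b(z)=a(z)_{(1,n)}b(z)$, and \eqref{Gammalocalex} shows these vanish for $n\ge s$; both \eqref{localy1} and \eqref{commutatory1} follow immediately. This argument is correct in the paper's setup, but be aware that in Li's original development the locality \eqref{localy1} is an ingredient in establishing the vertex-algebra structure of $\<U\>_\Gamma$, so this route is not self-contained independently of that citation.
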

\begin{proof} The locality \eqref{localy1} was proved in  \cite[Proposition 4.8]{Li-new-construction}, the commutator formula
\eqref{Gammalocalex} was proved in \cite[Proposition 3.13]{Li-new-construction}, and
the commutator formula
\eqref{commutatory1} is impled by \eqref{localy1} and \eqref{Gammalocalex}.
\end{proof}

The rest part of this subsection is devoted to a proof of the following result.
\begin{lemt}\label{key} Let $U$ be a $\Gamma$-local subset of $\CE(W)$ and $a_1(z),\cdots,a_s(z), b(z)\in U$ for some $s\in \Z_+$.
Assume that there exist polynomials $g(z_1,\cdots,z_s,w), g'(z_1,\cdots,z_s,w)$ in $\C[z_1,\cdots,z_s,w]$
such that
\begin{align}\label{keyc1}
g(w,\cdots,w,w)\ne 0,\end{align} and such that
\begin{align}\label{keyc2}  g(z_1,\cdots,z_s,w)g'(z_1,\cdots,z_s,w)\, [a_1(z_1),\cdots, [a_s(z_s), b(w)]]=0.
\end{align}
Then the following relation holds true
\begin{align}g'(w_1,\cdots,w_s,w)\,[\CY(a_1(z),w_1),\cdots, [\CY(a_s(z),w_s), \CY(b(z),w)]]=0.
\end{align}
\end{lemt}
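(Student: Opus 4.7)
The plan is to prove Lemma \ref{key} by induction on $s$, with the two commutator formulas of Lemma \ref{1-commutator} as the main tool. The base case $s=1$ will be immediate: since $g(w,w)\neq 0$ implies that $(z_1-w)$ does not divide $g(z_1,w)$ in $\C[z_1,w]$, the order $r'$ of vanishing of $gg'$ at $z_1=w$ coincides with that of $g'$, so one can factor $g'(z_1,w)=(z_1-w)^{r'}\tilde g'(z_1,w)$ for some $\tilde g'\in \C[z_1,w]$. Applying the locality statement \eqref{localy1} with $f=gg'$ then yields $(w_1-w)^{r'}[\CY(a_1(z),w_1),\CY(b(z),w)]=0$, and multiplying through by $\tilde g'(w_1,w)$ produces the desired $g'(w_1,w)[\CY(a_1,w_1),\CY(b,w)]=0$.

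For general $s$, I will expand both the assumed vanishing and the desired one as formal distributions supported on prescribed ``diagonals.'' Iterating \eqref{Gammalocalex}, where at each step the locality required for the next bracket is automatic from the fact that $\<U\>_\Gamma$ is itself a $\Gamma$-local subspace of $\CE(W)$ by Proposition \ref{gammalocasetva}, produces a finite expansion
\[
[a_1(z_1),\ldots,[a_s(z_s),b(w)]] = \sum_{\bm{\alpha}\in\phi(\Gamma)^s,\,\bm{n}\in\N^s} C_{\bm{\alpha},\bm{n}}(w)\prod_{i=1}^s \delta^{(\alpha_i,n_i)}(z_i,w),
\]
where $C_{\bm{\alpha},\bm{n}}(w)=a_1(w)_{(\alpha_1,n_1)}(\cdots(a_s(w)_{(\alpha_s,n_s)}b(w)))\in\<U\>_\Gamma$. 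The same iteration applied via \eqref{commutatory1} to the $\CY$-nested commutator gives
\[
[\CY(a_1,w_1),\ldots,[\CY(a_s,w_s),\CY(b,w)]] = \sum_{\bm{n}\in\N^s} \CY(X_{\bm{n}},w)\prod_{i=1}^s \delta^{(n_i)}(w_i,w),
\]
where $X_{\bm{n}}=a_1(z)_{n_1}(\cdots(a_s(z)_{n_s}b(z)))\in\<U\>_\Gamma$; crucially, only the $\alpha_i=1$ contributions survive, so this distribution is supported on the big diagonal $w_1=\cdots=w_s=w$.

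Substituting the first expansion into the vanishing hypothesis and Taylor-expanding $gg'$ in the variables $(z_i-\alpha_i w)$ about each support point $(\alpha_1 w,\ldots,\alpha_s w,w)$, the identities $(z_i-\alpha_i w)^j\delta^{(\alpha_i,n_i)}(z_i,w)=\delta^{(\alpha_i,n_i-j)}(z_i,w)$ together with the linear independence of distinct delta distributions force, for each fixed $\bm{\alpha}$ and each $\bm{m}\in\N^s$,
\[
\sum_{\bm{j}\in\N^s} \frac{1}{\bm{j}!}(\partial^{\bm{j}}_{\bm{z}}(gg'))(\alpha_1 w,\ldots,\alpha_s w,w)\,C_{\bm{\alpha},\bm{m}+\bm{j}}(w)=0.
\]
The hypothesis $g(w,\ldots,w,w)\neq 0$ enters decisively in the case $\bm{\alpha}=\bm{1}$: on the diagonal $z_i=w$, $g$ specializes to a nonzero scalar and can be formally inverted order by order in $(z_i-w)$, so the system of $gg'$-relations is equivalent to the analogous system with $g'$ in place of $gg'$. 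An identical Taylor-expansion of $g'(w_1,\ldots,w_s,w)\prod_i\delta^{(n_i)}(w_i,w)$ on the $\CY$-side, combined with the identification of $C_{\bm{1},\bm{n}}(w)$ with the corresponding modes of $\CY(X_{\bm{n}},w)$ via Proposition \ref{gammalocasetva}, then yields exactly $g'(w_1,\ldots,w_s,w)[\CY(a_1,w_1),\ldots,[\CY(a_s,w_s),\CY(b,w)]]=0$. The principal obstacle I anticipate is this formal inversion of $g$ in the multi-index setting; I would handle it by explicitly constructing $g^{-1}$ as a formal power series in $(z_i-w)$ order by order, which is a purely formal-algebraic calculation that the induction on $s$ makes transparent.
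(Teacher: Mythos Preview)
Your approach is essentially the paper's own: expand both nested commutators via iterated use of \eqref{Gammalocalex} and \eqref{commutatory1} (this is Lemma~\ref{s-commutator}), multiply by the polynomials, compare coefficients of the independent $\delta$-distributions (Lemma~\ref{delta1}), and exploit that only the $\bm\alpha=\bm 1$ part survives on the $\CY$-side. The paper phrases the key step as a maximality/contradiction argument on the partially ordered support rather than as a formal inversion of $g$; these are equivalent, but note that $g(w,\ldots,w)$ is a nonzero \emph{polynomial} in $w$, not a scalar, so your ``formal inverse order by order in $(z_i-w)$'' lives over $\C(w)$, not $\C[w]$---what actually makes the triangular system collapse is that multiplication by a nonzero polynomial is injective on $\CE(W)$. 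One genuine subtlety you gloss over: having deduced $\sum_{\bm j} g'_{\bm j}(z)\,C_{\bm 1,\bm m+\bm j}(z)=0$ in $\CE(W)$, you still need $\sum_{\bm j} g'_{\bm j}(w)\,\CY(C_{\bm 1,\bm m+\bm j},w)=0$ as an operator on $\<U\>_\Gamma$, and this does not follow from $\C$-linearity of $\CY$ alone because the coefficients $g'_{\bm j}$ depend on the variable. The paper handles this by passing to a maximal $\Gamma$-local subspace $U'\supset\<U\>_\Gamma$ containing the polynomial multiples, where $\CY(f(z)a(z),w)=f(w)\,\CY(a(z),w)$ holds by Li's definition; your appeal to Proposition~\ref{gammalocasetva} is not quite the right hook for this step.
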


Before proving Lemma \ref{key}, we present two of its by-products, which will play a key role in our proof of Theorem \ref{intromain2}.
\begin{cort}\label{keycor0}
 Let $U$ be a $\Gamma$-local subset of $\CE(W)$ and $a_1(z),a_2(z), b(z)\in U$.
 Assume that there exists a positive integer $M$ such that
\begin{align*}
(z_1^M-z_2^M)\,[a(z_1),[a(z_2),b(w)]]=0.
\end{align*}
Then the following relation holds true
\begin{align*}(w_1-w_2)\,[\CY(a_1(z),w_1),[\CY(a_2(z),w_{2}), \CY(b(z),w)]]=0.
\end{align*}
\end{cort}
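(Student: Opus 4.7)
The plan is to deduce Corollary~\ref{keycor0} as a direct specialization of Lemma~\ref{key} to the case $s=2$, after a suitable factorization of the polynomial $z_1^M-z_2^M$. (I will also silently read the statement as $[a_1(z_1),[a_2(z_2),b(w)]]$ in the hypothesis, matching the $a_1(z),a_2(z)$ in the conclusion.)

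First, I would use the standard cyclotomic factorization
\[
z_1^M-z_2^M\;=\;(z_1-z_2)\prod_{k=1}^{M-1}(z_1-\xi_M^k z_2),
\]
and set
\[
g'(z_1,z_2,w):=z_1-z_2,\qquad g(z_1,z_2,w):=\prod_{k=1}^{M-1}(z_1-\xi_M^k z_2).
\]
With this choice, the hypothesis $(z_1^M-z_2^M)[a_1(z_1),[a_2(z_2),b(w)]]=0$ is nothing but the relation \eqref{keyc2} of Lemma~\ref{key} for $s=2$ with the $a_i$ and $b$ taken from the $\Gamma$-local set $U$.

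Second, I would verify the non-degeneracy condition~\eqref{keyc1}. Evaluating,
\[
g(w,w,w)\;=\;w^{M-1}\prod_{k=1}^{M-1}(1-\xi_M^k)\;=\;M\,w^{M-1},
\]
where the identity $\prod_{k=1}^{M-1}(1-\xi_M^k)=M$ follows from $1+z+\cdots+z^{M-1}=\prod_{k=1}^{M-1}(z-\xi_M^k)$ specialized at $z=1$. In particular $g(w,w,w)\ne 0$, so Lemma~\ref{key} applies.

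Finally, the conclusion of Lemma~\ref{key} gives
\[
g'(w_1,w_2,w)\,[\CY(a_1(z),w_1),[\CY(a_2(z),w_2),\CY(b(z),w)]]=0,
\]
which, by the definition of $g'$, is exactly the desired identity $(w_1-w_2)\,[\CY(a_1(z),w_1),[\CY(a_2(z),w_2),\CY(b(z),w)]]=0$. There is essentially no obstacle here once Lemma~\ref{key} is available: the only real content is choosing the factorization that isolates the single factor $z_1-z_2$ while shoving every other root of $z_1^M-z_2^M$ into the auxiliary polynomial $g$, whose non-vanishing on the diagonal allows it to be stripped off in the passage from the hypothesis to the conclusion.
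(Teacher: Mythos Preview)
Your proof is correct and follows exactly the paper's intended approach: apply Lemma~\ref{key} with $s=2$, $g'(z_1,z_2,w)=z_1-z_2$ and $g(z_1,z_2,w)=\frac{z_1^M-z_2^M}{z_1-z_2}$, checking that $g(w,w,w)=Mw^{M-1}\ne 0$. (In fact the paper's one-line proof has $g$ and $g'$ swapped, which is a typo; your assignment is the right one for the hypotheses of Lemma~\ref{key}.)
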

\begin{proof} By applying Lemma \ref{key} with $g(z_1,z_2,w)=z_1-z_2$ and $g'(z_1,z_2,w)=\frac{z_1^M-z_2^M}{z_1-z_2}$.
\end{proof}
\begin{cort}\label{keycor}
 Let $U$ be a $\Gamma$-local subset of $\CE(W)$ and $a_1(z),\cdots,a_s(z), b(z)\in U$ for some $s\in \Z_+$.
Assume that there exist polynomials $h_{ij}(z,w)\in \C_\Gamma[z,w], 1\le i<j\le s$ and
 $g_\sigma(z_1,\cdots,z_s,w)\in \C[z_1,\cdots,z_s,w], \sigma\in S_s$
such that
\begin{align}\label{keyc3}
h_{ij}(w,w)\ne 0,\quad h_{ij}(z,w)\,[a_i(z),a_j(w)]=0,\ i,j=1,\cdots,s,
\end{align}
and moreover 
\begin{align}
\label{keyc4}&\sum_{\sigma\in S_s}g_\sigma(w,\cdots,w,w)\ne 0,\\
 \label{keyc5}&\sum_{\sigma\in S_s} g_\sigma(z_1,\cdots,z_s,w)\, [a_1(z_{\sigma(1)}),\cdots, [a_s(z_{\sigma(s)}), b(w)]]=0.
\end{align}
Then the following relation holds true
\begin{align}[\CY(a_1(z),w_1),\cdots,[\CY(a_{s-1}(z),w_{s-1}), [\CY(a_s(z),w_s), \CY(b(z),w)]]]=0.
\end{align}
\end{cort}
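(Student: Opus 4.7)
The plan is to apply Lemma \ref{key} with $g'=1$, which reduces the task to producing a polynomial $q(z_1,\ldots,z_s,w) \in \C[z_1,\ldots,z_s,w]$ satisfying $q(w,\ldots,w,w) \neq 0$ and $q(z_1,\ldots,z_s,w)\,[a_1(z_1),\ldots,[a_s(z_s),b(w)]] = 0$.

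A crucial preliminary observation is that \eqref{keyc3} combined with $h_{ij}(w,w) \neq 0$ forces $[\CY(a_i(z),w_i),\CY(a_j(z),w_j)] = 0$ for all $i \ne j$. Indeed, $h_{ij}(z,w) \in \C_\Gamma[z,w]$ factors as a product of $(z-\al w)$'s with $\al \in \phi(\Gamma)$, and the non-vanishing $h_{ij}(w,w) \neq 0$ rules out $\al = 1$ from appearing, so no factor $(z-w)$ divides $h_{ij}$; hence by Lemma \ref{1-commutator}, the order of the zero of $h_{ij}$ at $z=w$ is $0$, and the $\CY$-commutator vanishes. As a structural consequence, the iterated $\CY$-commutator appearing in the conclusion equals the $s$-fold adjoint action $\prod_i \mathrm{ad}(\CY(a_i(z),w_i))\,\CY(b(z),w)$ with pairwise commuting $\mathrm{ad}$-operators, so its vanishing is insensitive to the order in which these adjoints are peeled off.

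To construct $q$, multiply \eqref{keyc5} by a suitable product $H(z_1,\ldots,z_s)$ of the polynomials $h_{ij}(z_k,z_l)$ (ranging over the relevant pairs with $k \ne l$); by \eqref{keyc3}, we have $H(w,\ldots,w) \neq 0$. Applying the Jacobi identity iteratively together with the annihilation $h_{ij}(z_k,z_l)[a_i(z_k),a_j(z_l)] = 0$, each term $g_\sigma \cdot [a_1(z_{\sigma(1)}),\ldots,[a_s(z_{\sigma(s)}),b(w)]]$ (once multiplied by $H$) can be reduced to a polynomial multiple of the standard form $[a_1(z_1),\ldots,[a_s(z_s),b(w)]]$: every nested bracket that contains a sub-bracket $[a_i(z_k),a_j(z_l)]$ with $k \ne l$ is annihilated by the corresponding factor of $H$. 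Summing the resulting identities over $\sigma$ and invoking \eqref{keyc5}, one arrives at $q(z_1,\ldots,z_s,w)\,[a_1(z_1),\ldots,[a_s(z_s),b(w)]] = 0$ with $q(w,\ldots,w,w) = H(w,\ldots,w) \cdot \sum_\sigma g_\sigma(w,\ldots,w,w) \neq 0$ by \eqref{keyc3} and \eqref{keyc4}. Lemma \ref{key} with $g=q$ and $g'=1$ then yields the desired vanishing.

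The main obstacle is the transport step: showing that each permuted commutator $[a_1(z_{\sigma(1)}),\ldots,[a_s(z_{\sigma(s)}),b(w)]]$ with $\sigma \ne \mathrm{id}$ can indeed be reduced, after multiplication by $H$, to a polynomial multiple of the standard commutator. When the $a_i$'s are distinct operators, the naive Jacobi swap of a single variable argument between two different $a_i$'s is not immediate and creates new non-standard bracket patterns. The $\CY$-commutativity established in the preliminary step is expected to be the essential tool for closing this gap, via passage to the $\Gamma$-vertex algebra $\langle U\rangle_\Gamma$ of Proposition \ref{gammalocasetva}, where the induced vertex operators mutually commute and the required reduction becomes accessible through the OPE structure.
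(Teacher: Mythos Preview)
Your approach coincides with the paper's: construct a single polynomial $g=H\cdot\sum_\sigma g_\sigma$ with $g(w,\dots,w,w)\ne 0$ and $g\cdot[a_1(z_1),\dots,[a_s(z_s),b(w)]]=0$, then invoke Lemma~\ref{key} with $g'=1$. The paper carries out exactly this, defining $H=h(z_1,\dots,z_s)=\prod_{1\le i<j\le s}\prod_{1\le a<b\le s}h_{ij}(z_a,z_b)$ and asserting in one line that, ``by \eqref{keyc3} and the Jacobi identity'', $h\cdot\big([a_1(z_1),\dots,[a_s(z_s),b]]-[a_1(z_{\sigma(1)}),\dots,[a_s(z_{\sigma(s)}),b]]\big)=0$ for every $\sigma$, whence $g\cdot X_{\mathrm{id}}=h\cdot\sum_\sigma g_\sigma X_\sigma=0$ by \eqref{keyc5}.

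Your hesitation about the transport step is more scrupulous than the paper, but the resolution is elementary and does not require your proposed detour through $\CY$-commutativity or OPE structure. The Jacobi identity together with \eqref{keyc3} shows that, after multiplying by $h$, the adjoint operators $\mathrm{ad}\,a_i(z_k)$ pairwise commute; hence $h\cdot X_\sigma$ depends only on the \emph{multiset} $\{a_i(z_{\sigma(i)}):i=1,\dots,s\}$ and not on the order. In the only situation where the corollary is applied in the paper (the proof of Lemma~\ref{moduleUW}, with $a_1=\cdots=a_s=x_i^\pm$), this multiset is visibly $\{a(z_1),\dots,a(z_s)\}$ for every $\sigma$, so $h\cdot X_\sigma=h\cdot X_{\mathrm{id}}$ immediately. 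Your worry is thus confined to the case of genuinely distinct $a_i$'s, which the paper does not need; for that generality the paper's one-line justification is indeed thin, but it is not a gap you must close here.
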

\begin{proof} We need to introduce the polynomials
\begin{align*} h(z_1,\cdots,z_s)&=\prod_{1\le i<j\le s}\prod_{1\le a<b\le s} h_{ij}(z_a,z_b),\\
g(z_1,\cdots,z_s,w)&=h(z_1,\cdots,z_s)\cdot(\sum_{\sigma\in S_s} g_\sigma(z_1,\cdots,z_s,w)).
\end{align*}
From \eqref{keyc3} and \eqref{keyc4}, it follows  that
\begin{align}\label{keyc6} g(w,\cdots,w,w)\ne 0.
\end{align}
Moreover, by using \eqref{keyc3} and the Jacobi identity, one has  that
\begin{align*}
h(z_1,\cdots,z_s)\([a_1(z_1),\cdots, [a_s(z_s), b(z)]]-[a_1(z_{\sigma(1)}),\cdots, [a_s(z_{\sigma(s)}), b(z)]]\)=0,
\end{align*}
for all $\sigma\in S_s$.
This together with \eqref{keyc5} gives that
\begin{align} \label{keyc7}
g(z_1,\cdots,z_s,w)\,[a_1(z_1),\cdots, [a_s(z_s), b(z)]]=0.
\end{align}
So the assertion is implied by Lemma \ref{key} (with $g'(z_1,\cdots,z_s,w)=1)$, \eqref{keyc6} and \eqref{keyc7}.
\end{proof}

Now we turn to prove Lemma \ref{key}. We start with a slight generalization of Lemma \ref{1-commutator}.

\begin{lemt}\label{s-commutator}  Let $U$ be a $\Gamma$-local subset of $\CE(W)$ and $a_1(z),\cdots,a_s(z), b(z)\in U$ for some $s\in \Z_+$.
Then the formal series $[a_1(z_1),\cdots,[a_s(z_s), b(z)]]$ is a finite summation of the form
\begin{align}\label{s-commutator1}
\sum_{(\bm{\al},\bm{n})\in (\phi(\Gamma))^s\times \N^s} c_{\bm{\al},\bm{n}}(w)\ \delta^{(\bm{\al},\bm{n})}(z_1,\cdots,z_s,z)
\end{align}
for some uniquely determined formal series $c_{\bm{\al},\bm{n}}(z)\in \<U\>_\Gamma$.
Moreover,
\[[\CY(a_1(z),w_1),\cdots, [\CY(a_s(z),w_s), \CY(b(z),w)]]\]
is a finite summation of the form
\begin{align}\label{s-commutator2} \sum_{(\bm{\al},\bm{n})\in (\phi(\Gamma))^s\times \N^s}\delta_{\bm{\al},\bm{1}}\, \CY(c_{\bm{1},\bm{n}}(z),w)\ \delta^{(\bm{n})}
 (w_1,\cdots,w_s,w).\end{align}
 \end{lemt}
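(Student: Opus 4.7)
The plan is to prove both assertions simultaneously by induction on $s$, with Lemma \ref{1-commutator} serving as both base case and engine of the inductive step. The base case $s = 1$ is exactly the content of formulas \eqref{Gammalocalex} and \eqref{commutatory1}: since $a_1(z), b(z) \in U$ are mutually $\Gamma$-local, one writes $f(z,w) = \prod_k (z - \al_k w)^{r_k}$ for a polynomial killing the commutator and reads off $c_{\al_k, n}(z) = a_1(z)_{(\al_k, n)} b(z) \in \<U\>_\Gamma$, while the $\delta_{\bm{\al}, \bm{1}}$ factor in the ``moreover'' statement specializes to the $\delta_{\al_k, 1}$ appearing in \eqref{commutatory1}.

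For the inductive step I will set $B(z_2, \ldots, z_s, z) = [a_2(z_2), \ldots, [a_s(z_s), b(z)]]$, apply the inductive hypothesis to write
\[ B = \sum_{(\bm{\al}', \bm{n}')} c'_{\bm{\al}', \bm{n}'}(z)\, \delta^{(\bm{\al}', \bm{n}')}(z_2, \ldots, z_s, z) \]
with each $c'_{\bm{\al}', \bm{n}'}(z) \in \<U\>_\Gamma$, and then compute $[a_1(z_1), B]$. The delta factors involve only $z_2, \ldots, z_s, z$, so they commute past $a_1(z_1)$ and the problem reduces to computing $[a_1(z_1), c'_{\bm{\al}', \bm{n}'}(z)]$ for each multi-index. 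The crucial observation is that $\<U\>_\Gamma$ is itself a $\Gamma$-local subspace of $\CE(W)$ by Proposition \ref{gammalocasetva}, so $a_1(z)$ and each $c'_{\bm{\al}', \bm{n}'}(z)$ are mutually $\Gamma$-local and the $s = 1$ case applies. Concatenating multi-indices $\bm{\al} = (\al, \bm{\al}')$, $\bm{n} = (n, \bm{n}')$ yields the desired expansion with coefficients $c_{\bm{\al}, \bm{n}}(z) = a_1(z)_{(\al, n)} c'_{\bm{\al}', \bm{n}'}(z) \in \<U\>_\Gamma$. The ``moreover'' clause runs in parallel by iterating \eqref{commutatory1} in place of \eqref{Gammalocalex}: the $\delta_{\al_i, 1}$ factor at each step forces $\bm{\al} = \bm{1}$, and the coefficient of $\delta^{(\bm{n})}(w_1, \ldots, w_s, w)$ assembles recursively into $\CY(c_{\bm{1}, \bm{n}}(z), w)$ matching the first part.

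For uniqueness I will argue by linear independence of the iterated delta derivatives: the coefficient of $z_1^{-m_1 - 1} \cdots z_s^{-m_s - 1}$ in $\delta^{(\bm{\al}, \bm{n})}(z_1, \ldots, z_s, z)$ is $\prod_i \binom{m_i}{n_i} \al_i^{m_i - n_i} z^{\sum(m_i - n_i)}$, and since only finitely many $\bm{\al}$ occur in the sum, a Vandermonde argument in the exponents $m_i$ separates different $\bm{\al}$, after which a triangular argument in the $n_i$ (reading off the lowest-order term first) separates different $\bm{n}$. I do not anticipate a serious obstacle: the substantive analytic work is already packaged in Lemma \ref{1-commutator} and in the $\Gamma$-locality of $\<U\>_\Gamma$, and the inductive step is essentially a combinatorial matter of concatenating multi-indices and verifying which delta variables commute past which operators. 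The mildest subtlety is the bookkeeping for the uniqueness claim, but the standard delta-basis argument handles it cleanly.
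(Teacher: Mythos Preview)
Your proposal is correct and follows exactly the approach the paper indicates: induction on $s$ with Lemma \ref{1-commutator} as base case and inductive engine, using the $\Gamma$-locality of $\<U\>_\Gamma$ to bracket $a_1(z)$ against the coefficients $c'_{\bm{\al}',\bm{n}'}(z)$ produced by the inductive hypothesis. The paper's own proof simply says ``use Lemma \ref{1-commutator} and an induction argument'' and omits the details you have supplied; your uniqueness argument is essentially a rederivation of Lemma \ref{delta1}, which you could cite directly instead.
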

 \begin{proof}When $s=1$, the assertion is implied by Lemma \ref{1-commutator}.
 For the general case, one can prove the assertion by
 using   Lemma \ref{1-commutator} and  an induction argument. We omit the details.
 \end{proof}
We now record two well-known properties of $\delta$-functions for later use, whose proofs can be found respectively in
 \cite[\S\,2]{Li-local-twisted} and \cite[(2.3.51)]{LL}.
\begin{lemt}\label{delta1}  Let $f_1(w),\cdots,f_l(w)\in W[[w,w^{-1}]]$ and
let $(\bm{\al}_1,\bm{n}_1),\cdots,
(\bm{\al}_l,\bm{n}_l)$ be some distinct pairs in $(\phi(\Gamma))^s\times \N^s$, where $s\in \Z_+$.
Then
\[\sum_{k=1}^l f_{k}(w)\,\delta^{(\bm{\al}_k,\bm{n}_k)}(z_1,\cdots,z_s,w)=0\] if and only if
$f_{k}(w)=0$ for all $k=1,\cdots,l$.
\end{lemt}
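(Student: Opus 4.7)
The plan is to reduce the lemma to a standard $\C$-linear-independence claim for certain functions on $\Z^s$. First I would substitute the explicit expansion $z_i^{-1}\delta(\al_i w/z_i) = \sum_{m_i\in\Z} \al_i^{m_i} w^{m_i} z_i^{-m_i-1}$ into the definition of $\delta^{(\bm{\al},\bm{n})}$ and apply $\frac{1}{n_i!}(\al_i^{-1}\partial_w)^{n_i}$ factor by factor, using $\frac{1}{n!}(\al^{-1}\partial_w)^n w^m = \binom{m}{n}\al^{-n} w^{m-n}$, to obtain
\[
\delta^{(\bm{\al},\bm{n})}(z_1,\ldots,z_s,w) = \sum_{\bm{m}\in\Z^s} \prod_{i=1}^s \binom{m_i}{n_i}\al_i^{m_i-n_i}\, w^{\sum_i(m_i-n_i)} \prod_{i=1}^s z_i^{-m_i-1}.
\]
Extracting the coefficient of $\prod_i z_i^{-m_i-1}$ from the hypothesis then yields, for every $\bm{m}\in\Z^s$,
\[
\sum_{k=1}^l f_k(w)\, w^{|\bm{m}|-|\bm{n}_k|} \prod_{i=1}^s \binom{m_i}{n_{k,i}}\al_{k,i}^{m_i-n_{k,i}} = 0 \quad \text{in } W[[w,w^{-1}]],
\]
where $|\bm{m}|=\sum_i m_i$ and $|\bm{n}_k|=\sum_i n_{k,i}$; the reverse implication is trivial.

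Next I would normalize: setting $g_k(w) = f_k(w)\,w^{-|\bm{n}_k|}\prod_i \al_{k,i}^{-n_{k,i}}$, an invertible rescaling in $W[[w,w^{-1}]]$ so that $g_k=0 \iff f_k=0$, and multiplying the displayed identity through by $w^{-|\bm{m}|}$, the system becomes
\[
\sum_{k=1}^l g_k(w)\, F_k(\bm{m}) = 0 \quad \text{in } W[[w,w^{-1}]], \qquad \bm{m}\in\Z^s,
\]
where $F_k:\Z^s \to \C$ is defined by $F_k(\bm{m}) = \prod_{i=1}^s \binom{m_i}{n_{k,i}}\al_{k,i}^{m_i}$. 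Expanding $g_k(w) = \sum_{j\in\Z} g_{k,j} w^j$ with $g_{k,j}\in W$ and reading off each $w^j$-coefficient, it suffices to prove the purely combinatorial statement that whenever the indexing pairs $(\bm{\al}_k,\bm{n}_k) \in (\phi(\Gamma))^s\times\N^s$ are pairwise distinct, the functions $F_1,\ldots,F_l:\Z^s\to\C$ are $\C$-linearly independent.

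The main step, and what I expect to be the main obstacle, is this linear-independence claim. For $s=1$ it amounts to the standard fact that $\{m\mapsto \binom{m}{n}\al^m : (\al,n)\in\C^\times\times\N\}$ is linearly independent on $\Z$: one groups the terms by the common exponential base $\al$, uses that distinct exponentials $\{\al^m\}_{\al\in\C^\times}$ are linearly independent on $\Z$ (a Vandermonde-type argument, since any $\C$-linear dependence among finitely many distinct $\al^m$ would force a nonzero polynomial in the $\al$'s to vanish on arbitrarily many integer exponents), and then inside each group uses the linear independence of the polynomials $\{\binom{m}{n}\}_{n\in\N}$ in $m$. For general $s$, since $F_k$ is a tensor-product function $F_k(\bm m)=\prod_i \phi_{k,i}(m_i)$ with $\phi_{k,i}(m)=\binom{m}{n_{k,i}}\al_{k,i}^m$, one fixes $m_2,\ldots,m_s$, applies the $s=1$ case in the variable $m_1$ to partition the indices $k$ into groups sharing a common $(\al_{k,1},n_{k,1})$, and then iterates the argument on the remaining coordinates. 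The careful part is bookkeeping: ensuring at each step of the induction that the surviving tuples remain pairwise distinct in the remaining coordinates so that the single-variable independence keeps applying, until only the trivial relation survives.
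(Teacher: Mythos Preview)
Your argument is correct. The paper does not actually prove this lemma: it records it as a ``well-known property of $\delta$-functions'' and refers to \cite[\S\,2]{Li-local-twisted} for a proof. Your self-contained reduction---extracting the $z$-coefficients, rescaling, and reducing to the $\C$-linear independence of the functions $\bm m\mapsto\prod_i\binom{m_i}{n_{k,i}}\al_{k,i}^{m_i}$ on $\Z^s$---is essentially the standard argument one finds in such references, and your inductive handling of the tensor-product structure is the right one: grouping by the first coordinate pair $(\al_{k,1},n_{k,1})$ and observing that distinctness of the full tuples forces distinctness of the remaining coordinates within each group makes the induction go through cleanly. One small point worth making explicit in a final write-up is why $\C$-linear independence of the $F_k$ suffices to conclude $g_{k,j}=0$ in $W$: it is because linear independence over $\C$ yields finitely many evaluation points $\bm m^{(1)},\dots,\bm m^{(l)}$ with invertible evaluation matrix, and inverting that matrix over $\C$ applies verbatim to the $W$-valued system.
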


\begin{lemt}\label{delta2}
For any Laurent polynomial $f(w_1,w_2)$ and $\al\in \C^\times$, one has that
\begin{align*}
&f(w_1,w_2)\, \(\frac{\partial}{\partial w_1}\)^{n}\delta\(\al\frac{w_1}{w_2}\)\\
=&\ \sum_{k=0}^n(-1)^k {n\choose k}\(\(\frac{\partial}{\partial w_1}\)^{k}f\)(w_2,\al w_2) \(\frac{\partial}{\partial w_1}\)^{n-k}\delta\(\al \frac{w_1}{w_2}\).\end{align*}
\end{lemt}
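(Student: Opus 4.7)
The plan is to establish this identity by a standard reduction-to-monomials followed by an induction on $n$. By the $\C$-linearity of both sides in $f$, it suffices to verify the claim for a single monomial $f(w_1,w_2)=w_1^a w_2^b$ with $a,b\in\Z$. For such a monomial both sides of the asserted identity are finite $\C$-linear combinations of expressions of the form $w_1^{a'}w_2^{b'}\bigl(\partial/\partial w_1\bigr)^{m}\delta(\al w_1/w_2)$, so the claim reduces to a family of purely algebraic equalities in $\C[[w_1,w_1^{-1},w_2,w_2^{-1}]]$.

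The base case $n=0$ is the fundamental substitution property of the formal delta function: $f(w_1,w_2)\,\delta(\al w_1/w_2)$ equals $f$ evaluated at the point where the delta is supported, times $\delta(\al w_1/w_2)$. For a monomial this is immediate upon expanding $\delta(\al w_1/w_2)=\sum_{m\in\Z}\al^m w_1^m w_2^{-m}$ and comparing the coefficient of each power of $w_1$; for a general $f$ it then follows by $\C$-linearity.

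For the inductive step, I would apply $\partial_{w_1}^{n}$ to both sides of the $n=0$ substitution identity. The evaluated factor on the right has no $w_1$-dependence apart from the delta, so the right side becomes simply $f(\text{subst.},w_2)\,\partial_{w_1}^{n}\delta(\al w_1/w_2)$. On the left, the Leibniz rule expands the derivative as $\sum_{k=0}^{n}\binom{n}{k}(\partial_{w_1}^{k}f)(w_1,w_2)\,\partial_{w_1}^{n-k}\delta(\al w_1/w_2)$. Isolating the $k=0$ term gives an expression for $f\,\partial_{w_1}^{n}\delta$ in terms of the evaluated right-hand side minus lower-order pieces $(\partial_{w_1}^{k}f)\,\partial_{w_1}^{n-k}\delta$ with $k\ge 1$; applying the inductive hypothesis to each such lower-order piece and collecting the resulting binomial sums yields the claimed alternating-sign formula after a routine combinatorial manipulation (essentially a Möbius-style inversion of Leibniz).

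No genuine difficulty arises; the identity is stated to be well known, with reference \cite[(2.3.51)]{LL}, and the only technical point is tracking the alternating binomial coefficients in the induction, which is entirely formal.
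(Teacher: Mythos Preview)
Your argument is correct: the identity follows from the $n=0$ substitution property of the delta function together with Leibniz and a straightforward binomial inversion, exactly as you outline. The paper does not give its own proof of this lemma but simply cites \cite[(2.3.51)]{LL}, so there is nothing further to compare.
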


 Here we are ready to finish the proof of Lemma \ref{key}. Let $a_1(z),\cdots,a_s(z)$, $b(z)$ and $g(z_1,\cdots,z_s,w),
 g'(z_1,\cdots,z_s,w)$ be as in
 Lemma \ref{key}. Due to Lemma \ref{s-commutator}, we may write the the commutators
 \[[a_1(z_1),\cdots,[a_s(z_s),b(w)]]\quad \te{and}\quad [\CY(a_1(z),w_1),\cdots, [\CY(a_s(z),w_s), \CY(b(z),w)]]\]
as in \eqref{s-commutator1} and \eqref{s-commutator2}, respectively.
Given an  $(\bm{\al},\bm{n})\in (\phi(\Gamma))^s\times \N^s$.
Then it follows from Lemma \ref{delta2} that  the formal series
\[A(z_1,\cdots,z_s,w)\, \delta^{(\bm{\al},\bm{n})}(z_1,\cdots,z_s,w)\quad (A=g\ \te{or}\ g')\]
is a finite summation of the form
\begin{align}\label{keyproof1}
\sum_{\bm{m}\preccurlyeq \bm{n}\in \N^s} A_{\bm{\al},\bm{n},\bm{m}}(w)\, \delta^{(\bm{\al},\bm{m})}(z_1,\cdots,z_s,w)\end{align}
for some $A_{\bm{\al},\bm{n},\bm{m}}(w)\in \C[w]$, where $\bm{m}\preccurlyeq \bm{n}$ means that $m_k\le n_k$ for all $1\le k\le s$.

For any $\bm{m}\preccurlyeq \bm{n}$, set
\begin{align*}
c_{\bm{\al},\bm{n},\bm{m}}(w)&=g'_{\bm{\al},\bm{n},\bm{m}}(w)\,c_{\bm{\al},\bm{n}}(w)\in W[[w,w^{-1}]],\\
b_{\bm{n},\bm{m}}(w)&=g'_{\bm{1},\bm{n},\bm{m}}(w)\,\CY(c_{\bm{1},\bm{n}}(z),w)\in \CE(W)[[w,w^{-1}]].\end{align*}
Then one can conclude from \eqref{s-commutator1} and \eqref{keyproof1} (with $A=g')$ that the formal series
\[g'(z_1,\cdots,z_s,w)\, [a_1(z_1),\cdots,[a_s(z_s),b(w)]]\]
is a finite summation of the form
\begin{align}\label{firstexp}\sum_{(\bm{\al},\bm{n})\in (\phi(\Gamma))^s\times \N^s} c'_{\bm{\al},\bm{n}}(w)\,\delta^{(\bm{\al},\bm{n})}(z_1,\cdots,z_s,w),
\end{align}
where $c'_{\bm{\al},\bm{n}}(w)=\sum_{\bm{n}'\succcurlyeq \bm{n}} c_{\bm{\al,\bm{n}',\bm{n}}}(w)$.
Similarly,  by applying \eqref{s-commutator2} and \eqref{keyproof1} (with $A=g')$, we know that the formal series
 \[g'(w_1,\cdots,w_s,w)\,[\CY(a_1(z),w_1),\cdots, [\CY(a_s(z),w_s), \CY(b(z),w)]]\]
 has the following expression
 \[\sum_{\bm{n}\in \N^s} b_{\bm{n}}'(w)\, \delta^{(\bm{n})}
 (w_1,\cdots,w_s,w),\]
 where $b_{\bm{n}}'(w)=\sum_{\bm{n}'\succcurlyeq \bm{n}} b_{\bm{\bm{n}',\bm{n}}}(w)$.

 Thus, it suffices to prove that $b'_{\bm{n}}(w)=0$ for all $\bm{n}\in \N^s$. Indeed, assume conversely that the finite set
 $\{\bm{n}\in \N^s\mid b'_{\bm{n}}(w)\ne 0\}$ is non-empty. Let us take a maximal element $\bm{n}_0$  in this finite
set  with respect to the partial order $\preccurlyeq$.
We remark that for $\bm{\al}\in \phi(\Gamma)$ and $\bm{m},\bm{n}\in \N^s$ with
 $\bm{m}\preccurlyeq\bm{n}$,
 the formal series $c_{\al,\bm{n},\bm{m}}(z)$ may be not contained in  $\<U\>_\Gamma$.
 But these formal series belong to any maximal $\Gamma$-local subspace, say $U'$, of $\CE(W)$ that contains $1_W$ and $U$.
 Moreover, as operators on $\mathrm{End}(U')[[w,w^{-1}]]$, one has by definition (\cite[Definition 3.4]{Li-new-construction}) that
\[\CY(c_{\bm{1},\bm{n},\bm{m}}(z),w)=b_{\bm{n},\bm{m}}(w).\]
This in particular shows that $c'_{\bm{1},\bm{n}_0}(w)\ne 0$ and that $\bm{n}_0$ is also maximal in the set
$\{\bm{n}\in \N^s\mid c'_{\bm{n}}(w)\ne 0\}$.
Now, from \eqref{firstexp} and \eqref{keyproof1} (with $A=g)$, we know that the formal series
\[g(z_1,\cdots,z_s,w)g'(z_1,\cdots,z_s,w)\, [a_1(z_1),\cdots,[a_s(z_s),b(w)]]\]
is a finite summation of the form
\begin{align}\label{firstexp}\sum_{(\bm{\al},\bm{n})\in (\phi(\Gamma))^s\times \N^s} c''_{\bm{\al},\bm{n}}(w)\,\delta^{(\bm{\al},\bm{n})}(z_1,\cdots,z_s,w),
\end{align}
where $c''_{\bm{\al},\bm{n}}(w)=\sum_{\bm{n}'\succcurlyeq \bm{n}} g_{\bm{\al},\bm{n'},\bm{n}}(w)\,c'_{\bm{\al},\bm{n}'}(w)$.
This together with Lemma \ref{delta1} and the assumption \eqref{keyc2} gives that
\begin{align}\label{cprime}
c''_{\bm{\al},\bm{n}}(w)=0,\quad \forall\ (\bm{\al},\bm{n})\in (\phi(\Gamma))^s\times \N^s.\end{align}
But, by the maximality of $\bm{n}_0$ and the assumption \eqref{keyc1}, one has that
\[c''_{\bm{1},\bm{n}_0}(w)=g_{\bm{1},\bm{n}_0,\bm{n}_0}(w)\,c'_{\bm{1},\bm{n}_0}(w)=g(w,w,\cdots,w)\,c'_{1,\bm{n}_0}(w)\ne 0,\]
a contradiction to \eqref{cprime}. This finishes the proof of Lemma \ref{key}.

\subsection{$\Gamma$-conformal Lie algebras and their universal  $\Gamma$-vertex algebras}In this subsection
we recall another general construction of $\Gamma$-vertex algebras and their quasi-modules given in \cite{Li-Gamma-quasi-mod}.

Recall that a conformal Lie algebra $(C,Y_-,T)$ (\cite{Kac-VA}), also known as a vertex Lie algebra (\cite{DLM,Primc-VA-gen-by-Lie}),
is a vector space $C$ equipped with a linear operator $T$ and a linear map
 \begin{align}\label{Y-} Y_-: C\rightarrow \mathrm{Hom}(C, z^{-1}C[z^{-1}]),\quad a\mapsto Y_-(u,z)=\sum_{n\ge 0}u_{(n)} z^{-n-1}\end{align}
 such that for any $u,v\in C$,
 \begin{align}
\label{proT} &[T, Y_-(u,z)]=Y_-(Tu,z)=\frac{\rd}{\rd z}Y_-(u,z),\\
  &Y_-(u,z)v=\mathrm{Sing}\(e^{zT}Y_-(v,-z)u\),\notag\\
 &[Y_-(u,z),Y_-(v,w)]=\mathrm{Sing}(Y_-(Y_-(u,z-w)v,w)),\notag\end{align} where
 $\mathrm{Sing}$ stands for the singular part.

 A conformal Lie algebra structure on a vector space $C$ exactly amounts to a Lie algebra structure on the quotient
space
\[\wh{C}=\C[t,t^{-1}]\ot C/(1\ot T+\frac{\rd}{\rd t}\ot 1)(\C[t,t^{-1}]\ot C)\] of $\C[t,t^{-1}]\ot C$.
Let us denote by
\[\rho: \C[t,t^{-1}]\ot C\rightarrow \wh{C},\quad t^m\ot u\mapsto u(m),\quad u\in C,\ m\in \Z\] the natural quotient map.
The following result  was proved in \cite[Remark 4.2]{Primc-VA-gen-by-Lie}.
 \begin{lemt}\label{lem:determineC} Let $C$ be a vector space equipped with a linear operator $T$ and a linear map
 $Y_-$ as in \eqref{Y-} such that \eqref{proT} holds.
 Then $C$ is a conformal Lie algebra if and only if there is a Lie algebra structure on $\wh{C}$
 such that
 \begin{align}\label{LieC}[u(m), v(n)]=\sum_{i\ge 0}{{m}\choose{i}}(u_{(i)}v)(m+n-i),\end{align}
for $u,v\in C,\ m,n\in \Z$.
 \end{lemt}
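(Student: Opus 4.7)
The plan is to prove both directions by passing back and forth between the axioms of a conformal Lie algebra on $C$ and the Lie algebra axioms on $\widehat{C}$, with formula \eqref{LieC} serving as the dictionary.

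For the forward direction, assuming $(C,Y_-,T)$ is a conformal Lie algebra, I would first verify that \eqref{LieC} descends to a well-defined bilinear operation on $\widehat{C}$. The kernel of $\rho$ is spanned by elements $(Tu)(m)+m\,u(m-1)$, so the issue is to check
\begin{equation*}
[(Tu)(m),v(n)] + m[u(m-1),v(n)] = 0
\end{equation*}
in $\widehat{C}$, and similarly in the second slot. Using \eqref{proT} one obtains $(Tu)_{(i)}v = -i\, u_{(i-1)}v$, and substituting this into \eqref{LieC} together with the identity $i\binom{m}{i} = m\binom{m-1}{i-1}$ collapses the sum to the required identity. Skew-symmetry of $[\cdot,\cdot]$ on $\widehat{C}$ then follows by expanding the conformal skew-symmetry $Y_-(u,z)v = \mathrm{Sing}(e^{zT}Y_-(v,-z)u)$ into its formula $u_{(n)}v = \sum_{k\ge 0}\frac{(-1)^{n+k+1}}{k!}T^k v_{(n+k)}u$, substituting into \eqref{LieC}, and again reducing with the well-definedness relation $(T^k w)(\ell) = (-1)^k k! \binom{\ell}{k} w(\ell-k)$.

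The heart of the argument is the Jacobi identity on $\widehat{C}$, which I would derive from the conformal OPE axiom $[Y_-(u,z),Y_-(v,w)] = \mathrm{Sing}(Y_-(Y_-(u,z-w)v,w))$. Expanding both sides in modes gives Borcherds' commutator formula
\begin{equation*}
[u_{(m)},v_{(n)}] = \sum_{i\ge 0}\binom{m}{i}(u_{(i)}v)_{(m+n-i)}
\end{equation*}
as operators on $C$, but I need it as an identity in $\widehat{C}$. The correct approach is to compute $[[u(m),v(n)],w(\ell)]$ from \eqref{LieC} twice, then add the cyclic permutations; after applying the Vandermonde-type identity
$\binom{m}{i}\binom{m-i}{j} = \binom{m}{j}\binom{m-j}{i}$
and using the conformal OPE axiom at the level of the products $u_{(i)}v$, all terms cancel. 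This combinatorial reshuffling is the main technical obstacle, since the cancellation only becomes transparent after reindexing double sums via $i,j,i+j$.

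For the converse, suppose $\widehat{C}$ carries a Lie algebra structure satisfying \eqref{LieC}. Uniqueness of coefficients in formal series shows that the products $u_{(n)}v \in C$ are uniquely determined by the bracket, so $Y_-$ is specified. The compatibility $Y_-(Tu,z) = \frac{d}{dz}Y_-(u,z)$ is forced by the well-definedness of the bracket on $\widehat{C}$ (run Step~1 backwards). Skew-symmetry of $Y_-$ and the conformal OPE axiom are then obtained by reading the skew-symmetry and Jacobi identity of $\widehat{C}$ modulo the same combinatorial identities used in the forward direction; here I would extract the desired formula by pairing coefficients of $t^m$ on both sides and invoking the linear independence of the $\binom{m}{i}$ as functions of $m$. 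This last extraction is routine once the forward computation is in hand, so no separate difficulty arises.
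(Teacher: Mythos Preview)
Your proof outline is correct and follows the standard route for establishing this equivalence. However, the paper does not actually prove this lemma: it simply records that the result ``was proved in \cite[Remark 4.2]{Primc-VA-gen-by-Lie}'' and moves on. So there is no proof in the paper to compare against; you have supplied a direct argument where the authors defer to the literature.

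One small remark on your converse direction: when you say you will ``invoke the linear independence of the $\binom{m}{i}$ as functions of $m$'' to extract the conformal axioms from the Lie-algebra identities, this is the right idea but deserves a word of care. The cleanest way is to fix $u,v,w\in C$ and note that the Jacobi identity in $\widehat{C}$, written via \eqref{LieC}, is a polynomial identity in $m,n$ (for each fixed total degree) whose coefficients are elements of $\widehat{C}$; vanishing of that polynomial for all integers $m,n$ forces each coefficient to vanish, and then the isomorphism $C\cong\widehat{C}^{(-)}$ via $u\mapsto u(-1)$ lets you read the relation back in $C$. This is exactly what Primc does, so your sketch is in line with the cited source.
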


Let $C$ be a conformal Lie algebra.  Set
\[\wh{C}^{(-)}=\rho(t^{-1}\C[t^{-1}]\ot C)\quad\te{and}\quad \wh{C}^{(+)}=\rho(\C[t]\ot C).\] Then both $\wh{C}^{(+)}$
and $\wh{C}^{(-)}$ are subalgebras of $\wh{C}$ and
\begin{align}\label{polardec}
\wh{C}=\wh{C}^{(+)}\oplus \wh{C}^{(-)}
\end{align} is a polar
 decomposition of $\wh{C}$. Moreover,   the map
 \begin{align}\label{iso-1}
 C\rightarrow \wh{C}^{(-)},\quad u\mapsto u(-1)\end{align} is an isomorphism of vector spaces (\cite[Theorem 4.6]{Primc-VA-gen-by-Lie}).

 Consider the induced
$\wh{C}$-module \[V_C=\U(\wh{C})\otimes_{\U(\wh{C}^{(+)})}\C,\] where $\C$ is the one dimensional trivial $\wh{C}^+$-module.
Set $\vac=1\ot 1$. Identify $C$ as a subspace of $V_C$ through the linear map $u\mapsto u(-1)\vac$.
Then it was proved in \cite{Primc-VA-gen-by-Lie} that there exists a
unique vertex algebra structure on $V_C$  with $\vac$ as the vacuum vector and with
\[Y(u,z)=u(z)=\sum_{n\in \Z}u(n)z^{-n-1}\]
 for $u\in C$.
In the literature, $V_C$ is often called the universal  vertex algebra associated to $C$.

 \begin{lemt}\label{C0genVC}
 Let $C_0$ be a subset of  $C$ such that $\rho(\C[t,t^{-1}]\ot C_0)$ generates
the associated Lie algebra $\wh{C}$. Then $C_0$
 generates $V_C$ as a vertex algebra.
\end{lemt}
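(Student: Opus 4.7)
My plan is to let $V'$ denote the smallest vertex subalgebra of $V_C$ containing $\vac$ and the image of $C_0$ under the identification $u\mapsto u(-1)\vac$, and then to show $V'=V_C$ by exhibiting $V'$ as a $\wh C$-submodule of $V_C$ that contains the cyclic vector $\vac$.

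The first step is simply a bookkeeping remark: for each $u\in C_0$, the element $u=u(-1)\vac$ lies in $V'$, so the vertex operator $Y(u,z)=\sum_{n\in\Z}u(n)z^{-n-1}$ acts on $V'$ with all coefficients preserving $V'$ (by the very definition of a vertex subalgebra). Hence every mode $u(n)$ with $u\in C_0$, $n\in\Z$ restricts to a linear endomorphism of $V'$. Now let $L_0$ be the Lie subalgebra of $\wh C$ generated by $\{u(n)\mid u\in C_0,\ n\in\Z\}$. Because $\wh C$ acts on $V_C$ by Lie algebra homomorphism, any Lie bracket and linear combination of operators that preserve $V'$ still preserves $V'$; consequently every element of $L_0$ preserves $V'$. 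By hypothesis $\rho(\C[t,t^{-1}]\otimes C_0)$ generates $\wh C$ as a Lie algebra, so $L_0=\wh C$ and therefore the full Lie algebra $\wh C$ preserves $V'$.

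The second step invokes the standard structure of the universal object $V_C$. Using the polar decomposition $\wh C=\wh C^{(+)}\oplus \wh C^{(-)}$ (see \eqref{polardec}) and the fact that $\wh C^{(+)}\vac=0$ in $V_C$, we have $V_C=U(\wh C)\vac=U(\wh C^{(-)})\vac$, so $V_C$ is generated by $\vac$ as a $\wh C$-module. Since $V'$ is a $\wh C$-submodule of $V_C$ containing $\vac$, this forces $V'=V_C$, completing the proof.

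There is essentially no hard step here: the only point requiring a moment of care is the passage from ``each generator $u(n)$, $u\in C_0$, preserves $V'$'' to ``all of $\wh C$ preserves $V'$.'' This is where the hypothesis on $C_0$ is used, together with the elementary fact that if a set of operators on a vector space preserves a subspace, so does the Lie subalgebra of $\mathrm{End}$ they generate (applied to the representation of $\wh C$ on $V_C$). Everything else is a direct unwinding of the definition of $V_C$ via the induced module construction.
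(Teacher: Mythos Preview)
Your argument is correct, but it follows a different route from the paper's.

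The paper first reduces to a purely conformal-Lie-algebra statement: it shows that $C_0$ generates $C$ as a conformal Lie algebra, and then invokes the known fact that $C$ generates $V_C$. Concretely, using the bracket formula \eqref{LieC} one checks that the Lie subalgebra of $\wh C$ generated by the modes of $C_0$ is contained in the span of modes of elements of the conformal subalgebra $\langle C_0\rangle\subset C$; since this Lie subalgebra is all of $\wh C$ by hypothesis, one writes any $u(-1)$ with $u\in C$ as a combination of such modes, and then uses the polar decomposition and the isomorphism $C\cong\wh C^{(-)},\ u\mapsto u(-1)$ to force $u\in\langle C_0\rangle$. Your approach bypasses this intermediate step entirely: you observe that the vertex subalgebra $V'$ generated by $C_0$ is automatically stable under the modes $u(n)$, $u\in C_0$, hence under the Lie algebra they generate, which is all of $\wh C$; cyclicity of $\vac$ then finishes. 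Your proof is shorter and more direct; the paper's proof has the small advantage of isolating the statement $\langle C_0\rangle=C$ at the conformal-Lie-algebra level, though that fact is not used elsewhere in the paper.
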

\begin{proof}Since $C$ generates $V_C$ as a vertex algebra, we only need to show that
$C_0$ generates $C$ as a conformal Lie algebra.
 Using \eqref{LieC}, one knows that any element in the subalgebra of $\wh{C}$
 generated by $\rho(\C[t,t^{-1}]\ot C_0)$ is a finite summation
$\sum u^{(i)}(n_i)$
for some $u^{(i)}\in \<C_0\>$, $n_i\in \Z$, where  $\<C_0\>$ indicates the conformal Lie subalgebra of $C$ generated by $C_0$.
As $\rho(\C[t,t^{-1}]\ot C_0)$ generates the Lie algebra $\wh{C}$, for any $u\in C$,
there exist finitely many $u^{(i)}\in \<C_0\>$ and $n_i\in \Z$ such that
\begin{align}\label{u-1}u(-1)=\sum u^{(i)}(n_i).\end{align}
Note that $\wh{C}^{(+)}\cap \wh{C}^{(-)}=0$ and $k! u(-k-1)=T^k(u)(-1)$ for $u\in C$, $k\in \Z_+$.
Thus, we may (and do) assume that all the integers $n_i$ appeared in \eqref{u-1} are $-1$.
Then, due to the isomorphism given in \eqref{iso-1}, one gets that $\<C_0\>=C$, as desired.
\end{proof}

Recall from \cite{Li-Gamma-quasi-mod} (see also \cite{GKK-Gamma-conformal-alg}) that a $\Gamma$-conformal Lie algebra $(C,Y_-,T,R)$ is a conformal Lie algebra $(C,Y_-,T)$ equipped with a group homomorphism
\[R: \Gamma\rightarrow \mathrm{GL}(C),\quad g\mapsto R_g\] such that for any $u, v\in C$,
\begin{align}
\label{proTR}&T R_g=\phi(g)R_gT,\\
&\label{proRYR} R_gY_-(u,z)R_{g^{-1}}=Y_-(R_gu, \phi(g)^{-1}z)),\\
&Y_-(R_gu,z)v=0\quad \te{for all but finitely many }\ g\in \Gamma.\notag \end{align}

 As was pointed out in \cite{GKK-Gamma-conformal-alg}, the following result shows that
 a $\Gamma$-conformal Lie algebra exactly amounts to a conformal Lie algebra equipped
 with a $\Gamma$-action on the associated Lie algebra  by automorphisms.

\begin{lemt}\label{whRg} Let $(C,Y_-,T)$ be a conformal Lie algebra and let $R: \Gamma\rightarrow \mathrm{GL}(C)$ be a group homomorphism such that \eqref{proTR} holds.
Then $(C,Y_-,T,R)$ is a $\Gamma$-conformal Lie algebra if and only if for each $g\in \Gamma$, the map
\begin{align}\label{defwhRg}\wh{R}_{g}: \wh{C}\rightarrow \wh{C};\quad u(m)\mapsto \phi(g)^{m+1}(R_g(u)(m)),\quad u\in C,\ m\in \Z \end{align}
is an automorphism of $\wh{C}$ and for each $u,v\in C$, $[\wh{R}_{g}(u)(z), v(w)]=0$ for all but finitely many $g\in \Gamma$.
\end{lemt}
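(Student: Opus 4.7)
The plan is to translate each of the two remaining axioms of a $\Gamma$-conformal Lie algebra directly into statements about the map $\wh{R}_g$ on $\wh{C}$. First I would verify that $\wh{R}_g$ descends to a well-defined endomorphism of $\wh{C}$: using \eqref{proTR} (which in the form $R_gT=\phi(g)^{-1}TR_g$ gives a factor $\phi(g)^{-1}$), one has
\[
\wh{R}_g\bigl((Tu)(m)\bigr)=\phi(g)^{m+1}(R_gTu)(m)=\phi(g)^{m}(TR_gu)(m)=-m\,\phi(g)^{m}(R_gu)(m-1)=\wh{R}_g\bigl(-m\,u(m-1)\bigr),
\]
where the defining relation $(Tv)(n)=-n\,v(n-1)$ of $\wh{C}$ was used, and these elements generate the kernel of $\C[t,t^{-1}]\ot C\to \wh{C}$. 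The hypothesis that $R$ is a group homomorphism immediately implies $g\mapsto \wh{R}_g$ is a group homomorphism into $\mathrm{GL}(\wh{C})$, so each $\wh{R}_g$ is an automorphism of $\wh{C}$ if and only if it preserves the bracket \eqref{LieC}.

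The main step is the bracket computation. Expanding both $\wh{R}_g[u(m),v(n)]$ and $[\wh{R}_gu(m),\wh{R}_gv(n)]$ via \eqref{LieC} and matching coefficients at level $(m+n-i)$ shows that $\wh{R}_g$ preserves the bracket for all $u,v\in C$ and all $m,n\in\Z$ if and only if
\[
R_g\bigl(u_{(i)}v\bigr)=\phi(g)^{i+1}(R_gu)_{(i)}(R_gv)\qquad\text{for every }i\ge 0.
\]
These identities are precisely the coefficients of $z^{-i-1}$ in the two sides of the axiom \eqref{proRYR} after applying both sides to $R_gv$. Hence all the $\wh{R}_g$ are Lie automorphisms if and only if \eqref{proRYR} holds. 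The slightly delicate point is to track the shift $\phi(g)^{m+1}$ in the definition of $\wh{R}_g$ carefully so that the binomial coefficients $\binom{m}{i}$ on the two sides cancel and a single factor $\phi(g)^{-i-1}$ survives, which upon reabsorption into the variable $z$ reproduces exactly the scaling in \eqref{proRYR}.

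For the finiteness axiom, a direct change of summation variable yields
\[
\wh{R}_gu(z)=\sum_{m\in\Z}\phi(g)^{m+1}(R_gu)(m)\,z^{-m-1}=(R_gu)\bigl(\phi(g)^{-1}z\bigr),
\]
so $[\wh{R}_gu(z),v(w)]=0$ if and only if $(R_gu)_{(i)}v=0$ for all $i\ge 0$, i.e.\ if and only if $Y_-(R_gu,z)v=0$. Therefore the condition that $[\wh{R}_gu(z),v(w)]=0$ for all but finitely many $g\in\Gamma$ is exactly the remaining finiteness axiom for a $\Gamma$-conformal Lie algebra, completing both directions of the equivalence.
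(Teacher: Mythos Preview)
Your proposal is correct and follows essentially the same route as the paper: both reduce the question of whether $\wh{R}_g$ is a Lie automorphism to the single identity $R_g(u_{(i)}v)=\phi(g)^{i+1}(R_gu)_{(i)}(R_gv)$, which is then identified with \eqref{proRYR}. The only presentational difference is that the paper packages the bracket computation in generating-series form via $\wh{R}_g u(z)=(R_gu)(\phi(g)^{-1}z)$ and compares $\delta^{(i)}$-coefficients, whereas you work mode-by-mode; your treatment of well-definedness and of the finiteness axiom is in fact more explicit than the paper's.
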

\begin{proof} Using \eqref{proTR}, one can easily check that the map $\wh{R}_{g}$ is well-defined. Moreover, for any $u,v\in C$ and $g\in \Gamma$,
\begin{align*}[\wh{R}_{g}(u)(z), \wh{R}_{g}(v)(w)]=\,&[R_g(u)(\phi(g)^{-1}z), R_g(v)(\phi(g)^{-1}w)]\\
=&\sum_{i\ge 0}\phi(g)^{i+1}\(R_g(u)_{(i)}R_g(v)\)(\phi(g)^{-1}w)\delta^{(i)}(z,w).
\end{align*}
On the other hand,
\begin{align*}
\wh{R}_{g}([u(z),v(w)])=\sum_{i\ge 0} \wh{R}_{g}(u_{(i)}v)(w)\delta^{(i)}(z,w)=\sum_{i\ge 0} R_g(u_{(i)}v)(\phi(g)^{-1}w)\delta^{(i)}(z,w).
\end{align*}
Thus, $\wh{R}_{g}$ is an automorphism of $\wh{C}$ if and only if for any $i\in \Z$,
\[R_g(u_{(i)}v)=\phi(g)^{i+1}(R_gu)_{(i)}(R_gv),\]
Notice that this identity is equivalent to that in \eqref{proRYR} and so we complete the proof of lemma.
\end{proof}

Suppose now that $C$ is a $\Gamma$-conformal Lie algebra. Then  for each $g\in \Gamma$,  the automorphism $\wh{R}_{g}$ on $\wh{C}$ preserves  the polar decomposition \eqref{polardec}
and hence induces a linear automorphism, still denoted by $R_g$, on $V_C\cong \U(\wh{C}^{(-)})$.
The following result was proved in  \cite[Lemma 4.16]{Li-Gamma-quasi-mod}.
\begin{lemt}\label{gammacontova} If  $C$ is a $\Gamma$-conformal Lie algebra, then the universal vertex algebra $V_C$ associated to $C$
 is a $\Gamma$-vertex algebra with
the $\Gamma$-action given by
 \[R:\Gamma\rightarrow \mathrm{GL}(V_C),\quad g\mapsto R_g,\ g\in \Gamma.\]
 \end{lemt}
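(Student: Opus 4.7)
The plan is to verify the three axioms defining a $\Gamma$-vertex algebra for the vertex algebra $V_C$ equipped with the induced $\Gamma$-action: namely (i) $R$ is a group homomorphism, (ii) $R_g(\vac)=\vac$, and (iii) $R_g Y(v,z) R_g^{-1} = Y(R_gv, \phi(g)^{-1}z)$ for $g \in \Gamma$ and $v \in V_C$. The first two are essentially free: $R$ is obtained by letting $\wh R_g$ act on $\U(\wh C^{(-)}) \cong V_C$, so $R_{g_1 g_2} = R_{g_1} R_{g_2}$ follows at once from the analogous fact for $\wh R$ on $\wh C$ (and hence on its universal enveloping algebra), while $\vac = 1 \in \U(\wh C^{(-)})$ is preserved by any algebra automorphism.

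The heart of the argument is (iii). I first check it on the generating subspace $C \subset V_C$. For $u \in C$, one has $Y(u,z) = u(z) = \sum_{n \in \Z} u(n) z^{-n-1}$ acting on $V_C$, so
\begin{equation*}
R_g\, Y(u,z)\, R_g^{-1} = \sum_{n \in \Z} \wh R_g(u(n))\, z^{-n-1} = \sum_{n \in \Z} \phi(g)^{n+1} (R_g u)(n)\, z^{-n-1} = Y(R_g u,\, \phi(g)^{-1} z),
\end{equation*}
where the middle equality is \eqref{defwhRg}, and the final equality uses $(\phi(g)^{-1}z)^{-n-1} = \phi(g)^{n+1} z^{-n-1}$. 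This gives (iii) for $v \in C$.

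To extend (iii) to arbitrary $v \in V_C$, I would use the fact (analogous to Lemma \ref{C0genVC} and the description \eqref{describleUgamma}) that $V_C$ is linearly spanned by elements of the form $u^{(1)}_{n_1} \cdots u^{(s)}_{n_s} \vac$ with $u^{(i)} \in C$ and $n_i \in \Z$. Arguing by induction on $s$, suppose $R_g(w) = R_g^{\text{iter}}(w)$ has been shown for any $w$ expressible with at most $s-1$ generators. Writing $v = u_n w$ with $u \in C$ and applying the identity on $C$ already established together with the inductive hypothesis on $w$ yields
\begin{equation*}
R_g(u_n w) = R_g\, u_n\, R_g^{-1}\, R_g(w) = \bigl(Y(R_gu, \phi(g)^{-1}z)\bigr)_n\, R_g(w) = \phi(g)^{n+1}(R_gu)_n\, R_g(w),
\end{equation*}
which is exactly the relation needed. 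Expanding $Y(R_g v, \phi(g)^{-1}z) R_g(x)$ for arbitrary $x \in V_C$ by iterating this computation (or, more cleanly, invoking the Jacobi identity/associativity of $V_C$ together with the established identity on the generating set) then gives (iii) for all $v \in V_C$.

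The main obstacle is the bookkeeping in the induction step: one must keep track of how the factor $\phi(g)^{n+1}$ interacts with the multiple modes appearing in an iterated expression. A clean way to avoid the combinatorics is to use the uniqueness principle for vertex algebras (as in \cite[Proposition 5.7.9]{LL}, adapted in Proposition \ref{gammavahom}), applied to the maps $v \mapsto R_g Y(v,z) R_g^{-1}$ and $v \mapsto Y(R_g v, \phi(g)^{-1}z)$: both are $\E(V_C)$-valued linear maps that agree on the vacuum, agree on the generating subspace $C$, and satisfy the same commutation relations with the vertex operators $Y(u,z)$ for $u \in C$. Uniqueness of such an extension then forces them to coincide on all of $V_C$, completing the proof.
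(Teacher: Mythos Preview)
Your argument is correct in outline and supplies what the paper does not: the paper simply cites \cite[Lemma 4.16]{Li-Gamma-quasi-mod} for this result and gives no proof of its own, so there is nothing to compare against at the level of strategy.

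A few comments on the write-up. The verification of (iii) on $C$ is clean and correct. For the extension to all of $V_C$, the induction you sketch is slightly muddled: you compute $R_g(u_n w)$, which is a statement about the \emph{action} of $R_g$ on vectors, whereas what must be propagated is the operator identity $R_g Y(v,z) R_g^{-1} = Y(R_gv,\phi(g)^{-1}z)$. The cleanest way to close this gap is the one you allude to at the end: set
\[
S=\{v\in V_C\mid R_g Y(v,z)R_g^{-1}=Y(R_gv,\phi(g)^{-1}z)\},
\]
note $\vac\in S$ and $C\subset S$, and show $S$ is closed under all products $u_n v$ by conjugating the Jacobi identity by $R_g$ and comparing with the Jacobi identity for $R_gu,R_gv$ (with variables rescaled by $\phi(g)^{-1}$). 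Since $C$ generates $V_C$ as a vertex algebra, this forces $S=V_C$. This is presumably what you intend by ``invoking the Jacobi identity/associativity,'' but it is worth stating the set $S$ and the closure property explicitly rather than leaving it at the level of a uniqueness slogan.
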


Following \cite{Li-Gamma-quasi-mod}, we define a new operation $[\cdot,\cdot]_\Gamma$ on $\wh{C}$ by letting
\[[a, b]_{\Gamma}=\sum_{g\in \Gamma}[\wh{R}_{g}(a),b]\]
for $a,b\in \wh{C}$.
Under this operation, the quotient space
\[\wh{C}_\Gamma=\wh{C}/\te{Span}_\C\{\wh{R}_{g}(a)-a\mid a\in\wh{C}, g\in \Gamma\}\]
becomes a Lie algebra (\cite[Lemma 4.1]{Li-Gamma-quasi-mod}).
\begin{remt}{\em \label{whCGamma} If $\Gamma=\<g\>$ is a finite cyclic group, then it is easy to see that
$\wh{C}_\Gamma$ is isomorphic to the  subalgebra of
 $\wh{C}$ fixed by $\wh{g}$.
  }\end{remt}

For any $u\in C$, $m\in \Z$, we will also denote the image of $u(m)$  in $\wh{C}_\Gamma$ by itself.
Note that there is a natural $\Z$-grading structure on $\wh{C}_\Gamma$ such that
$\deg(u(m))=m$ for $u\in C, m\in \Z$.
For any restricted $\wh{C}_\Gamma$-module $W$ and $u\in C$, we set
\[u(z)=\sum_{m\in \Z}u(m)z^{-m-1}\in \CE(W).\] The following result was proved in \cite[Theorem 4.17]{Li-Gamma-quasi-mod}.

\begin{prpt}\label{quasitores} Let  $C$ be a $\Gamma$-conformal Lie algebra. Assume that the character $\phi:\Gamma\rightarrow \C^\times$ is injective. Then any restricted module $W$ for $\wh{C}_\Gamma$ is naturally a quasi-module
for the $\Gamma$-vertex  algebra $V_C$ with $Y_W(u,z)=u(z)$ for $u\in C$. On the other hand, any quasi-module $W$ for the
$\Gamma$-vertex algebra $V_C$ is naturally a restricted module for the Lie algebra $\wh{C}_\Gamma$ with $u(z)=Y_W(u,z)$ for $u\in C$.
\end{prpt}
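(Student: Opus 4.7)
The plan is to invoke Li's local system theory (Proposition \ref{gammalocasetva}) for the forward direction and the quasi-module Jacobi identity for the reverse direction, bridging the two via the universal property of $V_C$.

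For the forward direction, given a restricted $\wh{C}_\Gamma$-module $W$, set $U = \{u(z) \mid u \in C\} \subset \CE(W)$. The first step is to verify that $U$ is $\Gamma$-local. Using the definition $[a,b]_\Gamma = \sum_{g\in\Gamma}[\wh{R}_g(a), b]$ together with the bracket formula \eqref{LieC}, one computes as operators on $W$:
\[
[u(z), v(w)] = \sum_{g \in \Gamma}\sum_{i \ge 0} \phi(g)\bigl((R_g u)_{(i)} v\bigr)(w)\,\frac{1}{i!}\Bigl(\frac{\partial}{\partial w}\Bigr)^{i} z^{-1}\delta\bigl(\phi(g) w/z\bigr).
\]
By the $\Gamma$-conformal axioms only finitely many pairs $(g,i)$ contribute, so multiplying by $f(z,w) = \prod_{g \in \Gamma'}(z - \phi(g)w)^{N_g}$ for a suitable finite $\Gamma' \subset \Gamma$ and suitable exponents $N_g$ annihilates this sum; injectivity of $\phi$ ensures $f(z,w) \in \C_\Gamma[z,w]$ is non-zero and its distinct factors remain distinct. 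Hence $U$ is $\Gamma$-local and Proposition \ref{gammalocasetva} produces a $\Gamma$-vertex algebra $\<U\>_\Gamma$ with $W$ as a faithful quasi-module.

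Next I will build a $\Gamma$-vertex algebra homomorphism $\varphi \colon V_C \to \<U\>_\Gamma$ extending $u \mapsto u(z)$. By Lemma \ref{C0genVC} and the universal property of $V_C$ (standard for the universal vertex algebra associated to a conformal Lie algebra), it suffices to check that the assignment $u \mapsto u(z)$ preserves the conformal Lie algebra structure, i.e.\ $u(z)_i\, v(z) = (u_{(i)} v)(z)$ for $u,v \in C$ and $i \ge 0$; this follows by matching the commutator above against the formula \eqref{Gammalocalex} of Lemma \ref{1-commutator} and isolating the $\alpha = 1$ component. The $\Gamma$-equivariance condition \eqref{gammahom2} of Proposition \ref{gammavahom} holds because the relation $\wh{R}_g(u(m)) = u(m)$ in $\wh{C}_\Gamma$ translates on $W$ to $(R_g u)(m) = \phi(g)^{-m-1} u(m)$, so $(R_g u)(z) = u(\phi(g)z) = \mathfrak R_g(u(z))$. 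Pulling back along $\varphi$ then endows $W$ with the desired quasi-$V_C$-module structure.

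For the reverse direction, given a quasi-$V_C$-module $W$, define $u(m) \in \mathrm{End}(W)$ by $Y_W(u,z) = \sum_m u(m) z^{-m-1}$ for $u \in C$. Restrictedness as a $\wh{C}$-module is immediate from $Y_W(u,z) \in \CE(W)$. The quasi-module axiom $Y_W(R_g v, z) = Y_W(v, \phi(g)z)$ yields $(R_g v)(m) = \phi(g)^{-m-1} v(m)$ on $W$, which is exactly what is required for $\wh{R}_g(v(m)) - v(m)$ to act trivially, so the $\wh{C}$-action factors through $\wh{C}_\Gamma$. The bracket formula \eqref{LieC} for $[\cdot,\cdot]_\Gamma$ is then verified by applying the quasi-module Jacobi identity with a $\Gamma$-local polynomial $f(z_1, z_2)$ clearing the commutator of $Y_W(u,z_1)$ and $Y_W(v,z_2)$, and extracting coefficients at the poles $z_1 = \phi(g) z_2$; these contributions reassemble via the identification defining $\wh{C}_\Gamma$ into the expected twisted bracket. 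The principal technical point is reconciling the commutator in $\<U\>_\Gamma$ (which, by \eqref{commutatory1}, retains only the $\alpha = 1$ terms) with the bracket in $\wh{C}_\Gamma$ (which sums over all of $\Gamma$). Injectivity of $\phi$ is essential here, ensuring the reparametrizations $u(\phi(g)z)$ for distinct $g$ are genuinely distinguishable operators on $W$ so that the quotient identifications defining $\wh{C}_\Gamma$ correspond precisely to the reparametrization action of $\mathfrak R$ on $\<U\>_\Gamma$, with no unintended collapse.
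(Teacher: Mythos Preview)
The paper does not supply its own proof of this proposition; it simply records the statement and attributes it to \cite[Theorem 4.17]{Li-Gamma-quasi-mod}. So there is nothing in the paper to compare against beyond the citation.

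Your outline is essentially the argument one finds in Li's paper: in the forward direction one uses the $\Gamma$-locality of the generating fields to invoke the local-system construction (Proposition~\ref{gammalocasetva}), then produces a $\Gamma$-vertex algebra map $V_C\to\langle U\rangle_\Gamma$ via the universal property of $V_C$; in the reverse direction one reads off the $\wh{C}_\Gamma$-action from the quasi-module Jacobi identity and the equivariance axiom \eqref{quasimodule}. The role of injectivity of $\phi$ is exactly as you identify. One small point worth tightening: rather than appealing to an abstract ``universal property'' and checking $n$-th products, the cleanest route (and the one Li takes, paralleled in \S\ref{subsec:proof-thm-intromain2} of this paper for the specific algebras $\wh\fm$) is to show that $\langle U\rangle_\Gamma$ is a $\wh{C}$-module generated by $1_W$ on which $\wh{C}^{(+)}$ acts trivially, so that the induced-module universality of $V_C$ yields the map directly; your verification of $u(z)_i\,v(z)=(u_{(i)}v)(z)$ is what underlies this module structure. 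Apart from that phrasing, your sketch is sound.
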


\section{Proof of Theorem \ref{intromain1} and Theorem \ref{intromain2}}

In this section we prove the main results of this paper (Theorems \ref{intromain1} and \ref{intromain2}) stated in the Introduction.
Throughout this section, let
$\Gamma=\<\mu\>$ and let
$\phi:\Gamma\rightarrow \C^\times$ be the character determined by the rule $\phi(\mu)=\xi^{-1}$.
\subsection{Proof of  Theorem \ref{intromain2}}\label{subsec:proof-thm-intromain2}
As in Theorem \ref{intromain2}, let $\fm$ be one of the algebras $\fg, \fn_+$ and $\fn_-$, and
$W$ an arbitrary restricted $\mathcal D_{\bm{P}}(\fm,\mu)$-module.
 We define a subspace $U_{\fm,W}$ of $\CE(W)$ as follows
\[U_{\fn_+,W}=\sum_{i\in I}\C x_i^+(z),\ U_{\fn_-,W}=\sum_{i\in I}\C x_i^-(z)\ \te{and}\
U_{\fg,W}=U_{\fn_+,W}\oplus U_{\fn_-,W}.\]
Then one can conclude from the defining relation $(X\pm)$ of $\mathcal D_{\bm{P}}(\fm,\mu)$ that $U_{\fm,W}$ is a $\Gamma$-local subspace of $\CE(W)$.
Therefore, it follows from Proposition \ref{gammalocasetva} that  $U_{\fm,W}$ generates a $\Gamma$-vertex algebra $(\<U_{\fm,W}\>_\Gamma,1_W,\CY,\mathfrak R)$
 and that $(W,Y_{\mathfrak W})$ is a quasi-module of
$\<U_{\fm,W}\>_\Gamma$ with
\begin{align}\label{actionyW}
Y_{\mathfrak W}(x_i^\pm(z),w)=x_i^\pm(w),\quad i\in I.
\end{align}
Moreover, it follows form \eqref{Raction} and the first relation in $(X\pm)$  that
\begin{align}\label{Ractiononx}
\mathfrak R_{\mu^k}(x_i^\pm(z))=x_i^\pm(\xi^{-k}z)=\xi^k\, x_{\mu(i)}^\pm(z),\quad i\in I,\, k\in \Z_N.\end{align}

When $\mu=1$, recall the usual Serre relation $(DS)_{\bm{p}}$ given in \eqref{depun}.
The following result will be proved in  \S\,\ref{subsec:proof-prop-D(m,1)}.
\begin{prpt}\label{reducedtoun}  $\mathcal D_{\bm{p}}(\fg,1)$
is a Drinfeld type presentation of $\wh\fg$.
\end{prpt}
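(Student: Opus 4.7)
The plan is to deduce Proposition \ref{reducedtoun} as the $\mu=1$ case of Theorem \ref{intromain2}: combined with Lemma \ref{lem:injective}, it suffices to show that every restricted $\mathcal D_{\bm{p}}(\fm,1)$-module $W$, for $\fm\in\{\fg,\fn_+,\fn_-\}$, carries a compatible restricted $\wh\fm$-module structure. Since $\Gamma=\{1\}$ in this situation, the $\Gamma$-vertex algebra machinery of \S\,5 collapses to the ordinary vertex algebra machinery, which simplifies the setup considerably.

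First, I would exploit the locality relations $(X\pm)$ of $\mathcal D_{\bm{p}}(\fg,1)$ to see that $U_{\fm,W}\subset\CE(W)$ is a local subspace, so Proposition \ref{gammalocasetva} produces a vertex algebra $\<U_{\fm,W}\>$ on which $W$ is a faithful module via \eqref{actionyW}. Because $\bm{p}$ satisfies both $(\bm{P}1)$ and $(\bm{P}2)$, the Serre relations $(DS\pm)_{\bm{p}}$ (together with the $(AS\pm)$ relations in the $A_1^{(1)}$ case) hold on $W$, and Corollary \ref{keycor} together with Corollary \ref{keycor0} lift these identities to the vanishing of the corresponding iterated commutators among the vertex operators $\CY(x_i^\pm(z),w_k)$ inside $\<U_{\fm,W}\>$.

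Next, I would equip $\wh\fm$ with the natural vertex Lie (conformal) algebra structure coming from the commutator formulas of Lemma \ref{lem:relationwhfg} and form the universal vertex algebra $V(\wh\fm)$. By Lemma \ref{C0genVC}, $V(\wh\fm)$ is generated by the Chevalley currents, and I would then construct a vertex algebra homomorphism $\varphi_{\fm,W}\colon V(\wh\fm)\to\<U_{\fm,W}\>$ sending $e_i^\pm\mapsto x_i^\pm(z)$ (and the analogous assignment for $\alpha_i^\vee$ when $\fm=\fg$). Verifying that $\varphi_{\fm,W}$ is well defined reduces, via Lemma \ref{1-commutator}, to matching the defining relations of $\mathcal D_{\bm{p}}(\fm,1)$ against the commutator formulas of $V(\wh\fm)$: the relations $(H),(HX\pm),(XX)$ match the vertex-Lie formulas derived from Lemma \ref{lem:commutator}, while the locality and Serre-type input from the previous step handle $(X\pm)$, $(DS\pm)_{\bm{p}}$ and $(AS\pm)$. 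Composing $\varphi_{\fm,W}$ with the $\<U_{\fm,W}\>$-action on $W$ and invoking Proposition \ref{quasitores} with $\Gamma=\{1\}$ produces the desired restricted $\wh\fm$-module structure on $W$ that is compatible with $\theta_{\fm,\bm{p}}$.

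The main obstacle lies in the affine case: then $\wh\fg$ carries, beyond the loop part of $\fg$, the toroidal central generators $t_1^{n_1}t_2^{n_2}\rk_1'$ which do not come from $\CL(\fg)$, yet must be realized inside $\<U_{\fm,W}\>$ purely from the currents $x_i^\pm(z)$. The strategy is to extract them from the null-root part of the commutator in Lemma \ref{lem:relationwhfg}, identifying them with specific normally-ordered products of Chevalley currents; here the $(AS\pm)$ relation in type $A_1^{(1)}$, promoted by Corollary \ref{keycor0} to the vertex-operator level, is precisely what forces these null-root contributions to close up consistently and match the toroidal commutators. Once these toroidal generators are constructed and their relations verified, the remaining verifications proceed uniformly across types.
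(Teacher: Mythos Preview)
Your proposal has a genuine circularity. In the paper's architecture, Proposition \ref{reducedtoun} is not a special case of Theorem \ref{intromain2} but a \emph{prerequisite} for it: Lemma \ref{moduleUW}, which is the heart of the proof of Theorem \ref{intromain2}, explicitly invokes Proposition \ref{reducedtoun} in order to conclude that the operators $\CY(x_i^\pm(z),w)$ on $\<U_{\fm,W}\>_\Gamma$ satisfy the relations of $\wh\fm$. The step in your outline where you ``construct a vertex algebra homomorphism $\varphi_{\fm,W}\colon V(\wh\fm)\to\<U_{\fm,W}\>$'' is exactly this step. To build such a map you must first equip $\<U_{\fm,W}\>$ with a $\wh\fm$-module structure (so that the universal property of the induced module $V(\wh\fm)$ applies), and for that you need a presentation of $\wh\fm$ by generators and relations. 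Saying that this ``reduces to matching the defining relations of $\mathcal D_{\bm{p}}(\fm,1)$'' is precisely assuming $\mathcal D_{\bm{p}}(\fm,1)\cong\wh\fm$, which is the proposition you are trying to prove. Your final paragraph about realizing the toroidal central elements $t_1^{n_1}t_2^{n_2}\rk_1'$ via normal ordering gestures at the real difficulty but does not resolve it: without an independent presentation of $\wh\fn_\pm$, there is no finite list of relations to check.

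The paper's actual proof (\S\,6.3) breaks the circle by establishing Proposition \ref{reducedtoun} from scratch, independently of the $\Gamma$-vertex algebra machinery. For $\fm=\fg$ it cites the Moody--Rao--Yokonuma presentation (Proposition \ref{mainun}). The substantive new work is for $\fm=\fn_\pm$: one forms the free-type algebra $\wt{\mathcal D}(\fg)$ with only the relations $(H),(HX\pm),(XX)$, observes its triangular decomposition (Lemma \ref{lem:un2}), and then proves by explicit computations with normal orderings (Lemma \ref{lem:un3}) that the ideals generated by the $(X\pm)$, $(AS\pm)$, and $(DS\pm)_{\bm p}$ relations are stable under $\mathrm{ad}\,x_k^\mp(w')$. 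This ideal-stability argument is what guarantees that $\mathcal D(\fg)_\pm\cong\mathcal D_{\bm p}(\fn_\pm,1)$ (Lemma \ref{lem:un4}), after which the isomorphism $\theta_{\fn_\pm}$ follows from the already-known isomorphism $\theta_\fg$. None of this uses the quasi-module machinery; it is a direct Lie-algebraic computation that your proposal bypasses entirely.
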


Assume now that  Proposition \ref{reducedtoun} holds. Then we can prove the following result.
\begin{lemt}\label{moduleUW}Let $\fm$ be one of the algebras $\fg, \fn_+$ and $\fn_-$.
 Then there is an $\wh\fm$-module structure on the $\Gamma$-vertex algebra $\< U_{\fm,W}\>_\Gamma$ with the action
 determined by
 \begin{align}\label{UWmodact}e_i^\pm(w)=\CY(x_i^\pm(z),w),\quad i\in I.\end{align}
Furthermore, as an $\wh\fm$-module, $\< U_{\fm,W}\>_\Gamma$ is generated by $1_W$ and the elements  in
 \begin{align}\label{genm+}\{t_1^m\ot e_i^\pm\mid i\in I, m\in \N\}\cap \wh\fm\end{align} act trivially on $1_W$.
\end{lemt}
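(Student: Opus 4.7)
The plan is to pull back the problem to Proposition \ref{reducedtoun}: to construct an $\wh\fm$-module structure on $\<U_{\fm,W}\>_\Gamma$ it suffices to realize an action of the \emph{untwisted} Drinfeld-type presentation $\mathcal D_{\bm{p}}(\fm,1)$ on this space, sending each generator $x_{i,m}^\pm$ to the appropriate Fourier mode of $\CY(x_i^\pm(z),w)$. Thus the core task is to convert the $\mu$-twisted defining relations of $\mathcal D_{\bm{P}}(\fm,\mu)$ (which hold for the $x_i^\pm(z)$ on $W$) into the untwisted defining relations of $\mathcal D_{\bm{p}}(\fm,1)$ (which must hold for the $\CY(x_i^\pm(z),w)$ on the ambient $\Gamma$-vertex algebra). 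A useful preliminary observation: by \eqref{Ractiononx} the space $U_{\fm,W}$ is $\mathfrak R$-stable, so \eqref{describleUgamma} identifies $\<U_{\fm,W}\>_\Gamma$ with the ordinary vertex subalgebra of $\CE(W)$ generated by $U_{\fm,W}$.

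First, the $\Gamma$-locality $(X\pm)$ translates via Lemma \ref{1-commutator} into an ordinary locality $(w_1-w_2)\,[\CY(x_i^\pm,w_1),\CY(x_j^\pm,w_2)]=0$ on $\<U_{\fm,W}\>_\Gamma$, which is exactly the untwisted $(X\pm)$ relation. For $\fm=\fg$, the commutator formula \eqref{commutatory1} applied to the pair $(x_i^+(z),x_j^-(z))$ expresses $[\CY(x_i^+,w_1),\CY(x_j^-,w_2)]$ as a sum of $\CY(x_i^+(z)_{n}x_j^-(z),w_2)$ against derivatives of the $\delta$-function; matching this against the shape prescribed by the untwisted $(XX)$ relation canonically defines the operators representing the Heisenberg generators $h_{i,m}$ and the central element $c$. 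The relations $(H)$ and $(HX\pm)$ are then obtained by standard Jacobi-identity manipulations from the already-established $(X\pm)$ and $(XX)$.

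For the Serre relations, I would apply Corollary \ref{keycor} (respectively Corollary \ref{keycor0} in type $A_1^{(1)}$) to the system $\{x_i^\pm(z)\}$. The hypotheses fit perfectly: $\Gamma$-locality is supplied by $(X\pm)$; the polynomial family is $\bm{P}$; and the nondegeneracy requirement $\sum_\sigma g_\sigma(w,\ldots,w,w)\ne 0$ demanded by that corollary is precisely condition $(\bm{P}2)$. The conclusion is the plain Serre identity $[\CY(x_i^\pm,w_1),\cdots,[\CY(x_i^\pm,w_{1-a_{ij}}),\CY(x_j^\pm,w)]]=0$, which is the $\mu=1$ Serre relation $(DS\pm)_{\bm{p}}$ of $\mathcal D_{\bm{p}}(\fm,1)$. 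Together with the preceding paragraph, all defining relations of $\mathcal D_{\bm{p}}(\fm,1)$ are verified on $\<U_{\fm,W}\>_\Gamma$, and Proposition \ref{reducedtoun} delivers the asserted $\wh\fm$-module structure.

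The generation and vacuum-annihilation claims then follow from vertex algebra generalities. By \eqref{describleUgamma}, every vector of $\<U_{\fm,W}\>_\Gamma$ is a linear combination of iterated Fourier modes $(x_{i_1}^\pm)_{r_1}\cdots(x_{i_s}^\pm)_{r_s}\cdot 1_W$, which under the constructed action are iterated applications of $t_1^{r_j}\ot e_{i_j}^\pm$ to $1_W$; hence $1_W$ is a cyclic vector. The creation axiom $\CY(x_i^\pm(z),w)\,1_W\in\<U_{\fm,W}\>_\Gamma[[w]]$ forces $(x_i^\pm)_m\cdot 1_W=0$ for all $m\ge 0$, which is the statement that $t_1^m\ot e_i^\pm$ annihilates $1_W$ for $m\in\N$. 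I expect the main obstacle to lie in the $\fm=\fg$ case when $\fg$ is of affine type: correctly isolating the Heisenberg and central contributions inside the $(XX)$ commutator via the $\delta$-function manipulations of Lemmas \ref{delta1} and \ref{delta2}, and in particular accounting for the imaginary-root corrections of Lemma \ref{comminghmu} so that the precise coefficients demanded by the untwisted $(XX)$ relation emerge.
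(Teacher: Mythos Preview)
Your proposal is essentially correct and follows the same strategy as the paper. For the relations $(X\pm)$, $(DS\pm)_{\bm P}$, and $(AS\pm)$ your approach coincides with the paper's: locality via \eqref{localy1}, the Serre relations via Corollary~\ref{keycor} with the nondegeneracy supplied by $(\bm P2)$, and the $A_1^{(1)}$ relation via Corollary~\ref{keycor0}. The generation and annihilation statements are also handled identically, through \eqref{describleUgamma} and the creation property.

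The one place where your route differs is the case $\fm=\fg$. You propose to \emph{define} the Heisenberg and central operators implicitly, as the coefficients appearing in $[\CY(x_i^+,w_1),\CY(x_j^-,w_2)]$ via \eqref{commutatory1}, and then to recover $(H)$ and $(HX\pm)$ through Jacobi-identity manipulations. The paper proceeds more directly: since $W$ is already a $\mathcal D_{\bm P}(\fg,\mu)$-module, the formal series $h_i(z)=\sum_m h_{i,m}z^{-m-1}$ and $c(z)=c$ are elements of $\CE(W)$ lying in $\<U_{\fg,W}\>_\Gamma$; the paper simply sets $\alpha_i^\vee(w)=\CY(h_i(z),w)$ and $\rk_1=N\,\CY(c(z),w)$ and verifies each untwisted relation $(H)$, $(HX\pm)$, $(XX)$ by rewriting the corresponding twisted relation of $\mathcal D_{\bm P}(\fg,\mu)$ in $\delta$-function form and feeding it through the commutator formula \eqref{commutatory1}. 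This bypasses your indirect derivation entirely.

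Relatedly, your anticipated obstacle involving Lemma~\ref{comminghmu} is a red herring. That lemma computes commutators inside $\wh\fg[\mu]$ itself, whereas here you work only with the defining relations of $\mathcal D_{\bm P}(\fg,\mu)$ on the abstract restricted module $W$; the relation $(XX)$ already has the clean shape needed, and no imaginary-root correction terms enter at this stage.
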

\begin{proof}We first prove that the action \eqref{UWmodact} determines an $\wh\fm$-module structure on $\< U_{\fm,W}\>_\Gamma$
with $\fm=\fn_+$.
In view of  Proposition \ref{reducedtoun}, it suffices to show that the
operators $\CY(x_i^+(z),w)$, $i\in I$ satisfy the  relations
\begin{align}\label{untwistedre1}&f_{ij}(w_1-w_2)\cdot[\CY(x_i^+(z),w_1),\CY(x_j^+(z),w_2)]=0,\ \te{for}\ i,j\in I,\\
\label{untwistedre3}&[\CY(x_i^+(z),w_1),\cdots,[\CY(x_i^+(z),w_{1-a_{ij}}),\CY(x_j^+(z),w)]]=0,\ \te{for}\ (i,j)\in \mathbb I.
\end{align}
and if $\fg$ is of type $A_1^{(1)}$, satisfy the following addition relation
\begin{align}
\label{untwistedre2}(w_1-w_2)\,[\CY(x_i^+(z),w_1),[\CY(x_i^+(z),w_{2}),\CY(x_j^+(z),w)]]=0,\ \te{for}\ i\ne j.
\end{align}
Indeed, the relation \eqref{untwistedre1} is implied by the defining relation $(X+)$ of
$\mathcal D_{\bm{P}}(\fn_+,\mu)$ and \eqref{localy1},
the relation \eqref{untwistedre3} follows from the defining relation $(DS+)_{\bm{P}}$  of
$\mathcal D_{\bm{P}}(\fn_+,\mu)$, the condition $(\bm{P}2)$ and
Corollary \ref{keycor}, and the relation \eqref{untwistedre2} is implied by the defining relation $(AS\pm)$ of
$\mathcal D_{\bm{P}}(\fn_+,\mu)$ and Corollary
\ref{keycor0}.
Similarly, one can prove that $\< U_{\fn_-,W}\>_\Gamma$ is an $\wh\fn_-$-module with the action determined by \eqref{UWmodact}.

Now we turn to consider the case that $\fm=\fg$. In this case the formal series
\[h_i(z)=\sum_{m\in \Z} h_{i,m}z^{-m-1},\ i\in I, \ c(z)=\sum_{m\in \Z}\delta_{m,-1}c z^{-m-1}\]
are also contained in  $\< U_{\fg,W}\>_\Gamma$.
We are going to prove  that the action
\[e_i^\pm(w)=\CY(x_i^\pm(z),w),\ \al_i^\vee(w)=\CY(h_i(z),w),\ i\in I,\
\rk_1=N\CY(c(z),w)\]
determines a  $\wh\fg$-module structure on $\< U_{\fg,W}\>_\Gamma$. This action is well-defined as
\[\frac{\partial}{\partial z}\CY(c(z),w)=\CY(\frac{\partial}{\partial z}c(z),w)=0.\]
Again by using Proposition \ref{reducedtoun}, in addition the relations in \eqref{untwistedre1}\ and  \eqref{untwistedre3},
it remains to prove that the following commutation relations:
\begin{align}
\label{untwistedre4}[\CY(h_i(z),w_1),\CY(h_j(z),w_2)]=&\ \epsilon_j^{-1}a_{ij}\,\CY(c(z),w_2)\delta^{(1)}(w_1,w_2);\\
\label{untwistedre5}[\CY(h_i(z),w_1),\CY(x_j^\pm(z),w_2)]=& \pm a_{ij}\,\CY(x_j^\pm(z),w_2)\delta^{(0)}(w_1,w_2);\\
\label{untwistedre6} [\CY(h_i(z),w_1),\CY(c(z),w_2)]=&\, 0 =\, [\CY(x_i^\pm(z),w_1),\CY(c(z),w_2)];\\
\label{untwistedre7}[\CY(x_i^+(z),w_1),\CY(x_j^-(z),w_2)]=&\ \delta_{i,j}\big(\CY(h_j(z),w_2)\delta^{(0)}(w_1,w_2)\\
\notag&+\epsilon_j^{-1}\,\CY(c(z),w_2)\delta^{(1)}(w_1,w_2)\big),
\end{align}
for $i,j\in I$. Now we rewritten the defining relation $(H)$ of $\mathcal D_{\bm{P}}(\fg,\mu)$ as follows
\begin{align*} [h_i(z), c(w)]=0,\
[h_i(z),h_j(w)]=\sum_{k\in \Z_N}\frac{N}{\epsilon_j}a_{i\mu^k(j)}c(w_2)\delta^{(\xi^k,1)}(w_1,w_2).
\end{align*}
This together with Lemma \ref{1-commutator} gives  \eqref{untwistedre4}.
The relations \eqref{untwistedre5}-\eqref{untwistedre7} can be proved in a similar way by using
the remaining defining relations of $\mathcal D_{\bm{P}}(\fg,\mu)$ and Lemma \ref{1-commutator}, and
we omit the details.

For the furthermore statement, it follows from \eqref{Ractiononx} that $U_{\fm,W}$ is a $\Gamma$-submodule of $\CE(W)$
and hence $\<U_{\fm,W}\>_\Gamma=\<U_{\fm,W}\>_{\{1\}}$ (see \eqref{describleUgamma}).
 This implies that the
$\wh\fm$-module $\<U_{\fm,W}\>_\Gamma$ is generated by $1_W$. So it remains to prove that the elements in \eqref{genm+} act trivially on $1_W$, or equivalently,
to prove that
\[\CY(x_i^\pm(z),w).1_W\in \<U_{\fm,W}\>_\Gamma[[w]].\]
But this is just the creation property of the vacuum vector $1_W$ (see \eqref{creation}), and so we complete the proof.
\end{proof}

Let   $\wh\fm$ be one of the algebras $\wh\fg,
\wh\fn_+$ and $\wh\fn_-$. We define
$\wh\fm^{(+)}$ to be the subalgebra of $\wh\fm$ generated by the elements in \eqref{genm+}.
Form
the induced $\wh{\mathfrak m}$-module
\[V(\wh{\mathfrak m})= \U\(\wh{\mathfrak m}\)\otimes_{\U\(\wh{\mathfrak m}^{(+)}\)} \C,\]
where $\C$ stands for the one dimensional trivial $\wh{\mathfrak m}^{(+)}$-module.
Then it follows from Lemma \ref{moduleUW} and the universality of the $\wh\fm$-module $V(\wh\fm)$ that there is  a (unique) surjective $\wh\fm$-module homomorphism
\[\varphi_{\fm,W}:V(\wh\fm)\rightarrow \<U_{\fm,W}\>_\Gamma\] such that $\varphi_{\fm,W}(\vac)=1_W$.
In what follows, we will often  view
\begin{align}\label{defTfm}
T_\fm=\{e_i^\pm\mid i\in I\}\cap \fm\end{align} as a subset of $V(\wh{\fm})$  in sense of that
\[
e_i^\pm=(t_1^{-1}\ot e_i^\pm)\ot 1,\quad i\in I.\]
Since $\varphi_{\fm,W}$ is an $\wh\fm$-module homomorphism, one gets that
\[\varphi_{\fm,W}(e_i^\pm(w).\vac)=e_i^\pm(w).\varphi_{\fm,W}(\vac)=\CY(x_i^\pm(z),w)1_W,\quad i\in I.\]
 By  the creation property on the vacuum vector  (see \eqref{creation}), this implies that
\begin{align}\label{eixi} \varphi_{\fm,W}(e_i^\pm)=x_i^\pm(z),\quad i\in I.
\end{align}

 The following result will be proved in \S\,\ref{subsec:proof-prop-universalva}.

\begin{prpt}\label{universalva} Let $\wh\fm$ be one of the algebras $\wh\fg,
\wh\fn_+$ and $\wh\fn_-$.
Then there exists a $\Gamma$-vertex algebra structure on the induced $\wh\fm$-module $V(\wh\fm)$
such that $\vac=1\ot 1$ is the vacuum vector and such that $T_\fm$ generates $V(\wh\fm)$ as a vertex algebra.
The vertex operators on $T_\fm$ are given by
\begin{align}\label{uvavo}
Y(e_i^\pm,z)=e_i^\pm(z)=\sum_{m\in\Z}(t_1^m\ot e_i^\pm)z^{-m-1},\quad i\in I,\end{align}
and the $\Gamma$-action on $T_\fm$ is determined by
 \begin{align}\label{uvar}
 R_\mu(e_i^\pm)=\xi^{-1} e_{\mu(i)}^\pm,\quad i\in I.
 \end{align}
 Moreover, any quasi-module $W$ for the
$\Gamma$-vertex algebra $V(\wh\fm)$
  is naturally a restricted module
for the Lie algebra $\wh\fm[\mu]$ with
\begin{align}\label{uvam}
e_i^\pm(z)=\sum_{m\in \Z}(t_1^m\ot e_{i(m)}^\pm)z^{-m-1}=Y_W(e_i^\pm,z),\quad i\in I.
\end{align}
On the other hand,   any restricted  module $W$ for the Lie algebra $\wh\fm[\mu]$  is naturally a quasi-module
for the $\Gamma$-vertex  algebra $V(\wh\fm)$
  with $Y_W(e_i^\pm,z)=e_i^\pm(z)$ for $i\in I$.
\end{prpt}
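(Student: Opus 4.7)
The plan is to realize $V(\wh\fm)$ as the universal vertex algebra $V_{C_\fm}$ associated to a suitable $\Gamma$-conformal Lie algebra $C_\fm$, and then to invoke the general machinery for $\Gamma$-conformal Lie algebras reviewed above. First I will build $C_\fm$ abstractly: its underlying vector space is spanned by a formal copy of $\fm$ together with $\rk_1$ and the toroidal symbols $t_2^n \rk_1'$ (for $n \in \Z$ in the case $\fm = \fg$, and with the appropriate sign restriction on $n$ when $\fm = \fn_\pm$, in the affine setting). The operator $T$ is the formal derivative $\partial_z$, and the singular $Y_-$-map is read off directly from Lemma \ref{lem:relationwhfg}: the right-hand side there is a finite $\C$-linear combination of the distributions $z^{-1}\delta$ and $\partial_w z^{-1}\delta$, so that only the modes $u_{(0)}v$ and $u_{(1)}v$ of $Y_-$ need to be specified. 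Lemma \ref{lem:determineC} then gives the desired conformal Lie algebra structure, because the bracket formula \eqref{LieC} for this choice of $Y_-$ precisely reproduces the defining brackets of $\wh\fm$; this also provides an isomorphism $\wh{C_\fm} \cong \wh\fm$ whose polar decomposition matches the splitting $\wh\fm = \wh\fm^{(+)} \oplus (\te{negative part})$ implicit in the construction of $V(\wh\fm)$. The $\Gamma$-action on $C_\fm$ is copied from \eqref{defhatmu1}, \eqref{defhatmu2}, and the $\Gamma$-conformal axioms then reduce via Lemma \ref{whRg} to the fact that the induced automorphism $\wh{R}_\mu$ on $\wh{C_\fm} \cong \wh\fm$ coincides with $\wh\mu$, which is built into the construction.

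Next, Lemma \ref{gammacontova} turns $V_{C_\fm}$ into a $\Gamma$-vertex algebra. Both $V_{C_\fm}$ and $V(\wh\fm)$ are induced $\wh\fm$-modules from the trivial representation of $\wh\fm^{(+)}$, so the tautological $\wh\fm$-module isomorphism $V_{C_\fm} \cong V(\wh\fm)$ transports the $\Gamma$-vertex algebra structure to $V(\wh\fm)$, and under this identification \eqref{uvavo} and \eqref{uvar} follow immediately from the definitions. To establish that $T_\fm$ generates $V(\wh\fm)$ as a vertex algebra, I apply Lemma \ref{C0genVC}: it suffices to check that the Lie subalgebra of $\wh\fm$ generated by $\{t_1^m \ot e_i^\pm \mid i \in I,\, m \in \Z\} \cap \wh\fm$ is all of $\wh\fm$. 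This is routine: since $\fg$ is generated by its Chevalley generators $e_i^\pm$, iterated brackets produce all of $\C[t_1, t_1^{-1}] \ot \fm$, and in the affine setting the second identity of Lemma \ref{lem:commutator} then yields the toroidal central symbols $t_1^{n_1} t_2^{n_2} \rk_1'$ from brackets of elements whose root sum is a nonzero null root.

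Finally, since $\phi(\mu) = \xi^{-1}$ has order $N$, the character $\phi : \Gamma \to \C^\times$ is injective. By Remark \ref{whCGamma}, the Lie algebra $\wh{C_\fm}_\Gamma$ coincides with the $\wh\mu$-fixed subalgebra of $\wh{C_\fm} \cong \wh\fm$, which is by definition $\wh\fm[\mu]$. Applying Proposition \ref{quasitores} then produces the stated equivalence between restricted $\wh\fm[\mu]$-modules and quasi-$V(\wh\fm)$-modules, with $u(z) = Y_W(u,z)$ corresponding to \eqref{uvam} when $u = e_i^\pm \in T_\fm$. The main obstacle will be in the first step: in the affine-toroidal case one must include exactly enough of the symbols $t_2^n \rk_1'$ in $C_\fm$, with the correct sign restriction for $\fm = \fn_\pm$, so that $\wh{C_\fm}$ recovers the full $\wh\fm$ without redundancy; and one must encode the non-homogeneous cocycle contribution $-m_1 \rho_\mu(\dot h)\, t_1^{m_1} t_2^{m_2} \rk_1'$ from \eqref{defhatmu2} into $R_\mu$ on $C_\fm$ in a manner compatible with Lemma \ref{whRg}, so that the induced $\wh{R}_\mu$ genuinely equals $\wh\mu$.
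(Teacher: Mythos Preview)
Your proposal is correct and follows essentially the same route as the paper's own proof in \S\ref{subsec:proof-prop-universalva}: construct the $\Gamma$-conformal Lie algebra $C_\fm$ (with exactly the toroidal symbols $t_2^n\rk_1'$ and sign restrictions you describe for $\fn_\pm$), read off $Y_-$ from Lemma~\ref{lem:relationwhfg}, verify the $\Gamma$-conformal axioms via Lemma~\ref{whRg} by matching $\wh{R}_\mu$ with $\wh\mu$ (including the $\rho_\mu$ cocycle term you flag), identify $V_{C_\fm}\cong V(\wh\fm)$ by comparing polar parts, invoke Lemma~\ref{C0genVC} for generation by $T_\fm$, and finish with Remark~\ref{whCGamma} and Proposition~\ref{quasitores}. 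The paper packages these steps into Lemmas~\ref{lem:wh-C-Lie-iso}--\ref{charwhfm+}, but the content and order are the same as what you outline.
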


We continue  the proof of Theorem \ref{intromain2} by assuming that Proposition \ref{universalva} holds.

\begin{lemt}\label{lem:gammavahom}For any $\fm=\fg, \fn_+$ or $\fn_-$,
the $\wh\fm$-module homomorphism $\varphi_{\fm,W}:V(\wh\fm)\rightarrow \< U_{\fm,W}\>_\Gamma$ is a $\Gamma$-vertex algebra homomorphism
\end{lemt}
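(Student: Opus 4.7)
The plan is to invoke Proposition \ref{gammavahom} with $T = T_\fm$. By Proposition \ref{universalva}, $T_\fm$ generates $V(\wh\fm)$ as a vertex algebra, and by construction $\varphi_{\fm,W}(\vac) = 1_W$, so it will suffice to verify conditions \eqref{gammahom1} and \eqref{gammahom2} for $a = e_i^\pm \in T_\fm$.

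For \eqref{gammahom1}, I will exploit the fact that, by Proposition \ref{universalva}, the vertex operator $Y(e_i^\pm, z)$ on $V(\wh\fm)$ is literally the generating series $\sum_m (t_1^m \ot e_i^\pm)\, z^{-m-1}$ of the natural $\wh\fm$-action, while by Lemma \ref{moduleUW} the vertex operator $\CY(x_i^\pm(z), w)$ on $\<U_{\fm,W}\>_\Gamma$ realizes the $\wh\fm$-module action constructed there (with $n$-th mode $t_1^n \ot e_i^\pm$). Combining the $\wh\fm$-equivariance of $\varphi_{\fm,W}$ with the identification $\varphi_{\fm,W}(e_i^\pm) = x_i^\pm(z)$ from \eqref{eixi} and equating modes will give \eqref{gammahom1} directly.

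For \eqref{gammahom2}, I will first establish equivariance on the generators: for each $g \in \Gamma$ and $i \in I$, that $\varphi_{\fm,W}(R_g\, e_i^\pm) = \mathfrak{R}_g\, \varphi_{\fm,W}(e_i^\pm)$, by comparing formula \eqref{uvar} for $R_g$ on $T_\fm$ with formula \eqref{Ractiononx} for $\mathfrak{R}_g$ on $x_i^\pm(z)$, using $\varphi_{\fm,W}(e_i^\pm) = x_i^\pm(z)$. Granted this and the hypothesis $\varphi_{\fm,W}(R_g v) = \mathfrak{R}_g\, \varphi_{\fm,W}(v)$, the plan is to apply the $\Gamma$-vertex algebra axiom \eqref{Gammava} inside $V(\wh\fm)$ to rewrite $R_g\, Y(a,z) v = Y(R_g a,\, \phi(g)^{-1} z)\, R_g v$, then apply \eqref{gammahom1} extended linearly to the scalar multiple $R_g a$, substitute the equivariance on $a$ and the hypothesis on $v$, and finally invert the $\Gamma$-axiom inside $\<U_{\fm,W}\>_\Gamma$ to arrive at $\mathfrak{R}_g\bigl(Y'(\varphi_{\fm,W}(a), z)\, \varphi_{\fm,W}(v)\bigr)$, as required.

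The only delicate point will be the scalar tracking in the equivariance check on $T_\fm$; once the factors in \eqref{uvar} and \eqref{Ractiononx} are matched through $\varphi_{\fm,W}(e_i^\pm) = x_i^\pm(z)$, the remainder is a straightforward diagram chase, as essentially all the substantial content has already been produced in Lemma \ref{moduleUW} and Proposition \ref{universalva}, and the present lemma merely assembles them via Proposition \ref{gammavahom}.
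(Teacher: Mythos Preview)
Your proposal is correct and follows essentially the same approach as the paper: both invoke Proposition \ref{gammavahom} with $T=T_\fm$, verify \eqref{gammahom1} by combining the $\wh\fm$-module homomorphism property of $\varphi_{\fm,W}$ with \eqref{uvavo}, \eqref{UWmodact} and \eqref{eixi}, and verify \eqref{gammahom2} by applying the $\Gamma$-vertex algebra axiom \eqref{Gammava} on both sides together with the equivariance on generators coming from \eqref{uvar} and \eqref{Ractiononx}. The paper carries out the verification of \eqref{gammahom2} as a single chain of equalities rather than isolating the equivariance on $T_\fm$ as a separate lemma, but the logical content is identical.
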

\begin{proof} By the general principle given in Proposition \ref{gammavahom}, one only need to prove that
for $a\in T_\fm$ and $v\in V(\wh\fm)$,
\begin{align}
\label{gammahom3}\varphi_{\fm,W}(Y(a,w)v)&=\CY(\varphi_{\fm,W}(a),w)\,\varphi_{\fm,W}(v),
\end{align} and that for $a\in T_\fm$, $k\in \Z_N$ and $v\in V(\wh\fm)$,
\begin{align}
\label{gammahom4}
\varphi_{\fm,W}(R_{\mu^k}(Y(a,w)v))=\mathfrak R_{\mu^k}\(\CY(\varphi_{\fm,W}(a),w)\,\varphi_{\fm,W}(v)\),
\end{align}
provided that
\begin{align}\label{gammahom5}\varphi_{\fm,W}(R_{\mu^k}(v))=\mathfrak R_{\mu^k}(\varphi_{\fm,W}(v)).
\end{align}

We first prove the identity \eqref{gammahom3}. Indeed, for $i\in I$ and $v\in V(\wh\fm)$, one has that
\begin{eqnarray*}
& &\varphi_{\fm,W}(Y(e_i^\pm,w)v)=\varphi_{\fm,W}(e_i^\pm(w).v)\qquad \te{by}\ \eqref{uvavo}\\
&=&e_i^\pm(w).\varphi_{\fm,W}(v)\quad\qquad \te{(as $\varphi_{\fm,W}$ is a module homomorphism)}\\
&=&\CY(x_i^\pm(z),w)\varphi_{\fm,W}(v)
=\CY(\varphi_{\fm,W}(e_i^\pm),w)\varphi_{\fm,W}(v).\quad \te{by}\ \eqref{UWmodact},\eqref{eixi}.
\end{eqnarray*}
Next we prove that the condition \eqref{gammahom5} implies the identity \eqref{gammahom4}.
For $i\in I$, $k\in \Z_N$ and $v\in V(\wh\fm)$, one has that
\begin{eqnarray*}
&&\varphi_{\fm,W}\circ R_{\mu^k}(Y(e_i^\pm,w)v)=\varphi_{\fm,W}(Y(R_{\mu^k}(e_i^\pm),w)R_{\mu^k}(v))\qquad \te{by}\ \eqref{Gammava}\\
&=&\xi^k\varphi_{\fm,W}(Y(e_{\mu^k(i)}^\pm,w)R_{\mu^k}(v))=\xi^k\varphi_{\fm,W}(e_{\mu^k(i)}^\pm(w)R_{\mu^k}(v))
\qquad \te{by}\ \eqref{uvar}, \eqref{uvavo}\\
&=&\xi^k e_{\mu^k(i)}^\pm(w).\varphi_{\fm,W}(R_{\mu^k}(v))=\xi^k\CY(x_{\mu^k(i)}^\pm(z),w)\varphi_{\fm,W}(R_{\mu^k}(v))\quad \te{by}\ \eqref{UWmodact} \\
&=&\CY(\mathfrak R_{\mu^k}(x_i^\pm(z)),w)\varphi_{\fm,W}(R_{\mu^k}(v))=\CY(\mathfrak R_{\mu^k}(x_i^\pm(z)),w)\mathfrak R_{\mu^k}(\varphi_{\fm,W}(v))
\ \te{by}\ \eqref{Ractiononx},\eqref{gammahom5}\\
&=&\mathfrak R_{\mu^k}\(\CY(x_i^\pm(z),w)\varphi_{\fm,W}(v)\)=\mathfrak R_{\mu^k}\((e_{i}^\pm(z)).\varphi_{\fm,W}(v)\)\quad \te{by}\ \eqref{Gammava},\eqref{UWmodact}\\
&=&\mathfrak R_{\mu^k}\circ \varphi_{\fm,W}(e_i^\pm(w).v)=\mathfrak R_{\mu^k}\circ \varphi_{\fm,W}(Y(e_i^\pm,w)v),\quad \te{by}\  \eqref{uvavo}
\end{eqnarray*} as desired.
\end{proof}

Now we are ready to complete the proof of Theorem \ref{intromain2}. Firstly, recall that the restricted $\mathcal D_{\bm{P}}(\fm,\mu)$-module
$W$
is naturally a quasi-$\< U_{\fm,W}\>_\Gamma$-module under the action \eqref{actionyW}.
Next, by Lemma \ref{lem:gammavahom}, via the $\Gamma$-vertex algebra homomorphism $\varphi_{\fm,W}$,
 the quasi-$\< U_{\fm,W}\>_\Gamma$-module $(W,Y_{\mathfrak W})$
becomes a quasi-$V(\wh\fm)$-module with $Y_W(v,z)=Y_{\mathfrak W}(\varphi_{\fm,W}(v),z)$, $v\in V(\wh\fm)$.
In particular, by \eqref{UWmodact}, one gets that
\begin{equation}\begin{split}\label{equiact2}Y_W(e_i^\pm,w)=Y_{\mathfrak W}(x_i^\pm(z),w),\quad i\in I.
\end{split}\end{equation}
Finally, in view of Proposition \ref{universalva}, the quasi-$V(\wh\fm)$-module $(W,Y_W)$ admits an $\wh\fm[\mu]$-module structure
with action \eqref{uvam}.
In summary, by the actions \eqref{actionyW},\eqref{equiact2} and \eqref{uvam}, we have proved that there is an $\wh\fm[\mu]$-module structure on $W$ with
\begin{align*}e_i^\pm(w)=Y_W(e_i^\pm,w)=Y_{\mathfrak W}(x_i^\pm(z),w)=x_i^\pm(w),\quad i\in I.
\end{align*}
This finishes the proof of Theorem \ref{intromain2} as  the set
\[\{t_1^m\ot e_{i(m)}^\pm\mid i\in I, m\in \Z\}\cap \wh\fg[\mu]\]
generates the Lie algebra $\wh\fm[\mu]$.
\subsection{Proof of Theorem \ref{intromain1}}In this subsection we give the proof of Theorem \ref{intromain1}.
For this purpose, we need to prove two simple results  on  restricted modules of $\Z$-graded Lie algebras.
\begin{lemt}\label{zgraded1} Let $\mathfrak p=\oplus_{n\in \Z}\mathfrak p_ n$ be a $\Z$-graded Lie algebra and let $a\in \mathfrak p$.
Assume that $a.w=0$ for any restricted $\mathfrak p$-module $W$ and any $w\in W$.  Then $a=0$.
\end{lemt}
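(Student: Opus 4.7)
The plan is to exhibit, for each nonzero $a\in\mathfrak p$, an explicit restricted $\mathfrak p$-module on which $a$ acts nontrivially, thereby contradicting the hypothesis. The natural candidate is the generalized Verma module $M_s=\U(\mathfrak p)\otimes_{\U(\mathfrak p_{\ge s})}\C$, where $\mathfrak p_{\ge s}=\oplus_{n\ge s}\mathfrak p_n$ (a subalgebra of $\mathfrak p$ for any $s\ge 1$) and $\C$ is the trivial one-dimensional $\mathfrak p_{\ge s}$-module. Set $\vac_s=1\otimes 1$.

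First I would verify that $M_s$ is restricted for every $s\ge 1$. Any $v\in M_s$ can be written as a sum of vectors of the form $x_{1}\cdots x_{k}\otimes 1$ with each $x_i$ a homogeneous element of $\mathfrak p$, and there is an integer $N$ with $\deg x_i\ge -N$ for all the (finitely many) $x_i$ appearing. For $y\in\mathfrak p_n$ with $n\ge s+kN$, repeatedly applying $yx_i=x_iy+[y,x_i]$ moves $y$ to the rightmost position modulo commutators whose degrees are all $\ge s$; every resulting term therefore ends in an element of $\mathfrak p_{\ge s}$, which kills $1$. Hence $\mathfrak p_n.v=0$ for all $n$ sufficiently large, so $M_s$ is restricted.

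Next I would identify $M_s$ as a vector space. Since $\mathfrak p=\mathfrak p_{<s}\oplus\mathfrak p_{\ge s}$ as vector spaces (with $\mathfrak p_{\ge s}$ a subalgebra), the PBW theorem gives $\U(\mathfrak p)\cong \U(\mathfrak p_{<s})\otimes \U(\mathfrak p_{\ge s})$ as vector spaces. An immediate consequence is
\begin{equation*}
\U(\mathfrak p)\mathfrak p_{\ge s}=\U(\mathfrak p_{<s})\otimes \U(\mathfrak p_{\ge s})_+,
\end{equation*}
where $\U(\mathfrak p_{\ge s})_+$ is the augmentation ideal, and therefore $M_s\cong \U(\mathfrak p_{<s})$ canonically. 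Under this identification, the composition $\mathfrak p\hookrightarrow \U(\mathfrak p)\twoheadrightarrow M_s$ sends each homogeneous component $a_n$ to itself if $n<s$ and to $0$ if $n\ge s$.

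Finally, write $a=\sum_n a_n$ with $a_n\in\mathfrak p_n$. Assuming $a\ne 0$, pick some $n_0$ with $a_{n_0}\ne 0$ and take $s=n_0+1$. Then $a.\vac_s$ corresponds under the identification $M_s\cong \U(\mathfrak p_{<s})$ to $\sum_{n<s}a_n\in\mathfrak p_{<s}\subset \U(\mathfrak p_{<s})$, which is nonzero because $a_{n_0}\ne 0$ and $\mathfrak p_{<s}$ embeds into $\U(\mathfrak p_{<s})$. This contradicts the hypothesis that $a$ acts as zero on every element of every restricted $\mathfrak p$-module, so $a=0$. The only step requiring any care is the PBW-based identification of $\U(\mathfrak p)\mathfrak p_{\ge s}$; the verification that $M_s$ is restricted is the other bookkeeping point, but both are routine.
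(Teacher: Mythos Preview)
Your argument is correct and is essentially the paper's own proof: the paper considers the quotient module $\U(\mathfrak p)/\U(\mathfrak p)\mathfrak p_{\ge s}$, which is exactly your induced module $M_s$, and then uses PBW to conclude $\mathfrak p\cap \U(\mathfrak p)\mathfrak p_{\ge s}=\mathfrak p_{\ge s}$, matching your identification $M_s\cong \U(\mathfrak p_{<s})$. One small slip: you set $s=n_0+1$, but if every nonzero homogeneous component of $a$ sits in negative degree this gives $s\le 0$, where $\mathfrak p_{\ge s}$ need not be a subalgebra; simply take $s=\max(n_0+1,1)$ (or, as the paper phrases it, ``a sufficiently large integer $s$ with $a\notin\mathfrak p_{\ge s}$'') and your argument goes through unchanged.
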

\begin{proof} Let $a\in \mathfrak p$ be as in lemma and assume conversely that $a\ne 0$. Then one may take a sufficiently large
 integer $s$ such that $a\not\in \mathfrak p_{\ge s}$, where $\mathfrak p_{\ge s}=\oplus_{n\ge s}\mathfrak p_n$.
Consider the left $\U(\mathfrak p)$-module $\U(\mathfrak p)/\U(\mathfrak p) \mathfrak p_{\ge s}$ via the left multiplication action.
Note that this $\mathfrak p$-module is restricted and so we have
\[a\in  \{x\in \mathfrak p\mid x.(\U(\mathfrak p)/\U(\mathfrak p) \mathfrak p_{\ge s})=0\}=\mathfrak p\cap \U(\mathfrak p) \mathfrak p_{\ge s}.\]
However, one can conclude from the PBW basis theorem that
\begin{align*} \mathfrak p\cap U(\mathfrak p) \mathfrak p_{\ge s}= \mathfrak p_{\ge s},
\end{align*}
which implies $a\in \mathfrak p_{\ge s}$, a contradiction.
\end{proof}
\begin{lemt}\label{lem:injective}Let $\mathfrak p$ be a $\Z$-graded Lie algebra, and let $f:\mathfrak p\rightarrow \mathfrak q$
be a  homomorphism of Lie algebras. Assume that for any restricted $\mathfrak p$-module $W$, there is a $\mathfrak q$-module structure on it with
\begin{align}\label{rescon} a.w=f(a).w,\quad a\in \mathfrak p,\ w\in W.
\end{align}
Then $f$ is an injective map.
\end{lemt}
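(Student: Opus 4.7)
\medskip

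The plan is to reduce Lemma \ref{lem:injective} directly to the preceding Lemma \ref{zgraded1}. Specifically, I would pick any element $a$ in the kernel of $f$ and show that $a$ satisfies the hypothesis of Lemma \ref{zgraded1}, which will force $a=0$.

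The key step: let $a\in \mathfrak p$ with $f(a)=0$, and let $W$ be an arbitrary restricted $\mathfrak p$-module with an arbitrary $w\in W$. By the assumption of the lemma, $W$ carries a compatible $\mathfrak q$-module structure such that the $\mathfrak p$-action factors through $f$, in the sense of \eqref{rescon}. Therefore
\begin{equation*}
a.w \;=\; f(a).w \;=\; 0.w \;=\; 0.
\end{equation*}
Since $W$ and $w$ were arbitrary, $a$ annihilates every vector of every restricted $\mathfrak p$-module. By Lemma \ref{zgraded1} (applied to the $\Z$-graded Lie algebra $\mathfrak p$), this forces $a=0$. Hence $\ker f=0$ and $f$ is injective.

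I do not anticipate any serious obstacle: the lemma is a formal consequence of Lemma \ref{zgraded1}, once one notes that the hypothesis \eqref{rescon} precisely turns ``$f(a)=0$'' into ``$a$ acts trivially on every restricted module.'' The only point requiring minor care is to make sure the $\mathfrak q$-module structure supplied by the hypothesis is indeed defined on an arbitrary restricted $\mathfrak p$-module (not just on some specific one), so that we may apply it to the universal-type module $\U(\mathfrak p)/\U(\mathfrak p)\mathfrak p_{\ge s}$ implicitly used in the proof of Lemma \ref{zgraded1}; this is exactly what the lemma assumes.
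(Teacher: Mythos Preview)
Your proposal is correct and matches the paper's proof essentially verbatim: take $a\in\ker f$, use \eqref{rescon} to see $a.w=f(a).w=0$ on every restricted $\mathfrak p$-module, and invoke Lemma \ref{zgraded1} to conclude $a=0$.
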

\begin{proof} If $a\in \ker f$, then for any restricted $\mathfrak p$-module $W$, the relation \eqref{rescon} gives that $a.w=0$ for all $w\in W$. Thus the assertion follows from Lemma \ref{zgraded1}.
\end{proof}

Now, recall from Lemma \ref{thetahom} that  we already have the following surjective Lie homomorphisms
 \[\theta_{\fm,\bm{P}}:\mathcal D_{\bm{P}}(\fm,\mu)\rightarrow \wh\fm[\mu],\quad \fm=\fg,\ \fn_+\ \te{and}\ \fn_-.\]
By combining  Theorem \ref{intromain2} with Lemma \ref{lem:injective}, one finds that these homomorphisms
are also injective.
 This completes the proof of Theorem \ref{intromain1}.

\subsection{Proof of Proposition \ref{reducedtoun}}\label{subsec:proof-prop-D(m,1)}
When $\fg$ is of not type $A_1^{(1)}$, Proposition  \ref{reducedtoun} was proved in \cite{E-PBW-qaff}.
For convenience of the readers, in this subsection we give a proof of Proposition \ref{reducedtoun} for any $\fg$.
So throughout this subsection we assume that $\mu=\mathrm{Id}$.

Set $\mathcal D(\fm)=\mathcal D_{\bm{p}}(\fm,\mathrm{Id})$ and
$\theta_{\fm}=\theta_{\fm,\bm{p}}$  for $\fm=\fg, \fn_+$ or $\fn_-$.
Then we need to prove that all  $\theta_{\fm}$ are isomorphisms.
Firstly, we have that

\begin{prpt} \label{mainun}The Lie homomorphism $\theta_{\fg}:\mathcal D(\fg)\rightarrow \wh\fg$ is an isomorphism.
\end{prpt}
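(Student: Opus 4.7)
The plan is to apply Lemma \ref{lem:injective} to the surjective map $\theta_\fg:\mathcal D(\fg)\to\wh\fg$. Since $\mathcal D(\fg)$ carries a natural $\Z$-grading compatible with $\theta_\fg$, injectivity reduces to the following: for every restricted $\mathcal D(\fg)$-module $W$, one can equip $W$ with an $\wh\fg$-module structure satisfying $a.w=\theta_\fg(a).w$ for $a\in\mathcal D(\fg)$ and $w\in W$.

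I would produce this structure by specializing the vertex algebra argument of \S\ref{subsec:proof-thm-intromain2} to the untwisted setting $\mu=\mathrm{Id}$, so $\Gamma=\{1\}$, but without relying on Proposition \ref{reducedtoun} (which is what we are ultimately proving). Given a restricted $\mathcal D(\fg)$-module $W$, form the local subspace $U\subset\CE(W)$ spanned by the fields $x_i^\pm(z), h_i(z), c(z)$ for $i\in I$; locality follows from relations $(X\pm)$, $(H)$, $(HX\pm)$, $(XX)$, together with the extra relation $(AS\pm)$ in type $A_1^{(1)}$. Applying Proposition \ref{gammalocasetva} with trivial $\Gamma$ produces an ordinary vertex algebra $\langle U\rangle$ with $W$ as a faithful (quasi-)module.

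The central step is to endow $\langle U\rangle$ with a $\wh\fg$-module structure extending $e_i^\pm\mapsto\CY(x_i^\pm(z),\cdot)$, $\al_i^\vee\mapsto\CY(h_i(z),\cdot)$ and $\rk_1\mapsto N\CY(c(z),\cdot)$. The Heisenberg relations among the Cartan currents, the $[\al_i^\vee,e_j^\pm]$ relations, and the $[e_i^+,e_j^-]$ relations follow from $(H)$, $(HX\pm)$ and $(XX)$ via the commutator formula \eqref{commutatory1} of Lemma \ref{1-commutator}. The Serre-type relations needed to match $\wh\fg$ follow from $(DS\pm)_{\bm{p}}$ combined with Corollary \ref{keycor}, where the non-vanishing hypothesis $(\bm{P}2)$ satisfied by $\bm{p}$ is essential; in type $A_1^{(1)}$ the extra relation $(AS\pm)$ is handled by Corollary \ref{keycor0} to produce the required $(w_1-w_2)$-locality of the nested brackets of $e_i^\pm$-fields. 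This identifies $\langle U\rangle$ as an $\wh\fg$-module with $\wh\fg^{(+)}.1_W=0$.

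Once $\langle U\rangle$ is an $\wh\fg$-module, the universal property of the induced module $V(\wh\fg)$ yields an $\wh\fg$-module homomorphism $V(\wh\fg)\to\langle U\rangle$ sending $\vac\mapsto 1_W$, which is automatically a vertex algebra homomorphism by Proposition \ref{gammavahom}. Pulling back through this map and applying Proposition \ref{universalva} with $\mu=\mathrm{Id}$ (whose proof proceeds via $\Gamma$-conformal Lie algebra theory and is independent of Proposition \ref{reducedtoun}) transfers an $\wh\fg$-module structure onto $W$ compatible with $\theta_\fg$. The main obstacle is the verification of the Serre-type relations on the $\CY$-products in $\langle U\rangle$; Lemma \ref{key} and its corollaries are tailored precisely to this task, with type $A_1^{(1)}$ being the only case requiring separate attention.
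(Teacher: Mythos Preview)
Your proposal contains a genuine circularity at the step you call ``the central step''. You verify that the fields $\CY(x_i^\pm(z),w)$, $\CY(h_i(z),w)$, $\CY(c(z),w)$ satisfy, inside $\langle U\rangle$, precisely the relations $(H)$, $(HX\pm)$, $(XX)$, $(X\pm)$, $(AS\pm)$ and $(DS\pm)_{\bm p}$. But these are the defining relations of $\mathcal D(\fg)$, not a known presentation of $\wh\fg$. What you have actually produced is a $\mathcal D(\fg)$-module structure on $\langle U\rangle$; to conclude that this is a $\wh\fg$-module structure you must know that $\ker\theta_\fg$ acts trivially, and that is exactly the injectivity statement you are trying to prove. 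The paper's Lemma~\ref{moduleUW} performs this same verification, but it explicitly invokes Proposition~\ref{reducedtoun} (``In view of Proposition~\ref{reducedtoun}, it suffices to show\ldots'') to pass from the $\mathcal D(\fg)$-relations to a $\wh\fg$-module structure; you cannot simply drop that invocation.

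By contrast, the paper's own proof of Proposition~\ref{mainun} does not recurse into the vertex-algebra machinery at all. It first upgrades the locality $(z-w)[x_i^\pm(z),x_i^\pm(w)]=0$ to the genuine commutativity $[x_i^\pm(z),x_i^\pm(w)]=0$ by a short symmetry argument, and then appeals to the Moody--Rao--Yokonuma theorem \cite{MRY}, which establishes the presentation $\mathcal D(\fg)\cong\wh\fg$ directly (for untwisted affine $\fg$; the same argument goes through in the remaining types). In other words, Proposition~\ref{mainun} is an \emph{input} to the vertex-algebra argument of \S\ref{subsec:proof-thm-intromain2}, not a consequence of it: the logic of the paper is to first secure the untwisted case by classical means, and only then bootstrap to general $\mu$ via $\Gamma$-vertex algebras. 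Your proposal reverses this dependency and therefore does not stand on its own. To repair it without citing \cite{MRY}, you would have to give an independent argument that the $\mathcal D(\fg)$-action on $\langle U\rangle$ factors through $\wh\fg$ --- for instance by directly matching the $n$-products of the $\CY$-fields with those of the conformal Lie algebra $C_\fg$ for \emph{all} of $\underline{\fg}$, not just the Chevalley generators --- and that is essentially a reconstruction of the MRY proof.
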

\begin{proof} Notice  that, for any restricted $\mathcal D(\fg)$-module $W$, the relation
\begin{align}\label{xii1}
(z-w)[x_i^\pm(z), x_i^\pm(w)]=0,\quad i\in I,
\end{align}
implies that (see Lemma \ref{1-commutator})
\[[x_i^\pm(z), x_i^\pm(w)]=c_i^\pm(w)\delta^{(0)}(z,w),\]
for some $c_i^\pm(z)\in \CE(W)$. But, on the other hand, one has that
\[ [x_i^\pm(z), x_i^\pm(w)]=-[x_i^\pm(w),x_i^\pm(z)]=-c_i^\pm(z)\delta^{(0)}(w,z)
=-c_i^\pm(w)\delta^{(0)}(z,w).\]
This gives that $c_i^\pm(w)=0$ for $i\in I$ and so we have that (see Lemma \ref{zgraded1})
\begin{align}\label{xii2}
[x_i^\pm(z),x_i^\pm(w)]=0,\quad i\in I.
\end{align}

When $\fg$ is of untwisted affine type, by replacing the relation \eqref{xii1} with
 \eqref{xii2}, the presentation $\mathcal D(\fg)$ of $\wh\fg$ was proved in \cite{MRY}.
When $\fg$ is of other types, the proof given in \cite{MRY} is also valid. We left the
details to the interesting readers.
\end{proof}

Let $\wt{\mathcal D}(\fg)$
 be the algebra generated by the same generators of $\mathcal D(\fg)$ with the defining relations
$(H)$, $(HX\pm)$ and $(XX)$, and let
 $\wt{\mathcal D}(\fg)_+$, $\wt{\mathcal D}(\fg)_-$ and $\wt{\mathcal D}(\fg)_0$ be the subalgebras of $\wt{\mathcal D}(\fg)$ generated by $x_{i,m}^+$, $x_{i,m}^-$ and
$h_{i,m}, c$ ($i\in I, m\in \Z$), respectively.
The following result is standard.

\begin{lemt}\label{lem:un2} $\wt{\mathcal D}(\fg)=\wt{\mathcal D}(\fg)_+\oplus \wt{\mathcal D}(\fg)_0\oplus \wt{\mathcal D}(\fg)_-$ and the algebra $\wt{\mathcal D}(\fg)_+$ (resp. $\wt{\mathcal D}(\fg)_-$) are free generated by the elements $x_{i,m}^+$
(resp. $x_{i,m}^-$), $i\in I, m\in \Z$.
\end{lemt}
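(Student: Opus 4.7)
The plan is to realize $\wt{\mathcal D}(\fg)$ concretely as a Lie algebra assembled from free pieces, and identify the three summands with the named subalgebras. Let $F_+$ and $F_-$ denote the free Lie algebras on the formal symbols $\tilde{x}^+_{i,m}$ and $\tilde{x}^-_{i,m}$ ($i\in I$, $m\in\Z$), and let $H$ be the Heisenberg-type Lie algebra with basis $\{\tilde h_{i,m},\tilde c\}$ and brackets given by relation $(H)$. Set $L=F_+\oplus H\oplus F_-$ as a vector space. The goal is to equip $L$ with a Lie bracket extending the given ones on the three summands, then show $L\cong\wt{\mathcal D}(\fg)$ via the obvious correspondence of generators; the stated decomposition then follows immediately, as does the freeness of $\wt{\mathcal D}(\fg)_\pm$.

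First I would make $F_\pm$ into an $H$-module by derivations, setting $\tilde h_{i,m}\cdot \tilde x^\pm_{j,n}=\pm a_{ij}\tilde x^\pm_{j,m+n}$ and $\tilde c\cdot\tilde x^\pm_{j,n}=0$ on generators and extending by Leibniz (permissible because $F_\pm$ is free). Compatibility with the Heisenberg bracket is immediate: the action of $[\tilde h_{i,m},\tilde h_{j,n}]$ on a generator $\tilde x^\pm_{k,l}$ is a multiple of $\tilde c$ acting, namely zero, matching the central-valued right-hand side of $(H)$. This gives the semidirect product structure on $H\ltimes F_\pm$.

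The delicate step is the construction of the cross-bracket $[\cdot,\cdot]\colon F_+\times F_-\to H\oplus F_+\oplus F_-$. On pairs of generators I would use $(XX)$, setting
\begin{equation*}
[\tilde x^+_{i,m},\tilde x^-_{j,n}]=\delta_{ij}\bigl(\tilde h_{i,m+n}+(m/\epsilon_i)\delta_{m+n,0}\tilde c\bigr),
\end{equation*}
and extend by the Leibniz prescription
\begin{equation*}
[u,[v_1,v_2]]=[[u,v_1],v_2]+[v_1,[u,v_2]],\qquad u\in F_+,\ v_1,v_2\in F_-,
\end{equation*}
and its mirror image. Because $F_-$ is free on its generators, one inducts on the bracket depth to define $[u,\cdot]$ as a derivation of $F_-$ with values in $H\oplus F_+$ (the latter regarded as an $H$-module in the evident way); the antisymmetry of the Lie bracket in $F_-$ is automatic, so well-definedness amounts to consistency of the derivation extension at each inductive stage.

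The main obstacle is the Jacobi identity for the resulting bracket on $L$. The intra-summand cases are free (or come from $(H)$), and Jacobi on $(H,H,F_\pm)$ follows from the $H$-module structure. The genuinely new relations to verify are those mixing $F_+$ and $F_-$, namely the identities
\begin{equation*}
[[u_1,u_2],v]=[u_1,[u_2,v]]-[u_2,[u_1,v]],\quad u_i\in F_+,\ v\in F_-,
\end{equation*}
its mirror, and the $(H,F_+,F_-)$ triple. I would handle all of these by a simultaneous induction on the total bracket depth in $F_+$ and $F_-$, with the base case (generators only) reduced to a direct check against $(HX\pm)$, $(XX)$ and $(H)$. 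Granting Jacobi, $L$ satisfies all defining relations of $\wt{\mathcal D}(\fg)$, giving a surjection $\wt{\mathcal D}(\fg)\twoheadrightarrow L$; conversely, the assignment of each generator of $L$ to its namesake in $\wt{\mathcal D}(\fg)$ extends to a Lie homomorphism $L\to\wt{\mathcal D}(\fg)$ by the universal properties of $F_\pm$ and $H$. These are mutually inverse on generators, hence on all of $L$ and $\wt{\mathcal D}(\fg)$, so $\wt{\mathcal D}(\fg)\cong L$. Transporting the direct-sum decomposition of $L$ yields both claims of the lemma.
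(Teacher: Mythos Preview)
Your approach is correct and is exactly the standard argument the paper has in mind; the paper gives no proof at all, simply declaring the result ``standard'' (it is the loop analogue of the triangular decomposition for Kac--Moody algebras, proved in the same way as Theorem~1.2 of \cite{Kac-book}).

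One small slip worth fixing: when $u=\tilde x^+_{i,m}$ is a generator of $F_+$, the derivation $[u,\cdot]$ on $F_-$ takes values in $H\oplus F_-$, not $H\oplus F_+$ as you wrote; for instance $[\tilde x^+_{i,m},[\tilde x^-_{i,n},\tilde x^-_{j,p}]]=[\tilde h_{i,m+n},\tilde x^-_{j,p}]-a_{ij}\,\tilde x^-_{j,m+n+p}\in F_-$. With this corrected, the inductive construction and the Jacobi verification go through as you indicate.
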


We denote by $\wh{\mathcal D}(\fg)$ the quotient algebra of $\wt{\mathcal D}(\fg)$ modulo the
relation $(X\pm)$, and denote by $\overline{\mathcal D}(\fg)$ the quotient algebra of $\wh{\mathcal D}(\fg)$ modulo
the relation $(AS\pm)$ ($\wh{\mathcal D}(\fg)\ne \overline{\mathcal D}(\fg)$ only if $\fg$ is of type $A_1^{(1)}$).
Then $\mathcal D(\fg)$ is the quotient algebra of $\overline{\mathcal D}(\fg)$ obtained by modulo the
relation $(DS\pm$)$_{\bm{p}}$. Similar to $\mathcal D(\fg)$, all the algebras $\wt{\mathcal D}(\fg)$,
$\wh{\mathcal D}(\fg)$ and $\overline{\mathcal D}(\fg)$ are natural $\Z$-graded.

\begin{lemt}\label{lem:un3}
(i) In $\wt{\mathcal D}(\fg)$, one has that
\begin{align}\label{vanunrela1}f_{ij}(z,w)\cdot[[x_i^\pm(z),x_j^\pm(w)], x_k^\mp(w')]=0,\quad i,j,k\in I.
\end{align}
(ii) If $\fg$ is of type $A_1^{(1)}$, then in $\wh{\mathcal D}(\fg)$ one has that
\begin{align}\label{vanunrela2}
(z_1-z_2)[[x_i^\pm(z_1),[x_i^\pm(z_2),x_j^\pm(w)]],x_k^\mp(w')]=0,\quad
(i,j)\in\mathbb I,\, k\in I.
\end{align}
(iii) In $\overline{\mathcal D}(\fg)$, one has that
\begin{align}\label{vanunrela3} [[x_i^\pm(z_1),\cdots,[x_i^\pm(z_{1-a_{ij}}),x_j^\pm(w)]], x_k^\mp(w')]=0,\quad (i,j)\in \mathbb I,\ k\in I.
\end{align}
\end{lemt}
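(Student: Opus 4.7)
The strategy for all three parts is to expand the outermost commutator with $x_k^\mp(w')$ by repeated application of the Jacobi identity, moving $x_k^\mp(w')$ step-by-step past the inner currents and invoking the basic commutation relations $(XX)$ and $(HX\pm)$ (specialized to $\mu = \mathrm{Id}$). Every time $x_k^\mp(w')$ meets an $x_i^\pm(z_s)$, the relation $(XX)$ produces
$\delta_{ik}\bigl(h_i(w')\delta^{(0)}(z_s, w') + \epsilon_i^{-1} c\,\delta^{(1)}(z_s, w')\bigr)$,
and similarly for $x_j^\pm(w)$; the central contributions proportional to $c$ are killed by the remaining outer brackets because $c$ is central by $(H)$ and $(HX\pm)$. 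The surviving $h$-currents are then extracted from the nested brackets using $(HX\pm)$, producing further factors of $\delta^{(0)}$. The outcome is always a finite sum of currents (or lower-degree iterated brackets in $x_i^\pm$ and $x_j^\pm$) at positions that the delta-support conditions force to coincide with $w'$ or $w$.

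For part (i), the only potentially non-vanishing contributions are those where $x_k^\mp(w')$ is bracketed with $x_i^\pm(z)$ (forcing $k=i$) or with $x_j^\pm(w)$ (forcing $k=j$). After extracting the resulting $h$-current, each surviving term has the form of a single $x^\pm$-current times $\delta^{(0)}(z, w')\delta^{(0)}(w, w')$, whose joint support is $z = w = w'$. The polynomial $f_{ij}(z, w)$ contains the factor $z - w$ (with higher multiplicity in the $A_1^{(1)}$ case), and the standard identity
\begin{equation*}
(z - w)\delta^{(0)}(z, w')\delta^{(0)}(w, w') = (z - w')\delta^{(0)}(z, w')\delta^{(0)}(w, w') + (w' - w)\delta^{(0)}(z, w')\delta^{(0)}(w, w') = 0
\end{equation*}
annihilates every such term, yielding (i).

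For (ii), apply Jacobi once more to peel off the extra $x_i^\pm(z_1)$, then use the already-proved (i) together with the locality $(X\pm)$ now available in $\wh{\mathcal D}(\fg)$. In the $A_1^{(1)}$ case the residual brackets $[x_i^\pm(z_1), x_j^\pm(w)]$ and $[x_i^\pm(z_1), x_i^\pm(z_2)]$ produced in the expansion are governed by the localities $(z-w)^3$ and $z-w$ respectively, and Lemma \ref{1-commutator} describes the distributional structure of such local commutators; a careful accounting (using Lemma \ref{delta2} to recollect derivatives of delta functions) shows that the single extra factor $(z_1 - z_2)$ cancels the entire expression on the combined delta-function support. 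For (iii), induct on $1 - a_{ij}$ and treat $\mathrm{ad}(x_k^\mp(w'))$ as a derivation of the Lie bracket applied to the Serre element $S$. If $k \notin \{i, j\}$ then $S$ commutes with $x_k^\mp(w')$ at once. Otherwise each term in the derivation expansion collapses, by the procedure of the first paragraph, to a scalar sum of lower-degree iterated brackets of $x_i^\pm$'s (with or without a single $x_j^\pm$) evaluated at coinciding variables. In the $A_1^{(1)}$ case these residuals are annihilated by $(AS\pm)$ combined with part (ii); in all other types the simple factors in $f_{ij}$ combined with the delta-support argument of (i) suffice.

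\textbf{Main obstacle.} The trickiest case is (iii) in type $A_1^{(1)}$: the locality $(X\pm)$ only provides the third-order vanishing $(z - w)^3$, so the delta-function bookkeeping of (i) alone is insufficient to kill the residual three-fold nested brackets at coincident variables, and one must additionally invoke $(AS\pm)$ (together with part (ii)) to fully dispose of them. Keeping track of all the delta-function supports arising from commuting $h$-currents out of progressively deeper nested brackets, and ensuring uniform cancellation across the subcases $k = i$ and $k = j$, is the main technical difficulty; the underlying computation is purely mechanical but lengthy.
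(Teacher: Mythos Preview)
Your approach to (i) is essentially the paper's: expand via $(XX)$ and $(HX\pm)$, reduce to a single $x^\pm$-current supported on coinciding delta functions, and kill it with the factor $(z-w)$ in $f_{ij}$. One small omission: when $a_{ij}=0$ the polynomial $f_{ij}(z,w)$ equals $1$, so the delta-support argument is unavailable; the vanishing in that case comes instead from $a_{ij}=a_{ji}=0$ in the $(HX\pm)$ step. For (ii) your sketch is plausible and close in spirit to the paper, which instead computes the triple bracket explicitly via normal-ordered products and then observes that $(z_1-z_2)$ annihilates the resulting $\delta^{(\bm 0)}$ and $\delta^{(1,0,0)}$ supports.

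There is, however, a genuine gap in your treatment of (iii) outside type $A_1^{(1)}$. You write that ``the simple factors in $f_{ij}$ combined with the delta-support argument of (i) suffice,'' but the identity \eqref{vanunrela3} has \emph{no polynomial prefactor}: it asserts outright vanishing in $\overline{\mathcal D}(\fg)$. After you expand $[S,x_i^\mp(w')]$ by the Leibniz rule and push the resulting $h_i(w')$ through, each summand collapses (on the delta support) to the \emph{same} nonzero object, namely the $(b-1)$-fold nested bracket $[x_i^\pm(w),\dots,[x_i^\pm(w),x_j^\pm(w)]]$ (equivalently the normal-ordered product $:x_i^\pm(w)^{-a_{ij}}x_j^\pm(w):$). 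There is no polynomial left to multiply by, and this lower-degree bracket is not zero. What actually makes the sum vanish is a combinatorial cancellation: writing $b=1-a_{ij}$, the terms coming from $[h_i,x_i^\pm]$ contribute with total weight $2\binom{b}{2}=b(b-1)$ and those from $[h_i,x_j^\pm]$ with total weight $a_{ij}b$, and
\[
b(b-1)+a_{ij}b \;=\; b\bigl((1-a_{ij})-1+a_{ij}\bigr)\;=\;0.
\]
This identity is the crux of the $k=i$ case and is not supplied by any delta-support or locality argument; your ``induct on $1-a_{ij}$'' does not produce it either, since the shorter nested brackets satisfy no Serre-type relation to feed an induction. (For $k=j$ the argument is easier: $x_j^\mp(w')$ commutes past all $x_i^\pm$'s, hits $x_j^\pm(w)$, and the residual $[x_i^\pm,\dots,[x_i^\pm,h_j]]$ dies because $[x_i^\pm,x_i^\pm]=0$ in $\overline{\mathcal D}(\fg)$.) Your identification of $(AS\pm)$ as the extra ingredient needed in the $A_1^{(1)}$ case of (iii) is correct and matches the paper.
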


\begin{proof}
By applying the relations $(XX)$ and $(HX\pm$) in $\wt{\mathcal D}(\fg)$, one gets that
\begin{align*}&[[x_i^\pm(z),x_j^\pm(w)], x_k^\mp(w')]\\
=\ &\pm\delta_{i,k} [h_i(z), x_j^\pm(w)]\delta^{(0)}(z,w')\pm \delta_{j,k}[x_i^\pm(z), h_j(w)]\delta^{(0)}(w,w')\\
=\ &\pm(\delta_{i,k}a_{ij}x_j^\pm(w)-\delta_{j,k}a_{ji} x_i^\pm(z))\delta^{(0)}(z,w)\delta^{(0)}(w,w').
\end{align*}
Then the identity \eqref{vanunrela1} is implied by the fact that $\delta_{i,k}a_{ij}x_j^\pm(w)-\delta_{j,k} a_{ji} x_i^\pm(w)=0$ if $a_{ij}\ge 0$, and that
$(z-w)\,\delta^{(0)}(z,w)=0$ if $a_{ij}<0$.

Note that a similar argument of \eqref{xii2} shows that  for all $i\in I$, the relations
$[x_i^\pm(z),x_i^\pm(w)]=0$
hold in $\wh{\mathcal D}(\fg)$ and $\overline{\mathcal D}(\fg)$.
Therefore, in the rest of the proof, we can and do assume that
\begin{align}\label{eq:assume-f-ii}
f_{ii}(z,w)=1,\quad\te{for }i\in I
\end{align}
in the relation $(X\pm)$.

For the identities \eqref{vanunrela2} and \eqref{vanunrela3}, we only need to check the case that $k=i$ or $k=j$.
If $k=j$, then the identities \eqref{vanunrela2} and \eqref{vanunrela3} follow from the relation $(XX)$ with $i=j$.
So assume now that $k=i$. Let $W$ be an arbitrary restricted $\wh{\mathcal D}(\fg)$-module.
In view of the relation $(X\pm)$ in $\wh{\mathcal D}(\fg)$ and the equation \eqref{eq:assume-f-ii}, one finds  that as operators on $W$, the formal series
\begin{align*}:x_{i_1}^\pm(z_1)\cdots x_{i_a}^\pm(z_a):=\prod_{1\le s<t\le a} f_{i_s,i_t}(z_s,z_t)\cdot x_{i_1}^\pm(z_1)\cdots x_{i_a}^\pm(z_a),\
i_1,\cdots,i_a\in I,
\end{align*}
lie in the space $\mathrm{Hom}(W,W((z_1,\cdots,z_a)))$.

We first consider the case that $\fg$ is not of type $A_1^{(1)}$ and so $\wh{\mathcal D}(\fg)=\overline{\mathcal D}(\fg)$.
By definition, we have that
\begin{align*}
&[x_i^\pm(z), x_j^\pm(w)]=:x_i^\pm(z)x_j^\pm(w):((z-w)^{-1}-(w-z)^{-1})\\
=\ &:x_i^\pm(z)x_j^\pm(w):\delta^{(0)}(z,w)
=:x_i^\pm(w)x_j^\pm(w):\delta^{(0)}(z,w).
\end{align*}
More generally, one can easily check that for any $a\ge 1$,
\begin{align*}
\left[x_i^\pm(z),:\belowit{\underbrace{x_i^\pm(w)\cdots x_i^\pm(w)}}{(a-1)\te{-copies}}x_j^\pm(w):\right]
=:\belowit{\underbrace{x_i^\pm(w)\cdots x_i^\pm(w)}}{a\te{-copies}}x_j^\pm(w):\delta^{(0)}(z,w).
\end{align*}
This implies that for any $a\ge 1$,
\begin{equation}\begin{split}\label{norord}
&[x_i^\pm(z_1),\cdots,[x_i^\pm(z_a),x_j^\pm(w)]]\\
=\ &:\belowit{\underbrace{x_i^\pm(w)\cdots x_i^\pm(w)}}{a\te{-copies}}x_j^\pm(w):\delta^{(\bm{0})}(z_1,\cdots,z_a,w).
\end{split}\end{equation}

For convenience, set $b=1-a_{ij}$.
Then, by applying the relations $(XX)$ and $(HX\pm$), we have that
\begin{align*}
&[[x_i^\pm(z_1),\cdots,[x_i^\pm(z_b),x_j^\pm(w)],x^\mp_i(w')]]\\
=\ &\pm \sum_{1\le a\le b}[x_i^\pm(z_1),\cdots,[x_i^\pm(z_{a-1}),[h_i(w'),[x_i^\pm(z_{a+1}),\cdots,\\
& \cdot [x_i^\pm(z_b),x_j^\pm(w)]]]]]
\delta^{(0)}(z_a,w')\\
=\ &2\sum_{1\le a<a'\le b}X(z_1,\cdots,\hat{z_{a}},\cdots,z_b,w)\delta^{(0)}(z_a,z_{a'})\delta^{(0)}(z_a,w')\\
\ &+a_{ij}\sum_{1\le a\le b}X(z_1,\cdots,\hat{z_{a}},\cdots,z_b,w)\delta^{(0)}(z_a,w)\delta^{(0)}(z_a,w').
\end{align*}
where for any $a=1,\cdots,b$,
\[X(z_1,\cdots,\hat{z_{a}},\cdots,z_b,w)
=[x_i^\pm(z_1),\cdots,[x_i^\pm(z_{a-1}),[x_i^\pm(z_{a+1}),\cdots,[x_i^\pm(z_b),x_j^\pm(w)]]]].\]
Moreover, it follows from  \eqref{norord}  that for  $a,a'=1,\cdots,b$ with $a< a'$,
\begin{align*}
&X(z_1,\cdots,\hat{z_{a}},\cdots,z_b,w)\delta^{(0)}(z_a,z_a')\delta^{(0)}(z_a,w')\\
=\, &X(z_1,\cdots,\hat{z_{a}},\cdots,z_b,w)\delta^{(0)}(z_a,w)\delta^{(0)}(z_a,w')\\
=\, & :\belowit{\underbrace{x_i^\pm(w)\cdots x_i^\pm(w)}}{-a_{ij}\te{-copies}}x_j^\pm(w):\delta^{(\bm{0})}(z_1,\cdots,z_a,w)\delta^{(0)}(w,w').
\end{align*}
Thus, by combining these facts, we obtain that the formal series
\begin{align*}
&[[x_i^\pm(z_1),\cdots,[x_i^\pm(z_b),x_j^\pm(w)],x^\mp_i(w')]]\\
=\,&(b(b-1)+a_{ij}b):\belowit{\underbrace{x_i^\pm(w)\cdots x_i^\pm(w)}}{-a_{ij}\te{-copies}}x_j^\pm(w):\delta^{(\bm{0})}(z_1,\cdots,z_a,w)
\delta^{(0)}(w,w')\\
=\,& 0
\end{align*}
on any restricted $\wh{\mathcal D}(\fg)$-module.
This together with Lemma \ref{zgraded1} proves the identity  \eqref{vanunrela3} with $k=i$, as required.

Now, we turn to consider the case that $\fg$ is of type $A_1^{(1)}$.
Then by definition we have that
\begin{align*}
  &[x_i^\pm(z),x_j^\pm(w)]=:x_i^\pm(z)x_j^\pm(w):\delta^{(1)}(z,w)
  +\(\frac{\partial}{\partial z}:x_i^\pm(z)x_j^\pm(w):\)|_{z=w}\delta^{(0)}(z,w).
\end{align*}
From the relation $(X\pm)$ and the equation \eqref{eq:assume-f-ii}, we also have
\begin{align*}
  (z-w)^2x_i^\pm(z)\(\frac{\partial^{n_1+\cdots+n_k}}{\partial z_1^{n_1}\cdots \partial z_k^{n_k}}:x_i^\pm(z_1)\cdots x_i^\pm(z_k)x_j^\pm(w):\)|_{z_1=\cdots=z_k=w}
\end{align*}
lies in the space $\te{Hom}(W,W((z,w)))$
for any $k\ge 0$ and $n_1,\dots,n_k=0,1$.
This implies that
\begin{equation}\label{eq:A11-temp1}
\begin{split}
  &[x_i^\pm(z_1),[x_i^\pm(z_2),x_j^\pm(w)]]
  =:x_i^\pm(w)x_i^\pm(w)x_j^\pm(w):\delta^{(1,1)}(z_1,z_2,w)\\
  &\qquad+\(\frac{\partial}{\partial z_1}:x_i^\pm(z_1)x_i^\pm(w)x_j^\pm(w):\)|_{z_1=w}\delta^{(0,1)}(z_1,z_2,w)\\
  &\qquad+\(\frac{\partial}{\partial z_2}:x_i^\pm(w)x_i^\pm(z_2)x_j^\pm(w):\)|_{z_2=w}\delta^{(1,0)}(z_1,z_2,w)\\
  &\qquad+\(\frac{\partial^2}{\partial z_1\partial z_2}
  :x_i^\pm(z_1)x_i^\pm(z_2)x_j^\pm(w):\)|_{z_1=z_2=w}\delta^{(0,0)}(z_1,z_2,w).
  \end{split}
\end{equation}
By using the relations $(XX)$ and $(HX\pm)$, we get that
\begin{equation}\label{eq:A11-temp2}
\begin{split}
&[x_i^\mp(w'),[x_i^\pm(z_1),[x_i^\pm(z_2),x_j^\pm(w)]]]\\
=&
- 2:x_i^\pm(w)x_j^\pm(w):\delta^{(1,0,0)}(w',z_1,z_2,w)\\
&+ 2
\(\frac{\partial}{\partial z}:x_i^\pm(z)x_j^\pm(w):\)|_{z=w}\delta^{(0,0,0)}(w',z_1,z_2,w),
\end{split}
\end{equation}
and
\begin{equation}\label{eq:A11-temp3}
\begin{split}
&[x_i^\mp(w'),[x_i^\pm(z_1),[x_i^\pm(z_2),[x_i^\pm(z_3),x_j^\pm(w)]]]]\\
=&6\(\frac{\partial}{\partial z}:x_i^\pm(w)x_i^\pm(z)x_j^\pm(w):\)|_{z=w}
\delta^{({\bm 1}_1)}(w',z_1,z_2,z_3,w)\\
&+2\sum_{s=2}^4\(\frac{\partial}{\partial z}:x_i^\pm(w)x_i^\pm(z)x_j^\pm(w):\)|_{z=w}
\delta^{({\bm 1}_s)}(w',z_1,z_2,z_3,w)\\
&+2\sum_{1\le s<t\le 4}:x_i^\pm(w)x_i^\pm(w)x_j^\pm(w):
\delta^{({\bm 1}_{s,t})}(w',z_1,z_2,z_3,w),
\end{split}
\end{equation}
where ${\bm 1}_s=(\delta_{1s},\dots,\delta_{4s})$
and ${\bm 1}_{s,t}=(\delta_{1s}+\delta_{1t},\dots,\delta_{4s}+\delta_{4t})$.
Then the identity \eqref{vanunrela2} is implied by Lemma \ref{zgraded1}, the relation \eqref{eq:A11-temp2} and the fact that $(z-w)\delta^{(0)}(z-w)=0$.

For the identity \eqref{vanunrela3}, one can conclude from
 \eqref{eq:A11-temp1} that
\begin{equation}\label{eq:A11-temp4}
\begin{split}
 &:x_i^\pm(w)x_i^\pm(w)x_j^\pm(w):=0=\(\frac{\partial}{\partial z_1}:x_i^\pm(z_1)x_i^\pm(w)x_j^\pm(w):\)|_{z_1=w}\\
&\qquad =\(\frac{\partial}{\partial z_2}:x_i^\pm(w)x_i^\pm(z_2)x_j^\pm(w):\)|_{z_2=w}
\end{split}
\end{equation}
on any restricted $\bar{\mathcal D}(\fg)$-module.
Then it is easy to see that
 the identity \eqref{vanunrela3} is implied by Lemma \ref{zgraded1},
the relation \eqref{eq:A11-temp3} and the relation \ref{eq:A11-temp4} with $k=i$,
as required.
\end{proof}

Let $\mathcal D(\fg)_+$ and $\mathcal D(\fg)_-$ denote respectively
the subalgebras of $\mathcal D(\fg)$ generated by the elements $x_{i,m}^+$ and $x_{i,m}^-$ for $i\in I, m\in \Z$.
By combining  Lemma \ref{lem:un2} with Lemma \ref{lem:un3}, one immediately gets that
\begin{lemt}\label{lem:un4}  The algebra $\mathcal D(\fg)_+$ (resp. $\mathcal D(\fg)_-$) is isomorphic to the Lie algebra abstractly generated by the elements $x_{i,m}^+$ (resp. $x_{i,m}^-$), $i\in I, m\in \Z$ and subject to the relations
$(X+), (AS+), (DS+)_{\bm{p}}$ (resp. $(X-), (AS-), (DS-)_{\bm{p}}$).
\end{lemt}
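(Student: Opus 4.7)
By the symmetry of all the defining relations under the involution swapping $+$ and $-$, it suffices to treat $\mathcal D(\fg)_+$. The plan is to show that the natural surjection $\mathcal A_+ \twoheadrightarrow \mathcal D(\fg)_+$, where $\mathcal A_+$ denotes the Lie algebra abstractly defined in the statement, is also injective.

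Using the free presentation from Lemma \ref{lem:un2}, I would write $\mathcal A_+ = \wt{\mathcal D}(\fg)_+ / I^+$, where $I^+$ is the Lie ideal of $\wt{\mathcal D}(\fg)_+$ generated by the coefficients $\mathcal R^+$ of $(X+), (AS+), (DS+)_{\bm p}$; define $I^- \subset \wt{\mathcal D}(\fg)_-$ symmetrically, and let $J$ denote the Lie ideal of $\wt{\mathcal D}(\fg)$ generated by $\mathcal R^+ \cup \mathcal R^-$, so that $\mathcal D(\fg) = \wt{\mathcal D}(\fg)/J$. Injectivity of $\mathcal A_+ \to \mathcal D(\fg)_+$ is equivalent to the equality $J \cap \wt{\mathcal D}(\fg)_+ = I^+$; the inclusion $\supseteq$ is tautological, so the real task is the reverse.

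To achieve the reverse inclusion, the plan is as follows. Every element of $J$ is a sum of iterated commutators $\mathrm{ad}(y_n) \cdots \mathrm{ad}(y_1)(r)$ with $r \in \mathcal R^+ \cup \mathcal R^-$ and each $y_i$ a Chevalley generator of $\wt{\mathcal D}(\fg)$, that is, $c$, $h_{j,n}$, or $x_{j,n}^\pm$. The central generator $c$ can be dropped, and by $(HX\pm)$ and $(XX)$, Jacobi lets one move $h$- and $x^+$-factors (in the case $r \in \mathcal R^+$) past one another while staying inside $I^+ + I^-$. The critical step is to handle the remaining $x^\mp$-factors by commuting them rightward to $r$. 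This is where Lemma \ref{lem:un3} enters, providing a decreasing-complexity filtration: (i) $\mathrm{ad}(x^-)$ annihilates $(X+)$-coefficients identically; (ii) it annihilates $(AS+)$-coefficients modulo the ideal generated by $(X\pm)$-relations; (iii) it annihilates $(DS+)_{\bm p}$-coefficients modulo the ideal generated by $(X\pm)$ and $(AS\pm)$-relations. Using a double induction on (a) the complexity order $X \prec AS \prec DS$ of the innermost $r$, and (b) the number of $x^\mp$-factors, every iterated commutator in $J \cap \wt{\mathcal D}(\fg)_+$ reduces modulo $I^+ + I^-$ to an element of $I^+$. Finally, the direct-sum decomposition of Lemma \ref{lem:un2} yields $(I^+ + I^-) \cap \wt{\mathcal D}(\fg)_+ = I^+$, completing the argument.

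The main obstacle I expect is the bookkeeping of the Jacobi-reduction step. The triangular decomposition of $\wt{\mathcal D}(\fg)$ from Lemma \ref{lem:un2} is only a direct sum of vector spaces and is not preserved by arbitrary brackets; for instance, $[[x^+,x^+],x^-] \in \wt{\mathcal D}(\fg)_+$ rather than $\wt{\mathcal D}(\fg)_0$. Consequently, the correction terms produced when Jacobi is invoked to commute an $x^\mp$-factor past an $x^\pm$-factor must be tracked carefully, and the induction must be organized by a lexicographic measure (e.g., complexity of the innermost $r$, then number of $x^\mp$-factors to the left of $r$, then total length of the commutator) so that each Jacobi rewriting strictly decreases the measure. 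Lemma \ref{lem:un3}(i)--(iii) then ensures that the endpoint of the reduction is precisely an element of $I^+$, and a symmetric argument handles the $\mathcal D(\fg)_-$ case.
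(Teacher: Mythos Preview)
Your approach is correct and follows the same line as the paper's one-sentence proof, which simply invokes Lemma~\ref{lem:un2} and Lemma~\ref{lem:un3}; what you have written is an honest unpacking of the standard Gabber--Kac-style argument those two lemmas encode.

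One organizational remark: the bookkeeping you flag as the main obstacle largely evaporates if you run the argument \emph{stage by stage} rather than all at once in $\wt{\mathcal D}(\fg)$. Concretely, first use Lemma~\ref{lem:un3}(i) together with the $Q$-grading and the stability under $\mathrm{ad}(h_{j,n})$ to show that the ideal of $\wt{\mathcal D}(\fg)_+$ generated by the $(X+)$-coefficients is already an ideal of all of $\wt{\mathcal D}(\fg)$; by symmetry the same holds on the $-$ side, so the kernel of $\wt{\mathcal D}(\fg)\to\wh{\mathcal D}(\fg)$ splits as $I_X^+\oplus I_X^-$ and the triangular decomposition of Lemma~\ref{lem:un2} descends to $\wh{\mathcal D}(\fg)$. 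Then repeat the identical argument in $\wh{\mathcal D}(\fg)$ with $(AS\pm)$ and Lemma~\ref{lem:un3}(ii), and once more in $\overline{\mathcal D}(\fg)$ with $(DS\pm)_{\bm p}$ and Lemma~\ref{lem:un3}(iii). At each stage the ambient algebra already has a clean triangular decomposition, so the Jacobi-rewriting you propose collapses to the single observation that $[x_{k,n}^-,x_{j,m}^+]\in(\text{current})_0$ and $\mathrm{ad}((\text{current})_0)$ preserves the positive ideal. This is the content behind your ``complexity order $X\prec AS\prec DS$'', but phrased this way no lexicographic induction is needed.
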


Finally, from Proposition \ref{mainun}  and Lemma \ref{lem:un4},
it follows that both $\theta_{\fn_+}$ and $\theta_{\fn_-}$ are isomorphisms.
This completes the proof of Proposition \ref{reducedtoun}.

\subsection{Proof of Proposition \ref{universalva}}\label{subsec:proof-prop-universalva}
In this subsection, based on the theory of $\Gamma$-conformal Lie algebras developed
in \S\,\ref{subsec:Gamma-VA}, we give
a proof of Proposition \ref{universalva}.
We first consider the conformal Lie algebra $C_\fg$ associated to $\wh\fg$.
As a vector space
\[C_\fg=\(\C[T]\ot \underline{\fg}\)\oplus \C\rk_1,\]
where
 $\underline{\fg}=\fg\oplus \sum_{m\in \Z}\C t_2^m\rk_1'\subset \wh\fg$.
 We also introduce two subspaces of $C_\fg$ as follows
\[ C_{\fn_+}=\C[T]\ot \underline{\fn}_+\quad \te{and}\quad C_{\fn_+}=\C[T]\ot \underline{\fn}_-,\]
where $\underline{\fn}_+=\fn_+\oplus \sum_{m\in \Z_+}\C t_2^m\rk_1'$ and $\underline{\fn}_-=\fn_-\oplus \sum_{m\in \Z_+}\C t_2^{-m}\rk_1'$.
Let $T$ be the operator on $C_\fg$  defined by
\begin{align}\label{defT}T(T^m\ot x)=T^{m+1}\ot x,\quad T(\rk_1)=0,\end{align}
where $m\in \N$ and $x\in \underline{\fg}$. We define
\[Y_-: C_\fg\rightarrow \mathrm{Hom}(C_\fg, z^{-1}C_\fg[z^{-1}]),\quad x\mapsto \sum_{n\in \N} x_{(n)}z^{-n-1}\]
to be the unique linear map such that the property \eqref{proT} holds and such that
the non-trivial $n$-products on
$\underline{\fg}\oplus \rk_1$ are  as follows:
for any $\al, \beta\in \Delta$, $x\in \fg_\al$ and $y\in \fg_\beta$, if $\al+\beta\in \Delta^\times\cup \{0\}$, then
\begin{align}\label{npro1}
x_{(0)} y=[x,y],\quad x_{(1)} y=\<x,y\>\rk_1;\end{align}
if $\fg$ is of affine type, $x=t_2^{m_2}\ot \dot{x}$, $y=t_2^{n_2}\ot \dot{y}$ and $\al+\beta\in \Delta^0\setminus\{0\}$,  then
\begin{align}\label{npro2}
x_{(0)} y=[x,y]+\<\dot{x},\dot{y}\>m_2(T\ot t_2^{m_2+n_2}\rk_1'),\ x_{(1)}y=(m_2+n_2)\<\dot{x},\dot{y}\>t_2^{m_2+n_2}\rk_1'.
\end{align}

\begin{lemt}\label{lem:wh-C-Lie-iso}
The triple $(C_\fg,T,Y_-)$ defined above is a conformal Lie algebra, and $C_{\fn_+}$, $C_{\fn_-}$ are two conformal Lie subalgebras of it.
Moreover, the linear map $i_\fg: \wh{C}_\fg\rightarrow \wh\fg$ defined by
 \begin{align}\label{cfgiso}
 x(m)\mapsto t_1^m\ot x,\quad t_2^{n}\rk_1'(m)\mapsto t_1^{m+1}t_2^{n}\rk_1',\quad \rk_1(m)\mapsto \delta_{m,-1}\rk_1\end{align}
for $x\in \fg$ and $m,n\in \Z$, is an isomorphism of Lie algebras.
Similarly, the linear maps $i_{\fn_\pm}=i_\fg|_{\wh{C}_{\fn_\pm}}:\wh{C}_{\fn_\pm}\rightarrow \wh\fn_\pm$ are isomorphisms of Lie algebras.
\end{lemt}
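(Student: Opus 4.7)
The plan is to reduce everything to Lemma \ref{lem:determineC}: to equip $(C_\fg, T, Y_-)$ with a conformal Lie algebra structure it suffices to produce a Lie algebra structure on $\widehat{C}_\fg$ satisfying the bracket formula \eqref{LieC} with the $n$-products specified in \eqref{npro1}-\eqref{npro2}. I would produce this Lie algebra structure by transporting the bracket from $\widehat{\fg}$ through the candidate isomorphism $i_\fg$ in \eqref{cfgiso}, so that once $i_\fg$ is shown to be a linear bijection, the Lie-algebra and isomorphism assertions reduce to a single bracket computation.

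First I would check that $i_\fg$ is a well-defined linear bijection. Well-definedness on the quotient $\widehat{C}_\fg$ amounts to verifying the relations $(Tu)(m) = -m\,u(m-1)$ against \eqref{cfgiso}: for $u \in \fg$ this is immediate; for $u = t_2^n\rk_1'$ it follows from the degree-shift in the second clause of \eqref{cfgiso} (together with the rule that $t_1^{m_1}t_2^{m_2}\rk_1'$ vanishes when $m_2 = 0$); and for $u = \rk_1$ it follows from $T\rk_1 = 0$ and the factor $\delta_{m,-1}$ in the third clause. Bijectivity is then clear by matching the explicit spanning set of $\widehat{\fg}$ recalled just after \eqref{defwhfg} against the polar decomposition of $\widehat{C}_\fg$ afforded by \eqref{polardec} and \eqref{iso-1}. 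Having identified $i_\fg$ as a linear isomorphism, I would transport the bracket of $\widehat{\fg}$ and verify the result matches \eqref{LieC}. For $\al+\beta \in \Delta^\times \cup \{0\}$ the right-hand side of \eqref{LieC} collapses to $[x,y](m_1+n_1) + m_1\langle x,y\rangle\,\rk_1(m_1+n_1-1)$, whose image under $i_\fg$ is exactly the first formula in Lemma \ref{lem:commutator}. In the null-root case, combining $(x_{(0)}y)(m_1+n_1)$ and $m_1(x_{(1)}y)(m_1+n_1-1)$ with the identity $(T\ot t_2^{m_2+n_2}\rk_1')(m_1+n_1) = -(m_1+n_1)(t_2^{m_2+n_2}\rk_1')(m_1+n_1-1)$ yields the scalar coefficient $-m_2(m_1+n_1) + m_1(m_2+n_2) = m_1 n_2 - m_2 n_1$ in front of $(t_2^{m_2+n_2}\rk_1')(m_1+n_1-1)$, which under $i_\fg$ reproduces the second formula in Lemma \ref{lem:commutator} precisely. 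Lemma \ref{lem:determineC} then delivers the conformal Lie algebra structure, and $i_\fg$ is by construction a Lie algebra isomorphism.

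For the subalgebra claims, it suffices to observe that $T$ preserves $C_{\fn_\pm}$ and that the $n$-products \eqref{npro1}-\eqref{npro2} restrict to $\underline{\fn}_\pm$, since $\fn_\pm$ is closed under bracket and the $\rk_1'$-correction $t_2^{m_2+n_2}\rk_1'$ inherits the sign of the $t_2$-grading; this makes $C_{\fn_\pm}$ a conformal subalgebra of $C_\fg$, and the restrictions $i_{\fn_\pm}$ are then isomorphisms onto $\widehat{\fn}_\pm$ by the explicit spanning description of $\widehat{\fn}_\pm$ given in the discussion around the triangular decomposition \eqref{decwhfg}. The main obstacle will be the null-root computation above, where the two distinct correction terms (one coming from $x_{(0)}y$ via the $T$-action, the other from $x_{(1)}y$) must be combined so that the $T$-derivative produces exactly the asymmetric coefficient $m_1 n_2 - m_2 n_1$ appearing in the toroidal commutator \eqref{commutator1}; everything else reduces to routine bookkeeping of the degree-shift conventions in \eqref{cfgiso}.
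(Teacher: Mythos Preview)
Your proposal is correct and follows essentially the same approach as the paper: show that $i_\fg$ is a linear isomorphism, transport the Lie bracket from $\widehat{\fg}$ to $\widehat{C}_\fg$, verify via the explicit commutator formulas of Lemma~\ref{lem:commutator} (the paper cites the equivalent formal-series reformulation Lemma~\ref{lem:relationwhfg}) that this bracket has the shape \eqref{LieC} with the prescribed $n$-products, and then invoke Lemma~\ref{lem:determineC}. Your mode-by-mode null-root computation is exactly the content the paper suppresses under ``in view of Lemma~\ref{lem:relationwhfg}''; the only minor slip is that the coefficient $m_1n_2-m_2n_1$ appears in the second (unlabeled) formula of Lemma~\ref{lem:commutator}, not in \eqref{commutator1}.
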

\begin{proof} Using \eqref{defT}, it is easy to see that all the maps $i_\fg$, $i_{\fn_+}$ and $i_{\fn_-}$ are isomorphisms of vector spaces.
Then $\wh{C}_\fg$ admits a Lie algebra structure transferring from $\wh\fg$. In view of Lemma \ref{lem:relationwhfg},
the Lie bracket on $\wh{C}_\fg$ is given by \eqref{LieC} with the $n$-products defined by \eqref{npro1} and \eqref{npro2}.
Thus, by Lemma \ref{lem:determineC}, $(C_\fg,T,Y_-)$ is a conformal Lie algebra.
Moreover, it is obvious that $C_{\fn_+}$ and $C_{\fn_-}$  are invariant under the $n$-products \eqref{npro1} and \eqref{npro2}.
This implies that they are conformal Lie subalgebras of $C_{\fg}$ and the associated Lie algebras are isomorphic to $\wh\fn_\pm$.
\end{proof}

Recall the subset $T_\fm\subset C_\fm$ defined in \eqref{defTfm}.
Using Lemma \ref{C0genVC}, one gets that
\begin{lemt}\label{TmgeV} For any $\fm=\fg,\fn_+$ or $\fn_-$, $T_{\fm}$ is a generating subset of the universal vertex algebra $V_{C_\fm}$ associated to $C_\fm$.
\end{lemt}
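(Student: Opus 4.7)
The plan is to invoke Lemma \ref{C0genVC} and reduce everything to a Lie-theoretic generation statement inside $\wh\fm$. By Lemma \ref{C0genVC} it suffices to show that $\rho\bigl(\C[t,t^{-1}]\ot T_\fm\bigr)$ generates the associated Lie algebra $\wh{C}_\fm$. Under the isomorphism $i_\fm:\wh{C}_\fm\rightarrow \wh\fm$ from Lemma \ref{lem:wh-C-Lie-iso}, the set $\rho(t^m\ot e_i^\pm)$ is carried to $t_1^m\ot e_i^\pm$. Thus the problem reduces to proving that
\[
S_\fm:=\{t_1^m\ot e_i^\pm\mid i\in I,\ m\in\Z\}\cap \wh\fm
\]
generates $\wh\fm$ as a Lie algebra.

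First I will show that the ``loop part'' of $\wh\fm$ lies in $\<S_\fm\>$. Since $\al_i^\vee=[e_i^+,e_i^-]$, the Serre-Gabber-Kac presentation of $\fg$ implies that $\fg$ is generated as a Lie algebra by $\{e_i^\pm\mid i\in I\}$. Moreover, for any $x\in\fg_\al$, $y\in\fg_\beta$ with $\alpha+\beta\in\Delta^\times\cup\{0\}$, the bracket relation in Lemma \ref{lem:commutator} reads
\[
[t_1^{m_1}\ot x,\ t_1^{n_1}\ot y]=t_1^{m_1+n_1}\ot[x,y]+m_1\delta_{m_1+n_1,0}\<x,y\>\rk_1,
\]
so by iterating brackets among the $t_1^m\ot e_i^\pm$ one can produce $t_1^m\ot x$ for every $x\in\fg$ and every $m\in\Z$ (the central corrections lie in a one-dimensional summand that is easily separated by varying $m_1$). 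In particular the whole of $\CL(\fg)$ (or $\CL(\fn_\pm)$ in the $\fn_\pm$ case) sits inside $\<S_\fm\>$.

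It remains to capture the central elements. The element $\rk_1$ is obtained from the difference
\[
[t_1\ot e_i^+,\ t_1^{-1}\ot e_i^-]-[t_1^0\ot e_i^+,\ t_1^0\ot e_i^-]=\epsilon_i^{-1}a_{ii}\rk_1
\]
(see \eqref{fgbiform}), which is nonzero since $a_{ii}=2$. For $\fg$ of affine type, the remaining central basis elements $t_1^{n_1}t_2^{n_2}\rk_1'$ with $n_2\in r\Z^\times$ are produced by the second bracket formula of Lemma \ref{lem:commutator}: picking $x\in\fg_\alpha$, $y\in\fg_\beta$ with $\alpha+\beta=n_2\delta_2\in\Delta^0\setminus\{0\}$, and writing $x=t_2^{m_2}\ot\dot x$, $y=t_2^{n_2-m_2}\ot\dot y$, the commutator
\[
[t_1^{m_1}\ot x,\ t_1^{n_1'-m_1}\ot y]=t_1^{n_1'}\ot[x,y]+\<\dot x,\dot y\>\bigl(m_1(n_2-m_2)-m_2(n_1'-m_1)\bigr)t_1^{n_1'}t_2^{n_2}\rk_1'
\]
contains the desired central element once the coefficient is made nonzero (by choosing $m_1$ appropriately), and modulo a loop term already shown to lie in $\<S_\fm\>$. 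By the first step, $t_1^{m_1}\ot x$ and $t_1^{n_1'-m_1}\ot y$ are themselves in $\<S_\fm\>$. Restricting to $\fn_+$ or $\fn_-$, exactly the same argument recovers the one-sided central elements with $\pm n_2\in\Z_+$. The main obstacle in this step is to verify the coefficient can always be made nonzero, i.e., to exhibit, for each $n_2\in r\Z^\times$, a pair $(\dot x,\dot y)\in\dg\times\dg$ in appropriate graded components with $\<\dot x,\dot y\>\ne 0$; this is standard for affine Kac-Moody algebras, since imaginary root vectors are already obtained as brackets of real root vectors within $\fg$ and the normalized invariant form on $\dg$ is non-degenerate on each graded piece $\dg_{[m]}$.
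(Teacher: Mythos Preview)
Your proposal is correct and follows the same approach as the paper: invoke Lemma \ref{C0genVC} and reduce to the Lie-theoretic generation statement that the elements $t_1^m\ot e_i^\pm$ generate $\wh\fm$. The paper's own proof is literally the single line ``Using Lemma \ref{C0genVC}, one gets that,'' leaving the generation of $\wh\fm$ by the $t_1^m\ot e_i^\pm$ implicit (it was essentially remarked around \eqref{genhatg} for $\wh\fg$, and the $\wh\fn_\pm$ case is analogous). Your write-up just fills in those details. One small numerical slip: the central coefficient in your $\rk_1$ calculation is $\<e_i^+,e_i^-\>=\epsilon_i^{-1}$, not $\epsilon_i^{-1}a_{ii}$ (the formula \eqref{fgbiform} is for the pairing on $\fh$, not on root spaces); this does not affect the argument since the point is only that it is nonzero.
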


We now define a linear transformation $R_\mu$ on  $C_\fg$  such that
\begin{align*}
R_\mu(T^n\ot x)=\xi^{n+1}\,T^n\ot \mu(x),\quad R_\mu(\rk_1)=\rk_1
\end{align*}
for $n\in \N$, $x\in \fg_\al, \al\in \Delta^\times \cup \{0\}$,
and such that (the following actions exist only when $\fg$ is of affine type)
\begin{align*}
R_\mu(T^n\ot h)&=\xi^{n+1}(T^n\ot \mu(h)+\rho_\mu(\dot{h})\,T^{n+1}\ot t_2^{m_2}\rk_1'),\\
 R_\mu(T^n\ot t_2^{n_2}\rk_1')&=
\xi^{n}\,T^n\ot t_2^{n_2}\rk_1',
\end{align*}
for $n\in \N$, $h=t_2^{m_2}\ot \dot{h}$ with $m_2\ne 0$, $\dot h\in\dfh_{[m_2]} $ and $n_2\in \Z$.
\begin{lemt}\label{defgammaaction}
The map $R_\mu$ has order $N$ and satisfies the axiom $T R_\mu=\xi^{-1}R_\mu T$.
\end{lemt}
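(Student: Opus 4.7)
The plan is to verify both assertions directly on the spanning set $\{T^n\otimes x,\ \rk_1,\ T^n\otimes t_2^{n_2}\rk_1'\}$ of $C_\fg$, case by case according to the piecewise structure of the definition of $R_\mu$. When $\fg$ is of finite type there are no imaginary root vectors nor $\rk_1'$-elements, so only the first case occurs and both claims are immediate from $\xi^N=1$ and $\mu^N=\mathrm{Id}$. In the affine case, the commutation $TR_\mu=\xi^{-1}R_\mu T$ on $\rk_1$, on $T^n\otimes x$ with $\al\in\Delta^\times\cup\{0\}$, and on $T^n\otimes t_2^{n_2}\rk_1'$ follows from a one-line comparison of the scalars $\xi^{n+1}$ (resp.\ $\xi^n$) before and after applying $T$, together with $T(T^n\otimes\cdot)=T^{n+1}\otimes\cdot$. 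On $T^n\otimes h$ with $h=t_2^{m_2}\otimes\dot h$, $m_2\neq0$, one expands $R_\mu(T^n\otimes h)$ into its two summands and observes that $T$ shifts the $T$-exponent of each summand by one while $R_\mu$ supplies the matching power of $\xi$, so the equality holds summand by summand.

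For $R_\mu^N=\mathrm{Id}$, the cases of $\rk_1$, $T^n\otimes x$ (real root), and $T^n\otimes t_2^{n_2}\rk_1'$ are again immediate from $\xi^N=1$, $\mu^N=\mathrm{Id}$, and hence $\dot\mu^N=\mathrm{Id}$ on $\dg$. The only delicate case is $T^n\otimes h$. Using $R_\mu(T^{n+1}\otimes t_2^{m_2}\rk_1')=\xi^{n+1}T^{n+1}\otimes t_2^{m_2}\rk_1'$, a short induction on $k$ gives
\begin{equation*}
R_\mu^k(T^n\otimes h)=\xi^{k(n+1)}\,T^n\otimes\mu^k(h)+\xi^{k(n+1)}\Bigl(\sum_{j=0}^{k-1}\rho_\mu(\dot\mu^j(\dot h))\Bigr)T^{n+1}\otimes t_2^{m_2}\rk_1',
\end{equation*}
so setting $k=N$ reduces the claim to the vanishing $\sum_{j=0}^{N-1}\rho_\mu(\dot\mu^j(\dot h))=0$.

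The main obstacle is establishing this vanishing, and the plan is to extract it from the fact that $\mu$ already has order $N$ on $\fg$. Iterating formula \eqref{actmu1} $N$ times on $t_2^m\otimes x_{[m]}$ with $x\in\dg_{\dot\al}$ and comparing with the identity map forces both $\dot\mu^N=\mathrm{Id}$ and $\sum_{j=0}^{N-1}\rho_\mu(\dot\mu^j(\dot\al))=0$ for every $\dot\al\in\dot\Delta$. Since $\dot\Delta$ spans $\dfh^*$, $\C$-linearity yields $\rho_\mu\circ(1+\dot\mu+\cdots+\dot\mu^{N-1})\equiv0$ on all of $\dfh^*$, and transporting via the normalized-form identification $\dfh\cong\dfh^*$ produces the required vanishing on $\dot h\in\dfh$. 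Combining this with the induction formula above completes the verification of $R_\mu^N=\mathrm{Id}$.
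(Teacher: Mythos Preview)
Your proof is correct and follows essentially the same strategy as the paper's: a case-by-case verification on the spanning set, with the only nontrivial point being $R_\mu^N(T^n\otimes h)=T^n\otimes h$, which reduces to the vanishing $\sum_{j=0}^{N-1}\rho_\mu(\dot\mu^j(\dot h))=0$. The one difference worth noting is that the paper obtains this vanishing by citing an external result (\cite[Lemma 2.1 (b),(c)]{CJKT-uce}), whereas you derive it internally by iterating \eqref{actmu1} $N$ times and reading off the $t_2$-degree shift; your route is therefore more self-contained, at the cost of implicitly using that $\dot\mu$ permutes the root spaces of $\dg$ (so that \eqref{actmu1} can indeed be iterated with $\dot\al$ replaced by $\dot\mu(\dot\al)$ at each step), which is true but perhaps deserves a word of justification.
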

\begin{proof} The assertion is obvious when $\fg$ is of finite type.
For the case that  $\fg$ is of affine type, recall the automorphism $\dot\mu$ on $\dg$ and the linear functional
$\rho_\mu$ on $\dfh$ introduced in \S\,2.2.
Then one can conclude from the fact $\mu^N=1$ and
\cite[Lemma 2.1 (b),(c)]{CJKT-uce} that
\begin{align*}
  \sum_{k\in \Z_N}\rho_\mu(\dot\mu^k(\dot h))=0,\quad \forall\ \dot h\in \dfh.
\end{align*}
Using this and the action \eqref{actmu2}, one can easily check that
$R_\mu^N(T^n\ot h)=T^n\ot h$ for
$n\in \N$, $h=t_2^{m_2}\ot \dot{h}$ with $m_2\ne 0$ and $\dot h\in\dfh_{[m_2]}$. This  implies the first assertion in lemma and the
second one is obvious.
\end{proof}

In view of Lemma \ref{defgammaaction}, we now have the following group homomorphism
\[R:\Gamma\rightarrow \mathrm{GL}(C_\fg),\quad \mu^n\mapsto R_{\mu^n}=\(R_\mu\)^n,\quad n\in \Z_N,\]
which satisfies the condition that
\begin{align}\label{comgammaaction} T R_{\mu^n}=\xi^{-n}R_{\mu^n} T,\quad n\in \Z_N.
\end{align}

\begin{lemt}\label{gammaconformalg}  $(C_\fg,T,Y_-,R)$ is a $\Gamma$-conformal Lie algebra and $C_{\fn_+}$, $C_{\fn_-}$
are $\Gamma$-conformal Lie subalgebras of $C_\fg$.
Moreover, the associated Lie algebra  $(\wh{C}_\fm)_\Gamma$ is isomorphic to the Lie algebra $\wh\fm[\mu]$  for $\fm=\fg,\fn_+$ or $\fn_-$.
 \end{lemt}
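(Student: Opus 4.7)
The plan is to reduce everything to a single verification: that under the isomorphism $i_\fg : \wh{C}_\fg \to \wh\fg$ of Lemma \ref{lem:wh-C-Lie-iso}, the map $\wh{R}_\mu$ from \eqref{defwhRg} corresponds to the automorphism $\wh\mu$ of Lemma \ref{lem:wh-mu-N-period}. Once this is established, Lemma \ref{whRg} delivers the $\Gamma$-conformal structure (the second hypothesis of that lemma, about $[\wh R_g(u)(z), v(w)] = 0$ for all but finitely many $g$, is automatic because $\Gamma = \<\mu\>$ is finite of order $N$), the subalgebra claim becomes a short invariance check, and the identification of $(\wh{C}_\fm)_\Gamma$ with $\wh\fm[\mu]$ falls out of Remark \ref{whCGamma}.

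For the key verification, I would check $i_\fg \circ \wh{R}_{\mu} = \wh\mu \circ i_\fg$ on three families of generators of $\wh{C}_\fg$, namely $x(m)$ with $x \in \fg_\al$ and $\al \in \Delta^\times \cup \{0\}$; $h(m)$ with $h = t_2^{m_2} \ot \dot h \in \fh$, $m_2 \ne 0$; and $(t_2^{n_2}\rk_1')(m)$. On the first family, using $\phi(\mu) = \xi^{-1}$, the definition $\wh R_\mu(x(m)) = \xi^{-(m+1)} R_\mu(x)(m) = \xi^{-m}\mu(x)(m)$ matches $\wh\mu(t_1^m \ot x) = \xi^{-m}(t_1^m \ot \mu(x))$ from \eqref{defhatmu1}. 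On the third family, one gets $\wh R_\mu((t_2^{n_2}\rk_1')(m)) = \xi^{-(m+1)}(t_2^{n_2}\rk_1')(m)$, which under $i_\fg$ becomes $\xi^{-(m+1)} t_1^{m+1}t_2^{n_2}\rk_1'$, matching \eqref{defhatmu2}. The only subtle case is the second: the twist by $\rho_\mu(\dot h)$ in the definition of $R_\mu$ on imaginary-root generators produces a term $\rho_\mu(\dot h)\,(T\ot t_2^{m_2}\rk_1')(m)$, which via the standard identity $(Tu)(m) = -m\,u(m-1)$ in $\wh C$ becomes exactly the $-m\rho_\mu(\dot h)\,t_1^m t_2^{m_2}\rk_1'$ appearing in $\wh\mu(t_1^m \ot h)$. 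This is the one place that requires a short calculation, and it is precisely why $R_\mu$ was defined with that correction term in the first place.

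Since $\wh\mu$ is a Lie automorphism of $\wh\fg$ and $i_\fg$ is a Lie isomorphism, $\wh R_\mu$ is a Lie automorphism of $\wh C_\fg$. The condition $TR_\mu = \xi^{-1} R_\mu T = \phi(\mu) R_\mu T$ was already recorded in Lemma \ref{defgammaaction}. Hence Lemma \ref{whRg} applies, showing $(C_\fg, T, Y_-, R)$ is a $\Gamma$-conformal Lie algebra. For the second assertion, the subspaces $C_{\fn_\pm}$ are already conformal subalgebras by Lemma \ref{lem:wh-C-Lie-iso}, so it suffices to check $R_\mu$-invariance: since $\mu(e_i^\pm) = e_{\mu(i)}^\pm$ implies $\mu(\fn_\pm) = \fn_\pm$, and $R_\mu$ acts on each $T^n \ot t_2^m\rk_1'$ by the scalar $\xi^n$, both summands of $\underline{\fn}_\pm$ are preserved.

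For the last assertion, since $\Gamma = \<\mu\>$ is finite cyclic, Remark \ref{whCGamma} identifies $(\wh C_\fm)_\Gamma$ with the fixed-point subalgebra $\wh C_\fm^{\wh R_\mu}$. Transporting via $i_\fm = i_\fg|_{\wh C_\fm}$ and using $i_\fg \circ \wh R_\mu = \wh\mu \circ i_\fg$ established above, this fixed-point subalgebra maps isomorphically onto $\wh\fm \cap \wh\fg^{\wh\mu} = \wh\fm[\mu]$, as desired. The main obstacle in the whole argument is the one mentioned above: getting the central-twist in $R_\mu$ on imaginary-root vectors to line up with the central correction in $\wh\mu$ after passing through the quotient relation $T^{n+1}\ot u \equiv -(n+1)\,T^n\ot u$ implicit in $\wh C_\fg$; everything else is bookkeeping.
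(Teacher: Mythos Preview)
Your proof is correct and follows essentially the same approach as the paper: both reduce to verifying $i_\fg \circ \wh R_\mu = \wh\mu \circ i_\fg$ (the paper states this as $\wh R_\mu = i_\fg \circ \wh\mu \circ i_\fg^{-1}$, which appears to be a typo with $i_\fg$ and $i_\fg^{-1}$ swapped), then invoke Lemma \ref{whRg}, the $R_\mu$-invariance of $C_{\fn_\pm}$, and Remark \ref{whCGamma}. Your write-up in fact supplies more computational detail than the paper, which simply asserts that the intertwining relation ``can be checked'' from the explicit formulas in Lemma \ref{lem:wh-mu-N-period}.
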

\begin{proof} Recall  the linear map $\wh{R}_\mu: \wh{C}_\fg\rightarrow \wh{C}_\fg$ induced by $R_\mu$ defined
in \eqref{defwhRg}. By using the explicit action of $\wh\mu$ on $\wh\fg$ given in Lemma
\ref{defhatmu}, one can check that $\wh{R}_\mu=i_\fg\circ\wh{\mu}\circ i_\fg^{-1}$. So $\wh{R}_\mu$ is
an automorphism of $\wh{C}_\fg$. This together with Lemma \ref{whRg} and \eqref{comgammaaction} proves
that  $(C_\fg,T,Y_-,R)$ is a $\Gamma$-conformal Lie algebra.
Note that both  $C_{\fn_+}$ and $C_{\fn_-}$  are stable under the map $R_\mu$. So they are $\Gamma$-conformal Lie
subalgebras of $C_\fg$, which completes the proof of the first assertion.
The second assertion follows from Remark \ref{whCGamma} and the moreover statement in Lemma \ref{lem:wh-C-Lie-iso}.
\end{proof}

Combing Lemma \ref{gammaconformalg} with Lemma \ref{gammacontova}, one knows that $V_{C_\fm}$ admits a natural  $\Gamma$-vertex algebra structure.
Recall the subalgebra $\wh\fm^{(+)}$ of $\fm$ defined in \S\,\ref{subsec:proof-thm-intromain2}
and the subalgebra $\wh{C}_\fm^{(+)}$ of $\wh{C}_\fm$ defined in \S\,5.3.

\begin{lemt}\label{charwhfm+} For any $\fm=\fg,\fn_+$ or $\fn_-$, one has that $\wh\fm^{(+)}=i_\fm(\wh{C}_\fm^{(+)})$.
\end{lemt}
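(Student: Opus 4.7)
The plan is to verify $\wh\fm^{(+)}=i_\fm(\wh{C}_\fm^{(+)})$ by mutual inclusion. First I would make the right-hand side explicit. The defining quotient of $\wh{C}_\fm$ yields the relation $(Tu)(m)=-m\,u(m-1)$, and under $i_\fm$ one has $\rk_1(m)=\delta_{m,-1}\rk_1$; together these reduce the spanning set of $\wh{C}_\fm^{(+)}=\rho(\C[t]\ot C_\fm)$ to $\{x(r)\mid x\in\underline{\fm},\ r\geq 0\}$. Applying $i_\fm$ via \eqref{cfgiso} then identifies
\[
i_\fm(\wh{C}_\fm^{(+)})=\te{Span}_\C\{t_1^r\ot y\mid y\in\fm,\ r\geq 0\}+\te{Span}_\C\{t_1^{r+1}t_2^n\rk_1'\mid r\geq 0,\ n\text{ admissible for }\fm\},
\]
the second summand being present only in the affine case (with $n>0$ for $\fn_+$, $n<0$ for $\fn_-$, and $n\in r\Z^\times$ for $\fg$).

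The inclusion $\wh\fm^{(+)}\subset i_\fm(\wh{C}_\fm^{(+)})$ is immediate, since each generator $t_1^m\ot e_i^\pm$ of $\wh\fm^{(+)}$ equals $i_\fm(e_i^\pm(m))$ and the right-hand side is a Lie subalgebra of $\wh\fm$, being the image of $\wh{C}_\fm^{(+)}$ under the Lie isomorphism $i_\fm$. For the reverse inclusion, I would invoke the natural projection $\psi:\wh\fg\to\CL(\fg)$ whose kernel is the toroidal center. Both $\wh\fm^{(+)}$ and $i_\fm(\wh{C}_\fm^{(+)})$ project onto $\C[t_1]\ot\fm$: the first because $\{e_i^\pm\}\cap\fm$ generates $\fm$ as a Lie algebra, the second because its central spanning elements lie in $\ker\psi$. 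Combined with the first inclusion, this reduces the reverse inclusion to showing that every central basis vector $t_1^{r+1}t_2^n\rk_1'$ appearing in $i_\fm(\wh{C}_\fm^{(+)})$ already belongs to $\wh\fm^{(+)}$. In the finite-type case there are none and the argument terminates here.

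To produce these central elements as brackets within $\wh\fm^{(+)}$, for each admissible pair $(r,n)$ I would choose real-root weight vectors $y_1=t_2^{m_2}\ot\dot y_1$, $y_2=t_2^{n_2}\ot\dot y_2\in\fm$ whose $\fg$-weights sum to $n\delta_2$ and with $\<\dot y_1,\dot y_2\>\neq 0$, and apply the second (null-root) formula of Lemma \ref{lem:commutator}:
\[
[t_1^a\ot y_1,t_1^b\ot y_2]=t_1^{a+b}\ot[y_1,y_2]_\fg+\<\dot y_1,\dot y_2\>(an_2-m_2 b)\,t_1^{a+b}t_2^n\rk_1',
\]
with $a+b=r+1$ and $an_2-m_2b\neq 0$, so that $t_1^{r+1}t_2^n\rk_1'$ can be solved for in terms of the left-hand bracket and the null-root loop element $t_1^{r+1}\ot[y_1,y_2]_\fg$. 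The main obstacle is that both of these auxiliary elements must themselves be shown to lie in $\wh\fm^{(+)}$, and a naive induction would loop back to the full ``$t_1^r\ot y\in\wh\fm^{(+)}$'' claim that we are trying to avoid. My resolution is to take $y_1=e_0^\pm$ and arrange $y_2$ (and similarly the null-root element $[y_1,y_2]_\fg$) as nested Lie brackets of Chevalley generators in which every intermediate commutator has at least one factor with trivial $t_2$-component; by Lemma \ref{lem:commutator} such brackets produce no toroidal correction at any stage, so the relevant $t_1^{r+1}\ot(\cdot)$-elements can be exhibited inside $\wh\fm^{(+)}$ purely by Lie-word manipulation among the generators $t_1^m\ot e_i^\pm$. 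Feeding these back into the displayed identity yields $t_1^{r+1}t_2^n\rk_1'\in\wh\fm^{(+)}$ and completes the proof.
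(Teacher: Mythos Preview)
Your overall architecture matches what the paper glosses over in one sentence: you correctly identify
\[
i_\fm(\wh{C}_\fm^{(+)})=\te{Span}_\C\{t_1^m\ot y,\ t_1^{m+1}t_2^n\rk_1'\mid y\in\fm,\ m\in\N\}\cap\wh\fm,
\]
and the reduction via $\psi$ to the central part is the right move. The paper simply declares this span to be the subalgebra generated by the set \eqref{genm+}, so you are supplying exactly the missing argument.

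There is, however, a genuine gap in your final step. The claim that brackets in which one factor has ``trivial $t_2$-component'' produce no toroidal correction is false. In the second formula of Lemma~\ref{lem:commutator} the correction term is $\<\dot x,\dot y\>(m_1n_2-m_2n_1)\,t_1^{m_1+n_1}t_2^{m_2+n_2}\rk_1'$; setting $m_2=0$ leaves $\<\dot x,\dot y\>\,m_1n_2\,t_1^{m_1+n_1}t_2^{n_2}\rk_1'$, which is typically nonzero (take $A_1^{(1)}$, $x=e_1^+=1\ot\dot e_1$, $y=t_2\ot\dot f_1\in\fg_{\delta_2-\alpha_1}$, and $m_1=1$). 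Moreover, building a root vector for $n\delta_2-\alpha_0$ out of simple generators necessarily passes through null partial sums (already in $A_1^{(1)}$, any chain for $2\alpha_0+\alpha_1$ visits $\delta_2$), so the circularity you flagged cannot be avoided this way.

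A clean repair uses the $Q\times\Z$-grading together with a subtraction trick. Since $\psi$ maps $\wh\fm^{(+)}$ onto $\C[t_1]\ot\fm$ and is graded, and since $\dim\wh\fg_{(\alpha,m)}=1$ for every \emph{real} $\alpha\in\Delta_\pm$, one gets $t_1^m\ot y\in\wh\fm^{(+)}$ for all real-root vectors $y\in\fm$ and all $m\ge 0$ for free. Now pick real $\gamma\in\Delta_\pm$ with $n\delta_2-\gamma\in\Delta_\pm$ and $y_1\in\fg_\gamma$, $y_2\in\fg_{n\delta_2-\gamma}$ with $\<\dot y_1,\dot y_2\>\ne 0$. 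For any two decompositions $a+b=a'+b'=r+1$ with $a\ne a'$ and all entries $\ge 0$, subtracting gives
\[
[t_1^a\ot y_1,t_1^b\ot y_2]-[t_1^{a'}\ot y_1,t_1^{b'}\ot y_2]=\<\dot y_1,\dot y_2\>(a-a')\,n\,t_1^{r+1}t_2^{n}\rk_1'\in\wh\fm^{(+)},
\]
which is a nonzero multiple of the desired central element. This replaces your nested-bracket maneuver and closes the argument.
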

\begin{proof} It is easy to see that
\[i_\fm(\wh{C}_\fm^{(+)})=\te{Span}_\C\set{t_1^m\ot x,\, t_1^{m+1}t_2^n\rk_1' }{x\in \fg,n\in\Z,m\in\N}\cap \wh\fm\]
is the subalgebra of $\wh\fm$ generated by
by the elements in \eqref{genm+}, and hence coincides with $\wh\fm^{(+)}$, as required.
\end{proof}

Now we are ready to finish the proof of Proposition \ref{universalva}.
In view of Lemma \ref{charwhfm+}, there is a natural $\Gamma$-vertex algebra structure on $V(\wh\fm)$
  transferring from $V_{C_\fm}$,
with $T_\fm$ as a generating subset (Lemma \ref{TmgeV}).
This proves the first assertion of Proposition \ref{universalva}.
The second assertion of Proposition \ref{universalva} is implied by Lemma \ref{gammaconformalg} and
Proposition \ref{quasitores}, as desired.



\end{document}